\documentclass[a4paper, 11pt]{article}

\usepackage{fullpage}

\usepackage[english]{babel}
\usepackage[T1]{fontenc}
\usepackage[utf8]{inputenc}
\usepackage{amsmath, amsfonts, amssymb, amsthm}
\usepackage{mathrsfs}
\usepackage{mathtools}
\usepackage{bm}
\usepackage{xfrac}
\usepackage{comment}
\usepackage{todonotes}
\usepackage[symbol]{footmisc}

\usepackage{booktabs}
\usepackage[colorlinks=true, linkcolor=blue, citecolor=red]{hyperref}
\usepackage{enumitem}
\usepackage{graphicx}
\usepackage{subcaption}

\usepackage[linesnumbered,ruled,vlined]{algorithm2e}
\SetAlFnt{\small}

\renewcommand{\thefootnote}{\arabic{footnote}}

\newcommand{\N}{\mathbb{N}}

\newtheorem{theorem}{Theorem}[section]

\newtheorem{proposition}[theorem]{Proposition}
\newtheorem{corollary}[theorem]{Corollary}

\newtheorem{definition}[theorem]{Definition}

\newtheorem{remark}[theorem]{Remark}
\usepackage{hyperref}

\usepackage{float}
\usepackage{todonotes}
\usepackage{algorithm2e}
\newlist{checklist}{itemize}{2}
\setlist[checklist]{label=$\square$}

\hypersetup{      
    urlcolor=black
}
\renewcommand{\thefootnote}{\fnsymbol{footnote}}

\newcommand{\vol}{\mathrm{vol}}

\title{Certified and accurate computation of function space norms of deep neural networks}
\author{
	Johannes Gründler\thanks{Faculty of Mathematics, University of Vienna, Kolingasse 14--16, 1090 Vienna, Austria (\href{mailto:a12332806@unet.univie.ac.at}{a12332806@unet.univie.ac.at}, \href{mailto:moritz.maibaum@univie.ac.at}{moritz.maibaum@univie.ac.at}, \href{mailto:philipp.petersen@univie.ac.at}{philipp.petersen@univie.ac.at})} \thanks{Equal contribution}
	\and
	Moritz Maibaum\footnotemark[1] \footnotemark[2]
	\and
	Philipp Petersen\footnotemark[1]
}

\begin{document}
	\maketitle
	
	\renewcommand{\thefootnote}{\arabic{footnote}}
	\setcounter{footnote}{0}
	
	\begin{abstract}
		Neural network methods for PDEs require reliable error control in function space norms. However, trained neural networks can typically only be probed at a finite number of point values. 
		Without strong assumptions, point evaluations alone do not provide enough information to derive tight deterministic and guaranteed bounds on function space norms. In this work, we move beyond a purely black-box setting and exploit the {neural} network structure directly. We present a framework for the certified and accurate computation of integral quantities of neural networks, including Lebesgue and Sobolev norms, by combining interval arithmetic enclosures on axis-aligned boxes with adaptive marking/refinement and quadrature-based aggregation. 
		On each box, we compute guaranteed lower and upper bounds for function values and derivatives, and propagate these local certificates to global lower and upper bounds for the target integrals. 
		Our analysis provides a general convergence theorem for such certified adaptive quadrature procedures and instantiates it for function values, Jacobians, and Hessians, yielding certified computation of $L^p$, $W^{1,p}$, and $W^{2,p}$ norms. We further show how these ingredients lead to practical certified bounds for PINN interior residuals. Numerical experiments illustrate the accuracy and practical behavior of the proposed methods.
	\end{abstract}
	
	{\bf Keywords:} deep neural networks; Sobolev norms; certified integration; interval arithmetic; adaptive refinement; PINNs
	
	{\bf MSC2020:} 68T07, 65N15, 65D30, 65G20
	
	\section{Introduction}
	Deep neural networks have become a standard tool across engineering, computer science, and applied mathematics, driven by their empirical success and their ability to approximate high-dimensional nonlinear maps \cite{petersenzech}. In recent years, they have also been used extensively for the numerical solution of partial differential equations (PDEs), most prominently in physics-informed neural networks (PINNs) \cite{raissi2019pinns} and related approaches such as deep Galerkin methods and operator learning \cite{sirignano2018dgm,lu2021deeponet,kovachki2021neuraloperator}.
	
	From the viewpoint of numerical analysis, these methods suggest a paradigm that can complement and sometimes replace classical discretizations such as finite element methods (FEM{s}) based on spline spaces \cite{verfurth2013posteriori}. In the neural-network-based setting, the ansatz space is no longer a hand-crafted finite-dimensional space, but instead a trainable class of functions. Crucially, for many applications, it is essential that these methods be complemented with \emph{reliable and certifiable error control}. A natural approach, rooted in PDE theory and numerical analysis, is to measure errors in function space norms and to bound residual quantities through integrals of derivatives. For spline-based methods, such quantities can typically be accessed and computed directly \cite{babuska1978posteriori,ainsworth1997posteriori,verfurth2013posteriori}.

	In comparison, neural networks are much more of a black box: after training, they are essentially available only as \emph{queryable} objects, i.e., one can evaluate a neural network (and, via automatic differentiation, its derivatives) at chosen points, but global information in function space norms remains inaccessible.
	Current practice in neural-network-based PDE solvers relies predominantly on pointwise evaluations on (random) samples, and error assessment is therefore typically phrased in probabilistic terms (e.g.\ via statistical learning theory arguments such as Rademacher complexity bounds), yielding guarantees only ``with high probability'' \cite{shalev2014understanding,neyshabur2018bounds}.
	Moreover, it has been shown that pointwise sampling information alone can be fundamentally insufficient to obtain certified uniform bounds or function space norm information because neural networks can represent highly localized functions; see, e.g., \cite{grohsvoigtlaendergap}.
	
	In this work, we address this challenge by enabling certified and accurate computation of integrals and function space norms of neural network functions. Concretely, we seek to compute (higher-order) Sobolev norms of neural networks. We provide a precise description of our contributions and results in the next section.

\subsection{Our contribution}
    We present AdaQuad, the first framework for computing certified lower and upper
bounds on Lebesgue $L^p$, Sobolev $W^{1,p}$, and $W^{2,p}$ norms of trained deep
neural networks. Our approach combines three ingredients that have not previously
been brought together:
\begin{enumerate}
  \item interval-arithmetic-based enclosures of neural network outputs and their
    derivatives on axis-aligned boxes, building on but going beyond pointwise
    verification methods \cite{Gowal2018,Zhang2018,Singh2019} and local derivative
    certification \cite{Zhang2019RecurJac,Sharifi2024,Laurel2022};

  \item adaptive partition refinement using D\"{o}rfler marking
    \cite{Dorfler1996}---imported from the adaptive finite element literature into
    neural network analysis for the first time; and

  \item quadrature-based aggregation that propagates local certificates to global
    integral bounds, extending the classical self-validating quadrature paradigm
    \cite{CorlissRall1987} from univariate smooth functions to multivariate neural
    network integrands.
\end{enumerate}
Our main theoretical result (Theorem~\ref{thm: main conv}) establishes a general
convergence theorem for certified adaptive quadrature procedures, guaranteeing that
the gap between upper and lower bounds vanishes at a geometric rate under
refinement. We then instantiate this principle for several integral computations:
\begin{itemize}
  \item In Section~\ref{subsec:functionValues}, we derive upper and lower bounds for function
    values. Algorithm~\ref{alg: nn fval bounds} computes certified bounds for neural network
    outputs on boxes, and Corollary~\ref{corr:LpNormsconvergenceOfQuadratureRule} shows that the resulting $L^p$
    norm estimates are accurate. Moreover, we give an efficient way to check whether
    a ReLU network is affine on a box (Proposition~\ref{prop:AffLinPieceCheck}),
    enabling exact integration on such boxes (see Remark~\ref{rem:ExactIntegration}).

  \item In Section~\ref{subsec:jacobianBounds}, we develop an algorithm for pointwise Jacobian
    bounds. Algorithm~\ref{alg: nn jac bounds} provides certifiable upper and lower bounds on
    boxes, and Corollary~\ref{corr:mainW1p} establishes accurate computation of
    $W^{1,p}$ norms.

  \item In Section~\ref{subsec:hessianBounds}, we develop the analogous construction for
    Hessian bounds. Algorithm~\ref{alg: nn hess bounds} yields certified upper and lower
    bounds on boxes, and Corollary~\ref{corr:mainW2p} proves accurate computation of
    $W^{2,p}$ norms.

  \item In Corollary~\ref{corr:mainEnergy}, we show how these ingredients lead to
    practical certified bounds for PINN interior residuals---providing the missing
    computational ingredient for rigorous a posteriori error control in
    neural-network-based PDE solvers
    \cite{Hillebrecht2025,Ernst2025,DeRyck2024}.

  \item In Section~\ref{sec:numerical_experiments}, we present one- and two-dimensional
    numerical experiments, including an elliptic PINN residual example, that
    illustrate certified computation of upper and lower bounds on $L^p$,
    $W^{1,p}$, $W^{2,p}$ norms, and energy norms, confirming the geometric
    convergence rates predicted by our theory.
\end{itemize}

	\begin{figure}[htbp]
		\centering
		\centering
		\includegraphics[width =\textwidth]{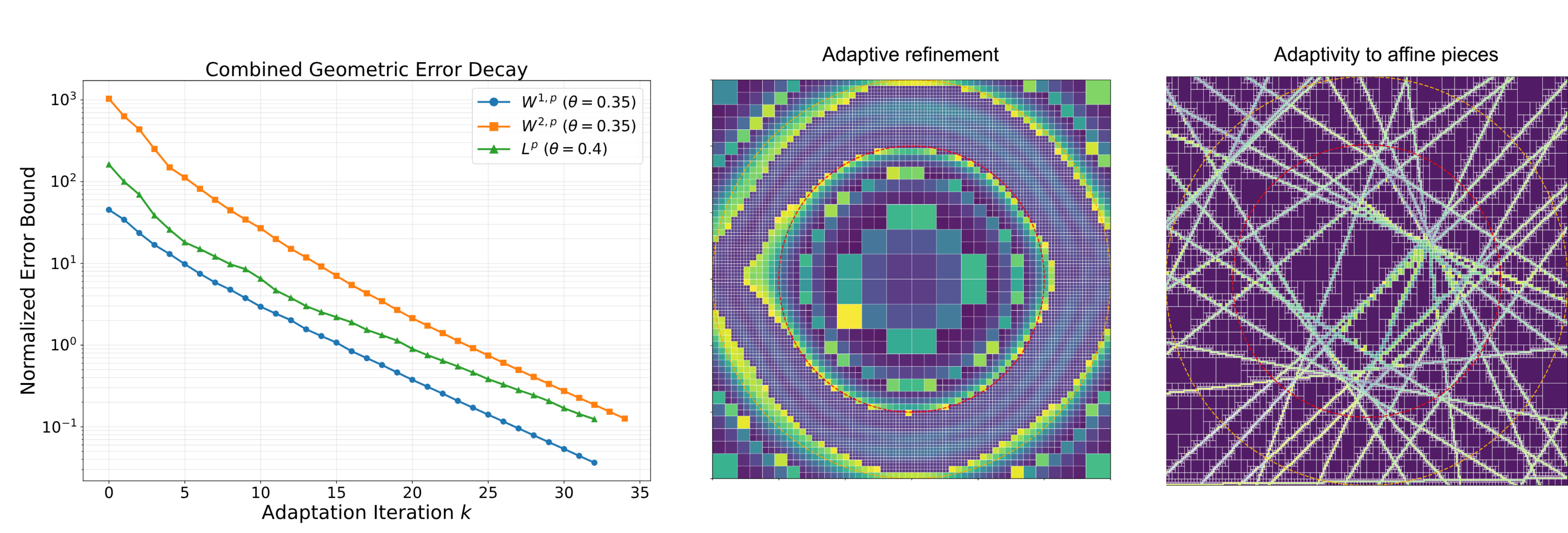}
		\caption{Overview of the adaptive refinement strategies. \textbf{Left:} 
			Certified relative gap of the proposed algorithms for the computation of $L^p$, $W^{1,p}$, and $W^{2,p}$ norms of a smoothed disc function (here $p= 2$). The plotted quantity is our upper bound minus the lower bound, normalized by the final norm estimate. \textbf{Middle:} Heatmap of the local $W^{2,p}$ norm error for a deep architecture with tanh activation function approximating a smoothed disc function. The partition is computed by $\mathrm{AdaQuad}$ (Algorithm~\ref{alg: adaptive integration}). \textbf{Right:} Adaptive partition after 30 refinement steps of $\mathrm{AdaQuad}$ applied to a random ReLU neural network.}
		\label{fig:intro_overview}
	\end{figure}

	\subsection{Related work}

    We summarize related work on \emph{certificates} (i.e., deterministic upper/lower bounds) for neural network function values, derivatives, or derived quantities, and position our contribution relative to the existing literature.

    \paragraph{Neural network verification and bound propagation.}
A large body of work computes certified output enclosures for neural networks.
Interval Bound Propagation (IBP) \cite{Gowal2018,Mirman2018} propagates input
intervals layer-by-layer, while CROWN and auto\_LiRPA \cite{Zhang2018,Xu2020}
propagate linear bounds backward for tighter enclosures. DeepPoly
\cite{Singh2019} employs abstract interpretation with polyhedra-interval
domains, and $\alpha,\beta$-CROWN \cite{Wang2021} achieves state-of-the-art
performance by combining optimized bound propagation with branch-and-bound.
Kapela et al.\ \cite{Kapela2024} import doubleton and affine arithmetic from
validated numerics to mitigate the wrapping effect in IBP. All these methods
target pointwise output properties---typically adversarial robustness---and do
not address derivatives or integral quantities.
 
\paragraph{Certified derivative bounds.}
RecurJac \cite{Zhang2019RecurJac} computes element-wise certified Jacobian bounds
over local input regions via recursive refinement. Shi et al.\ \cite{Shi2022}
generalize this to compute local Lipschitz constants via bound propagation on the
chain-rule backward graph. LipSDP \cite{Fazlyab2019} formulates global Lipschitz
estimation as a semidefinite program but produces only a single scalar bound. For
second-order derivatives, Entesari et al.\ \cite{Entesari2024} derive
compositional curvature bounds propagated layer-by-layer, while Sharifi and
Fazlyab \cite{Sharifi2024} propose derivative-preserving reachability analysis
providing analytical bounds on both gradients and Hessians---the closest precursor
to our local derivative enclosures. Laurel et al.\ \cite{Laurel2022} present a
general framework for abstractly interpreting higher-order automatic
differentiation, yielding certified first- and second-derivative bounds via
dual-number abstractions. All of these methods provide pointwise or region-wise
derivative certificates but do not aggregate local bounds into global integral
quantities such as Sobolev norms.
 
\paragraph{Branch-and-bound verification and domain splitting.}
Branch-and-bound (BaB) methods partition the problem domain to tighten
verification bounds. Bunel et al.\ \cite{Bunel2020} unify NN verification under a
BaB/MIP framework with activation-based splitting. ReluVal \cite{Wang2018ReluVal}
uses input-domain bisection with interval arithmetic. MN-BaB \cite{Ferrari2022}
and GCP-CROWN \cite{Zhang2022GCP} integrate multi-neuron relaxations and cutting
planes within BaB. GenBaB \cite{Shi2024GenBaB} extends BaB to general nonlinear
activations. Our adaptive refinement shares the surface-level mechanism of domain
splitting but pursues a structurally different objective: BaB verification prunes
verified subdomains and seeks early termination, whereas our framework aggregates
certified bounds across all subdomains to compute integrals. Boetius et al.\
\cite{Boetius2025} is the closest structural analog, aggregating bounds over
partitions to compute probability certificates, but does not compute function
norms, uses different splitting heuristics, and does not provide AFEM-style convergence
theory.
 
\paragraph{Validated numerics and rigorous integration.}
Self-validating quadrature combining interval arithmetic with adaptive subdivision
was pioneered by Corliss and Rall \cite{CorlissRall1987} for univariate functions.
Petras \cite{Petras2002} and Johansson \cite{Johansson2018} extend this to
piecewise-analytic integrands with arbitrary-precision ball arithmetic. Taylor
model methods \cite{BerzMakino1998,BerzMakino2003} provide a multivariate
framework for rigorous integration via polynomial-remainder pairs, with
demonstrated feasibility up to approximately eight variables. Foundational
references include Moore et al.\ \cite{Moore2009}, Tucker \cite{Tucker2011}, and
Rump \cite{Rump2010}. Computer-assisted proofs in PDEs \cite{Nakao2019} use validated numerics to prove existence of PDE solutions near finite element approximations. A key ingredient in these proofs is the certified computation of Sobolev norms of the approximate solution; however, existing methods rely on the piecewise-polynomial structure of finite element spaces and do not extend to neural network ansatz functions. Our
framework brings validated quadrature into the neural network setting, exploiting
the compositional and piecewise-affine structure of trained networks and using
D\"{o}rfler marking \cite{Dorfler1996} rather than simple bisection for optimal
adaptive refinement.
 
\paragraph{PINN error estimation.}
A priori error analysis \cite{DeRyck2024,Lorenz2024,Doumeche2025} establishes
convergence rates and approximation bounds but cannot certify a specific trained
network. A posteriori methods are more directly relevant: Hillebrecht and Unger
\cite{Hillebrecht2022,Hillebrecht2025} derive rigorous prediction error bounds via
semigroup theory but evaluate residual norms only at discrete points. Ernst et
al.\ \cite{Ernst2025} use Riesz representations on simpler domains, requiring
auxiliary finite element solves. Berrone et al.\ \cite{Berrone2022} define
reliable estimators for variational PINNs with mesh-based test functions. Most
closely related, $\partial$-CROWN \cite{Eiras2024} extends CROWN bound propagation
to compute certified worst-case ($L^\infty$) PINN residual bounds over continuous
domains. Mukherjee et al.\ \cite{Mukherjee2026} establish generalization bounds
connecting residual control to solution error. Tanaka and Yatabe \cite{Tanaka2026}
use interval arithmetic to verify PINN sub/super-solutions for ODEs but address
pointwise differential inequalities, not integral norms. Our work provides the
missing ingredient: certified $L^p$ and Sobolev norms of PINN residuals, computed
directly from the network structure via interval arithmetic and adaptive
quadrature, without requiring auxiliary PDE solves, mesh-based test functions, or
restriction to pointwise bounds.
 
\paragraph{Function-space norms of neural networks.}
Theoretical works define function-space norms---Barron norms \cite{E2022}, spectral
Barron spaces \cite{LiaoMing2023}---and prove approximation rates in Sobolev norms
\cite{Guhring2020,SiegelXu2023,DeVore2021}, but none provides algorithms for
computing these norms for trained networks. Parameter-space measures---spectral
norms \cite{Bartlett2017}, path norms \cite{Neyshabur2015,Gonon2024}---are
computable but characterize weight matrices, not the input-output function in $L^p$
or $W^{k,p}$. To the best of our knowledge, no prior work computes certified
function-space norms of trained deep neural networks with deterministic guarantees.

	\section{Preliminaries}
	
	In this section, we introduce preliminary notions that are used throughout the manuscript.

	\subsection{Function spaces and measures}
	All integrals in this work are taken with respect to the Lebesgue measure.
	For a measurable set $A \subset \mathbb{R}^d$, we denote by $\vol(A)$ the Lebesgue measure of $A$.
	The main objects of interest are the Lebesgue and Sobolev norms defined below.
	
	\begin{definition}\label{def:lebesgueSob}
		Let $1 \leq p < \infty$ and $d, m, k \in \mathbb{N}$. For $\Omega\subset \mathbb{R}^d$, we define the Lebesgue space of vector-valued functions $L^p(\Omega; \mathbb{R}^m)$ as the set of measurable maps $\Phi: \Omega \to \mathbb{R}^m$
		with finite norm,
		\begin{equation}
			\label{def: sobolev norm}
			||\Phi ||_{L^p(\Omega; \mathbb{R}^m)} \coloneqq \left( \sum_{i=1}^m \int_\Omega |\Phi_i(x)|^p \,dx\right)^{1/p}
		\end{equation}
		where functions that agree almost everywhere are identified.
		We define the Sobolev space of vector-valued functions $W^{k,p}(\Omega; \mathbb{R}^m)$ as the set of $\Phi \in L^p(\Omega; \mathbb{R}^m)$ such that, for every multi-index $\alpha \in \mathbb{N}_0^d$ with $|\alpha| \leq k$, the weak derivative $D^\alpha\Phi=(D^\alpha\Phi_1,\dotsc, D^\alpha\Phi_m)$ belongs to $L^p(\Omega; \mathbb{R}^m)$. We define the Sobolev norm
		\begin{equation}
			||\Phi||_{W^{k,p}(\Omega; \mathbb{R}^m)}\coloneqq\left(\sum_{i=1}^m  \sum_{|\alpha|\leq k}\int_\Omega |D^\alpha\Phi_i(x)|^p\, dx\right)^{1/p}
		\end{equation}
		where $|\alpha|=\alpha_1 + \dots + \alpha_d$.
	\end{definition}
	
	\begin{remark}
		For $1 \leq p < \infty$, we use the convention that $W^{0,p}(\Omega; \mathbb{R}^m)=L^p(\Omega; \mathbb{R}^m)$ and that $D^\alpha$ is the identity map for $\alpha = 0 \in \mathbb{N}_0^d$.
	\end{remark}

	\subsection{Neural networks}
	
	We consider standard feedforward neural networks. To fix notation, we include the following definition from \cite{petersenzech}.
	
	\begin{definition}[Feedforward neural network]\label{def: neural network}
		A feedforward neural network $\Phi:\mathbb{R}^{d_0}\to\mathbb{R}^{d_{L+1}}$ consists of $L\in\mathbb{N}$ hidden layers, layer widths $d_0,\dots,d_{L+1}\in\mathbb{N}$, an activation function $\sigma:\mathbb{R}\to\mathbb{R}$, and weight matrices
		$W^{(\ell)}\in\mathbb{R}^{d_{\ell+1}\times d_\ell}$ and bias vectors $b^{(\ell)}\in\mathbb{R}^{d_{\ell+1}}$ for $\ell = 0, \dots, L$
		such that
		\begin{align*}
			{x}^{(0)} &:= x, \nonumber \\
			z^{(\ell)} &:= W^{(\ell-1)}x^{(\ell-1)}+b^{(\ell-1)}, \quad &\text{(pre-activation)} \\
			x^{(\ell)} &:= \sigma(z^{(\ell)}),  \quad &\text{(activation)} \\
			x^{(L+1)} &:= W^{(L)}x^{(L)}+b^{(L)}\nonumber, 
		\end{align*}
		we have
		\[
		\Phi(x)=x^{(L+1)}\qquad\text{for all }x\in\mathbb{R}^{d_0}.
		\]
		We call $L$ the depth, $w_\Phi:=\max_{\ell=1,\dots,L} d_\ell$ the {width}, and $(\sigma; d_0,\dots,d_{L+1})$ the {architecture} of the neural network~$\Phi$.
	\end{definition}
	
	When bounding derivatives of a neural network in Section~\ref{sec: results}, we iterate backward through the network. This motivates the following definition of a tail network.
	
	\begin{definition}\label{def: tail nn}
		Let $\Phi: \mathbb{R}^{d_0} \to \mathbb{R}^{d_{L+1}}$ be a neural network. For $\ell = 0,\dotsc, L$, define maps
		$$ T_\ell: \mathbb{R}^{d_\ell} \to \mathbb{R}^{d_{\ell+1}}, \quad  T_\ell (x) = W^{(\ell)}x + b^{(\ell)},$$
		with $W^{(\ell)}$ and $b^{(\ell)}$ from $\Phi$. For $\ell =0,\dotsc, L$, we define the tail neural networks $\Phi^{(\ell)}: \mathbb{R}^{d_\ell} \to \mathbb{R}^{d_{L+1}}$ by
		$$ \Phi^{(\ell)}(x) = T_{L}\circ \sigma \circ T_{L-1}\circ  \dots \circ \sigma \circ T_\ell(x). $$
		In particular, we have $\Phi^{(0)}=\Phi$ and $\Phi^{(\ell)}(x^{(\ell)}) = \Phi(x)$.
	\end{definition}
	
	\begin{remark}
		For a neural network $\Phi: \mathbb{R}^{d_0} \to \mathbb{R}^{d_{L+1}}$, it is useful to view the layers as functions: the pre-activations are $z^{(\ell)}(u)=W^{(\ell-1)}x^{(\ell-1)}(u)+b^{(\ell-1)}$, and the activations are $x^{(\ell)}(u)=\sigma(z^{(\ell)}(u))$ for $\ell=1,\dots,L$, $u \in \mathbb{R}^{d_0}$, with $x^{(0)}(u)=u$.
	\end{remark}

	The Jacobian and Hessian of a neural network involve diagonal matrices with activation derivatives evaluated at pre-activations on the diagonal. We define these matrices below.
	
	\begin{definition}\label{def: diag D}
		Let $\Phi : \mathbb{R}^{d_0} \to \mathbb{R}^{d_{L+1}}$ be a neural network with $L$ hidden layers and activation function $\sigma \colon \mathbb{R} \to \mathbb{R}$.
		For each $\ell=1,\dots,L$ and each vector $u=(u_1,\dots,u_{d_\ell})\in\mathbb{R}^{d_\ell}$, define the diagonal matrix $D^{(\ell)}(u)\in\mathbb{R}^{d_\ell\times d_\ell}$ by
		\[
		D^{(\ell)}(u) = \operatorname{diag}\!\big(\sigma'(u_1),\dots,\sigma'(u_{d_\ell})\big),
		\]
		i.e.,
		\[
		D^{(\ell)}(u)_{ii} = \sigma'(u_i), \qquad D^{(\ell)}(u)_{ij}=0 \ \ (i\neq j),
		\]
		for $u \in \mathbb{R}^{d_\ell}$.
	\end{definition}

	\begin{definition}\label{def:ReLUActivationFunction}
		If $\sigma : \mathbb{R} \to \mathbb{R}$ is defined by $\sigma(x) = \max\{x, 0\}$, we call it a rectified linear unit (ReLU) activation function.
	\end{definition}

	\subsection{Interval analysis}\label{subsec: interval analysis}
	Here, we introduce definitions from interval analysis (see, e.g., \cite{moore2009introduction,rump2010verification}) that are required in the subsequent sections. We begin with the system of intervals.

	\begin{definition}[\cite{rump2010verification}]
		We call
		\begin{equation}
			\mathbb{IR} = \{ [\underline{X}, \overline{X}]\subset \mathbb{R} \, : \,   \underline{X}\leq \overline{X}\}
		\end{equation}
		the system of intervals. For $n,m\in\mathbb{N}$, elements of $\mathbb{IR}^n$ are called interval vectors (or boxes), and elements of $\mathbb{IR}^{n \times m}$ are called interval matrices.
	\end{definition}
	When computing with intervals, we replace arithmetic operations on numbers by the corresponding interval operations.
	\begin{definition}[\cite{rump2010verification}]\label{def:IntArit}
		Given two intervals $X,Y \in \mathbb{IR}$, the operations $\circ \in \{+,-,\cdot,/\}$ are defined by
		\begin{equation}
			X \circ Y = \{ x \circ y \, : \, x\in X, y \in Y\},
		\end{equation}
		where $0\notin Y$ is assumed in the case of division.
	\end{definition}
	Interval arithmetic can be expressed purely in terms of endpoints.

	\begin{proposition}[\cite{moore1966interval}]\label{prop: endpoint formulas}
		Given two intervals $X= [\underline{X}, \overline{X}]$ and $Y=[\underline{Y}, \overline{Y}]$, we have:
		\begin{itemize}
			\item For addition
			$$ X+Y = [\underline{X} + \underline{Y}, \,\overline{X} + \overline{Y}]$$
			
			\item For subtraction
			$$ X-Y = [\underline{X}-\overline{Y},\, \overline{X}-\underline{Y}] $$
			
			\item For multiplication, let $S=\{\underline{X}\underline{Y}, \underline{X}\overline{Y}, \overline{X}\underline{Y}, \overline{X}\overline{Y}\}$; then
			$$ X \cdot Y =  [\min S, \max S] $$
			
			\item For division (assuming $0 \notin [\underline{Y}, \overline{Y}]$),
			$$
			X/Y = [\underline{X}, \overline{X}]/[\underline{Y}, \overline{Y}] 
			= [\underline{X}, \overline{X}]\cdot[1/\overline{Y}, 1/\underline{Y}] .
			$$
		\end{itemize}
	\end{proposition}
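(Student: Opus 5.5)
The plan is to prove each formula by showing directly that the set $X\circ Y=\{x\circ y : x\in X,\ y\in Y\}$ from Definition~\ref{def:IntArit} equals the stated interval. Every case uses the same two ingredients. First, the map $(x,y)\mapsto x\circ y$ is continuous on the box $X\times Y$, which is compact and connected (for division because $0\notin Y$). Its image is therefore a compact connected subset of $\mathbb{R}$, that is, a closed interval $[\min,\max]$. Second, the extrema of this map over the box must be located. This reduces each case to finding the minimum and maximum of an elementary function of two variables over a rectangle.

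For addition, $x+y$ is increasing in each variable separately. Hence $\underline{X}+\underline{Y}\le x+y\le \overline{X}+\overline{Y}$ on the box, and both bounds are attained at corners. Subtraction then follows by writing $X-Y=X+(-Y)$ with $-Y=\{-y: y\in Y\}=[-\overline{Y},-\underline{Y}]$. Division similarly reduces to multiplication: since $0\notin Y$, the interval $Y$ lies entirely in $(0,\infty)$ or entirely in $(-\infty,0)$. On either half-line $t\mapsto 1/t$ is a continuous decreasing bijection, so $\{1/y:y\in Y\}=[1/\overline{Y},1/\underline{Y}]$. Then $X/Y=X\cdot\{1/y: y\in Y\}$ holds directly from the definitions as sets.

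The main step is multiplication. The map $f(x,y)=xy$ is affine in $x$ for fixed $y$, so over the segment $x\in[\underline{X},\overline{X}]$ its extrema are attained at $x\in\{\underline{X},\overline{X}\}$. For each such fixed endpoint, $f$ is affine in $y$, so the extrema are attained at $y\in\{\underline{Y},\overline{Y}\}$. Combining the two steps, for every $(x,y)$ in the box we have $\min S\le xy\le \max S$, and both values are attained at corners, which belong to the box. By the connectedness argument every intermediate value is also attained. This gives $X\cdot Y=[\min S,\max S]$.

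The only delicate point is making sure that the set image really is an interval, so that no gaps appear. This is exactly where continuity on a connected domain is needed. In the division case it also requires $0\notin Y$, since otherwise $Y\setminus\{0\}$ would be disconnected and the reciprocal unbounded.
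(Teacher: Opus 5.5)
Note first that the paper does not prove this proposition. It quotes it from Moore's monograph as a standard fact of interval arithmetic, so there is no argument in the paper to compare against; your proposal supplies one, and it is correct and complete.

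Your route is topological.
\begin{itemize}
\item Continuity of $(x,y)\mapsto x\circ y$ on the compact connected box $X\times Y$ makes $X\circ Y$ a closed interval with no gaps. This is exactly the half of each equality (every intermediate value is attained) that a pure endpoint bound does not give.
\item Bilinearity of $xy$ places the extrema of the product at the four vertices, with no case analysis on the signs of $\underline{X},\overline{X},\underline{Y},\overline{Y}$.
\item Subtraction and division reduce to addition and multiplication via $-Y=[-\overline{Y},-\underline{Y}]$ and $\{1/y : y\in Y\}=[1/\overline{Y},1/\underline{Y}]$. The latter correctly uses that an interval avoiding $0$ lies in a single open half-line, where $t\mapsto 1/t$ is a continuous decreasing bijection.
\end{itemize}
The familiar sign-case treatment of products buys something different: an efficient evaluation rule (two products suffice unless both intervals contain $0$ in their interior), not a shorter proof.

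Two small wording slips do not affect validity. First, $X\times Y$ is compact and connected regardless of whether $0\notin Y$; that hypothesis is what makes $x/y$ defined and continuous on the box. Second, in your closing remark, $Y\setminus\{0\}$ need not be disconnected: when $0$ is an endpoint of $Y$, the set $Y\setminus\{0\}$ is still connected. In that case the only failure is the unboundedness of $1/y$.
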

	\begin{remark}\label{rem:IntervalLinAlg}
		Products of interval matrices with interval matrices (or interval vectors) are computed by replacing the arithmetic operations in real matrix products by interval arithmetic operations. In this way, endpoint formulas for interval matrix products follow from Proposition~\ref{prop: endpoint formulas}.
	\end{remark}
	
	Next, we define the width of an interval.
	\begin{definition}[\cite{moore2009introduction}]
		Let $X = [\underline{X}, \overline{X}]$ be an interval. Then
		\begin{equation}
			w(X) = \overline{X} - \underline{X}
		\end{equation}
		is called the width of the interval $X$. Let $n,m\in \mathbb{N}$. For an interval vector $X=(X_1, \dotsc,X_n)\in \mathbb{IR}^n$ we define
		$$ w(X) = \max_{i=1,\dotsc, n} w(X_i) $$
		and for an interval matrix $A\in \mathbb{IR}^{n\times m}$ we define
		$$ w(A) = \max_{\substack{i=1,\dots,n \\ j=1,\dotsc,m}} w(A_{ij}), $$
		where the $A_{ij}$'s are the entries of the interval matrix.
	\end{definition}
	
	\begin{remark}[\cite{moore2009introduction}]\label{rem: width linearity}
		For $X,Y\in\mathbb{IR}$ the width satisfies
		$$ w(aX + bY) = |a|w(X) + |b|w(Y) $$
		for $a,b\in \mathbb{R}$.
	\end{remark}
	
	Next, we define the magnitude, absolute value and powers of an interval.
	
	\begin{definition}[\cite{rump2010verification}, \cite{moore2009introduction}]\label{def: mag abs power}
		Let $X= [\underline{X}, \overline{X}]$ be an interval. We define:
        \begin{itemize}
            \item The magnitude of the interval $X$ by
            $$ \mathrm{mag}(X) = \max\{|\underline{X}|, \,|\overline{X}|\}. $$
            We have $\mathrm{mag}(X) = \max\{|x| \, : x \in X\}.$
            \item The absolute value of $X$ by
            $$ |X| =  \begin{cases}
				[\underline{X}, \overline{X}], & \text{if } \underline{X}\geq 0 \\
				[0, \max\{-\underline{X}, \overline{X}\}] & \text{if }\underline{X}<0, \overline{X}\geq 0 \\
				[-\overline{X}, -\underline{X}] & \text{if }\underline{X}<0,\overline{X}< 0.
			\end{cases}$$
            This is the united interval extension of the absolute value $x \mapsto |x|$ on $\mathbb{R}$.
            \item The interval $X$ to the power of $\gamma \geq 0$ by
            $$ X^\gamma = [\underline{X}^{\gamma}, \overline{X}^{\gamma}]  $$
            provided $0 \leq \underline X$ (so that $x\mapsto x^\gamma$ is well-defined and monotone on $X$). This is the united interval extension of the $x \mapsto x^\gamma$ on $\mathbb{R}$.
        \end{itemize}
	\end{definition}

	Next, we define a norm for interval matrices. 
	
	\begin{definition}[\cite{moore2009introduction}]\label{def: interval matrix norm}
		Let $m,n\in \mathbb{N}$ and $A \in \mathbb{IR}^{m\times n}$. We define
		\begin{equation}
			||A|| = \max_{i=1,\dotsc, m} \sum_{j=1}^n\mathrm{mag}(A_{ij}).
		\end{equation}
		Moreover, this norm is an interval extension of the maximum row-sum norm $||.||_{\infty,\infty}$ and for every real matrix $B$ contained in the interval matrix $A$, we have $||B||_{\infty, \infty} \leq ||A||$.
	\end{definition}

	We state some very useful inequalities for the width and matrix norm.
	
	\begin{proposition}
		Let $m,n,d\in \mathbb{N}$. Then
		\begin{equation}\label{eq: width subadditivity}
			w(A+B) \leq w(A) + w(B)
		\end{equation}
		and
		\begin{equation}\label{eq: matrix width inequality}
			w(AB) \leq ||A||w(B) + ||B||w(A)
		\end{equation}
		as well as
		\begin{equation}\label{eq: matrix magnitude inequality}
			||AB|| \leq ||A|| \, ||B||
		\end{equation}
		holds for every $A \in \mathbb{IR}^{m\times n}$ and $B \in \mathbb{IR}^{n \times d}$.
	\end{proposition}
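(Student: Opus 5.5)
The approach is to reduce everything to scalar interval facts and then sum along rows. I treat the three inequalities in the order stated, starting from the scalar identities of Proposition~\ref{prop: endpoint formulas}, Remark~\ref{rem: width linearity} and Definition~\ref{def: mag abs power}.

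\textbf{Scalar facts.} For intervals $X,Y$ I will use:
\begin{itemize}
\item $w(X+Y)=w(X)+w(Y)$, which follows from the endpoint formula for addition.
\item $\mathrm{mag}(X+Y)\le \mathrm{mag}(X)+\mathrm{mag}(Y)$ and $\mathrm{mag}(XY)=\mathrm{mag}(X)\,\mathrm{mag}(Y)$, because $XY$ is the exact range $\{xy\}$.
\item The product rule $w(XY)\le \mathrm{mag}(X)\,w(Y)+\mathrm{mag}(Y)\,w(X)$.
\end{itemize}
For the product rule, take $x,x'\in X$ and $y,y'\in Y$. Then
\[
|xy-x'y'|\le |x|\,|y-y'|+|y'|\,|x-x'|\le \mathrm{mag}(X)w(Y)+\mathrm{mag}(Y)w(X).
\]
Since $w(XY)$ is the maximal such difference, the rule follows.

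\textbf{Subadditivity \eqref{eq: width subadditivity}.} Entrywise, $w(A_{ij}+B_{ij})=w(A_{ij})+w(B_{ij})$. Taking maxima over $(i,j)$ gives $w(A+B)\le w(A)+w(B)$.

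\textbf{Submultiplicativity \eqref{eq: matrix magnitude inequality}.} By Remark~\ref{rem:IntervalLinAlg}, $(AB)_{ik}=\sum_j A_{ij}B_{jk}$ in interval arithmetic. The magnitude facts give $\mathrm{mag}((AB)_{ik})\le\sum_j \mathrm{mag}(A_{ij})\,\mathrm{mag}(B_{jk})$. Summing over $k$ and exchanging the order of summation gives
\[
\sum_k \mathrm{mag}((AB)_{ik}) \le \sum_j \mathrm{mag}(A_{ij})\sum_k \mathrm{mag}(B_{jk}) \le \|A\|\,\|B\|.
\]
Maximizing over $i$ yields $\|AB\|\le\|A\|\,\|B\|$.

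\textbf{The width inequality \eqref{eq: matrix width inequality}: reading of $w$.} This step is the main obstacle, because of how the width of a matrix must be read. With the entrywise-maximum reading, the second term produces a \emph{column} sum $\sum_j\mathrm{mag}(B_{jk})$, and this is not controlled by $\|B\|$. The inequality as stated fails under that reading. A counterexample is $A=([0,1],\dots,[0,1])\in\mathbb{IR}^{1\times n}$ and $B=(1,\dots,1)^T$: here $w(AB)=n$, while the right-hand side equals $1$.

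I will therefore prove \eqref{eq: matrix width inequality} in the form that matches the norm in Definition~\ref{def: interval matrix norm}. The matrix width is read as the row-sum norm of the width matrix,
\[
w(A)=\max_i\sum_j w(A_{ij}).
\]
For interval vectors, viewed as $n\times 1$ matrices, this reading coincides exactly with the definition $\max_i w(X_i)$. This is the setting in which the inequality is applied to propagate layer widths $W^{(\ell)}x^{(\ell)}$ and Jacobian products.

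\textbf{The width inequality: proof under this reading.} Apply subadditivity and the product rule entrywise:
\[
w((AB)_{ik})\le\sum_j\big(\mathrm{mag}(A_{ij})\,w(B_{jk})+w(A_{ij})\,\mathrm{mag}(B_{jk})\big).
\]
Now sum over $k$ and swap the order of summation. This gives
\[
\sum_k w((AB)_{ik}) \le \sum_j \mathrm{mag}(A_{ij})\sum_k w(B_{jk}) + \sum_j w(A_{ij})\sum_k \mathrm{mag}(B_{jk}).
\]
Bound each inner $k$-sum by its row maximum: $\sum_k w(B_{jk})\le w(B)$ and $\sum_k \mathrm{mag}(B_{jk})\le\|B\|$. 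The right-hand side is then at most $\|A\|\,w(B)+w(A)\,\|B\|$. Maximizing over $i$ gives \eqref{eq: matrix width inequality}.

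I will also check that subadditivity \eqref{eq: width subadditivity} still holds under this reading. It does, because the row sums of widths add exactly and the maximum of a sum is at most the sum of the maxima.
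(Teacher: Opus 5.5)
Your proposal is correct. The paper quotes this proposition as a standard interval-analysis fact and gives no proof, so there is no argument of the authors to compare against.

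Your arguments for \eqref{eq: width subadditivity} and \eqref{eq: matrix magnitude inequality} are valid with the paper's definitions. Your objection to \eqref{eq: matrix width inequality} is well founded. With the paper's $w(A)=\max_{i,j}w(A_{ij})$, your example $A=([0,1],\dots,[0,1])\in\mathbb{IR}^{1\times n}$, $B=(1,\dots,1)^T$ gives $w(AB)=w([0,n])=n$. The right-hand side is $\|A\|\,w(B)+\|B\|\,w(A)=n\cdot 0+1\cdot 1=1$, so the statement as printed is false for $n\ge 2$, and no proof of it can exist. Your entrywise display is the classical componentwise bound $w(AB)\le |A|\,w(B)+w(A)\,|B|$, where $|\cdot|$ is the real matrix of magnitudes. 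The failure comes exactly from its second term, which involves column sums of magnitudes of $B$ rather than the row sums that define $\|B\|$.

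Your repair, measuring matrix width by the row-sum norm of the width matrix, is the natural one. It coincides with $\max_i w(X_i)$ on interval vectors, so widths of boxes are unchanged. It also turns the componentwise bound into \eqref{eq: matrix width inequality} by taking $\|\cdot\|_{\infty,\infty}$ of both sides. An equally valid alternative keeps the entrywise-maximum width and replaces $\|B\|$ in the second term by $\max_k\sum_j\mathrm{mag}(B_{jk})$.

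Either version suffices for the paper's later use:
\begin{itemize}
\item In the $\mathrm{Fval}$ estimate the offending term vanishes, because the left factor $W^{(\ell-1)}$ is a point matrix.
\item In the Hölder estimates for $\mathrm{Jac}$ and $\mathrm{Hess}$, an interval matrix of nonzero width is multiplied on the right by a matrix with several rows, so the printed inequality is not justified there. However, only the existence of some Hölder constant is needed, and the two notions of matrix width differ by at most a factor equal to the number of columns.
\end{itemize}
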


    Let us define the range of a function.

    \begin{definition}
        Let $d,m\in \mathbb{N}$ and $f: \mathbb{R}^d  \to \mathbb{R}^m$. We define the range of $f$ on $A$
        as
        $$ f(A) = \{f(x) \, : \, x \in A\} $$
        for $A \subset \mathbb{R}^d$.
    \end{definition}
	
	When we bound the range of a function over an interval vector, we obtain for each output component a lower and an upper bound, and these two numbers together define an interval.
    
	\begin{definition}
		A function that maps intervals, interval vectors, interval matrices, etc. to intervals, interval vectors, interval matrices, etc. is called an interval function.
	\end{definition}

    If an interval function agrees with the true range of a function, we call it a united interval extension

    \begin{definition}[\cite{moore2009introduction}]
        Let $d,m\in \mathbb{N}$, $f:\mathbb{R}^d \to \mathbb{R}^m$ and $F: \mathbb{IR}^d  \to \mathbb{IR}^m$. If
        $$ f(K) = F(K) $$
        for $K \in \mathbb{IR}^d$, we call $F$ the united interval extension of $f$.
    \end{definition}
	
	We will mostly encounter interval functions mapping interval vectors to intervals, interval vectors, or interval matrices. A desirable property is preservation of set inclusion, called inclusion isotonicity.
	
	\begin{definition}[\cite{moore2009introduction}]
		Let $d,m\in \mathbb{N}$ and $F: \mathbb{IR}^d  \to \mathbb{IR}^m$. If, for all $K',K\in\mathbb{IR}^d$ with $K' \subset K$ we have $F(K') \subset F(K)$, then $F$ is called inclusion isotonic.
	\end{definition}
	
	If an interval function is inclusion isotonic and contains the range of a non-interval function $f$, it is called an interval enclosure of $f$.
	
	\begin{definition}[\cite{moore2009introduction}]\label{def: int enclosure}
		Let $d,m \in \mathbb{N}$ and $\Phi: \mathbb{R}^d \to \mathbb{R}^m$. We call a function $F_\Phi: \mathbb{IR}^d \to \mathbb{IR}^m$ an interval enclosure of $\Phi$ if it is inclusion isotonic and
		$$ \Phi(K) \subset F_\Phi(K) $$
		for all $K\in\mathbb{IR}^d$.
	\end{definition}
	
	We now define the concept of an interval extension. This is an interval function that agrees with a non-interval function on singletons.
	
	\begin{definition}[\cite{moore2009introduction}]\label{def: int extension}
		Let $d,m \in \mathbb{N}$ and $\Phi: \mathbb{R}^d \to \mathbb{R}^m$. We call a function $F_\Phi: \mathbb{IR}^d \to \mathbb{IR}^m$ an interval extension of $\Phi$ if
		$$ \Phi(\{x\}) = F_\Phi(\{x\}) $$
		for all $x \in \mathbb{R}^d$.
	\end{definition}
	
	An inclusion-isotonic interval extension is also an interval enclosure. This is the Fundamental Theorem of Interval Analysis.
	
	\begin{theorem}[\cite{moore1966interval}]\label{thm: fund thm of IA}
		Let $d,m \in \mathbb{N}$ and $\Phi: \mathbb{R}^d \to \mathbb{R}^m$. If $F_\Phi: \mathbb{IR}^d  \to \mathbb{IR}^m$ is an inclusion-isotonic interval extension of $\Phi$, then it is an interval enclosure of $\Phi$, i.e.
		$$ \Phi(K) \subset F_\Phi(K) $$
		for all $K\in\mathbb{IR}^d$.
	\end{theorem}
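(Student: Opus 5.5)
The argument is a one-line set-theoretic one: write the box $K$ as the union of its singletons, apply the interval-extension property pointwise, and use inclusion isotonicity to pass from singletons to $K$. Concretely, fix $K\in\mathbb{IR}^d$ and $y\in\Phi(K)$, so that $y=\Phi(x)$ for some $x\in K$. I will show $y\in F_\Phi(K)$; since $y$ is arbitrary, this gives $\Phi(K)\subset F_\Phi(K)$.

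First, the degenerate interval vector $\{x\}=([x_1,x_1],\dots,[x_d,x_d])$ belongs to $\mathbb{IR}^d$, and $x\in K$ gives $\{x\}\subset K$ componentwise. Second, by Definition~\ref{def: int extension},
\[
\Phi(\{x\})=F_\Phi(\{x\}),
\]
so $y=\Phi(x)\in F_\Phi(\{x\})$. Third, inclusion isotonicity applied to $\{x\}\subset K$ yields $F_\Phi(\{x\})\subset F_\Phi(K)$. Chaining the two gives $y\in F_\Phi(K)$, as required. Together with the assumed isotonicity, this shows $F_\Phi$ satisfies both conditions of Definition~\ref{def: int enclosure}, so it is an interval enclosure.

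There is essentially no analytic obstacle. The only step needing care is the identification of the real point $x$ with the degenerate box $\{x\}$ (and of $\Phi(x)$ with $\{\Phi(x)\}\in\mathbb{IR}^m$), so that the hypothesis $\Phi(\{x\})=F_\Phi(\{x\})$ makes sense as an equality in $\mathbb{IR}^m$. I will state this convention explicitly at the start of the proof. No continuity of $\Phi$ is required, because the argument is purely pointwise.
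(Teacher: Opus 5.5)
Your proof is correct and complete: picking $x\in K$, using $\Phi(x)\in F_\Phi(\{x\})$ from the extension property and then $F_\Phi(\{x\})\subset F_\Phi(K)$ from inclusion isotonicity is exactly the standard argument for the Fundamental Theorem of Interval Analysis. The paper gives no proof of its own here and only cites Moore, whose proof is this same pointwise argument, so there is nothing to add.
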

	
	\begin{remark}
		By the Fundamental Theorem of Interval Analysis, every inclusion-isotonic interval extension of a function $\Phi$ is an interval enclosure. However, the converse is false, since an interval enclosure need not agree with $\Phi$ on singletons.
	\end{remark}

	The concept of Hölder continuity will be crucial for studying the convergence of adaptive quadrature in the next section.
	
	\begin{definition}
		Let $d,m \in \mathbb{N}$, $\Omega \in \mathbb{IR}^d$, and $F: \mathbb{IR}^d  \to \mathbb{IR}^m$. We call $F$ Hölder continuous on $\Omega$ with exponent $\gamma \in (0,1]$ and constant $C>0$ if
		\begin{equation}\label{eq: hölder continuity}
			w(F(K)) \leq C w(K)^\gamma
		\end{equation}
		for all $K\in\mathbb{IR}^d$ with $K\subset \Omega$. In the case of $\gamma = 1$ we call $F$ Lipschitz continuous.
	\end{definition}
	
	\begin{remark}\label{rem: hölder continuity}
		For a function $\Phi: \Omega \to \mathbb{R}^m$, classical Hölder continuity
		$$ \|\Phi(x) - \Phi(y)\|_\infty \leq C \|x-y\|_\infty^\gamma $$
		for all $x,y \in \Omega$ implies \eqref{eq: hölder continuity} for the map $K \mapsto \Phi(K)$ if it has values in $\mathbb{IR}^m$.
	\end{remark}

    Hölder continuity allows us to bound the width of an interval enclosure by the width of the box. In Algorithm~\ref{alg: hölder strategy} we use this as a technique to control the local error.

	\section{Adaptive quadrature}
	
	Our aim is to design an adaptive quadrature that {yields, after} $n\in \mathbb{N}$ steps{,} two values $\mathrm{Q}_n, \eta_n \in \mathbb{R}$ for $f:\Omega \to \mathbb{R}$ with
	\begin{equation}
		\int_\Omega f(x) \, dx = \mathrm{Q}_n + \mathrm{R}_n
	\end{equation}
	where $\eta_n$ is {an upper bound on} the error $|\mathrm{R}_n|$ and converges to zero, i.e.
	\begin{equation}\label{eq: goal}
		|\mathrm{R}_n| \leq \eta_n \quad \text{and} \quad \eta_n \to 0 \text{  for  }n \to \infty.
	\end{equation}
	To bound the error of the quadrature, note that we have
	$$\int_\Omega f(x) \, dx \in f(\Omega) \vol(\Omega) $$
	for a continuous function $f$. If the quadrature lies within the interval $f(\Omega)\vol(\Omega)$, its width rigorously bounds the error.
	
	\begin{definition}\label{def: quadrature rule}
		Let $d\in \mathbb{N}$ and $\Omega \in \mathbb{IR}^d$. We call a function $\mathcal{I}$ with arguments $f:\Omega \to \mathbb{R}$ and interval vector $K \subset \Omega$ in $\mathbb{IR}^d$ mapping into $\mathbb{R}$, defined by
		$$ \mathcal{I}(f,K) = \sum_{i=1}^N w_i f(x_i), $$
		with parameters $w_1,\dotsc,w_N \in \mathbb{R}$ and $x_1,\dotsc,x_N \in K$ a quadrature rule.
		We say that $\mathcal{I}$ is exact for a class of functions $\mathcal{F}$ if
		$$ \mathcal{I}(f,K) = \int_K f(x)\,dx \qquad \text{for all } f \in \mathcal{F}. $$
		If $\mathcal{P}_m$ denotes the space of polynomials on $\mathbb{R}^d$ of total degree
		at most $m$, then we say that $\mathcal{I}$ has {degree of exactness} (or {precision}) $m$
		if it is exact for $\mathcal{P}_m$ but not exact for $\mathcal{P}_{m+1}$.
	\end{definition}
	
	If $\mathcal{I}$ is a positive-weight quadrature rule {that} is exact for constants, then $\mathcal{I}(f,\Omega)\in f(\Omega)\vol(\Omega)$ and we obtain the width
	$$\tilde{\eta}_\Omega = w\big(f(\Omega)\vol(\Omega) \big) = w(f(\Omega))\vol(\Omega)$$
	as an upper bound for $|\mathrm{R}_n|$.
	The computation of
	$$ w(f(\Omega)) = \overline{f(\Omega)} - \underline{f(\Omega)} $$
	requires knowledge of the maximum $\overline{f(\Omega)}$ and minimum $\underline{f(\Omega)}$ of $f$ on $\Omega$. Instead of applying computationally costly global optimization methods, we seek an interval enclosure $F: \mathbb{IR}^d \to  \mathbb{IR}$ of $f$. This yields
	$$\int_\Omega f(x)\, dx \in F(\Omega) \vol(\Omega) $$
	and 
	$$\eta_\Omega = w(F(\Omega))\vol(\Omega)$$
	satisfying $|\mathrm{R}_n| \leq \eta_\Omega$.
	The interval-valued map $F$ contains the range of $f$ and therefore yields rigorous error bounds. This motivates the following definition of a problem instance.
	
	\begin{definition}
		We call the triple
		\begin{equation}
			P = (f, F, \Omega)
		\end{equation}
		consisting of a function $f:\Omega \to \mathbb{R}$, an interval enclosure $F:\mathbb{IR}^d \to \mathbb{IR}$ of $f$ and a domain $\Omega\in \mathbb{IR}^d$ a problem instance.
	\end{definition}
	
	We have seen above how a suitable interval-valued function $F$ can yield an upper bound $\eta_\Omega$ to the error of the quadrature. Let us now turn to the second part of Equation \eqref{eq: goal} and motivate how to achieve convergent error decay. To this end, let $n \in \mathbb{N}$ and $\mathcal{P}_n\subset \mathbb{IR}^d $ be a partition of $\Omega$. We then have
	$$ \int_\Omega f(x) \, dx \in \sum_{K \in \mathcal{P}_n} F(K) \vol(K)  $$
	with the {right-hand side} having width $\sum_{K \in \mathcal{P}_n} w(F(K))\vol(K)$. Setting 
	\begin{equation}
		\eta_K = w(F(K))\vol(K)
	\end{equation}
	and
	\begin{equation}\label{eq: partition error bound}
		\eta_n =  \sum_{K \in \mathcal{P}_n} \eta_K
	\end{equation}
	yields $|\mathrm{R}_n| \leq \eta_n$, provided
	$$ \mathrm{Q}_n = \sum_{K \in \mathcal{P}_n} \mathcal{I}(f,K) $$
	with local quadratures satisfying $\mathcal{I}(f,K) \in F(K)\vol(K)$. Considering Equation \eqref{eq: partition error bound}, we hope to create partitions $\mathcal{P}_n$ such that the local error bounds $\eta_K$ converge to zero as the boxes get smaller. This motivates the definition of a state instance.
	
	\begin{definition}
		We call the triple
		\begin{equation}
			S = (\mathcal{P}, \mathcal{Q}, \mathcal{E})
		\end{equation}
		a state instance. The components are:
		\begin{itemize}
			\item A partition $\mathcal{P} \subset \mathbb{IR}^d$ of $\Omega$ into interval vectors.
			\item A set $\mathcal{Q} \subset \mathcal{P}\times \mathbb{R}$ of pairs $\mathcal{Q}=\{(K,Q_K)\,:\,K\in\mathcal{P}\}$ assigning a quadrature value $Q_K$ to each $K\in\mathcal{P}$.
			\item A set $\mathcal{E} \subset \mathcal{P}\times \mathbb{R}_{\geq 0}$ of pairs $\mathcal{E}=\{(K,\eta_K)\,:\,K\in\mathcal{P}\}$ assigning an error bound $\eta_K$ to each $K\in\mathcal{P}$.
		\end{itemize}
		We write $1^T\mathcal{Q} := \sum_{(K,Q_K)\in \mathcal{Q}} Q_K$ and $1^T\mathcal{E} := \sum_{(K,\eta_K)\in \mathcal{E}} \eta_K$.
	\end{definition}
	
	We can create partitions either by uniformly subdividing the problem domain into smaller boxes or iteratively refining a partition by choosing only a subset of boxes to subdivide. Inspired by adaptive finite element methods (see, e.g. \cite{verfurth2013posteriori}) we follow the latter approach of adaptively generating partitions. This introduces an additional selection process which is called marking. We will collect the local quadrature, marking and refinement rule in an algorithm instance.
	
	\begin{definition}
		We call the triple
		\begin{equation}
			A = (\mathcal{I}, \mathcal{M}, \mathcal{R})
		\end{equation}
		an algorithm instance. The components are:
		\begin{itemize}
			\item A quadrature rule $\mathcal{I}$ with arguments $f:\Omega \to \mathbb{R}$ and $K \in \mathbb{IR}^d$ with $K\subset \Omega$ which returns a real number.
			\item A marking rule $\mathcal{M}$ with argument $\mathcal{E}\subset \mathcal{P} \times \mathbb{R}_{\geq 0}$ that returns a subset $\widetilde{\mathcal{P}}\subset \mathcal{P}$.
			\item A refinement rule $\mathcal{R}$ taking as argument a box $K\in \mathbb{IR}^d$ and returning a partition $\mathcal{P}_K \subset \mathbb{IR}^d$ of $K$ into interval vectors.
		\end{itemize}
	\end{definition}
	
	We initialize the first state as the trivial partition and the corresponding error bound and quadrature.
	
	\vspace{1em}
	\SetKwInOut{Input}{Input}
	\SetKwInOut{Output}{Output}
	\begin{algorithm}[H]
		\LinesNotNumbered
		\DontPrintSemicolon
		\caption{$\mathrm{Init}_A(P)$}
		\label{alg: initialize state}
		\Input{Problem instance $P=(f,F, \Omega)$; algorithm instance $A=(\mathcal{I}, \mathcal{M}, \mathcal{R})$}
		\Output{State instance $S_0$}
		
		\BlankLine
		$\mathcal{P}_0 = \{\Omega\}$ \quad (trivial partition)\;
		$\mathcal{Q}_0 = \{(\Omega, \mathcal{I}(f,\Omega))\}$ \quad (initial quadrature)\; 
		$\mathcal{E}_0 = \{(\Omega, w(F(\Omega))\vol(\Omega))\}$ \quad (initial error bound)\;
		$S_0 = (\mathcal{P}_0, \mathcal{Q}_0, \mathcal{E}_0)$\;
		\Return $S_0$\;
	\end{algorithm}
	\vspace{1em}

	Based on a given state $S_n$ at iteration $n\in \mathbb{N}$, we can compute the next state by applying marking and refinement strategies followed by updating the partition, quadratures, and error bounds.
	
	\vspace{1em}
	\SetKwInOut{Input}{Input}
	\SetKwInOut{Output}{Output}
	\begin{algorithm}[H]
		\LinesNotNumbered
		\DontPrintSemicolon
		\caption{$\mathrm{Step}_A(S_n, P)$}
		\label{alg: step}
		\Input{State instance $S_n=(\mathcal{P}_n, \mathcal{Q}_n, \mathcal{E}_n)$; problem instance $P=(f,F, \Omega)$; algorithm instance $A=(\mathcal{I}, \mathcal{M}, \mathcal{R})$}
		\Output{State instance $S_{n+1}$}
		
		\BlankLine
		$\widetilde{\mathcal{P}}_n = \mathcal{M}(\mathcal{E}_n)$ \quad (marked partition elements)\;
		$\widetilde{\mathcal{Q}}_n = \{(K,Q_K)\in\mathcal{Q}_n : K\in\widetilde{\mathcal{P}}_n\}$ \quad (marked quadratures)\;
		$\widetilde{\mathcal{E}}_n = \{(K,\eta_K)\in\mathcal{E}_n : K\in\widetilde{\mathcal{P}}_n\}$ \quad (marked error bounds)\;
		$\mathcal{P}_{n+1, U} = \mathcal{P}_n \setminus \widetilde{\mathcal{P}}_n$ \quad (unmarked partition elements)\;
		$\mathcal{Q}_{n+1, U} = \mathcal{Q}_n \setminus \widetilde{\mathcal{Q}}_n$ \quad (unmarked quadratures)\;
		$\mathcal{E}_{n+1, U} = \mathcal{E}_n \setminus \widetilde{\mathcal{E}}_n$ \quad (unmarked error bounds)\;
		$\mathcal{P}_{n+1, R} = \emptyset$ \quad (initialize refined partition)\;
		\For{$K \in \widetilde{\mathcal{P}}_n$}{
			$\mathcal{P}_K = \mathcal{R}(K)$ \quad (refine marked partition element)\;
			$\mathcal{P}_{n+1, R} = \mathcal{P}_{n+1, R} \cup \mathcal{P}_K$\quad (update refined partition)\;
		}
		$\mathcal{Q}_{n+1, R} = \{(K, \mathcal{I}(f,K))\}_{K \in \mathcal{P}_{n+1,R}}$ \quad (update refined quadratures)\;
		$\mathcal{E}_{n+1, R} = \{(K, w(F(K))\vol(K))\}_{K \in \mathcal{P}_{n+1,R}}$\quad (update refined error bounds)\;
		$\mathcal{P}_{n+1} = \mathcal{P}_{n+1, R} \cup \mathcal{P}_{n+1, U}$ \quad (next partition)\;
		$\mathcal{Q}_{n+1} = \mathcal{Q}_{n+1, R} \cup \mathcal{Q}_{n+1, U}$\quad (next quadratures)\;
		$\mathcal{E}_{n+1} = \mathcal{E}_{n+1, R} \cup \mathcal{E}_{n+1, U}$\quad (next error bounds)\;
		$S_{n+1} = (\mathcal{P}_{n+1}, \mathcal{Q}_{n+1}, \mathcal{E}_{n+1})$ \quad (next state instance)\;
		\Return $S_{n+1}$\;
	\end{algorithm}
	\vspace{1em}
	
	Combining the initialization and the step algorithm, we obtain the adaptive quadrature method.
	
	\vspace{1em}
	\SetKwInOut{Input}{Input}
	\SetKwInOut{Output}{Output}
	\begin{algorithm}[H]
		\LinesNotNumbered        
		\DontPrintSemicolon
		\caption{$\mathrm{AdaQuad}_n(P,A)$}
		\label{alg: adaptive integration}
		\Input{Problem instance $P=(f,F, \Omega)$; algorithm instance $A=(\mathcal{I}, \mathcal{M}, \mathcal{R})$}
		\Output{Approximation $\mathrm{Q}_n$ to integral $\int_\Omega f(x)dx$ with error bound $\eta_n$.}
		
		\BlankLine
		$S_0 = \mathrm{Init}_A(P)$ \quad (initialize state instance)\;
		\For{$t =0,\dotsc, n-1$}{
			$S_{t+1} = \mathrm{Step}_A(S_t,P)$ \quad (update state instance)\;
		}
		$(\mathcal{P}_n,\mathcal{Q}_n,\mathcal{E}_n) = S_n$\;
		$\mathrm{Q}_n = 1^T \mathcal{Q}_n$\quad (sum local quadratures)\;
		$\eta_n = 1^T \mathcal{E}_n$ \quad (sum local error bounds)\;
		\Return $(\mathrm{Q}_n, \eta_n)$
	\end{algorithm}
	\vspace{1em}
	
	Next, we will make rigorous what conditions the problem and algorithm instances must satisfy in order for $\mathrm{AdaQuad}$ to satisfy Equation \eqref{eq: goal}.

	\subsection{Certification and convergence}
	
	Let us start by giving conditions on the problem and algorithm instance which are sufficient to certify the adaptive quadrature, i.e. satisfy the first part of Equation \eqref{eq: goal}.
	
	\begin{proposition}\label{prop: certificate}
		Let $P=(f,F,\Omega)$ and $A=(\mathcal{I}, \mathcal{M}, \mathcal{R})$ be a problem and algorithm instance, respectively. If $f$ is continuous and $\mathcal{I}$ satisfies
		\begin{equation}\label{eq: quadrature}
			\mathcal{I}(f,K) \in F(K)\vol(K),
		\end{equation}
		then $(\mathrm{Q}_n , \eta_n) = \mathrm{AdaQuad}_n(P,A)$ satisfy
		$$ |\mathrm{R}_n|  \leq \eta_n $$
		for every $n\in \mathbb{N}$.
	\end{proposition}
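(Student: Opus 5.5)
The plan is to prove, by induction on $n$, an invariant of the state produced by $\mathrm{AdaQuad}$. Then I will show that this invariant implies the bound $|\mathrm{R}_n|\le\eta_n$. The invariant is that $\mathcal{P}_n$ is a partition of $\Omega$ into boxes (overlapping at most on sets of measure zero, namely shared faces). Moreover, for every $K\in\mathcal{P}_n$ the stored pair values are $Q_K=\mathcal{I}(f,K)$ and $\eta_K=w(F(K))\vol(K)$.

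For $n=0$ the invariant holds directly by the definition of $\mathrm{Init}_A$. For the step, $\mathrm{Step}_A$ keeps the unmarked elements with their stored values. It replaces each marked $K$ by $\mathcal{R}(K)$, which is a partition of $K$, and recomputes the values exactly as in the invariant. The union of the unmarked elements and the refinements of the marked ones is therefore again a partition of $\Omega$.

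Given the invariant, fix $K\in\mathcal{P}_n$. Since $f$ is continuous and $K$ is compact, $\int_K f\,dx\in[\min_K f,\max_K f]\vol(K)\subset F(K)\vol(K)$. The first containment follows from the mean value theorem for integrals, or simply from monotonicity of the integral. The second uses $f(K)\subset F(K)$, which holds because $F$ is an interval enclosure (Definition~\ref{def: int enclosure}). By hypothesis \eqref{eq: quadrature}, $\mathcal{I}(f,K)$ lies in the same interval $F(K)\vol(K)$. Two points of one interval differ by at most its width, so
\[
\Bigl|\int_K f\,dx-\mathcal{I}(f,K)\Bigr|\le w\bigl(F(K)\vol(K)\bigr)=w(F(K))\vol(K)=\eta_K,
\]
where the width scaling is Remark~\ref{rem: width linearity}.

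Finally, since the boxes of $\mathcal{P}_n$ overlap only in null sets, additivity of the integral gives $\int_\Omega f\,dx=\sum_{K\in\mathcal{P}_n}\int_K f\,dx$. Hence
\[
|\mathrm{R}_n|=\Bigl|\sum_{K\in\mathcal{P}_n}\Bigl(\int_K f\,dx-\mathcal{I}(f,K)\Bigr)\Bigr|\le\sum_{K\in\mathcal{P}_n}\eta_K=\eta_n
\]
by the triangle inequality. This is exactly what Algorithm~\ref{alg: adaptive integration} returns.

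The only delicate point is the meaning of ``partition'' for closed boxes. They can share boundary faces, so the argument must read partition as a cover by boxes with pairwise intersections of Lebesgue measure zero. I would state this interpretation explicitly and check that it is preserved by the refinement rule $\mathcal{R}$ in the induction step. Everything else is bookkeeping.
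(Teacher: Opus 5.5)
Your proof is correct and follows essentially the same argument as the paper. Continuity and monotonicity give $\int_K f\,dx\in f(K)\vol(K)\subset F(K)\vol(K)$, the quadrature lies in the same interval, and summing over the partition yields $|\mathrm{R}_n|\le\eta_n$. The only cosmetic difference is that the paper places $\int_\Omega f\,dx$ and $\mathrm{Q}_n$ in the single interval $\sum_{K}F(K)\vol(K)$ of width $\eta_n$, whereas you bound each box's error and apply the triangle inequality. Your explicit induction invariant on the stored state, the use of $f(K)\subset F(K)$, and the null-overlap reading of ``partition'' spell out points the paper's proof leaves implicit.
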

	
	\begin{proof}
		Since $f$ is continuous, it maps connected compact sets to connected compact sets and because it is also real-valued $f(K)\in \mathbb{IR}$ for all $K \subset\Omega$ in $\mathbb{IR}^d$. Thus, we can write $f(K)= [\underline{f(K)}, \overline{f(K)}]$ where the endpoints are the minimum and maximum of $f$ over $K$. By the monotonicity of the integral{,} we have
		$$\underline{f(K)}\vol(K) = \int_K \underline{f(K)}\, dx \leq \int_{K} f(x) \, dx \leq \int_K \overline{f(K)}\, dx = \overline{f(K)}\vol(K), $$
		showing 
		$$\int_K f(x) \, dx  \in f(K)\vol(K). $$
		By assumption we have $\mathcal{I}(f,K)\in F(K)\vol(K)$. It follows that
		$$ \int_\Omega f(x) \, dx, \mathrm{Q}_n \in \sum_{K \in \mathcal{P}_n} F(K) \vol(K),$$
		by the additivity of the integral and definition of $\mathrm{Q}_n$. By definition
		$$ \eta_n = \sum_{K \in \mathcal{P}_n} w(F(K)) \vol(K) $$
		which is equal to 
		$$ w\left(\sum_{K \in \mathcal{P}_n} F(K) \vol(K) \right), $$
		finishing the proof.
	\end{proof}
	
	
	\begin{definition}
		Let $P=(f,F,\Omega)$ and $A=(\mathcal{I},\mathcal{M}, \mathcal{R})$ be a problem and {algorithm} instance, respectively. If $P$ and $A$ satisfy the assumptions of Proposition~\ref{prop: certificate}, we call them a certifiable pair.
	\end{definition}    
	
	Next, we want to derive a sufficient condition that yields the convergence of the global error to zero, i.e. the second part of Equation \eqref{eq: goal}. The following proposition gives properties for the marking and refinement rule to satisfy in order to obtain linear convergence for the error decay. These assumptions are standard in the proof of convergence of adaptive finite element methods (see, e.g. \cite[Sec.~1.14]{verfurth2013posteriori}).
	
	\begin{proposition}\label{prop: convergence}
		Let $\theta,\rho\in(0,1)$ as well as $P=(f,F,\Omega)$ and $A=(\mathcal{I},\mathcal{M}, \mathcal{R})$ be a problem and algorithm instance, respectively. If for a partition $\mathcal{P}\subset \mathbb{IR}^d$ of $\Omega$ and $\mathcal{E} \subset \mathcal{P}\times \mathbb{R}_{\geq 0}$ the inequality
		\begin{equation}\label{eq: bulk condition}
			1^T\widetilde{\mathcal{E}} \geq \theta 1^T\mathcal{E}
		\end{equation}
		is satisfied for $\widetilde{\mathcal{E}}=\{(K, \eta_K)\in\mathcal{E} \, : \, K \in \widetilde{\mathcal{P}}\}$ with $\widetilde{\mathcal{P}}=\mathcal{M}(\mathcal{E})$; and $\mathcal{P}_K = \mathcal{R}(K)$ satisfies
		\begin{equation}\label{eq: local error reduction}
			\sum_{K' \in \mathcal{P}_K} w(F(K'))\vol(K') \leq \rho w(F(K))\vol(K) 
		\end{equation}
		for every $K\in\mathbb{IR}^d$ with $K \subset \Omega$, then for $n\in\mathbb{N}$ and $(\mathrm{Q}_n, \eta_n)=\mathrm{AdaQuad}_n(P,A)$ we have
		\begin{equation}
			\eta_n \to 0 \quad \text{as}\quad n\to \infty \quad \text{and}\quad \eta_{n+1} \leq q\eta_n
		\end{equation}
		with $q = 1 - \theta(1-\rho)$ satisfying $q\in(0,1)$.
	\end{proposition}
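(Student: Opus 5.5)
The plan is to prove the contraction $\eta_{n+1}\le q\eta_n$ directly from one step of Algorithm~\ref{alg: step}, and then get $\eta_n\to 0$ by iterating it.

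\textbf{Splitting $\eta_{n+1}$.} Fix $n$ and write $\mathcal{E}_n$ for the error bounds of state $S_n$. Let $\widetilde{\mathcal{P}}_n=\mathcal{M}(\mathcal{E}_n)$ be the marked boxes. By construction in $\mathrm{Step}_A$, the partition $\mathcal{P}_{n+1}$ is the disjoint union of the unmarked boxes $\mathcal{P}_n\setminus\widetilde{\mathcal{P}}_n$ and the children $\mathcal{R}(K)$ for $K\in\widetilde{\mathcal{P}}_n$. Each box carries the error bound $w(F(\cdot))\vol(\cdot)$; unmarked boxes keep theirs. Hence
\[
\eta_{n+1}=\sum_{K\in\mathcal{P}_n\setminus\widetilde{\mathcal{P}}_n}\eta_K+\sum_{K\in\widetilde{\mathcal{P}}_n}\sum_{K'\in\mathcal{R}(K)}w(F(K'))\vol(K').
\]
The same formula $\eta_K=w(F(K))\vol(K)$ holds for every element of every state, including $S_0$ from $\mathrm{Init}_A$, which can be checked by induction on $n$.

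\textbf{Applying the two hypotheses.} Each marked box is a subbox of $\Omega$, so the local reduction \eqref{eq: local error reduction} bounds the inner sum by $\rho\,\eta_K$. This gives
\[
\eta_{n+1}\le \eta_n-(1-\rho)\,1^T\widetilde{\mathcal{E}}_n .
\]
The bulk condition \eqref{eq: bulk condition}, applied to $\mathcal{P}=\mathcal{P}_n$ and $\mathcal{E}=\mathcal{E}_n$, gives $1^T\widetilde{\mathcal{E}}_n\ge\theta\eta_n$. Since $1-\rho>0$, it follows that $\eta_{n+1}\le(1-\theta(1-\rho))\eta_n=q\eta_n$.

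\textbf{Conclusion.} From $\theta,\rho\in(0,1)$ we get $\theta(1-\rho)\in(0,1)$, so $q\in(0,1)$. Iterating gives $0\le\eta_n\le q^n\eta_0$. Here $\eta_0=w(F(\Omega))\vol(\Omega)<\infty$ because $F(\Omega)$ is a compact interval, so $\eta_n\to0$.

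\textbf{Main obstacle.} The only delicate point is the bookkeeping in $\mathrm{Step}_A$. The sets $\mathcal{E}_n$ are sets of pairs, so distinct boxes are distinct entries and no value is lost or double-counted. The decomposition of $1^T\mathcal{E}_{n+1}$ into unmarked and refined parts must therefore be justified from the disjointness of $\mathcal{P}_{n+1,U}$ and $\mathcal{P}_{n+1,R}$. After that, everything reduces to the two inequalities above.
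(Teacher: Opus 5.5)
Your proposal is correct and follows essentially the same route as the paper. Both split $\eta_{n+1}=\eta_n-\eta_M+\eta_R$ into an unmarked part and a refined part, bound the refined part by $\rho\,\eta_M$ via the local reduction \eqref{eq: local error reduction}, and then apply the bulk condition $\eta_M\ge\theta\eta_n$. You additionally make explicit the invariant $\eta_K=w(F(K))\vol(K)$ and the iteration $\eta_n\le q^n\eta_0\to0$, which the paper leaves implicit; note that for the upper bound on $\eta_{n+1}$ you do not even need the disjointness you flag as the main obstacle, since all $\eta_K\ge 0$.
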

	
	\begin{proof}
		Let $(\eta_n)_{n\in \mathbb{N}_0}$ with $(\mathrm{Q}_n, \eta_n)= \mathrm{AdaQuad}_n(P,A)$ for $n \in \mathbb{N}_0$. Let $n\in \mathbb{N}_0$ and $S_n=(\mathcal{P}_n, \mathcal{Q}_n, \mathcal{E}_n)$ be a state instance. From the definition of $\mathrm{Step}_A(S_n,P)$ and $\eta_n$ it follows that
		$$ \eta_{n+1} = \eta_n - \eta_{M} +  \eta_{R},$$
		where $\eta_n = 1^T \mathcal{E}_n$, $\eta_M = 1^T\widetilde{\mathcal{E}}_n$ with $\widetilde{\mathcal{E}}_n = \{(K, \eta_K)\in\mathcal{E}_n \, : \, K \in \widetilde{\mathcal{P}}\}$ and $\widetilde{\mathcal{P}}_n=\mathcal{M}(\mathcal{E}_n)$, $\eta_R = 1^T\mathcal{E}_{n+1,R}$ with $\mathcal{E}_{n+1,R} = \{(K, w(F(K))\vol(K))\}_{K \in \mathcal{P}_{n+1, R}}$ and $\mathcal{P}_{n+1, R} = \bigcup_{K \in \widetilde{\mathcal{P}}_n} \mathcal{R}(K)$. Since the marking rule returns subsets of $\mathcal{P}_n$, inequality \eqref{eq: bulk condition} implies $\eta_M \in [\theta \eta_n, \eta_n]$. Moreover, inequality \eqref{eq: local error reduction} implies $\eta_R \in [0,\rho \eta_M]$. Inserting the upper bound for $\eta_{R}$ yields
		\begin{align*}
			\eta_{n} - \eta_{M} + \eta_{R}  &\leq \eta_n  - \eta_{M} +\rho\eta_{M}.\\
			\intertext{Factoring out $-\eta_{M}$ and inserting the lower bound for $\eta_{M}$ yields}
			\eta_n  - \eta_{M} +\rho\eta_{M} &\leq \eta_n - \theta(1-\rho)\eta_n, \\
			\intertext{and finally}
			\eta_n - \theta(1-\rho)\eta_n & = \big(1-\theta(1-\rho)\big)\eta_n.
		\end{align*}
		Since $\rho, \theta \in (0,1)$ it follows that $q \in (0,1)$ where $q=1- \theta(1-\rho)$. This finishes the proof.
	\end{proof}
	

	\begin{definition}
		Let $P=(f,F,\Omega)$ and $A=(\mathcal{I},\mathcal{M}, \mathcal{R})$ be a problem  and algorithm instance, respectively. If $P$ and $A$ satisfy the assumptions of Proposition~\ref{prop: convergence}, we call them a converging pair.
	\end{definition}

	\subsection{Application to Lebesgue and Sobolev norms}
    
	We want to compute Lebesgue and Sobolev norms of vector-valued functions with the $\mathrm{AdaQuad}$ algorithm. Firstly, we define suitable integrands. 
	\begin{proposition}\label{prop: lebesgue sobolev integrand}
		Let $d,m\in \mathbb{N}$, $\Omega \in \mathbb{IR}^d$, $k\in \mathbb{N}_0$, $1\leq p < \infty$ and $\Phi \in W^{k,p}(\Omega;\mathbb{R}^m)$. Define
		\begin{equation}
			f_{\Phi,k,p}(x) = \sum_{i=1}^m\sum_{|\alpha| \leq k} |D^\alpha \Phi_i(x)|^p .
		\end{equation}
		Then
		\begin{equation}
			\left( \int_\Omega f_{\Phi,k,p}(x)\, dx\right)^{1/p} = ||\Phi||_{W^{k,p}(\Omega; \mathbb{R}^m)}
		\end{equation}
		holds.
	\end{proposition}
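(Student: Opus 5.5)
This is essentially an unfolding of Definition~\ref{def:lebesgueSob}, so I will argue directly from it. The only analytic content is that the double sum and the integral may be interchanged. Fix representatives of the weak derivatives $D^\alpha\Phi_i$ for every $i=1,\dots,m$ and every multi-index $|\alpha|\le k$. By the convention that $D^0$ is the identity, this includes $\Phi_i$ itself when $k=0$. Each $D^\alpha\Phi_i$ is measurable and lies in $L^p(\Omega)$ because $\Phi\in W^{k,p}(\Omega;\mathbb{R}^m)$. Hence each $|D^\alpha\Phi_i|^p$ is a nonnegative measurable function with finite integral.

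The first step is to note that $f_{\Phi,k,p}$ is a finite sum of such functions. The index set $\{\alpha\in\mathbb{N}_0^d : |\alpha|\le k\}$ has $\binom{d+k}{k}$ elements, so the sum is finite. Therefore $f_{\Phi,k,p}$ is measurable, nonnegative, and integrable. It is well defined almost everywhere, independent of the chosen representatives up to a null set, so its integral is unambiguous.

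The second step applies linearity of the Lebesgue integral to this finite sum:
\begin{equation*}
\int_\Omega f_{\Phi,k,p}(x)\,dx=\sum_{i=1}^m\sum_{|\alpha|\le k}\int_\Omega |D^\alpha\Phi_i(x)|^p\,dx .
\end{equation*}
This uses only additivity for finitely many integrable, or even just nonnegative, functions. By Definition~\ref{def:lebesgueSob}, the right-hand side is exactly $\|\Phi\|_{W^{k,p}(\Omega;\mathbb{R}^m)}^p$. The final step takes $p$-th roots of both sides, which are nonnegative, using that $t\mapsto t^{1/p}$ is well defined and injective on $[0,\infty)$.

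There is no real obstacle here. The only point needing care is the almost-everywhere identification, i.e.\ that changing representatives of the weak derivatives does not change the integral. I would address it in one sentence at the start. Remaining sums are finite.
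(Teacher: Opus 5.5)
Your proposal is correct and is the same argument as the paper's, which simply invokes Definition~\ref{def:lebesgueSob} and linearity of the integral over the finite double sum. One small wording slip: the $\alpha=0$ term contributes $\Phi_i$ itself for every $k$, not only when $k=0$.
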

	\begin{proof}
		The result follows directly from Definition~\ref{def:lebesgueSob} and the linearity of the integral.
	\end{proof}
	
	Next, we want to construct an interval enclosure for $f_{\Phi,k,p}$ from a box enclosure for $\Phi$.
	
	\begin{proposition}\label{prop: lebesgue sobolev interval enclosure}
		Let $d,m\in \mathbb{N}$, $\Omega \in \mathbb{IR}^d$, $k\in \mathbb{N}_0$, $1\leq p < \infty$ and $\Phi \in W^{k,p}(\Omega;\mathbb{R}^m)$. Furthermore, let for every $\alpha \in \mathbb{N}^d_0$ with $|\alpha|\leq k$, $i\in\{1,\dotsc, m\}$ the function $F_{\Phi,\alpha,i}: \mathbb{IR}^d  \to \mathbb{IR}$ be an interval enclosure of $D^\alpha\Phi_i$. Define
		\begin{equation}
			F_{\Phi,k,p}(K) = \sum_{i=1}^m\sum_{|\alpha|\leq k} |F_{\Phi,\alpha, i}(K)|^p
		\end{equation}
		for $K\in\mathbb{IR}^d$, $K\subset \Omega$. Then $ F_{\Phi,k,p}$ is an interval enclosure of $f_{\Phi,k,p}$.
	\end{proposition}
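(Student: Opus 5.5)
We must verify the two defining properties of an interval enclosure from Definition~\ref{def: int enclosure}: inclusion isotonicity of $F_{\Phi,k,p}$ and the range inclusion $f_{\Phi,k,p}(K)\subset F_{\Phi,k,p}(K)$ for every box $K\subset\Omega$. The natural strategy is compositional. $F_{\Phi,k,p}$ is built from the given enclosures $F_{\Phi,\alpha,i}$ by applying, in sequence, the interval absolute value, the interval power $\gamma=p$, and interval addition. So it suffices to show that each of these elementary interval operations preserves both properties, and then chain them.

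The first step is well-definedness. Since $|X|$ from Definition~\ref{def: mag abs power} always has a nonnegative lower endpoint, the power $|F_{\Phi,\alpha,i}(K)|^p$ is defined. Moreover, the sum of finitely many intervals is an interval by Proposition~\ref{prop: endpoint formulas}.

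The second step is to record three elementary monotonicity facts.
\begin{itemize}
\item If $X\subset Y$ then $|X|\subset |Y|$. This holds because $|\cdot|$ is the united extension, i.e.\ $|X|=\{|x|:x\in X\}$, and images of sets are monotone under inclusion.
\item If $0\le \underline X$ and $X\subset Y$ with $0\le\underline Y$, then $X^p\subset Y^p$. This holds because $t\mapsto t^p$ is increasing on $[0,\infty)$, so $X^p=\{x^p:x\in X\}$.
\item If $X_j\subset Y_j$ for all $j$, then $\sum_j X_j\subset\sum_j Y_j$. This follows directly from the set definition in Definition~\ref{def:IntArit}, or from the endpoint formulas.
\end{itemize}
Given $K'\subset K$, isotonicity of each $F_{\Phi,\alpha,i}$ gives $F_{\Phi,\alpha,i}(K')\subset F_{\Phi,\alpha,i}(K)$. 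Applying the three facts in order then yields $F_{\Phi,k,p}(K')\subset F_{\Phi,k,p}(K)$.

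The third step is the range inclusion. Fix $x\in K$. For each pair $(\alpha,i)$ we have $D^\alpha\Phi_i(x)\in F_{\Phi,\alpha,i}(K)$. By the united-extension property, $|D^\alpha\Phi_i(x)|\in|F_{\Phi,\alpha,i}(K)|$, and then $|D^\alpha\Phi_i(x)|^p\in|F_{\Phi,\alpha,i}(K)|^p$. Summing the memberships using the set definition of interval addition gives $f_{\Phi,k,p}(x)\in F_{\Phi,k,p}(K)$. Since $x\in K$ was arbitrary, $f_{\Phi,k,p}(K)\subset F_{\Phi,k,p}(K)$.

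The main subtlety is that $D^\alpha\Phi_i$ is a priori only a weak derivative, so it is defined only almost everywhere. Pointwise statements such as "$D^\alpha\Phi_i(x)\in F_{\Phi,\alpha,i}(K)$" must therefore be read for the representative the hypothesis refers to. The statement of the proposition already assumes $F_{\Phi,\alpha,i}$ encloses the range of $D^\alpha\Phi_i$, so we simply use that representative throughout, and $f_{\Phi,k,p}$ is defined via the same representatives. With this convention the argument is purely set-theoretic and requires no further analysis.
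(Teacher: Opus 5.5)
Your proposal is correct, and its range-inclusion part is the same argument as the paper's: fix $x\in K$, push the membership $D^\alpha\Phi_i(x)\in F_{\Phi,\alpha,i}(K)$ through the united extensions $|\cdot|$ and $(\cdot)^p$, then sum. You also verify inclusion isotonicity of $F_{\Phi,k,p}$, via monotonicity of $|\cdot|$, of $(\cdot)^p$ on $[0,\infty)$, and of interval addition; Definition~\ref{def: int enclosure} requires this property, but the paper's proof never checks it, so your version is the more complete one.
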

	\begin{proof}
		Let $K\in \mathbb{IR}$, $K \subset \Omega$. We have 
		$$ D^\alpha\Phi_i(K) \subset F_{\Phi,\alpha, i}(K) $$
		by assumption.
		By Definition~\ref{def: mag abs power} we have
		$ |D^\alpha\Phi_i(x)| \in |F_{\Phi,\alpha, i}(K)|$
		for every $x\in K$, which yields
		$$ |D^\alpha\Phi_i(K)| \subset |F_{\Phi,\alpha, i}(K)|. $$
		Moreover,  by Definition~\ref{def: mag abs power} we have
		$ |D^\alpha\Phi_i(x)|^p \in |F_{\Phi,\alpha, i}(K)|^p$
		for every $x\in K$, which yields
		$$ |D^\alpha\Phi_i(K)|^p \subset |F_{\Phi,\alpha, i}(K)|^p. $$
		Finally, for $\alpha,\beta \in \mathbb{N}_0^d$ satisfying $|\alpha|,|\beta|\leq k$, $i,j\in\{1,\dotsc, m\}$ we have
		$ |D^\alpha\Phi_i(x)|^p + |D^\beta\Phi_j(x)|^p \in |F_{\Phi,\alpha, i}(K)|^p + |F_{\Phi,\beta,j}(K)|^p$
		for every $x\in K$ by Proposition~\ref{prop: endpoint formulas}, which yields
		$$ |D^\alpha\Phi_i(K)|^p + |D^\beta\Phi_j(K)|^p \subset |F_{\Phi,\alpha, i}(K)|^p + |F_{\Phi,\beta,j}(K)|^p. $$
		Summing over all $|\alpha|\leq k$ and $i\in\{1,\dotsc,m\}$ yields
		$$ f_{\Phi,k,p}(K) \subset F_{\Phi,k,p}(K). $$
		This finishes the proof.
	\end{proof}
	
	Next, we show that $F_{\Phi,k,p}$ is Hölder continuous, if the interval extensions $F_{\Phi,\alpha,i}$ of $D^\alpha \Phi_i$ are Hölder continuous.
	
	\begin{proposition}\label{prop: hölder lebesgue sobolev interval enclosure}
		Let $d,m\in \mathbb{N}$, $\Omega \in \mathbb{IR}^d$, $k\in \mathbb{N}_0$, $1\leq p < \infty$ and $\Phi \in W^{k,p}(\Omega;\mathbb{R}^m)$. Furthermore, let for every $\alpha \in \mathbb{N}^d_0$ with $|\alpha| \leq k$, $i\in\{1,\dotsc, m\}$ the function $F_{\Phi,\alpha,i}: \mathbb{IR}^d \to \mathbb{IR}$ be an interval enclosure of $D^\alpha\Phi_i$ and be Hölder continuous on $\Omega$ with constant $C_{\alpha, i}> 0$ and exponent $\gamma_{\alpha, i}\in(0,1]$. Then $F_{\Phi,k,p}$ is a Hölder continuous interval enclosure of $f_{\Phi,k,p}$ with exponent $\min_{\alpha, i} \gamma_{\alpha, i}$.
	\end{proposition}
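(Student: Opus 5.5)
The enclosure property is already given by Proposition~\ref{prop: lebesgue sobolev interval enclosure}, so only the Hölder estimate
$$w(F_{\Phi,k,p}(K)) \le C\, w(K)^{\gamma}, \qquad \gamma := \min_{\alpha,i}\gamma_{\alpha,i},$$
for all $K\in\mathbb{IR}^d$ with $K\subset\Omega$ remains. The plan is to bound the width term by term and then collect the Hölder exponents.

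\emph{Step 1: reduction to single terms.} By subadditivity of the width \eqref{eq: width subadditivity} (equivalently, Proposition~\ref{prop: endpoint formulas} for sums), we have
$$w(F_{\Phi,k,p}(K)) \le \sum_{i}\sum_{|\alpha|\le k} w\big(|F_{\Phi,\alpha,i}(K)|^p\big).$$
It therefore suffices to bound each summand.

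\emph{Step 2: the absolute value does not increase width.} For an interval $X$ we show $w(|X|)\le w(X)$ by checking the three cases of Definition~\ref{def: mag abs power}. In the first and third cases the width is unchanged. In the mixed case $\underline X<0\le\overline X$ we have $\max\{-\underline X,\overline X\}\le \overline X-\underline X$.

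\emph{Step 3: the power map is locally Lipschitz on bounded intervals.} For $0\le a\le b\le M$ and $p\ge 1$, the mean value theorem gives
$$b^p-a^p \le p M^{p-1}(b-a).$$
Hence $w(|X|^p)\le pM^{p-1}\,w(X)$ whenever $\mathrm{mag}(X)\le M$.

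\emph{Step 4: a uniform magnitude bound.} This is the key point. By inclusion isotonicity, $K\subset\Omega$ implies $F_{\Phi,\alpha,i}(K)\subset F_{\Phi,\alpha,i}(\Omega)$. Hence
$$\mathrm{mag}(F_{\Phi,\alpha,i}(K))\le M_{\alpha,i}:=\mathrm{mag}(F_{\Phi,\alpha,i}(\Omega)),$$
which is a finite constant independent of $K$. Combining this with Steps 2 and 3 and the assumed Hölder continuity gives
$$w\big(|F_{\Phi,\alpha,i}(K)|^p\big)\le p M_{\alpha,i}^{p-1} C_{\alpha,i}\, w(K)^{\gamma_{\alpha,i}}.$$

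\emph{Step 5: unifying the exponents.} Since $K\subset\Omega$, we have $w(K)\le w(\Omega)$. Therefore
$$w(K)^{\gamma_{\alpha,i}} = w(K)^{\gamma}\, w(K)^{\gamma_{\alpha,i}-\gamma}\le \max\{1,w(\Omega)\}^{\gamma_{\alpha,i}-\gamma}\, w(K)^{\gamma}.$$
If $w(K)=0$, both sides vanish and the bound holds trivially.

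\emph{Conclusion.} Summing over $i$ and $\alpha$ yields the claim with
$$C=\sum_{i,\alpha} p\, M_{\alpha,i}^{p-1}C_{\alpha,i}\max\{1,w(\Omega)\}^{\gamma_{\alpha,i}-\gamma}.$$
If $C=0$ (all $M_{\alpha,i}=0$), any positive constant works, so $C>0$ can always be chosen.

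The only genuine obstacle is Step 4. The map $x\mapsto x^p$ is not globally Lipschitz, so a bound uniform in $K$ is needed, and inclusion isotonicity of the enclosures is exactly what supplies it.
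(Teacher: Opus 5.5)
Your proposal is correct and follows essentially the same route as the paper: split the width of the sum termwise, use that the interval absolute value does not increase width, apply the Lipschitz bound $pM^{p-1}$ for $t\mapsto t^p$ with $M$ taken from the enclosures evaluated on $\Omega$, and then unify the exponents via $w(K)\le w(\Omega)$. The only differences are cosmetic. You use a separate $M_{\alpha,i}$ for each term instead of one maximum, you add an unnecessary but harmless $\max\{1,w(\Omega)\}$, and you state explicitly that inclusion isotonicity is what makes $M$ independent of $K$, which the paper uses only implicitly.
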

	\begin{proof}
		It remains to be shown that $F_{\Phi,k,p}$ is Hölder continuous. We have
		\begin{align*}
			w\left( \sum_{i=1}^m\sum_{|\alpha|\leq k} |F_{\Phi,\alpha, i}(K)|^p \right) &= \sum_{i=1}^m\sum_{|\alpha|\leq k}w( |F_{\Phi,\alpha, i}(K)|^p  ) \\
			\intertext{by Remark~\ref{rem: width linearity}. The map $t\mapsto t^p$ for $t\in[0,M]$, $M>0$ is Lipschitz continuous with constant $L_p = pM^{p-1}$. Therefore, by Remark~\ref{rem: hölder continuity} we have}
			\sum_{i=1}^m\sum_{|\alpha|\leq k}w( |F_{\Phi,\alpha, i}(K)|^p  ) &\leq \sum_{i=1}^m\sum_{|\alpha|\leq k} L_p w( |F_{\Phi,\alpha, i}(K)|  ), \\
			\intertext{with $M=\max_{\alpha,i}\overline{|F_{\Phi,\alpha, i}(\Omega)|}$. By Definition~\ref{def: mag abs power} taking the absolute value of an interval does not increase its width. This yields}
			\sum_{i=1}^m\sum_{|\alpha|\leq k} L_p w( |F_{\Phi,\alpha, i}(K)|  )
			& \leq
			\sum_{i=1}^m\sum_{|\alpha|\leq k} L_p w( F_{\Phi,\alpha, i}(K)  ). \\
			\intertext{By assumption the  $F_{\Phi,\alpha, i}(K)$ are Hölder continuous. Thus, we obtain}
			\sum_{i=1}^m\sum_{|\alpha|\leq k} L_p w( F_{\Phi,\alpha, i}(K) )
			& \leq
			\sum_{i=1}^m\sum_{|\alpha|\leq k} L_p C_{\alpha,i} w( K )^{\gamma_{\alpha,i}}. \\
			\intertext{Let $\tilde{\gamma} = \min_{\alpha,i} \gamma_{\alpha,i}$ be the smallest of the Hölder exponents. Factoring out $w(K)^{\tilde{\gamma}}$ yields}
			\sum_{i=1}^m\sum_{|\alpha|\leq k} L_p C_{\alpha,i} w( K )^{\gamma_{\alpha,i}}
			& =
			L_p \left(\sum_{i=1}^m\sum_{|\alpha|\leq k} C_{\alpha,i} w(K)^{\gamma_{\alpha, i} - \tilde{\gamma}}\right) w( K )^{\tilde{\gamma}} \\
			\intertext{We have $w(K) \leq w(\Omega)$ since $K\subset \Omega$. This yields}
			L_p \left(\sum_{i=1}^m\sum_{|\alpha|\leq k} C_{\alpha,i} w(K)^{\gamma_{\alpha, i} - \tilde{\gamma}}\right) w( K )^{\tilde{\gamma}}
			& \leq
			L_p \left(\sum_{i=1}^m\sum_{|\alpha|\leq k} C_{\alpha,i} w(\Omega)^{\gamma_{\alpha, i} - \tilde{\gamma}}\right) w( K )^{\tilde{\gamma}}
		\end{align*}
		and finishes the proof.
	\end{proof}

	\subsection{Application to energy norms}
	Let us now compute the integral of the squared residuals of boundary-value problems. 
	This is especially relevant when solving partial differential equations with neural networks (see \cite{raissi2019pinns}). Here the loss function is taken to be the mean squared residual of the differential equation in the interior and on the boundary evaluated on a set of collocation points. To evaluate the generalization error it is common to simply increase the number of collocation points and scale by the volume of the domain (Monte Carlo integration). We show how $\mathrm{AdaQuad}$ can compute generalization errors for PINNs with rigorous error bounds. Consider the following boundary-value problem (see \cite[Chapter 6]{evans2022partial}). Let $\Omega \in \mathbb{IR}^d$ for $d \in \mathbb{N}$ and consider the boundary-value problem
	\begin{equation}
		\begin{cases}\label{eq:BVP}
			\mathcal{D}u= g & \text{in } \mathring{\Omega} \\
			u = 0 & \text{on } \partial\Omega
		\end{cases}
	\end{equation}
	where $\mathring{\Omega}$ denotes the interior of $\Omega$ and $u: \Omega \to \mathbb{R}$ is the unknown. Here the function $g: \mathring{\Omega} \to \mathbb{R}$ is given and $\mathcal{D}$ denotes a second-order partial differential operator given in nondivergence form
	\begin{equation}\label{eq:DiffOperator}
		\mathcal{D}\Phi(x) = - \sum_{i,j=1}^d a_{ij}(x) \Phi_{x_i x_j}(x)  + \sum_{i=1}^d b_i(x) \Phi_{x_i}(x) + c(x)\Phi(x),
	\end{equation}
	for given coefficient functions $a_{ij}, b_i,c$ for $i,j=1, \dotsc, d$ and trial function $\Phi$. As for Lebesgue and Sobolev norms, let us give the a suitable integrand that allows $\mathrm{AdaQuad}$ to compute Lebesgue norms of the differential equations residual.

    \begin{proposition}\label{prop: energy integrand}
        Let $d \in \mathbb{N}$, $1 \leq p < \infty$ and consider the boundary-value problem \eqref{eq:BVP} with $\mathcal{D}\Phi -g \in L^p(\Omega; \mathbb{R})$. Define
        \begin{equation}
            r_{\Phi, p}(x) = |\mathcal{D}\Phi - g|^p.
        \end{equation}
        Then
        \begin{equation*}
        \left(\int_\Omega r_{\Phi, p}(x) \, dx\right)^{1/p} = ||\mathcal{D}\Phi - g||_{L^p(\Omega; \mathbb{R})}
        \end{equation*}
        holds.
    \end{proposition}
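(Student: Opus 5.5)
The identity is essentially a restatement of the definition of the $L^p$ norm in Definition~\ref{def:lebesgueSob}, in the scalar case $m=1$. I will mirror the proof of Proposition~\ref{prop: lebesgue sobolev integrand}.

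\emph{Step 1: measurability.} I first observe that $r_{\Phi,p}$ is a well-defined nonnegative measurable function on $\Omega$. By hypothesis $\mathcal{D}\Phi - g \in L^p(\Omega;\mathbb{R})$, so $\mathcal{D}\Phi-g$ is measurable. Composing with the continuous map $t\mapsto |t|^p$ preserves measurability. I interpret $r_{\Phi,p}(x)$ as $|\mathcal{D}\Phi(x)-g(x)|^p$ pointwise. The boundary $\partial\Omega$, where $g$ is not defined, is a Lebesgue null set for a box $\Omega\in\mathbb{IR}^d$, so the integral over $\Omega$ coincides with the integral over $\mathring{\Omega}$.

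\emph{Step 2: the identity.} I apply Definition~\ref{def:lebesgueSob} with $m=1$ to the function $\mathcal{D}\Phi - g$. This gives
\[
\|\mathcal{D}\Phi-g\|_{L^p(\Omega;\mathbb{R})} = \left(\int_\Omega |\mathcal{D}\Phi(x)-g(x)|^p\,dx\right)^{1/p} = \left(\int_\Omega r_{\Phi,p}(x)\,dx\right)^{1/p}.
\]
Finiteness of the integral is guaranteed by the membership in $L^p$.

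\emph{Main obstacle.} There is no real obstacle here. The only point needing care is the well-definedness issue in Step 1: $g$ is given only on $\mathring{\Omega}$, and $\mathcal{D}\Phi$ involves second derivatives that may exist only almost everywhere (for instance, for ReLU networks). Both issues are handled by noting that $L^p$ functions are identified up to null sets.
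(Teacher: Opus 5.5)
Your proof is correct and matches what the paper intends: the paper gives no separate proof of this proposition and, as with Proposition~\ref{prop: lebesgue sobolev integrand}, treats it as an immediate consequence of Definition~\ref{def:lebesgueSob} with $m=1$. Your remarks on measurability and on $\partial\Omega$ being a null set are harmless extra care and do not change the route.
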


    Next, we show that replacing all real functions with interval enclosures in $r_{\Phi,p}$, yields an interval enclosure $R_{\Phi,p}$.
	
	\begin{proposition}\label{prop: energy interval enclosure}
		Let $d \in \mathbb{N}$, $\Omega \in \mathbb{IR}^d$, $\Phi: \Omega \to \mathbb{R}$ and for every $i,j=0,\dotsc,d$ the function $F_{\Phi, i,j} : \mathbb{IR}^d  \to \mathbb{IR}$ be an interval enclosure of $\Phi_{x_ix_j}$, where $\Phi_{x_0}=\Phi$. Furthermore, for $i,j=1,\dotsc, d$ let $A_{ij},B_i,C,G: \mathbb{IR}^d \to \mathbb{IR}$ be interval enclosures of $a_{ij},b_i,c,g: \mathring{\Omega} \to \mathbb{R}$, respectively. Define
		\begin{equation}
			R_{\Phi,p}(K) = \left|- \sum_{i,j=1}^d A_{ij}(K) F_{\Phi,i,j}(K)+ \sum_{i=1}^d B_i(K) F_{\Phi,i,0}(K) + C(K)F_{\Phi,0,0}(K) -G(K)\right|^p
		\end{equation}
		for $K\in \mathbb{IR}^d$ with $K \subset \Omega$. Then $R_{\Phi, p}$ is an interval enclosure of $r_{\Phi, p}$.
	\end{proposition}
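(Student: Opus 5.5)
The plan mirrors the proof of Proposition~\ref{prop: lebesgue sobolev interval enclosure}. Interval enclosures are stable under the interval operations of Definition~\ref{def:IntArit}: sums, products, absolute value and powers. We therefore build the enclosure of $r_{\Phi,p}$ one operation at a time, checking two properties at each stage. The first is \emph{containment}: for every $x\in K$, the real quantity at $x$ lies in the interval quantity at $K$. The second is \emph{inclusion isotonicity}. Both are needed by Definition~\ref{def: int enclosure}.

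\textbf{Containment.} Fix $K\in\mathbb{IR}^d$ with $K\subset\Omega$ and a point $x\in K$. By assumption $a_{ij}(x)\in A_{ij}(K)$ and $\Phi_{x_ix_j}(x)\in F_{\Phi,i,j}(K)$. Definition~\ref{def:IntArit} defines $X\cdot Y$ as the set of all products, so $a_{ij}(x)\Phi_{x_ix_j}(x)\in A_{ij}(K)F_{\Phi,i,j}(K)$. The same argument gives
\[
b_i(x)\Phi_{x_i}(x)\in B_i(K)F_{\Phi,i,0}(K), \qquad c(x)\Phi(x)\in C(K)F_{\Phi,0,0}(K),
\]
using the conventions $\Phi_{x_0}=\Phi$ and $\Phi_{x_ix_0}=\Phi_{x_i}$. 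Negation, addition and subtraction are also set operations, so the value $\mathcal{D}\Phi(x)-g(x)$ lies in the interval $Z(K)$ inside the absolute value in the definition of $R_{\Phi,p}$. Next, $|\cdot|$ in Definition~\ref{def: mag abs power} is the united extension of the absolute value, so $|\mathcal{D}\Phi(x)-g(x)|\in|Z(K)|$. Finally, $|Z(K)|$ has nonnegative lower endpoint and $t\mapsto t^p$ is monotone on $[0,\infty)$, so $r_{\Phi,p}(x)\in|Z(K)|^p=R_{\Phi,p}(K)$. Since $x\in K$ was arbitrary, $r_{\Phi,p}(K)\subset R_{\Phi,p}(K)$.

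\textbf{Inclusion isotonicity.} Let $K'\subset K$. Each enclosure $A_{ij},B_i,C,G,F_{\Phi,i,j}$ is isotonic, so it returns a smaller interval on $K'$. All the interval operations are isotonic in their arguments, because each is defined as the image of a set under a fixed map: if $X'\subset X$ and $Y'\subset Y$, then $X'\circ Y'\subset X\circ Y$. The same holds for $|\cdot|$, which is a united extension, and for $(\cdot)^p$ on nonnegative intervals, which is given by monotone endpoint formulas. Composing these gives $R_{\Phi,p}(K')\subset R_{\Phi,p}(K)$.

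The only subtle step is isotonicity of the power operation. This needs the observation that $|X'|\subset|X|$ follows from $X'\subset X$, which is immediate from the image characterization of $|\cdot|$. It also needs the fact that the power is applied only to intervals with nonnegative lower endpoint, which guarantees well-definedness. A further caveat is that the coefficient enclosures are only required to enclose on $\mathring{\Omega}$. I would handle this by noting that containment is needed for $x$ in the domain of $r_{\Phi,p}$, and that boundary points form a null set irrelevant for the integral. Alternatively, one can simply assume the coefficients extend continuously to $\Omega$.
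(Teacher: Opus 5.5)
Your proposal is correct and takes essentially the same route as the paper: fix $x\in K$, use that interval addition and multiplication are set-based, and then that $|\cdot|$ and $(\cdot)^p$ are united extensions, to get $r_{\Phi,p}(x)\in R_{\Phi,p}(K)$. Your explicit check of inclusion isotonicity, which Definition~\ref{def: int enclosure} requires, and your handling of the $\mathring{\Omega}$ domain issue are both left implicit in the paper's proof, so your version is slightly more complete.
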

    \begin{proof}
        Let $K \in \mathbb{IR}^d$ with $K\subset\Omega$ and $x \in K$. By assumption, for $i,j=1,\dotsc, d$ we have
        $a_{ij}(x)\in A_{ij}(K)$, $b_i(x) \in B_i(K)$, $c(x) \in C(K)$, $g(x) \in G(K)$ and likewise $\Phi_{x_i x_j}(x)\in F_{\Phi,i,j}(K)$, $\Phi_{x_i}(x) \in F_{\Phi,i,0}(K)$, $\Phi(x) \in F_{\Phi,0,0}(K)$.
        Since interval addition and multiplication are inclusion isotonic interval extensions of the corresponding real operations, the interval expression inside the absolute value contains the real expression evaluated at $x$. Applying $\mathrm{abs}$ and then $(\cdot)^p$ preserves inclusion (with $p\geq 0$) by Definition~\ref{def: mag abs power}, hence $r_{\Phi,p}(x)\in R_{\Phi,p}(K)$.
        Therefore $R_{\Phi,p}$ is an interval enclosure of $r_{\Phi,p}$.
    \end{proof}
	
	Next we show that if the coefficient and partial derivative interval enclosures are Hölder continuous, then $F_{\Phi,\mathcal{D}}$ is also Hölder continuous.
	
	\begin{proposition}\label{prop:DiffOpIntervalEnclosure}
		Let $d \in \mathbb{N}$, $\Omega \in \mathbb{IR}^d$, $\Phi: \Omega \to \mathbb{R}$ and for every $i,j=0,\dotsc,d$ the function $F_{\Phi, i,j} : \mathbb{IR}^d \to \mathbb{IR}$ be an interval enclosure of $\Phi_{x_ix_j}$ and be Hölder continuous on $\Omega$, where $\Phi_{x_0}=\Phi$. Furthermore, for $i,j=1,\dotsc, d$ let $A_{ij},B_i,C: \mathbb{IR}^d \to \mathbb{IR}$ be interval enclosures of $a_{ij},b_i,c$, respectively, and assume that these interval enclosures are Hölder continuous on $\Omega$. Then $F_{\Phi, \mathcal{D}}$ is a Hölder continuous interval enclosure of $\mathcal{D}\Phi$.
	\end{proposition}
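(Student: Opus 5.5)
We take $F_{\Phi,\mathcal{D}}$ to be the interval expression inside the absolute value in Proposition~\ref{prop: energy interval enclosure}, without the $-G(K)$ term:
\[
F_{\Phi,\mathcal{D}}(K) = - \sum_{i,j=1}^d A_{ij}(K) F_{\Phi,i,j}(K)+ \sum_{i=1}^d B_i(K) F_{\Phi,i,0}(K) + C(K)F_{\Phi,0,0}(K).
\]
The proof has two parts: the enclosure property and Hölder continuity.

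\textbf{Enclosure.} Interval addition and multiplication are inclusion isotonic (Definition~\ref{def:IntArit}). Composing them with the inclusion isotonic enclosures $A_{ij},B_i,C,F_{\Phi,i,j}$ therefore gives an inclusion isotonic map. Containment of $\mathcal{D}\Phi(x)$ for every $x\in K$ follows exactly as in the proof of Proposition~\ref{prop: energy interval enclosure}, evaluating pointwise and dropping $G$ and the outer $|\cdot|^p$.

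\textbf{Hölder continuity.} We bound the width term by term.
\begin{itemize}
\item By \eqref{eq: width subadditivity}, the width of the sum is at most the sum of the widths of the products. A minus sign does not change width (Remark~\ref{rem: width linearity}).
\item For each product of scalar intervals we apply \eqref{eq: matrix width inequality} with $1\times1$ matrices:
\[
w(A_{ij}(K)F_{\Phi,i,j}(K)) \le \mathrm{mag}(A_{ij}(K))\,w(F_{\Phi,i,j}(K)) + \mathrm{mag}(F_{\Phi,i,j}(K))\,w(A_{ij}(K)).
\]
\item Inclusion isotonicity and $K\subset\Omega$ give $\mathrm{mag}(A_{ij}(K))\le \mathrm{mag}(A_{ij}(\Omega))=:M_{A_{ij}}$, and similarly for all other factors. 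So every magnitude is bounded by a constant independent of $K$.
\item Each width is bounded by $C_\bullet w(K)^{\gamma_\bullet}$ using the assumed Hölder continuity.
\end{itemize}
This yields
\[
w(F_{\Phi,\mathcal{D}}(K)) \le \sum_\bullet M_\bullet C_\bullet\, w(K)^{\gamma_\bullet},
\]
summed over finitely many terms.

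\textbf{Common exponent.} Let $\tilde\gamma$ be the minimum of all exponents involved. As in the proof of Proposition~\ref{prop: hölder lebesgue sobolev interval enclosure}, we factor out $w(K)^{\tilde\gamma}$ and use $w(K)^{\gamma_\bullet-\tilde\gamma}\le w(\Omega)^{\gamma_\bullet-\tilde\gamma}$. This gives a single constant $C$ with $w(F_{\Phi,\mathcal{D}}(K))\le C\,w(K)^{\tilde\gamma}$.

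\textbf{Main obstacle.} The only delicate point is that all magnitudes must be uniformly bounded over subboxes. This is where inclusion isotonicity of the enclosures is essential: it reduces every magnitude to the finite value on $\Omega$.

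One edge case needs care: if some constant $C_\bullet$ or $M_\bullet$ is zero, the resulting constant could be $0$, while the definition requires $C>0$. We handle this by enlarging the constant to something positive, which keeps the inequality valid.
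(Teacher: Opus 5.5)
Your proposal is correct and follows essentially the same route as the paper: width subadditivity for the sum, the product bound $w(XY)\le \mathrm{mag}(X)\,w(Y)+\mathrm{mag}(Y)\,w(X)$, uniform magnitude bounds via inclusion isotonicity on $\Omega$, and reduction to the minimal exponent using $w(K)\le w(\Omega)$. The differences are cosmetic. The paper packages the product step as a general lemma for two inclusion-isotonic Hölder functions and leaves $F_{\Phi,\mathcal{D}}$ implicit. You instead write $F_{\Phi,\mathcal{D}}$ out explicitly, check its inclusion isotonicity, and handle the requirement $C>0$.
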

	
	\begin{proof}
		In Proposition~\ref{prop: energy interval enclosure} we already showed that $F_{\Phi, \mathcal{D}}$ is an interval enclosure of $\mathcal{D}\Phi$. It remains to be shown that $F_{\Phi, \mathcal{D}}$ is Hölder continuous.
		In the proof of Proposition~\ref{prop: hölder lebesgue sobolev interval enclosure} we already showed that sums of Hölder continuous interval functions on $\Omega \in \mathbb{IR}^d$ are again Hölder continuous. Thus, it suffices to show that the product is also Hölder continuous. Let $F_1,F_2: \mathbb{IR}^d \to \mathbb{IR}$ be inclusion isotonic and Hölder continuous on $\Omega$ with constants $C_1,C_2 > 0$ and exponent $\gamma_1, \gamma_2 \in (0,1]$, respectively. Let $K \in \mathbb{IR}^d, \, K \subset \Omega$. By Equation~\eqref{eq: matrix width inequality} we have
		
		$$w(F_1(K)F_2(K)) \leq \mathrm{mag}(F_1(K))w(F_2(K)) + \mathrm{mag}(F_2(K))w(F_1(K)). $$
		{Applying Hölder continuity yields}
		$$\mathrm{mag}(F_1(K))w(F_2(K)) + \mathrm{mag}(F_2(K))w(F_1(K)) \leq \mathrm{mag}(F_1(K))C_2 w(K)^{\gamma_2} + \mathrm{mag}(F_2(K))C_1 w(K)^{\gamma_1} $$
		{Since $F_1,F_2$ are inclusion isotonic we have $\mathrm{mag}(F_i(K)) \leq \mathrm{mag}(F_i(\Omega))$ for $i=1,2$. Let $C = \max\{C_1\mathrm{mag}(F_2(\Omega)), C_2 \mathrm{mag}(F_1(\Omega)\}$. This yields}
		$$ 
		\mathrm{mag}(F_1(K))C_2 w(K)^{\gamma_2} + \mathrm{mag}(F_2(K))C_1 w(K)^{\gamma_1}
		\leq C (w(K)^{\gamma_1} + w(K)^{\gamma_2}).$$
		taking $\gamma=\min_{i=1,2} \gamma_i$ yields
		$$ C (w(K)^{\gamma_1} + w(K)^{\gamma_2}) \leq C (w(\Omega)^{\gamma_1-\gamma} + w(\Omega)^{\gamma_2 -\gamma})w(K)^\gamma. $$
		We thus showed that the product of inclusion isotonic Hölder continuous interval functions is again Hölder continuous. This completes the proof.
	\end{proof}
	
	Now we give an interval enclosure for the squared residual.
	
	\begin{proposition}\label{prop: hölder energy}
		Let $d \in \mathbb{N}$, $1\leq p < \infty$, $\Omega \in \mathbb{IR}^d$, $\Phi: \Omega \to \mathbb{R}$ and for every $i,j=0,\dotsc,d$ the function $F_{\Phi, i,j} : \mathbb{IR}^d \to \mathbb{IR}$ be an interval enclosure of $\Phi_{x_ix_j}$ and be Hölder continuous on $\Omega$, where $\Phi_{x_0}=\Phi$. Furthermore, for $i,j=1,\dotsc, d$ let $A_{ij},B_i,C,G: \mathbb{IR}^d \to \mathbb{IR}$ be interval enclosures of $a_{ij},b_i,c,g$, respectively, and assume that these interval enclosures are Hölder continuous on $\Omega$. Define
		\begin{equation}
			R_{\Phi,p}(K) = | F_{\Phi,\mathcal{D}}(K) - G(K)|^p
		\end{equation}
		for $K \in \mathbb{IR}^d, \, K \subset \Omega$. Then $R_{\Phi, p}$ is a Hölder continuous interval enclosure of
		$$ r_{\Phi,p}(x) = |\mathcal{D}\Phi(x) - g(x)|^p $$
		where $x \in \Omega$.
	\end{proposition}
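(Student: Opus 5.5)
The proof splits into two parts: the enclosure property and the Hölder continuity of $R_{\Phi,p}$.

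\textbf{Enclosure.} Here $F_{\Phi,\mathcal{D}}(K)$ denotes the interval expression
$$-\sum_{i,j=1}^d A_{ij}(K)F_{\Phi,i,j}(K)+\sum_{i=1}^d B_i(K)F_{\Phi,i,0}(K)+C(K)F_{\Phi,0,0}(K).$$
With this reading, $R_{\Phi,p}$ coincides with the function defined in Proposition~\ref{prop: energy interval enclosure}, so the enclosure property $r_{\Phi,p}(K)\subset R_{\Phi,p}(K)$ follows directly from that result. For inclusion isotonicity, I would note three facts. First, $F_{\Phi,i,j}$, $A_{ij}$, $B_i$, $C$ and $G$ are all inclusion isotonic. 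Second, interval addition, subtraction and multiplication are inclusion isotonic by Definition~\ref{def:IntArit}. Third, $|\cdot|$ and $(\cdot)^p$ are united extensions of real maps (Definition~\ref{def: mag abs power}), hence also isotonic. Composing these gives isotonicity of $R_{\Phi,p}$.

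\textbf{Hölder continuity.} By Proposition~\ref{prop:DiffOpIntervalEnclosure}, $F_{\Phi,\mathcal{D}}$ is Hölder continuous on $\Omega$, say with constant $C_{\mathcal{D}}$ and exponent $\gamma_{\mathcal{D}}$. By assumption, $G$ is Hölder continuous with constant $C_G$ and exponent $\gamma_G$. By Remark~\ref{rem: width linearity} (or \eqref{eq: width subadditivity}),
$$w\bigl(F_{\Phi,\mathcal{D}}(K)-G(K)\bigr)\le C_{\mathcal{D}}w(K)^{\gamma_{\mathcal{D}}}+C_Gw(K)^{\gamma_G}.$$
Setting $\gamma=\min\{\gamma_{\mathcal{D}},\gamma_G\}$ and using $w(K)\le w(\Omega)$, exactly as at the end of the proof of Proposition~\ref{prop: hölder lebesgue sobolev interval enclosure}, this width is bounded by $C'w(K)^\gamma$. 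Applying the absolute value does not increase the width, so $w(|Y|)\le w(Y)$.

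For the power, let $M=\mathrm{mag}\bigl(F_{\Phi,\mathcal{D}}(\Omega)-G(\Omega)\bigr)$. Isotonicity gives $|F_{\Phi,\mathcal{D}}(K)-G(K)|\subset[0,M]$ for all $K\subset\Omega$. On $[0,M]$, the map $t\mapsto t^p$ is Lipschitz with constant $pM^{p-1}$. Since $(\cdot)^p$ acts by endpoints on nonnegative intervals, this yields
$$w(R_{\Phi,p}(K))\le pM^{p-1}C'w(K)^\gamma.$$
Hence $R_{\Phi,p}$ is Hölder continuous with exponent $\gamma$.

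\textbf{Main obstacle.} The most delicate step is the uniform bound $M$. It requires that all enclosures evaluated on $\Omega$ itself be finite intervals, which holds because they take values in $\mathbb{IR}$. It also requires inclusion isotonicity to pass from $K$ to $\Omega$. If $M=0$, the width is trivially zero, so that degenerate case needs no separate handling.
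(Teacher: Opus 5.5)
Your proof is correct and follows essentially the same route as the paper. You get Hölder continuity of $F_{\Phi,\mathcal{D}}$ from Proposition~\ref{prop:DiffOpIntervalEnclosure}, use width additivity for the difference with $G$, and then apply the absolute-value and Lipschitz-power argument of Proposition~\ref{prop: hölder lebesgue sobolev interval enclosure}, with the uniform bound $M$ taken from inclusion isotonicity on $\Omega$. You also spell out details the paper leaves as ``analogous'', notably the inclusion isotonicity of $R_{\Phi,p}$ that Definition~\ref{def: int enclosure} requires and that the paper's enclosure argument never states explicitly.
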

	\begin{proof}
		In Proposition~\ref{prop:DiffOpIntervalEnclosure} we showed that under the given assumptions $F_{\Phi,\mathcal{D}}$ is a Hölder continuous interval enclosure of $\mathcal{D}\Phi$. Since $G$ is a Hölder continuous interval enclosure of $g$, by Definition~\ref{def:IntArit} we have that $F_{\Phi,\mathcal{D}} - G$ is an interval enclosure of $\mathcal{D}\Phi - g$. Since finite sums of Hölder continuous interval functions on compact domains are again Hölder continuous (as shown in the proof of Proposition~\ref{prop: hölder lebesgue sobolev interval enclosure}), we have that $F_{\Phi,\mathcal{D}} - G$ is Hölder continuous. To show that $ | F_{\Phi,\mathcal{D}}(K) - G(K)|^p$ is Hölder continuous and an interval enclosure of $|\mathcal{D}\Phi(x) - g(x)|^p$, we can proceed analogously to the proof of Proposition~\ref{prop: hölder lebesgue sobolev interval enclosure}.
	\end{proof}
	
	\begin{remark}
		Assume the coefficient functions are compositions of arithmetic operations and elementary functions (such as $\exp$, $\sin$, $\sqrt{}$, etc.). Then replacing arithmetic operations with interval arithmetic and elementary functions with interval enclosures yields an interval enclosure of the coefficient function (see \cite{moore2009introduction}).
	\end{remark}

	\section{Results}\label{sec: results}
	
	Previously, we saw how to use $\mathrm{AdaQuad}$ to compute Lebesgue and Sobolev norms of vector-valued functions. In this section, we explicitly construct the necessary interval enclosures for the function values and partial derivatives up to order two of a neural network. This yields a problem instance and we specify an algorithm instance such that the pair is certifiable and converging. We start by giving a marking rule for our algorithm instance. This is essentially the Dörfler marking strategy (\cite{dorfler1996convergent}).
	
	\vspace{1em}
	\SetKwInOut{Input}{Input}
	\SetKwInOut{Output}{Output}
	\begin{algorithm}[H]\label{alg: dörfler strategy}
		\LinesNotNumbered
		\DontPrintSemicolon
		\caption{$\mathrm{D\ddot{o}rfler}_\theta(\mathcal{E})$}
		\Input{
			Set $\mathcal{E}=\{(K, \eta_K)\}_{K\in \mathcal{P}}\subset \mathcal{P}\times\mathbb{R}_{\geq 0}$ with index set $\mathcal{P}$; $\theta \in (0,1)$.
		}
		\Output{
			Subset $\widetilde{\mathcal{P}} \subset \mathcal{P}$.
		}
		\BlankLine
		
		$\widetilde{\mathcal{P}} = \emptyset$ \quad (initialize set of marked partition elements)\;
		$\widetilde{\mathcal{E}} = \emptyset$ \quad (initialize set of marked error bounds)\;
		\While{$1^T \widetilde{\mathcal{E}} < \theta 1^T \mathcal{E}$}{
			${\eta}_{\max} = \max \{\eta_K \, : \, (K, \eta_K)\in\mathcal{E}, K \in \mathcal{P}\setminus \widetilde{\mathcal{P}}\}$ \quad (take the largest unmarked error bound)\;
			\ForEach{$K \in \mathcal{P} \setminus \widetilde{\mathcal{P}}$}{
				
				\If{$\eta_K = {\eta}_{\max}$}{
					$\widetilde{\mathcal{P}} = \widetilde{\mathcal{P}} \cup \{K\}$\quad (add the corresponding element to the marked set)\;
					$\widetilde{\mathcal{E}} = \widetilde{\mathcal{E}} \cup \{(K,\eta_K)\}$ \quad (add the corresponding tuple to the error bounds set)\;
				}
			}
		}
		\Return $\widetilde{\mathcal{P}}$\;
	\end{algorithm}
	\vspace{1em}
	
	Next, we give a refinement method {that} requires a Hölder continuous interval enclosure of the integrand.
	
	\vspace{1em}
	\begin{algorithm}[H]\label{alg: hölder strategy}
		\LinesNotNumbered
		\DontPrintSemicolon
		\caption{$\mathrm{H\ddot{o}lder}_{\rho}(K, F, \gamma,C)
			$}
		\Input{
			Interval vector $K=(K_1, \dotsc, K_d)\in \mathbb{IR}^d$; Hölder continuous $F:\mathbb{IR}^d \to \mathbb{IR}$ with constant $C>0$ and exponent $\gamma \in (0,1]$; $\rho \in (0,1)$.
		}
		\Output{
			Partition $\mathcal{P}_{K}\subset \mathbb{IR}^d$ of $K$.
		}
		\BlankLine
		$\eta_K = w(F(K))\vol(K)$\;
		\For{$i = 1,\dotsc, d$}{
			$l_i = \overline{K}_i - \underline{K}_i$\;
			$m_i = \left \lceil l_i\left(\frac{C \vol(K) }{\rho \eta_K} \right)^{1/\gamma} \right \rceil$\;
			\For{$j=1,\dotsc, m_i$}{
				$K_{i,j} = [\underline{K}_i + (j-1)l_i/m_i, \;\underline{K}_i + j l_i / m_i]$\;
				
			}
		}
        $\mathcal{P}_K = \left\{(K_{1,j_1}, \dotsc, K_{d,j_d}) : j_i \in \{1,\dotsc,m_i\},\; i = 1,\dotsc,d \right\}$\;
		\Return $\mathcal{P}_{K}$\;
	\end{algorithm}
	\vspace{1em}
	
	We show that the $\mathrm{D\ddot o rfler}_\theta$ and $\mathrm{H\ddot older}_\rho$ marking and refinement strategies together with a quadrature rule satisfying \eqref{eq: quadrature} are certifiable and converging for all problem instances with Hölder continuous interval enclosures.
	
	\begin{theorem}\label{thm: main conv}
		Define the algorithm instance $A_{\theta, \rho} = (\mathcal{I}, \mathrm{D\ddot orfler}_\theta, \mathrm{H\ddot older}_\rho)$ where $\mathcal{I}$ is a positive-weight quadrature rule that is exact for constants and $\theta, \rho \in (0,1)$. Let $P=(f,F,\Omega)$ where $f$ is continuous, $F$ is an interval enclosure of $f$ and is Hölder continuous on $\Omega$, and $\Omega \in \mathbb{IR}^d$. Then $P$ and $A_{\theta,\rho}$ are certifiable and converging.
	\end{theorem}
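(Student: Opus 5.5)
The proof checks the two defining properties separately: certifiability via Proposition~\ref{prop: certificate}, and convergence via Proposition~\ref{prop: convergence}. For the latter, the bulk condition \eqref{eq: bulk condition} comes from $\mathrm{D\ddot orfler}_\theta$ and the local reduction \eqref{eq: local error reduction} comes from $\mathrm{H\ddot older}_\rho$.

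\emph{Certifiability.} Since $f$ is continuous, it suffices to verify \eqref{eq: quadrature}. Let $\mathcal{I}(f,K)=\sum_i w_i f(x_i)$ with $w_i> 0$ and $x_i\in K$. Exactness for the constant $1$ gives $\sum_i w_i=\vol(K)$. Each $f(x_i)$ lies in $f(K)=[\underline{f(K)},\overline{f(K)}]$, which is an interval because $f$ is continuous and $K$ is compact and connected. Hence
$$\underline{f(K)}\vol(K)\le \mathcal{I}(f,K)\le \overline{f(K)}\vol(K),$$
so $\mathcal{I}(f,K)\in f(K)\vol(K)\subset F(K)\vol(K)$, using that $F$ is an enclosure.

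\emph{Bulk condition.} The while loop of $\mathrm{D\ddot orfler}_\theta$ stops only once $1^T\widetilde{\mathcal{E}}\ge\theta 1^T\mathcal{E}$. It does stop: each pass marks at least one new element, and if all elements were marked then $1^T\widetilde{\mathcal{E}}=1^T\mathcal{E}\ge\theta 1^T\mathcal{E}$. If $1^T\mathcal{E}=0$, the loop does not run at all and the condition holds trivially. Because $\mathcal{P}$ is finite, the loop terminates and \eqref{eq: bulk condition} holds.

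\emph{Local error reduction (the main step).} Fix $K\subset\Omega$ with $\eta_K>0$, and let $C,\gamma$ be the Hölder constant and exponent of $F$.
\begin{enumerate}
\item Each subbox $K'$ produced by $\mathrm{H\ddot older}_\rho$ has side lengths
$$\frac{l_i}{m_i}\le \left(\frac{\rho\,\eta_K}{C\vol(K)}\right)^{1/\gamma},$$
by the choice of the ceiling in $m_i$. Therefore $w(K')\le(\rho\eta_K/(C\vol(K)))^{1/\gamma}$.
\item Hölder continuity then gives
$$w(F(K'))\le C\,w(K')^\gamma\le \frac{\rho\,\eta_K}{\vol(K)}=\rho\, w(F(K)).$$
\item The subboxes tile $K$, so $\sum_{K'}\vol(K')=\vol(K)$. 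Consequently
$$\sum_{K'} w(F(K'))\vol(K')\le\rho\, w(F(K))\vol(K),$$
which is \eqref{eq: local error reduction}.
\end{enumerate}

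The obstacle here is the degenerate cases, where the formula for $m_i$ is undefined or gives $m_i=0$.
\begin{itemize}
\item If $\eta_K=0$, either $\vol(K)=0$ or $w(F(K))=0$. When $w(F(K))=0$, inclusion isotonicity gives $w(F(K'))\le w(F(K))=0$ for every $K'\subset K$. When $\vol(K)=0$, every subbox also has zero volume. In both cases the left-hand side of \eqref{eq: local error reduction} is $0$ for any partition, so the inequality holds with the convention that $\mathrm{H\ddot older}_\rho$ returns, say, $\{K\}$.
\item If some $l_i=0$, set $m_i=1$. This does not affect the estimate, since then $\vol(K)=0$.
\end{itemize}

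With the bulk condition and the local reduction established, Proposition~\ref{prop: convergence} gives $\eta_{n+1}\le(1-\theta(1-\rho))\eta_n$, and in particular $\eta_n\to0$.
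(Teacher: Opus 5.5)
Your proof is correct and follows the same route as the paper. Certifiability comes from positive weights plus exactness for constants, giving $\mathcal{I}(f,K)\in f(K)\vol(K)\subset F(K)\vol(K)$; the bulk condition comes from the construction of $\mathrm{D\ddot orfler}_\theta$; and the local reduction comes from the ceiling bound $l_i/m_i\le(\rho\eta_K/(C\vol(K)))^{1/\gamma}$ combined with H\"older continuity, after which Proposition~\ref{prop: convergence} applies. Your extra checks (termination of the D\"orfler loop, and the degenerate case $\eta_K=0$ where the formula for $m_i$ is undefined) are points the paper's proof passes over silently, and you handle them soundly.
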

	
	\begin{proof}
		By assumption $F$ is an interval enclosure of $f$ and therefore
		$$ f(K) \vol(K) \subset F(K)\vol(K)$$
		for every $K \in \mathbb{IR}^d, \, K \subset \Omega$. By assumption $\mathcal{I}$ is a positive-weight quadrature rule which is exact for constants. This yields $\mathcal{I}(f,K) \in f(K)\vol(K)$ for every $K \in \mathbb{IR}^d, \, K \subset \Omega$ which proves certifiability by Proposition~\ref{prop: certificate}. Towards convergence of the pair $(P, A_{\theta,\rho})$, the $\mathrm{D\ddot orfler}_\theta$ marking strategy satisfies inequality \eqref{eq: bulk condition} by construction. It remains to be shown that $\mathrm{H\ddot older}_\rho$ satisfies \eqref{eq: local error reduction}.  
		Let $K \in \mathbb{IR}^d, \, K \subset \Omega$, and $C>0$, $\gamma\in(0,1]$ be the Hölder constant and exponent of $F$, respectively. Note that
		$ l_im_i^{-1} \leq \left(\frac{\rho \eta_K}{C \vol(K)}\right)^{1/\gamma}, $
		since $\lceil a^{-1}\rceil ^{-1} \leq a$ for every $a>0$. Thus 
		$$w(K') \leq \left(\frac{\rho \eta_K}{C \vol(K)}\right)^{1/\gamma}$$
		for $K' \in \mathcal{P}_K$ and $\mathcal{P}_K = \mathrm{H \ddot older}_{\rho}(K,F,\gamma,C)$.
		We have
		\begin{align*}
			\sum_{K' \in \mathcal{P}_K} w(F(K'))\vol(K')   &\leq \sum_{K' \in \mathcal{P}_K} Cw(K')^\gamma\vol(K' ) \\
			\intertext{by Hölder continuity of $F$. Inserting the upper bound for $w(K')$ yields}
			\sum_{K' \in \mathcal{P}_K} Cw(K')^\gamma\vol(K' ) &\leq \sum_{K' \in \mathcal{P}_K} C\left( \left(\frac{\rho \eta_K}{C \vol(K)}\right)^{1/\gamma}\right)^\gamma \vol(K').\\
			\intertext{After simplifying and inserting $\eta_K = w(F(K))\vol(K)$ we obtain}
			\sum_{K' \in \mathcal{P}_K} C\left( \left(\frac{\rho \eta_K}{C \vol(K)}\right)^{1/\gamma}\right)^\gamma \vol(K') &= \rho w(F(K))\vol(K).
		\end{align*}
		Thus, by Proposition~\ref{prop: convergence} the pair $P$ and {$A_{\theta,\rho}$} are converging. This finishes the proof.
	\end{proof}
	
	\begin{remark}\label{rem:HalfRefinement}
		Let $d\in \mathbb{N}$ and $\mathrm{Half}:\mathbb{IR}^d \to (\mathbb{IR}^d)^{2^d}$ be the refinement strategy that divides a box into two halves along every axis. This refinement strategy does not satisfy Equation~\eqref{eq: local error reduction} for $P=(f,F,\Omega)$ with Hölder continuous $F$ with constant $C>0$ and exponent $\gamma \in (0,1]$ in general. Define
		\begin{equation}
			\bar{\eta}_K = Cw(K)^\gamma \vol(K)
		\end{equation}
		for $K\in \mathbb{IR}^d$. This is an upper bound to $\eta_K = w(F(K))\vol(K)$
		by Hölder continuity of $F$. Moreover, for $K\in \mathbb{IR}^d$ the refinement $\mathcal{P}_K = \mathrm{Half}(K)$ satisfies
		\begin{equation}
			\sum_{K' \in \mathcal{P}_K} \bar{\eta}_{K'} \leq \frac{1}{2^\gamma} \bar{\eta}_K,
		\end{equation}
		since $w(K')=\tfrac{1}{2}w(K)$ for $K' \in \mathcal{P}_K$.
	\end{remark}

	\subsection{Neural network function value bounds}\label{subsec:functionValues}

	For a neural network $\Phi: \mathbb{R}^{d_0} \to \mathbb{R}^{d_{L+1}}$ we obtain bounds on the output $\Phi(K)$ for interval vectors $K\in \mathbb{IR}^{d_0}$ by replacing arithmetic operations with interval arithmetic operations and the activation function $\sigma:\mathbb{R} \to \mathbb{R}$ by an interval enclosure $\Sigma : \mathbb{IR} \to \mathbb{IR}$.
	
	\vspace{1em}
	\begin{algorithm}[H]\label{alg: nn fval bounds}
		\LinesNotNumbered
		\DontPrintSemicolon
		\caption{$\mathrm{Fval_{\Phi,\ell,\Sigma}}(K)$}
		\Input{
			Interval vector $K\in \mathbb{IR}^{d_0}$; neural network $\Phi : \mathbb{R}^{d_0} \to \mathbb{R}^{d_{L+1}}$ of depth $L$ with activation function $\sigma : \mathbb{R} \to \mathbb{R}$; layer $\ell \in\{ 1, \dotsc, L+1\}$; interval enclosure $\Sigma : \mathbb{IR} \to \mathbb{IR}$ of $\sigma$.
		}
		\Output{
			Interval vector $Z^{(\ell)}\in \mathbb{IR}^{d_{\ell}}$ containing the pre-activations of layer $\ell$ for inputs from $K$.
		}
		\vspace{1em}
		${X}^{(0)} = K$\;
		\For{$\ k = 0, \dotsc, \ell-1$}{
			${Z}^{(k+1)} = W^{(k)} X^{(k)} + b^{(k) }$\;
			${X}^{(k+1)} = \Sigma({Z}^{(k+1)})$\;
		}
		\Return $Z^{(\ell)}$\;
	\end{algorithm}
	\vspace{1em}
	
	\begin{remark}
		The algorithm $\mathrm{Fval}_{\Phi,\ell, \Sigma}(K)$ returns bounds for the function values of the neural network $\Phi$ when choosing $\ell=L+1$ and bounds on the pre-activations of the $\ell$-th hidden layer when $\ell < L+1$, since no activation function is applied to the last layer, i.e. $x^{(L+1)} = z^{(L+1)}$.
	\end{remark}

	We want to show that for $\ell=1,\dotsc,L+1$ the function $\mathrm{Fval}_{\Phi, \ell, \Sigma}$ is a Hölder continuous interval enclosure of $\Phi$ {provided} the activation function allows a Hölder continuous interval enclosure.
	\begin{remark}\label{rem:monotonicSigma}
		If the activation function $\sigma : \mathbb{R} \to \mathbb{R}$ is monotonic, we can compute its range on an interval exactly only by evaluating the endpoints accordingly.
	\end{remark}
	
	We start by showing $\mathrm{Fval}_{\Phi, \ell, \Sigma}$ is an interval extension.
	
	\begin{proposition}\label{prop: fval interval extension}
		Let $\Phi : \mathbb{R}^{d_0} \to \mathbb{R}^{d_{L+1}}$ be a neural network and $\Sigma:\mathbb{IR} \to \mathbb{IR}$ be an interval extension of $\sigma : \mathbb{R} \to \mathbb{R}.$ Then for $\ell=1,\dotsc, L+1$ we have that $\mathrm{Fval}_{\Phi, \ell, \Sigma}$ is {an} interval extension of the pre-activation $z^{(\ell)}$, i.e.
		$$ z^{(\ell)}(\{x\}) = \mathrm{Fval}_{\Phi, \ell, \Sigma}(\{x\}) $$
		for all $x\in \Omega$.
	\end{proposition}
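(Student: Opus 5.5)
The proof goes by induction on the layer index $k$ in Algorithm~\ref{alg: nn fval bounds}. The invariant is: when the input is the singleton box $K=\{x\}=[x,x]$, every interval vector produced by the loop is degenerate and equals the true value. Concretely, for each $k=0,\dotsc,\ell-1$ we show $X^{(k)}=\{x^{(k)}(x)\}$, and for each $k=1,\dotsc,\ell$ we show $Z^{(k)}=\{z^{(k)}(x)\}$. Here $x^{(k)}(x)$ and $z^{(k)}(x)$ are the activations and pre-activations viewed as functions of the input, as in the remark following Definition~\ref{def: tail nn}. Since the algorithm returns $Z^{(\ell)}$, the claim then reads $\mathrm{Fval}_{\Phi,\ell,\Sigma}(\{x\})=\{z^{(\ell)}(x)\}=z^{(\ell)}(\{x\})$.

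\emph{Base case.} We have $X^{(0)}=K=\{x\}=\{x^{(0)}(x)\}$ by initialization.

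\emph{Affine step.} Suppose $X^{(k)}=\{x^{(k)}(x)\}$ is degenerate. The update $Z^{(k+1)}=W^{(k)}X^{(k)}+b^{(k)}$ is computed entrywise with interval arithmetic (Remark~\ref{rem:IntervalLinAlg}). Each real weight $W^{(k)}_{ij}$ and each bias entry can be viewed as a degenerate interval. By Proposition~\ref{prop: endpoint formulas}:
\begin{itemize}
\item a product of degenerate intervals $[a,a]\cdot[c,c]$ has endpoint set $S=\{ac\}$, so it equals $[ac,ac]$;
\item a sum of degenerate intervals $[a,a]+[c,c]$ equals $[a+c,a+c]$.
\end{itemize}
Hence every entry of $Z^{(k+1)}$ is the degenerate interval whose value is the corresponding entry of $W^{(k)}x^{(k)}(x)+b^{(k)}$. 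That is, $Z^{(k+1)}=\{z^{(k+1)}(x)\}$.

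\emph{Activation step.} The interval map $\Sigma$ is applied componentwise. Since $\Sigma$ is an interval extension of $\sigma$ (Definition~\ref{def: int extension}), we have $\Sigma(\{t\})=\{\sigma(t)\}$ for every $t\in\mathbb{R}$. Therefore $X^{(k+1)}=\{\sigma(z^{(k+1)}(x))\}=\{x^{(k+1)}(x)\}$, which closes the induction.

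The argument has no real obstacle; it is bookkeeping. The only points needing care are stated conventions. First, real matrices and vectors are identified with degenerate interval matrices and vectors. Second, $\Sigma$ acts componentwise on interval vectors, which is implicit in the algorithm. Third, the loop stops at $k=\ell-1$, so for $\ell=L+1$ the last step is affine with no activation, and the output is $\Phi(x)=z^{(L+1)}(x)$. Finally, the statement quantifies over $x\in\Omega$, but the argument works for every $x\in\mathbb{R}^{d_0}$, so no assumption on $\Omega$ is used.
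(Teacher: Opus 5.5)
Your proof is correct and uses the same argument as the paper. The paper simply cites three facts: interval matrix--vector products are interval extensions, $\Sigma$ is an interval extension, and compositions of interval extensions are again interval extensions; your layer-by-layer induction with the degenerate-interval endpoint computations spells out exactly these facts (cosmetically, the last loop iteration does compute $X^{(\ell)}=\Sigma(Z^{(\ell)})$, which is discarded since $Z^{(\ell)}$ is returned, so your invariant is unaffected).
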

	
	\begin{proof}
		Note that matrix-interval-vector-multiplication is an interval extension of
		matrix-vector-multiplication. Moreover, $\Sigma$ is an interval extension of $\sigma$ and compositions of interval extensions are again interval extensions. Thus, it follows that $\mathrm{Fval}_{\Phi, \ell, \Sigma}$ is an interval extension of the pre-activation $z^{(\ell)}$ for all $\ell =1, \dots, L+1$.
	\end{proof}
	
	Next, we show that this interval extension is inclusion isotonic.
	
	\begin{proposition}\label{prop: fval inclusion isotonic}
		Let $\Phi : \mathbb{R}^{d_0} \to \mathbb{R}^{d_{L+1}}$ be a neural network with activation function $\sigma : \mathbb{R} \to \mathbb{R}$ and inclusion isotonic interval extension $\Sigma: \mathbb{IR} \to \mathbb{IR}$. Then for $\ell=1,\dotsc, L+1$ we have that $\mathrm{Fval}_{\Phi, \ell, \Sigma}$ is inclusion isotonic, i.e.
		\begin{equation}
			\mathrm{Fval}_{\Phi, \ell, \Sigma}(K') \subset \mathrm{Fval}_{\Phi, \ell, \Sigma}(K) \quad \text{if}\, K' \subset K
		\end{equation}
		for $K',K \in \mathbb{IR}^{d_0}, \, K',K \subset \Omega$.
	\end{proposition}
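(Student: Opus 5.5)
The plan is to argue by induction over the layers $k=0,\dotsc,\ell-1$ of Algorithm~\ref{alg: nn fval bounds}. Fix $K' \subset K$ in $\mathbb{IR}^{d_0}$, and write $X'^{(k)}, Z'^{(k)}$ and $X^{(k)}, Z^{(k)}$ for the interval vectors produced on inputs $K'$ and $K$, respectively. The inductive claim is $X'^{(k)} \subset X^{(k)}$ componentwise for every $k$, together with $Z'^{(k+1)} \subset Z^{(k+1)}$. The base case is immediate, since $X'^{(0)} = K' \subset K = X^{(0)}$.

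For the inductive step there are two operations to handle. The first is the affine map $Z^{(k+1)} = W^{(k)}X^{(k)} + b^{(k)}$. By Remark~\ref{rem:IntervalLinAlg}, each component is
$$Z^{(k+1)}_i=\sum_j W^{(k)}_{ij}X^{(k)}_j+b^{(k)}_i,$$
evaluated with interval arithmetic, where the weights and biases are treated as degenerate intervals. By Definition~\ref{def:IntArit}, each interval operation $X\circ Y$ is the set of all $x\circ y$ with $x\in X$, $y\in Y$. Shrinking the operands therefore shrinks this set, so interval $+$ and $\cdot$ are inclusion isotonic. Composing finitely many such operations gives $Z'^{(k+1)}\subset Z^{(k+1)}$. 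The second operation is the componentwise activation $X^{(k+1)} = \Sigma(Z^{(k+1)})$. Here I use the hypothesis that $\Sigma$ is inclusion isotonic, which gives $\Sigma(Z'^{(k+1)}_i)\subset\Sigma(Z^{(k+1)}_i)$ for each $i$, and hence $X'^{(k+1)}\subset X^{(k+1)}$.

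The algorithm returns $Z^{(\ell)}$, which is the output of the last affine step. Running the induction up to $k=\ell-1$ then gives $\mathrm{Fval}_{\Phi,\ell,\Sigma}(K')\subset\mathrm{Fval}_{\Phi,\ell,\Sigma}(K)$ for every $\ell\in\{1,\dotsc,L+1\}$.

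The only step needing real care is the inclusion isotonicity of the interval linear algebra. The entries are evaluated by the endpoint formulas of Proposition~\ref{prop: endpoint formulas}, so one has to make sure these formulas agree with the set definition in Definition~\ref{def:IntArit}. That agreement is exactly what Proposition~\ref{prop: endpoint formulas} states. Once it is in hand, isotonicity follows directly from the set-theoretic form, and no separate case analysis over the signs of the weights is needed.
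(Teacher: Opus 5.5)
Your proof is correct and takes the same approach as the paper. The paper notes that the interval affine map and $\Sigma$ are inclusion isotonic and that compositions of inclusion isotonic maps are again inclusion isotonic; your layer-by-layer induction spells out that composition argument, with the isotonicity of the affine step justified from Definition~\ref{def:IntArit}.
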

	
	\begin{proof}
		Notice that matrix-interval-vector-multiplication is inclusion isotonic and by assumption $\Sigma$ is inclusion isotonic as well. Moreover, the composition of inclusion isotonic functions is again inclusion isotonic. This finishes the proof.
	\end{proof}
	
	From the Fundamental Theorem of Interval Analysis it follows that for $\ell=0,\dotsc,L+1$ the function $\mathrm{Fval}_{\Phi, \ell, \Sigma}$ is an interval enclosure of $\Phi$.
	
	\begin{corollary}\label{cor: fval int encl}
		Let $\Phi : \mathbb{R}^{d_0} \to \mathbb{R}^{d_{L+1}}$ be a neural network with activation function $\sigma : \mathbb{R} \to \mathbb{R}$ and interval enclosure $\Sigma:\mathbb{IR}\to \mathbb{IR}$. Then for $\ell=1,\dotsc,L+1$ the function $\mathrm{Fval}_{\Phi, \ell, \Sigma}$ is an interval enclosure of $z^{(\ell)}$.
	\end{corollary}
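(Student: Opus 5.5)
The plan is to combine Proposition~\ref{prop: fval interval extension}, Proposition~\ref{prop: fval inclusion isotonic} and Theorem~\ref{thm: fund thm of IA}. The one gap is that the corollary only assumes $\Sigma$ is an \emph{interval enclosure} of $\sigma$. By Definition~\ref{def: int enclosure} this means $\Sigma$ is inclusion isotonic with $\sigma(X)\subset\Sigma(X)$, but not necessarily an interval extension. So Proposition~\ref{prop: fval interval extension} cannot be invoked verbatim.

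I would therefore argue directly by induction over the layers instead of going through the Fundamental Theorem. Fix $K\in\mathbb{IR}^{d_0}$ and $x\in K$, and write $X^{(k)},Z^{(k)}$ for the interval quantities produced by Algorithm~\ref{alg: nn fval bounds} on input $K$. The claim to prove for $k=0,\dots,\ell$ is
\[
x^{(k)}(x)\in X^{(k)} \qquad\text{and}\qquad z^{(k)}(x)\in Z^{(k)} \quad (k\ge 1).
\]

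\textbf{Base case.} We have $x^{(0)}(x)=x\in K=X^{(0)}$.

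\textbf{Inductive step.} Suppose $x^{(k)}(x)\in X^{(k)}$.
\begin{itemize}
\item The map $W^{(k)}X^{(k)}+b^{(k)}$ is evaluated with interval arithmetic (Remark~\ref{rem:IntervalLinAlg}), and each interval operation in Definition~\ref{def:IntArit} contains all pointwise results. Applying this entrywise gives $z^{(k+1)}(x)=W^{(k)}x^{(k)}(x)+b^{(k)}\in Z^{(k+1)}$.
\item Since $\Sigma$ is applied componentwise and is an enclosure of $\sigma$, we get $\sigma(z^{(k+1)}_i(x))\in\sigma(Z^{(k+1)}_i)\subset\Sigma(Z^{(k+1)}_i)$. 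Hence $x^{(k+1)}(x)\in X^{(k+1)}$.
\end{itemize}
Since $x\in K$ was arbitrary, this yields $z^{(\ell)}(K)\subset \mathrm{Fval}_{\Phi,\ell,\Sigma}(K)$.

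Inclusion isotonicity also has to be checked. The proof of Proposition~\ref{prop: fval inclusion isotonic} uses only that $\Sigma$ is inclusion isotonic, which holds for an enclosure by definition. Together with the inclusion isotonicity of interval matrix--vector products and of adding a constant vector, and the fact that compositions of inclusion isotonic maps are inclusion isotonic, this gives the second requirement of Definition~\ref{def: int enclosure}.

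The main obstacle is exactly the mismatch in hypotheses. If $\Sigma$ is additionally an interval extension, the result is immediate from the two propositions and Theorem~\ref{thm: fund thm of IA}. In general, the layerwise containment argument above is what carries the proof. The only delicate point there is justifying that interval matrix--vector products contain all real products $Wv$ with $v\in X$. For a real matrix $W$ this follows entrywise from Proposition~\ref{prop: endpoint formulas}, since each entry is a sum of real multiples of intervals.
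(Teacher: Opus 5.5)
Your proof is correct, and it takes a different route from the paper's. The paper's proof is one line: it cites Proposition~\ref{prop: fval interval extension} and Proposition~\ref{prop: fval inclusion isotonic} to conclude that $\mathrm{Fval}_{\Phi,\ell,\Sigma}$ is an inclusion isotonic interval extension of $z^{(\ell)}$, and then applies Theorem~\ref{thm: fund thm of IA}.

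As you observed, Proposition~\ref{prop: fval interval extension} requires $\Sigma$ to be an interval extension of $\sigma$, while the corollary only assumes an interval enclosure. The paper's argument as written therefore covers only the special case of an inclusion isotonic extension, such as the exact range of a monotone $\sigma$ (Remark~\ref{rem:monotonicSigma}). If $\Sigma$ inflates singletons, $\mathrm{Fval}_{\Phi,\ell,\Sigma}(\{x\})$ is in general a nondegenerate box for $\ell\geq 2$, so the Fundamental Theorem cannot be invoked at all.

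Your layerwise induction is essentially the proof of the Fundamental Theorem unrolled along the network. The difference is that the ``agrees on singletons'' step is replaced by the weaker containment $\sigma(Z_i)\subset\Sigma(Z_i)$, so you prove the corollary under its stated hypotheses. This generality matters in practice: an enclosure computed with outward rounding, as in a genuinely verified floating-point implementation, is typically not an interval extension, and only your argument covers it.

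The paper's route is shorter and matches the pattern it reuses for $\mathrm{Jac}$ and $\mathrm{Hess}$. Yours isolates exactly what is needed: containment for interval arithmetic, plus enclosure and isotonicity of $\Sigma$. Your remark that the proof of Proposition~\ref{prop: fval inclusion isotonic} uses only isotonicity of $\Sigma$ correctly settles the second requirement of Definition~\ref{def: int enclosure}.
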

	\begin{proof}
		Since by Proposition~\ref{prop: fval interval extension} and Proposition~\ref{prop: fval inclusion isotonic} the function $\mathrm{Fval}_{\Phi, \ell, \Sigma}$ is an inclusion isotonic interval extension of $z^{(\ell)}$. The Fundamental Theorem of Interval Analysis~\ref{thm: fund thm of IA} yields that $\mathrm{Fval}_{\Phi, \ell, \Sigma}$ is an interval enclosure of $z^{(\ell)}$, finishing the proof.
	\end{proof}

	\begin{proposition}\label{prop: fval hölder}
		Let $\Phi : \mathbb{R}^{d_0} \to \mathbb{R}^{d_{L+1}}$ be a neural network with activation function $\sigma : \mathbb{R} \to \mathbb{R}$ and Hölder continuous interval enclosure $\Sigma:\mathbb{IR}\to \mathbb{IR}$ having exponent $\gamma_\Sigma \in (0,1]$ and constant $C_\Sigma > 0$. Then for $\ell=1,\dotsc,L+1$ the function $\mathrm{Fval}_{\Phi, \ell, \Sigma}$ is Hölder continuous on $\Omega$, i.e., there exist $C > 0$ and $\gamma \in (0,1]$ such that
		$$ w(\mathrm{Fval}_{\Phi, \ell, \Sigma}(K)) \leq C w(K)^{\gamma} $$
		for all $K \in \mathbb{IR}^{d_0}$ with $K \subset \Omega$.
	\end{proposition}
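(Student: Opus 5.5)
We argue by induction on the layer index $\ell$, tracking the widths of the interval vectors $Z^{(k)}$ and $X^{(k)}$ produced by Algorithm~\ref{alg: nn fval bounds} on inputs $K\subset\Omega$. The aim is a bound of the form $w(Z^{(k)}(K)) \leq C_k w(K)^{\gamma_k}$ with $\gamma_k = \gamma_\Sigma^{k-1}$. The hypothesis on $\Sigma$ is read componentwise: the interval vector $X^{(k)}=\Sigma(Z^{(k)})$ satisfies $w(X^{(k)}) \leq C_\Sigma w(Z^{(k)})^{\gamma_\Sigma}$. This applies wherever the argument lies, since $\Sigma$ acts on single intervals. If the Hölder property of $\Sigma$ is only assumed on a bounded set, we will use inclusion isotonicity (Proposition~\ref{prop: fval inclusion isotonic}) to confine every $Z^{(k)}(K)$ to the fixed bounded interval vector $Z^{(k)}(\Omega)$.

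\textbf{Affine step.} For a real matrix $W$, a vector $b$ and an interval vector $X$, Remark~\ref{rem: width linearity} gives, row by row,
$$w(WX+b) = \max_i \sum_j |W_{ij}|\, w(X_j) \leq \|W\|_{\infty,\infty}\, w(X).$$
Equivalently, this is \eqref{eq: matrix width inequality} with $w(W)=0$.

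\textbf{Induction.} The base case is $k=1$: here $w(Z^{(1)}) \leq \|W^{(0)}\| w(K)$, so $\gamma_1=1$. For the inductive step, suppose $w(Z^{(k)}) \leq C_k w(K)^{\gamma_k}$. Then
$$w(Z^{(k+1)}) \leq \|W^{(k)}\|\, C_\Sigma\, w(Z^{(k)})^{\gamma_\Sigma} \leq \|W^{(k)}\|\, C_\Sigma\, C_k^{\gamma_\Sigma}\, w(K)^{\gamma_k\gamma_\Sigma},$$
using that $t\mapsto t^{\gamma_\Sigma}$ is monotone on $[0,\infty)$. This yields $C_{k+1} = \|W^{(k)}\| C_\Sigma C_k^{\gamma_\Sigma}$ and $\gamma_{k+1}=\gamma_k\gamma_\Sigma\in(0,1]$. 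Stopping at $k=\ell$ gives the claim with $C=C_\ell$ and $\gamma=\gamma_\Sigma^{\ell-1}$.

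\textbf{Degenerate constants.} If $C_\ell=0$, which happens for instance when a weight matrix is zero, we replace it by any positive constant, since the statement requires $C>0$. If a uniform exponent across layers is wanted, the trick from Proposition~\ref{prop: hölder lebesgue sobolev interval enclosure} applies: $w(K)\le w(\Omega)$ lets us lower exponents at the cost of factors $w(\Omega)^{\gamma_k-\gamma}$.

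\textbf{Main obstacle.} The only delicate point is making sure the Hölder estimate for $\Sigma$ is used at the width level, on interval inputs of arbitrary location, rather than only pointwise. I would check that the definition of Hölder continuity for $\Sigma:\mathbb{IR}\to\mathbb{IR}$ is indeed a width statement valid on the relevant range. If it is only stated on some domain, that domain must contain $Z^{(k)}(\Omega)$, which is guaranteed by inclusion isotonicity.
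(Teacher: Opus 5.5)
Your proposal is correct and follows the same route as the paper. Both arguments bound $w(Z^{(k+1)})\le \|W^{(k)}\|\,w(X^{(k)})$ using the matrix width inequality, apply the H\"older estimate for $\Sigma$, and iterate down to $w(K)$, which gives the exponent $\gamma_\Sigma^{\ell-1}$. You add three things the paper's proof leaves implicit: the explicit constants $C_{k+1}=\|W^{(k)}\|C_\Sigma C_k^{\gamma_\Sigma}$, the fix for a degenerate $C_\ell=0$, and the use of inclusion isotonicity to keep the arguments of $\Sigma$ inside a fixed bounded range. These are careful and welcome additions.
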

	
	\begin{proof}
		Let $\ell\in\{1, \dotsc, L+1\}$, $K\in \mathbb{IR}^{d_0}, \, K \subset \Omega$. Let $Z^{(\ell)} = \mathrm{Fval}_{\Phi, \ell, \Sigma}(K)$. By definition we have
		\begin{align*}
			w(Z^{(\ell)}) &= w(W^{(\ell-1)}X^{(\ell-1)} + b^{(\ell-1)}) \\
			\intertext{and applying Equations~\eqref{eq: width subadditivity} and~\eqref{eq: matrix width inequality} yields}
			w(W^{(\ell-1)}X^{(\ell-1)} + b^{(\ell-1)}) &\leq ||W^{(\ell-1)}||w(X^{(\ell-1)}), \\
			\intertext{since the matrix $W^{(\ell -1)}$ and the vector $b^{(\ell-1)}$ have zero width. If $\ell=1$ we stop. If $\ell>1$, applying the Hölder continuity of $\Sigma$ yields}
			||W^{(\ell-1)}||w(X^{(\ell-1)}) &\leq ||W^{(\ell-1)}||C_\Sigma w(Z^{(\ell-1)})^{\gamma_\Sigma}.
		\end{align*}
        Iterating this recurrence and considering $K = Z^{(0)}$ yields $w(Z^{(\ell)}) \leq C w(K)^{\gamma}$ for some $C > 0$ and $\gamma = \gamma_\Sigma^{\ell-1} \in (0,1]$.
		This finishes the proof.
	\end{proof}
	
	We can now apply Theorem~\ref{thm: main conv} to compute Lebesgue norms of neural networks. 
	
	\begin{corollary}\label{corr:LpNormsconvergenceOfQuadratureRule}
		Let $\Phi : \mathbb{R}^{d_0} \to \mathbb{R}^{d_{L+1}}$ be a neural network with activation function $\sigma : \mathbb{R} \to \mathbb{R}$ and Hölder continuous interval enclosure $\Sigma:\mathbb{IR}\to \mathbb{IR}$ and $1\leq p < \infty$. 
		Let $\alpha = 0 \in \mathbb{N}^d_0$ and set
		$$ F_{\Phi,\alpha,i}(K) = \mathrm{Fval}_{\Phi, L+1, \Sigma}(K)_i,$$
		where $\mathrm{Fval}_{\Phi, L+1, \Sigma}(K)_i$ is the interval along the $i$-th axis.
		Then $f_{\Phi,0,p}$ is continuous and $F_{\Phi,0,p}$ is a Hölder continuous interval enclosure of $f_{\Phi,0,p}$. Thus, $P_{0,p}=(f_{\Phi, 0,p}, F_{\Phi, 0,p}, \Omega)$ and {$A_{\theta, \rho}$} are certifiable and convergent. In particular, we have
		$$ |\, ||\Phi||_{L^p(\Omega; \mathbb{R}^{d_{L+1}})} - \mathrm{Q}_n^{1/p}| \leq \eta_n^{1/p} $$
		and $\eta_n \leq \eta_0 q^n$ where $(\mathrm{Q}_n, \eta_n) = \mathrm{AdaQuad}_n(P_{0,p}, A_{\theta, \rho})$ for $n\in \mathbb{N}$ and $q=1-\theta(1-\rho)$.
	\end{corollary}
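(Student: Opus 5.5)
The plan is to assemble the corollary from earlier results: Corollary~\ref{cor: fval int encl}, Proposition~\ref{prop: fval hölder}, Propositions~\ref{prop: lebesgue sobolev integrand}--\ref{prop: hölder lebesgue sobolev interval enclosure}, Theorem~\ref{thm: main conv} and Proposition~\ref{prop: convergence}. The final step then converts the bound on the $p$-th power into a bound on the norm itself.

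\textbf{Continuity of the integrand.} I first need the activation $\sigma$ to be continuous. Classical Hölder continuity follows from the interval Hölder condition on singletons: $\Sigma$ is an enclosure, so for $x,y$ the range $\sigma([\min(x,y),\max(x,y)])$ lies in an interval of width at most $C_\Sigma|x-y|^{\gamma_\Sigma}$. Hence $\Phi$ is a composition of affine maps and continuous $\sigma$, so it is continuous. Then $f_{\Phi,0,p}=\sum_i|\Phi_i|^p$ is continuous as well.

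\textbf{Hölder continuous enclosure.} By Corollary~\ref{cor: fval int encl} with $\ell=L+1$, each component $F_{\Phi,0,i}=\mathrm{Fval}_{\Phi,L+1,\Sigma}(\cdot)_i$ is an interval enclosure of $\Phi_i=z^{(L+1)}_i$. Its inclusion isotonicity passes to components. By Proposition~\ref{prop: fval hölder}, each component has width at most $w(\mathrm{Fval}_{\Phi,L+1,\Sigma}(K))\le Cw(K)^{\gamma}$, so it is Hölder continuous. Propositions~\ref{prop: lebesgue sobolev interval enclosure} and~\ref{prop: hölder lebesgue sobolev interval enclosure} with $k=0$ then give that $F_{\Phi,0,p}$ is a Hölder continuous interval enclosure of $f_{\Phi,0,p}$. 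The hypothesis $\Phi\in L^p$ there is automatic, since $\Phi$ is continuous on the compact box $\Omega$.

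\textbf{Certification and convergence.} Theorem~\ref{thm: main conv} now yields that $P_{0,p}$ and $A_{\theta,\rho}$ are certifiable and converging. Proposition~\ref{prop: certificate} gives
\[
\Bigl|\int_\Omega f_{\Phi,0,p}\,dx-\mathrm{Q}_n\Bigr|\le\eta_n .
\]
Iterating $\eta_{n+1}\le q\eta_n$ from Proposition~\ref{prop: convergence} gives $\eta_n\le q^n\eta_0$. By Proposition~\ref{prop: lebesgue sobolev integrand}, the integral equals $\|\Phi\|_{L^p}^p$.

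\textbf{Passing to the $p$-th root.} Set $a=\|\Phi\|^p_{L^p}\ge 0$ and $b=\mathrm{Q}_n$. I use the elementary inequality $|a^{1/p}-b^{1/p}|\le|a-b|^{1/p}$ for $a,b\ge0$, which holds because $t\mapsto t^{1/p}$ is concave and subadditive. Nonnegativity of $\mathrm{Q}_n$ follows because each local quadrature lies in $F_{\Phi,0,p}(K)\vol(K)$. That interval is nonnegative, since it is a sum of intervals $|\cdot|^p\subset[0,\infty)$, so $\mathrm{Q}_n\ge0$ and $\mathrm{Q}_n^{1/p}$ is well defined.

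\textbf{Main obstacle.} The only step needing real care is justifying continuity of $\sigma$, and hence of $f$, from the interval-level Hölder assumption, together with the subadditivity step above. Everything else is direct citation of earlier results.
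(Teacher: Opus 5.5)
Your proposal is correct and follows the same route as the paper. You combine Corollary~\ref{cor: fval int encl} with the H\"older estimate for $\mathrm{Fval}_{\Phi,L+1,\Sigma}$ to obtain a H\"older continuous enclosure, invoke Theorem~\ref{thm: main conv}, and pass to the $p$-th root via the $1/p$-H\"older continuity (subadditivity) of $t\mapsto t^{1/p}$. Your extra checks fill in details the paper's proof leaves implicit: continuity of $\sigma$ (hence of $f_{\Phi,0,p}$) from the interval H\"older bound, $\mathrm{Q}_n\ge 0$, and iterating $\eta_{n+1}\le q\eta_n$.
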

	\begin{proof}
		Corollary~\ref{cor: fval int encl} and Proposition~\ref{prop: fval hölder} yield that $F_{\Phi,0,p}$ with $F_{\Phi,0,i} = \mathrm{Fval}_{\Phi, L+1, \Sigma}(K)_i$, $i=1,\dotsc, d_{L+1}$ is a Hölder continuous interval enclosure of $f_{\Phi,0,p}$. By Theorem~\ref{thm: main conv} the pair $P_{0,p}$ and $A_{\theta,\rho}$ are certifiable and converging. The map $t \mapsto t^{1/p}$ is Hölder continuous for $1 \leq p < \infty$ and $t\geq 0$ with constant $1$ and exponent $1/p$. For $n\in \mathbb{N}$ and $(\mathrm{Q}_n, \eta_n) = \mathrm{AdaQuad}_n(P_{0,p}, A_{\theta,\rho})$ this yields
		$$ |\, ||\Phi||_{L^p(\Omega ; \mathbb{R}^{d_{L+1}})}  - \mathrm{Q}_n^{1/p}|\leq  |\, ||\Phi||_{L^p(\Omega ; \mathbb{R}^{d_{L+1}})}^p  - \mathrm{Q}_n|^{1/p} = |\mathrm{R}_n|^{1/p} \leq \eta_n^{1/p}. $$
		This finishes the proof.
	\end{proof}
	
	Since ReLU neural networks are piecewise affine linear, we can integrate them exactly on boxes which are contained in affine linear pieces. Moreover, taking the absolute value and raising the output of the neural network to the power of $\tilde{p}\in \mathbb{N}$ still allows us to integrate exactly via quadratures with degrees of exactness greater or equal $\tilde{p}$ (see Definition~\ref{def: quadrature rule}). Thus, the error on such boxes is exactly zero and no error bound is needed. Therefore, we must check if a box is inside an affine linear piece. In \cite{hanin2019deep} it was shown that activation pieces (activation regions) are contained in affine linear pieces. This gives us an efficient way to check whether or not a box is contained in such a piece. Firstly, we define activation patterns of a ReLU neural network.

	\begin{definition}[\cite{hanin2019deep}]\label{def:ActiPattern}
		Let $\Phi: \mathbb{R}^{d_0} \to \mathbb{R}^{d_{L+1}}$ be a ReLU neural network and $u \in \mathbb{R}^{d_0}$. The activation pattern of $\Phi$ for the input $u$ is defined as
		\begin{equation*}
			\mathfrak{A}_\Phi(u) = \{ \mathrm{sign}(x^{(\ell)}(u)) \, : \, \text{for }\ell=1,\dotsc, L\} \in \mathfrak{B}_\Phi
		\end{equation*}
		with $\mathfrak{B}_\Phi=  \{0,1\}^{d_1} \times \dots \times \{0,1\}^{d_L}$ and $\mathrm{sign}$ applied entrywise, mapping negative numbers to $-1$, $0$ to $0$, and positive numbers to $1$.
	\end{definition}
	
	Next, we define activation pieces of a ReLU neural network.
	
	\begin{definition}\label{def:ActiPiece}
		Let $\Phi: \mathbb{R}^{d_0} \to \mathbb{R}^{d_{L+1}}$ be a ReLU neural network and $a \in \mathfrak{B}_\Phi$. We define the activation piece as the set of inputs $u\in \mathbb{R}^{d_0}$ with activation pattern $a$:
		\begin{equation*}
			\mathfrak{R}_\Phi(a) = \{ u \in \mathbb{R}^{d_0} \, : \, \mathfrak{A}_\Phi(u) = a\}\subset \mathbb{R}^{d_0},
		\end{equation*}
		which is equal to $\mathfrak{A}_\Phi^{-1}(a)$.
	\end{definition}
	
	\begin{remark}
		Notice that if for a ReLU neural network $\Phi: \mathbb{R}^{d_0} \to \mathbb{R}^{d_{L+1}}$, set $C \subset \mathbb{R}^d$ and activation pattern $a\in \mathfrak{B}_\Phi$ we have $\mathfrak{A}_\Phi(C) =\{a\}$, it follows $C\subset \mathfrak{R}_\Phi(a)$.
	\end{remark}
	
	In principle we would need to check the activation pattern for every element in a given box $K \in \mathbb{IR}^{d_0}$, but Lemma 1 from \cite{hanin2019deep} shows that the activation pieces are convex.
	
	\begin{definition}
		Let $d \in \mathbb{N}$, $K= (K_1, \dotsc, K_d)\in \mathbb{IR}^d$ with $K_i = [\underline{K}_i, \overline{K}_i]$ for $i=1,\dotsc,d$ and $b \in \{0,1\}^d$. Then the vertices of the box are given by
		$$ v(b)_i = (1- b_i) \underline{K}_i + b_i \overline{K}_i, $$
		with $i=1,\dotsc,d$. Moreover,
		$$ \mathrm{vert}(K)  = \{ v(b) \in \mathbb{R}^d \, : \, b \in \{0,1\}^d \} $$
		is the set of vertices of $K$.
	\end{definition}
	
	It suffices to compare the activation patterns of the vertices of a box to have a sufficient condition whether or not it is inside an activation piece.
	
	\begin{proposition}\label{prop:ActiPieceCheck}
		Let $\Phi: \mathbb{R}^{d_0} \to \mathbb{R}^{d_{L+1}}$ be a ReLU neural network and $K \in \mathbb{IR}^{d_0}$. Then $\mathfrak{A}_\Phi(K)$ is a singleton if and only if $\mathfrak{A}_\Phi(\mathrm{vert}(K))$ is a singleton.
	\end{proposition}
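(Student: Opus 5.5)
The plan is to prove the two directions separately. The forward direction is immediate; the reverse direction follows from convexity of activation pieces.

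For ``$\Rightarrow$'': since $\mathrm{vert}(K)\subset K$ and $\mathrm{vert}(K)\neq\emptyset$, we have $\mathfrak{A}_\Phi(\mathrm{vert}(K))\subset \mathfrak{A}_\Phi(K)$. So if the latter is a singleton $\{a\}$, the former is the nonempty subset $\{a\}$ as well.

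For ``$\Leftarrow$'': suppose $\mathfrak{A}_\Phi(\mathrm{vert}(K))=\{a\}$, i.e.\ $\mathrm{vert}(K)\subset\mathfrak{R}_\Phi(a)$. The box $K$ is the convex hull of its vertices, since $K$ is a product of intervals and each point is an iterated convex combination of endpoints coordinate by coordinate. It therefore suffices to know that $\mathfrak{R}_\Phi(a)$ is convex. Then $K=\mathrm{conv}(\mathrm{vert}(K))\subset \mathfrak{R}_\Phi(a)$, which gives $\mathfrak{A}_\Phi(K)=\{a\}$.

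Convexity of $\mathfrak{R}_\Phi(a)$ is Lemma~1 of \cite{hanin2019deep}, which we may cite. For completeness I would sketch it by induction over the layers. Fix $a=(a^{(1)},\dots,a^{(L)})$ and let $R_\ell$ be the set of inputs $u$ whose pattern agrees with $a$ in the first $\ell$ layers, with $R_0=\mathbb{R}^{d_0}$. The induction hypothesis is that $R_{\ell-1}$ is convex and that on $R_{\ell-1}$ the maps $u\mapsto x^{(k)}(u)$, $k<\ell$, coincide with affine maps. The affine claim holds because on $R_{\ell-1}$ each ReLU acts as either the identity or zero, as prescribed by $a$. Consequently $z^{(\ell)}$ coincides on $R_{\ell-1}$ with an affine map $u\mapsto A_\ell u+c_\ell$. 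The condition $\mathrm{sign}(x^{(\ell)}_i(u))=a^{(\ell)}_i$ is, for $x^{(\ell)}_i=\sigma(z^{(\ell)}_i)$, equivalent to either $(A_\ell u+c_\ell)_i>0$ or $(A_\ell u+c_\ell)_i\le 0$. Each of these is a (open or closed) half-space. Hence $R_\ell$ is $R_{\ell-1}$ intersected with finitely many half-spaces, which is convex, and on $R_\ell$ the map $x^{(\ell)}$ is again affine. Since $\mathfrak{R}_\Phi(a)=R_L$, convexity follows.

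The main point of care is this induction. The affine representation of $z^{(\ell)}$ is only valid on $R_{\ell-1}$, so the half-space description must be intersected with $R_{\ell-1}$ rather than used globally. One must also handle the sign convention: since $x^{(\ell)}\ge 0$, the pattern entries lie in $\{0,1\}$, and ``$0$'' corresponds to the closed condition $z\le 0$. Either way, each condition is convex, so the argument is unaffected.
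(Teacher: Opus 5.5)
Your proof is correct and follows the paper's argument exactly: the forward direction uses $\mathrm{vert}(K)\subset K$, and the reverse direction combines $K=\mathrm{conv}(\mathrm{vert}(K))$ with the convexity of activation pieces from \cite{hanin2019deep}. The paper simply cites that convexity lemma. Your inductive sketch proves it directly, by intersecting $R_{\ell-1}$ with open or closed half-spaces on which $z^{(\ell)}$ is affine, with pattern entry $0$ corresponding to $z\le 0$; this is a valid self-contained replacement for the citation.
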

	\begin{proof}
		Let $a \in \mathfrak{B}_\Phi$. Assume that $\mathfrak{A}_\Phi(K) = \{a\}$. Since $\mathrm{vert}(K) \subset K$ we have $\mathfrak{A}_\Phi(\mathrm{vert}(K)) = \{a\}$. Now let $\mathfrak{A}_\Phi (\mathrm{vert}(K))=\{a\}$. Since by \cite[Lemma 1]{hanin2019deep} the activation piece is convex, it follows $\mathfrak{A}_\Phi (K)=\{a\}$ from $\mathrm{conv}(\mathrm{vert}(K)) = K$.
	\end{proof}
	
	Since the activation pieces are contained in affine linear pieces, we obtain a sufficient condition by only comparing the activation patterns of the vertices.
	
	\begin{proposition}\label{prop:AffLinPieceCheck}
		Let $\Phi: \mathbb{R}^{d_0} \to \mathbb{R}^{d_{L+1}}$ be a ReLU neural network and $K \in \mathbb{IR}^{d_0}$. Then $\Phi$ is affine linear on $K$ if $\mathfrak{A}_\Phi(\mathrm{vert}(K))$ is a singleton.
	\end{proposition}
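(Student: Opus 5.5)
The argument combines Proposition~\ref{prop:ActiPieceCheck} with an explicit formula for $\Phi$ on a set where the activation pattern is fixed. Suppose $\mathfrak{A}_\Phi(\mathrm{vert}(K)) = \{a\}$ for some $a \in \mathfrak{B}_\Phi$. By Proposition~\ref{prop:ActiPieceCheck}, $\mathfrak{A}_\Phi(K)=\{a\}$, so every $u\in K$ has activation pattern $a$, i.e.\ $K\subset\mathfrak{R}_\Phi(a)$. It then suffices to show that $\Phi$ coincides with a single affine map on $\mathfrak{R}_\Phi(a)$, which is the statement of \cite{hanin2019deep} that activation pieces are contained in affine linear pieces. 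To keep the argument self-contained, I would verify it directly.

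For each hidden layer $\ell=1,\dots,L$, let $a^{(\ell)}$ denote the $\ell$-th block of $a$ and define the diagonal $0/1$ matrix
\[
\Delta^{(\ell)}=\operatorname{diag}\bigl(\mathbf{1}[a^{(\ell)}_i>0]\bigr).
\]
For ReLU and any $z\in\mathbb{R}$ we have $\sigma(z)=\mathbf{1}[\sigma(z)>0]\,z$. If $\sigma(z)>0$ the indicator is $1$ and $\sigma(z)=z$. If $\sigma(z)=0$ the indicator is $0$, and both sides equal $0$ regardless of whether $z<0$ or $z=0$. Since the pattern records $\mathrm{sign}(x^{(\ell)}(u))=\mathrm{sign}(\sigma(z^{(\ell)}(u)))$, this gives $x^{(\ell)}(u)=\Delta^{(\ell)}z^{(\ell)}(u)$ for every $u\in\mathfrak{R}_\Phi(a)$. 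The subtlety is the boundary case $z^{(\ell)}_i(u)=0$, and the identity just shown covers it.

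We then proceed by induction on $\ell$, using Definition~\ref{def: neural network}:
\[
z^{(1)}(u)=W^{(0)}u+b^{(0)}, \qquad z^{(\ell+1)}(u)=W^{(\ell)}\Delta^{(\ell)}z^{(\ell)}(u)+b^{(\ell)}.
\]
Each step composes an affine map with a fixed linear map (the fixed matrix $\Delta^{(\ell)}$ is what makes this work), so each $z^{(\ell)}$ is of the form $A_\ell u+c_\ell$ on $\mathfrak{R}_\Phi(a)$, where $A_\ell$ and $c_\ell$ depend only on $a$ and the weights. Hence
\[
\Phi(u)=x^{(L+1)}(u)=W^{(L)}\Delta^{(L)}\cdots\Delta^{(1)}W^{(0)}u+c
\]
for some fixed vector $c$ and all $u\in K$, so $\Phi$ is affine linear on $K$.

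I expect the main obstacle to be the reliance on convexity of activation pieces inside Proposition~\ref{prop:ActiPieceCheck}, since it comes from \cite[Lemma 1]{hanin2019deep}. That is already established and I would simply invoke it. Everything else is the routine induction above.
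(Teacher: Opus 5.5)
Your proof is correct and follows the paper's route: you use Proposition~\ref{prop:ActiPieceCheck} to get $K\subset\mathfrak{R}_\Phi(a)$, and then use the fact that $\Phi$ is affine on an activation piece. The only difference is that the paper cites \cite[Lemma 3]{hanin2019deep} for that last step, whereas you verify it directly with the fixed $0/1$ matrices $\Delta^{(\ell)}$, and you correctly handle the boundary case $z^{(\ell)}_i(u)=0$.
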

	\begin{proof}
		Let $K \in \mathbb{IR}^{d_0}$ and $\mathfrak{A}_\Phi(K)$ be a singleton. By Proposition~\ref{prop:ActiPieceCheck} the set $K$ is contained in an activation piece. By \cite[Lemma 3]{hanin2019deep} the activation pieces are contained in affine linear pieces. This completes the proof.
	\end{proof}

	\begin{figure}[htb]
		\centering
		\includegraphics[width=0.32\linewidth]{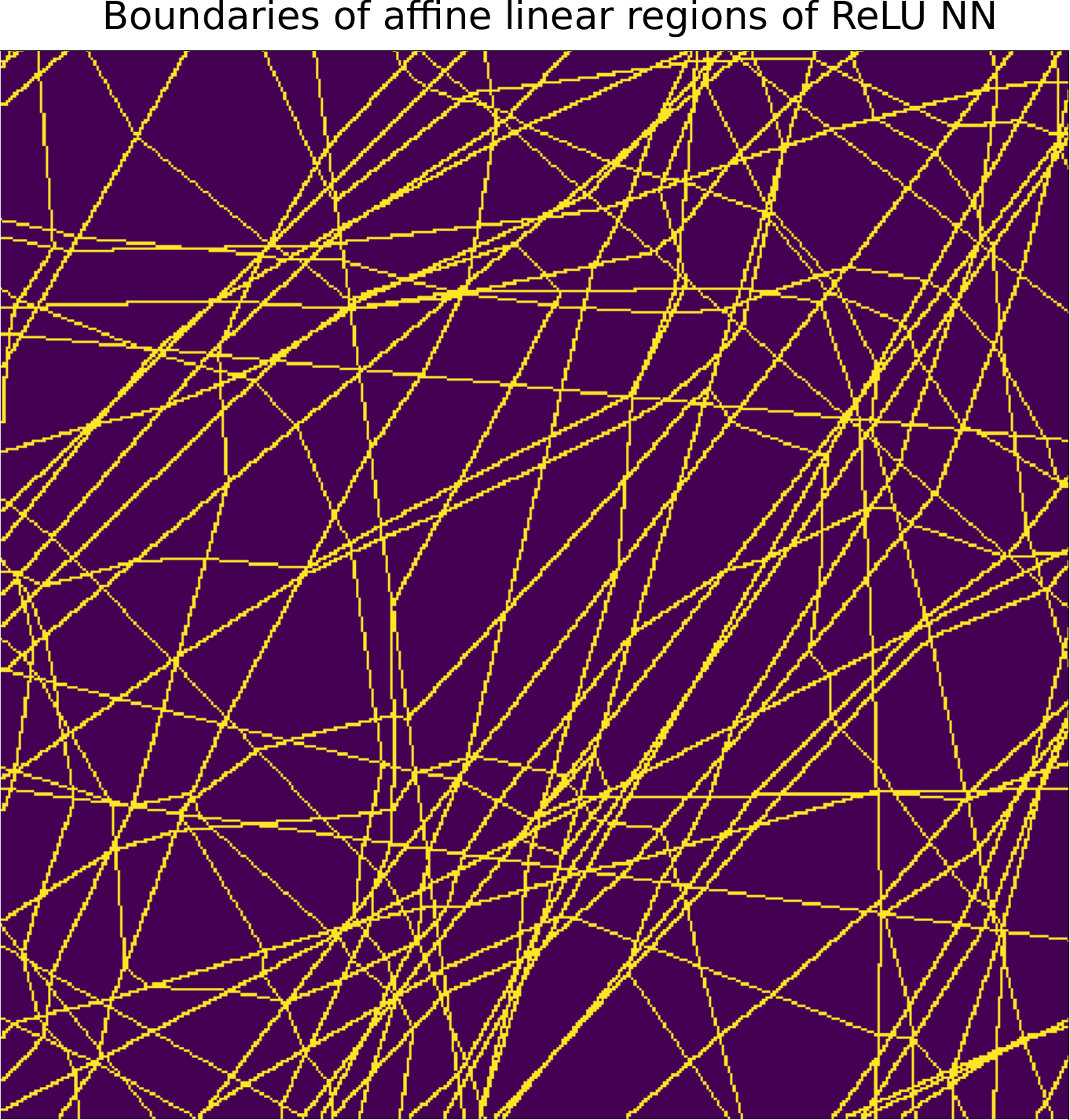} \ \includegraphics[width=0.32\linewidth]{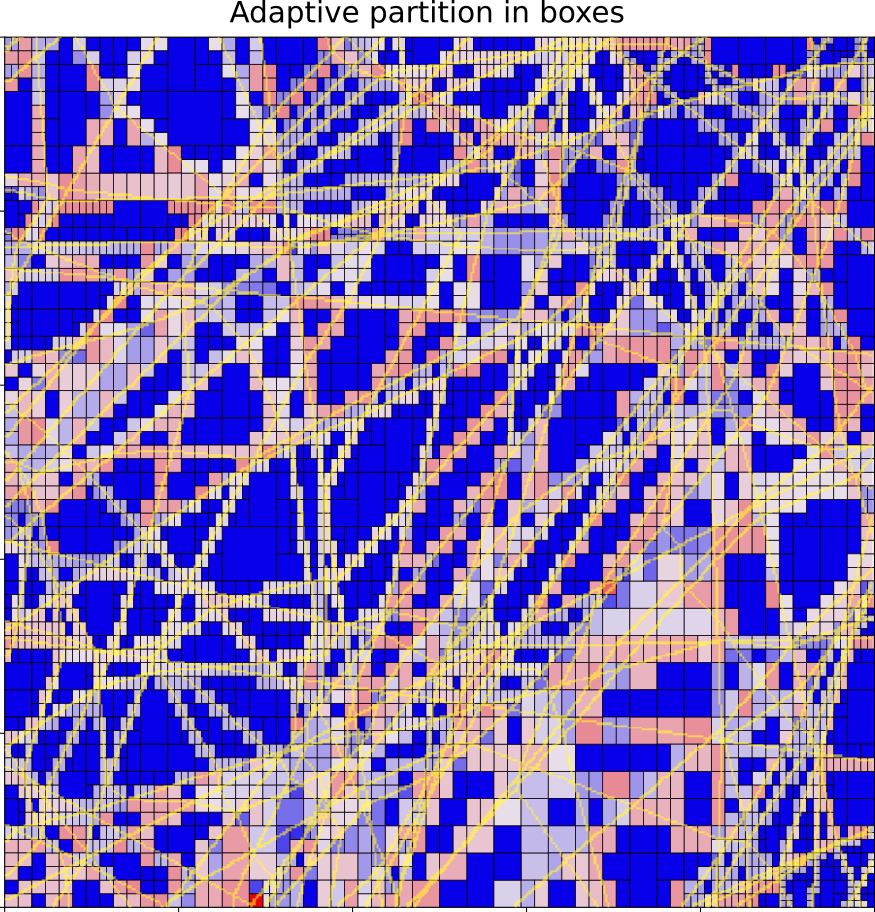} \ 
		\includegraphics[width=0.32\linewidth]{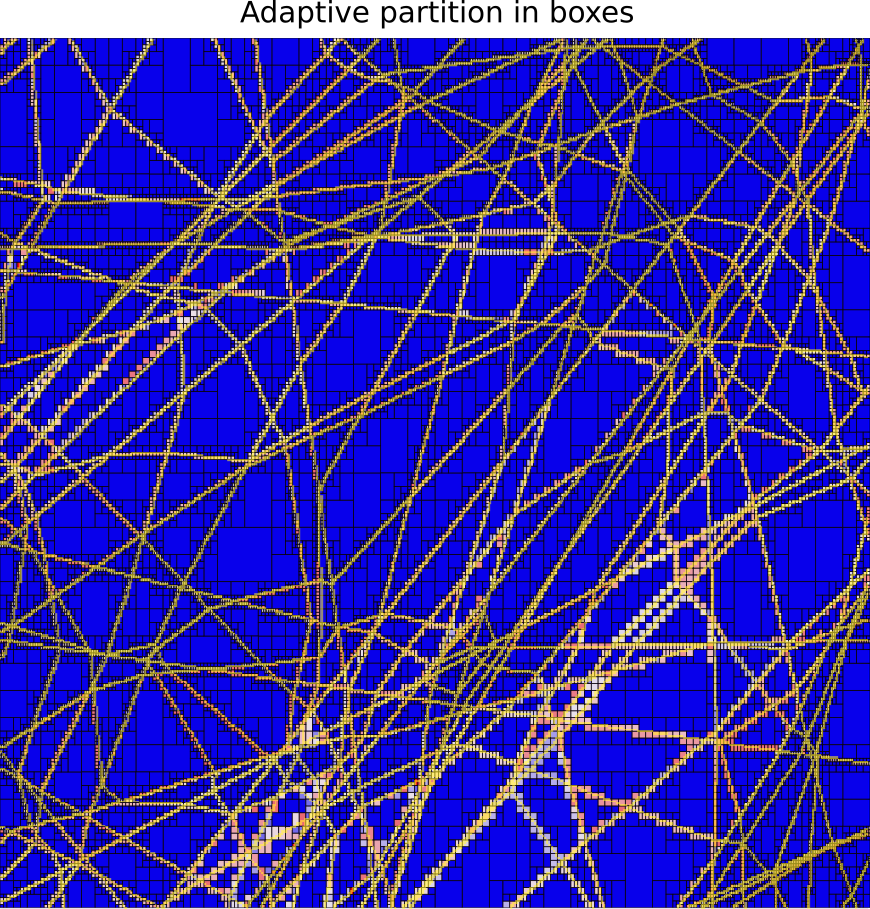}
		\caption{\textbf{Left: } Boundaries of affine linear pieces for a random ReLU network of width 40 and depth 5. \textbf{Middle: } Adaptive partition after 30 refinement steps of $\mathrm{AdaQuad}$ with color indicating local error bounds. The boundaries of affine linear regions are overlaid. \textbf{Right: } Adaptive partition after 40 refinement steps of $\mathrm{AdaQuad}$ with color indicating local error bounds. The boundaries of affine linear regions are overlaid.}
		\label{fig:relu_partition_deep}
	\end{figure}
	
	\begin{remark}\label{rem:ExactIntegration}
		We can use Proposition~\ref{prop:AffLinPieceCheck} to efficiently check whether or not a box is inside an affine linear piece of a ReLU neural network. If true, we can use a quadrature with degree of exactness equal to one and integrate without error. When taking the absolute value and a power of the neural network output, we can integrate exactly by using a quadrature rule with a degree of exactness higher than said power. Figure \ref{fig:relu_partition_deep} shows the effect of employing the activation region check in $\mathrm{AdaQuad}$. 
	\end{remark}
	\begin{remark}\label{rem:higher-orderReLU}
		In principle, the methods of this section can be extended to neural networks with a $ReLU^k$ activation function for $k \in \N$. These neural networks will then be piecewise polynomial and can be integrated exactly with appropriate quadrature rules whenever boxes lie fully within an activation region.
	\end{remark}

	
	\subsection{Neural network Jacobian bounds}\label{subsec:jacobianBounds}
	We aim to construct an interval enclosure for the Jacobian $\nabla\Phi (K)$ with $ K \in \mathbb{IR}^{d_{0}}$ for a neural network $\Phi : \mathbb{R}^{d_0} \to \mathbb{R}^{d_{L+1}}$.
	Recall that we have $  \Phi^{(0)} =\Phi$, implying
	\begin{equation}\label{eq: full tail jacobian}
		\nabla\Phi^{(0)}(x^{(0)}) = \nabla \Phi (x),
	\end{equation}
	where $\Phi^{(0)}$ is a tail neural network (see Definition~\ref{def: tail nn}) and $x^{(0)}=x\in \mathbb{R}^{d_0}$.
	Furthermore, for $\ell=0, \dotsc, L-1$ we have the recursion
	$$
	\Phi^{(\ell)} = \Phi^{(\ell+1)} \circ \sigma \circ T_{\ell}.
	$$
	This implies
	$$ \nabla  \Phi^{(\ell)}(u) = \nabla\Phi^{(\ell+1)}( \sigma( T_{\ell}(u))) \operatorname{diag}(\sigma' (T_\ell(u))) W^{(\ell)}$$
	for $u \in \mathbb{R}^{d_\ell}$ by the chain rule. In particular, for $u \in \mathbb{R}^{d_0}$ we have
	\begin{equation}\label{eq: shortest tail jac}
		\nabla \Phi^{(L)}(x^{(L)}(u)) = W^{(L)} 
	\end{equation}
	and
	\begin{equation}\label{eq: jacobian recursion}
		\nabla  \Phi^{(\ell)}(x^{(\ell)}(u)) = \nabla\Phi^{(\ell+1)}( x^{(\ell+1)}(u)) \operatorname{diag}(\sigma' (z^{(\ell+1)}(u))) W^{(\ell)}
	\end{equation}
	for $\ell=0, \dotsc, L-1$. Starting with Equation~\eqref{eq: shortest tail jac} and repeatedly applying Equation~\eqref{eq: jacobian recursion} allows us to compute $\nabla \Phi(u)$ by Equation~\eqref{eq: full tail jacobian}. To bound the Jacobian $\nabla \Phi(K)$ for $K \in \mathbb{IR}^{d_0}$ we replace the pre-activations $z^{(\ell)}(K)$ by $\mathrm{Fval}_{\Phi, \ell, \Sigma}(K)$ and carry out the matrix multiplications via interval arithmetic (see Proposition~\ref{prop: endpoint formulas}).
	
	\vspace{1em}
	\begin{algorithm}[H]\label{alg: nn jac bounds}
		\LinesNotNumbered
		\DontPrintSemicolon
		\caption{$\mathrm{Jac}_{\Phi,\ell, \Sigma, \Sigma'}(K)$}
		\Input{
			Interval vector $K\in \mathbb{IR}^{d_0}$; neural network $\Phi : \mathbb{R}^{d_0} \to \mathbb{R}^{d_{L+1}}$ of depth $L$ with activation function $\sigma : \mathbb{R} \to \mathbb{R}$; layer $\ell \in\{ 0, \dotsc, L\}$; interval enclosures $\Sigma,\Sigma' : \mathbb{IR} \to \mathbb{IR}$ of $\sigma$, $\sigma'$, respectively.
		}
		\Output{
			Interval matrix $J^{(\ell)}\in \mathbb{IR}^{d_{L+1} \times d_{\ell}}$ containing the Jacobians for the $\ell$-th tail neural network with inputs from $x^{(\ell)}(K)$.
		}
		\vspace{1em}
		$J^{(L)} = W^{(L)}$\;
		\For{$\ k = L-1, \dotsc, \ell$}{
			$Z^{(k+1)} = \mathrm{Fval}_{\Phi, k+1, \Sigma}(K)$\;
			$J^{(k)} = J^{(k+1)} \operatorname{diag}(\Sigma'(Z^{(k+1)}))  W^{(k)}$\;
		}
		\Return $J^{(\ell)}$\;
	\end{algorithm}
	\vspace{1em}
	
	We now want to show that for a neural network $\Phi : \mathbb{R}^{d_0} \to \mathbb{R}^{d_{L+1}}$ the function $\mathrm{Jac}_{\Phi,\ell, \Sigma, \Sigma'}$ is an interval extension of the Jacobian of the $\ell$-th tail neural network for inputs from $x^{(\ell)}(K)$ and $K \in \mathbb{IR}^{d_0}$ for $\ell = 0, \dotsc, L$.

	\begin{proposition}\label{prop: jac interval extension}
		Let $\Phi : \mathbb{R}^{d_0} \to \mathbb{R}^{d_{L+1}}$ be a neural network with differentiable activation function $\sigma : \mathbb{R} \to \mathbb{R}$ and interval extensions $\Sigma,\Sigma':\mathbb{IR}\to \mathbb{IR}$ of $\sigma,\sigma'$, respectively. Then for $\ell=0,\dotsc, L$ we have that $\mathrm{Jac}_{\Phi,\ell, \Sigma, \Sigma'}$ is {an} interval extension {of} the Jacobian of {the} $\ell$-th tail neural network evaluated at $x^{(\ell)}$, i.e.
		$$ \nabla \Phi^{(\ell)}(x^{(\ell)}(\{u\})) = \mathrm{Jac}_{\Phi,\ell, \Sigma, \Sigma'}(\{u\}) $$
		for all $u\in \mathbb{R}^{d_0}$.
	\end{proposition}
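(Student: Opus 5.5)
The plan is a backward induction on $\ell$, from $\ell = L$ down to $\ell = 0$, mirroring the loop in Algorithm~\ref{alg: nn jac bounds}. Fix $u \in \mathbb{R}^{d_0}$ and apply every algorithm to the degenerate box $K = \{u\}$. The claim to carry through the induction is that $J^{(\ell)}$ is the degenerate interval matrix whose single real element is $\nabla \Phi^{(\ell)}(x^{(\ell)}(u))$.

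\textbf{Base case} $\ell = L$. The algorithm returns $J^{(L)} = W^{(L)}$, which is a real matrix and hence a degenerate interval matrix. By Equation~\eqref{eq: shortest tail jac} this matrix equals $\nabla \Phi^{(L)}(x^{(L)}(u))$.

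\textbf{Inductive step}, for $k \in \{L-1, \dots, \ell\}$, assuming $J^{(k+1)} = \{\nabla\Phi^{(k+1)}(x^{(k+1)}(u))\}$.
\begin{enumerate}
\item By Proposition~\ref{prop: fval interval extension}, applied with the interval extension $\Sigma$, we have $Z^{(k+1)} = \mathrm{Fval}_{\Phi,k+1,\Sigma}(\{u\}) = \{z^{(k+1)}(u)\}$.
\item Since $\Sigma'$ is an interval extension of $\sigma'$, applying it componentwise gives $\Sigma'(Z^{(k+1)}) = \{\sigma'(z^{(k+1)}(u))\}$. The interval diagonal matrix $\operatorname{diag}(\Sigma'(Z^{(k+1)}))$ therefore has degenerate diagonal entries. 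Its off-diagonal entries are the exact zeros $[0,0]$, so it equals the degenerate matrix $\operatorname{diag}(\sigma'(z^{(k+1)}(u)))$.
\item Interval matrix products with degenerate factors reduce to real products. By Proposition~\ref{prop: endpoint formulas} and Remark~\ref{rem:IntervalLinAlg}, every elementary operation on degenerate intervals returns the degenerate interval of the real result, and sums of degenerate intervals are degenerate.
\item Combining the previous steps, $J^{(k)}$ is the degenerate matrix containing only
$\nabla\Phi^{(k+1)}(x^{(k+1)}(u))\operatorname{diag}(\sigma'(z^{(k+1)}(u)))W^{(k)}$. By the chain-rule recursion \eqref{eq: jacobian recursion}, this matrix is $\nabla\Phi^{(k)}(x^{(k)}(u))$.
\end{enumerate}

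The loop stops at $k = \ell$, so the induction yields the statement for every $\ell \in \{0,\dots,L\}$. Differentiability of $\sigma$ is used only to make \eqref{eq: jacobian recursion} valid.

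The main obstacle is bookkeeping rather than a genuine difficulty. First, one must be precise that interval matrix multiplication, as defined via Remark~\ref{rem:IntervalLinAlg}, is itself an interval extension of real matrix multiplication; this follows directly from the endpoint formulas for $+$ and $\cdot$. Second, one must check that the zero off-diagonal entries of $\operatorname{diag}(\cdot)$ do not introduce any width. Once these are in place, the statement follows because compositions of interval extensions are interval extensions, as already used in the proof of Proposition~\ref{prop: fval interval extension}.
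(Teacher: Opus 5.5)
Your proposal is correct and takes the same approach as the paper. The paper's proof is a one-line version of your backward induction: it cites the base case \eqref{eq: shortest tail jac}, the recursion \eqref{eq: jacobian recursion}, and the fact that $\Sigma'$ and $\mathrm{Fval}_{\Phi,k,\Sigma}$ are interval extensions. Your explicit treatment of degenerate interval products and of the exact-zero off-diagonal entries fills in bookkeeping that the paper leaves implicit.
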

	
	\begin{proof}
		Let $u \in \mathbb{R}^{d_0}$. The result follows directly from the fact that for $\ell=0, \dotsc, L$ we can compute $\nabla \Phi^{(\ell)}(x^{(\ell)}(u))$ via Equation \eqref{eq: shortest tail jac} and recursion \eqref{eq: jacobian recursion}{, and that} $\Sigma'$ as well as $\mathrm{Fval}_{\Phi,k,\Sigma}$ are interval extensions for $k=L-1, \dotsc, \ell$.
	\end{proof}

	Next, we show that this interval extension is inclusion isotonic.
	
	\begin{proposition}\label{prop: jac inclusion isotonic}
		Let $\Phi : \mathbb{R}^{d_0} \to \mathbb{R}^{d_{L+1}}$ be a neural network with differentiable activation function $\sigma : \mathbb{R} \to \mathbb{R}$ and inclusion isotonic interval extensions $\Sigma,\Sigma':\mathbb{IR}\to \mathbb{IR}$ of $\sigma,\sigma'$, respectively. Then for $\ell=0,\dotsc, L$ we have that $\mathrm{Jac}_{\Phi,\ell, \Sigma, \Sigma'}$ is inclusion isotonic, i.e.
		\begin{equation}
			\mathrm{Jac}_{\Phi,\ell, \Sigma, \Sigma'}(\tilde{K}) \subset \mathrm{Jac}_{\Phi,\ell, \Sigma, \Sigma'}(K) \quad \text{if}\quad \tilde{K} \subset K
		\end{equation}
		for $\tilde{K},K \in \mathbb{IR}^{d_0}$.
	\end{proposition}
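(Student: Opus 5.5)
The plan is to write $\mathrm{Jac}_{\Phi,\ell,\Sigma,\Sigma'}$ as a composition of inclusion isotonic interval operations and then argue by backward induction on $k$, following the loop of Algorithm~\ref{alg: nn jac bounds}. Fix $\tilde K\subset K$ in $\mathbb{IR}^{d_0}$ and write $\tilde J^{(k)}$, $J^{(k)}$ and $\tilde Z^{(k)}$, $Z^{(k)}$ for the quantities the algorithm produces on $\tilde K$ and on $K$.

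First, Proposition~\ref{prop: fval inclusion isotonic} gives $\tilde Z^{(k+1)}=\mathrm{Fval}_{\Phi,k+1,\Sigma}(\tilde K)\subset \mathrm{Fval}_{\Phi,k+1,\Sigma}(K)=Z^{(k+1)}$ for every $k=\ell,\dots,L-1$. Since $\Sigma'$ is inclusion isotonic by assumption, applying it entrywise yields $\Sigma'(\tilde Z^{(k+1)})\subset\Sigma'(Z^{(k+1)})$. Hence the diagonal interval matrices satisfy $\operatorname{diag}(\Sigma'(\tilde Z^{(k+1)}))\subset\operatorname{diag}(\Sigma'(Z^{(k+1)}))$ entrywise. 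The off-diagonal entries are the degenerate interval $[0,0]$ in both matrices, so they cause no issue.

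Next comes the induction. The base case is $\tilde J^{(L)}=W^{(L)}=J^{(L)}$. For the step, assume $\tilde J^{(k+1)}\subset J^{(k+1)}$. Then $\tilde J^{(k)}=\tilde J^{(k+1)}\operatorname{diag}(\Sigma'(\tilde Z^{(k+1)}))W^{(k)}$ is contained in $J^{(k)}$. This follows because interval matrix products are inclusion isotonic in each argument. By Remark~\ref{rem:IntervalLinAlg}, each entry of such a product is a sum of interval products. By Definition~\ref{def:IntArit}, every interval operation $X\circ Y=\{x\circ y\}$ is monotone under inclusion of $X$ and $Y$. Sums of such terms are monotone as well, so the whole product is monotone entrywise. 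The product with the point matrix $W^{(k)}$ is monotone by the same argument.

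The only subtlety I expect is the order of evaluation. Interval matrix multiplication is not associative, so I should fix the left-to-right order used in the algorithm and apply isotonicity to each binary product separately. Isotonicity holds for any fixed evaluation order, so this causes no real difficulty. Running the induction down to $k=\ell$ gives $\mathrm{Jac}_{\Phi,\ell,\Sigma,\Sigma'}(\tilde K)\subset\mathrm{Jac}_{\Phi,\ell,\Sigma,\Sigma'}(K)$.
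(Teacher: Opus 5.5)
Your proof is correct and follows the paper's route: backward induction from the base case $J^{(L)}=W^{(L)}$, using inclusion isotonicity of $\mathrm{Fval}_{\Phi,k+1,\Sigma}$, of $\Sigma'$, and of interval matrix products in the recursion $J^{(k)}=J^{(k+1)}\operatorname{diag}(\Sigma'(Z^{(k+1)}))W^{(k)}$. Your justification of why interval matrix products are isotonic, and your remark on fixing an evaluation order, make explicit what the paper takes for granted.
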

	\begin{proof}
		Let $\tilde{K},K \in \mathbb{IR}^{d_0}$ satisfy $\tilde{K} \subset K$. We have
		$$ \mathrm{Jac}_{\Phi,L, \Sigma, \Sigma'}(\tilde{K}) = \mathrm{Jac}_{\Phi,L, \Sigma, \Sigma'}({K}) = W^{(L)}. $$
		Now assume
		\begin{equation}\label{eq: jac induction hypothesis}
			\mathrm{Jac}_{\Phi,\ell+1, \Sigma, \Sigma'}(\tilde{K}) \subset \mathrm{Jac}_{\Phi,\ell+1, \Sigma, \Sigma'}(K)
		\end{equation}
		for an arbitrary but fixed $\ell \in \{0, \dotsc, L-1\}$. Note that by definition we have
		\begin{equation}\label{eq: algo jac recursion}
			\mathrm{Jac}_{\Phi,\hat{\ell}, \Sigma, \Sigma'}(\hat{K}) = 
			\mathrm{Jac}_{\Phi,\hat{\ell}+1, \Sigma, \Sigma'}(\hat{K})  \operatorname{diag}(\Sigma'(\mathrm{Fval}_{\Phi , \hat{\ell}+1,\Sigma}(\hat{K}))) W^{(\hat{\ell})}
		\end{equation}
		for $\hat{K} \in \mathbb{IR}^{d_0}$ and $\hat{\ell} = 0, \dotsc, L-1$. 
		This yields
		\begin{align*}
			\mathrm{Jac}_{\Phi,\ell, \Sigma, \Sigma'}(\tilde{K}) & = \mathrm{Jac}_{\Phi,{\ell}+1, \Sigma, \Sigma'}(\tilde{K})  \operatorname{diag}(\Sigma'(\mathrm{Fval}_{\Phi , {\ell}+1,\Sigma}(\tilde{K}))) W^{(\ell)}.
		\end{align*}
		Applying Equation~\eqref{eq: jac induction hypothesis} and the inclusion isotonicity of $\Sigma'$ and $\mathrm{Fval}_{\Phi, \ell+1, \Sigma}$ yields
		\begin{align*}
			\mathrm{Jac}_{\Phi,{\ell}+1, \Sigma, \Sigma'}(\tilde{K})  \operatorname{diag}(\Sigma'(\mathrm{Fval}_{\Phi , {\ell}+1,\Sigma}(\tilde{K}))) W^{(\ell)} &\subset \mathrm{Jac}_{\Phi,{\ell}+1, \Sigma, \Sigma'}({K})  \operatorname{diag}(\Sigma'(\mathrm{Fval}_{\Phi , {\ell}+1,\Sigma}({K}))) W^{(\ell)}. 
		\end{align*}
		Finally, we obtain
		\begin{align*}
			\mathrm{Jac}_{\Phi,{\ell}+1, \Sigma, \Sigma'}({K})  \operatorname{diag}(\Sigma'(\mathrm{Fval}_{\Phi , {\ell}+1,\Sigma}({K}))) W^{(\ell)} &= \mathrm{Jac}_{\Phi,{\ell}, \Sigma, \Sigma'}({K}) 
		\end{align*}
		by Equation~\eqref{eq: algo jac recursion}. This finishes the proof.
	\end{proof}
	
	From the Fundamental Theorem of Interval Analysis it follows that for $\ell=0,\dotsc,L$ the function $\mathrm{Jac}_{\Phi,\ell, \Sigma, \Sigma'}$ is an interval enclosure of $\nabla\Phi^{(\ell)}(x^{(\ell)}(K))$ for neural network $\Phi$ and $K \in \mathbb{IR}^d$.
	
	\begin{corollary}\label{cor: jac int encl}
		Let $\Phi : \mathbb{R}^{d_0} \to \mathbb{R}^{d_{L+1}}$ be a neural network with differentiable activation function $\sigma : \mathbb{R} \to \mathbb{R}$ and interval enclosures $\Sigma,\Sigma':\mathbb{IR}\to \mathbb{IR}$ of $\sigma,\sigma'$, respectively. Then for $\ell=0,\dotsc,L$ the function $\mathrm{Jac}_{\Phi,\ell, \Sigma, \Sigma'}(K)$ is an interval enclosure of $\nabla\Phi^{(\ell)}(x^{(\ell)}(K))$ for $K\in \mathbb{IR}^{d_0}$.
	\end{corollary}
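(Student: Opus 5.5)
The plan mirrors the proof of Corollary~\ref{cor: fval int encl}: show that $\mathrm{Jac}_{\Phi,\ell,\Sigma,\Sigma'}$ is an inclusion isotonic interval extension of $K\mapsto\nabla\Phi^{(\ell)}(x^{(\ell)}(K))$, then invoke the Fundamental Theorem of Interval Analysis (Theorem~\ref{thm: fund thm of IA}). The one subtlety is that the statement only assumes $\Sigma,\Sigma'$ are interval \emph{enclosures}, not extensions that agree with $\sigma,\sigma'$ on singletons. So Propositions~\ref{prop: jac interval extension} and~\ref{prop: jac inclusion isotonic} cannot be cited verbatim, and I will argue the containment directly by induction.

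\textbf{Isotonicity.} This needs no singleton property. The proof of Proposition~\ref{prop: jac inclusion isotonic} uses only three facts, all available here:
\begin{itemize}
\item $\Sigma'$ is inclusion isotonic (this is part of the definition of an interval enclosure);
\item $\mathrm{Fval}_{\Phi,k,\Sigma}$ is inclusion isotonic, by the proof of Proposition~\ref{prop: fval inclusion isotonic}, which again uses only isotonicity of $\Sigma$;
\item interval matrix products are inclusion isotonic.
\end{itemize}
Hence $\mathrm{Jac}_{\Phi,\ell,\Sigma,\Sigma'}$ is inclusion isotonic.

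\textbf{Containment, by downward induction on $\ell$.} Fix $u\in K$.
\begin{itemize}
\item Base case $\ell=L$: $\nabla\Phi^{(L)}(x^{(L)}(u))=W^{(L)}=J^{(L)}$ by Equation~\eqref{eq: shortest tail jac}.
\item Inductive step: assume $\nabla\Phi^{(\ell+1)}(x^{(\ell+1)}(u))\in J^{(\ell+1)}$.
\begin{itemize}
\item By Corollary~\ref{cor: fval int encl}, $z^{(\ell+1)}(u)\in \mathrm{Fval}_{\Phi,\ell+1,\Sigma}(K)=Z^{(\ell+1)}$.
\item Since $\Sigma'$ encloses $\sigma'$, we get $\sigma'(z^{(\ell+1)}_i(u))\in\Sigma'(Z^{(\ell+1)}_i)$ entrywise, so $\operatorname{diag}(\sigma'(z^{(\ell+1)}(u)))\in\operatorname{diag}(\Sigma'(Z^{(\ell+1)}))$.
\item Interval matrix multiplication (Remark~\ref{rem:IntervalLinAlg}) is built entrywise from the set-valued operations of Definition~\ref{def:IntArit}. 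For any real matrices $A\in\mathbf{A}$ and $B\in\mathbf{B}$ we therefore have $AB\in\mathbf{A}\mathbf{B}$, since each entry $\sum_k A_{ik}B_{kj}$ lies in $\sum_k \mathbf{A}_{ik}\mathbf{B}_{kj}$.
\item Applying this to recursion~\eqref{eq: jacobian recursion} gives $\nabla\Phi^{(\ell)}(x^{(\ell)}(u))\in J^{(\ell)}$.
\end{itemize}
\end{itemize}
Since $u\in K$ was arbitrary, $\nabla\Phi^{(\ell)}(x^{(\ell)}(K))\subset\mathrm{Jac}_{\Phi,\ell,\Sigma,\Sigma'}(K)$. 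Together with isotonicity, this is exactly Definition~\ref{def: int enclosure}.

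\textbf{Main obstacle.} The main risk is the product step in the inductive argument. Interval products of three factors can depend on the order of evaluation (associativity fails for interval matrices), so I must check that containment holds for whichever order the algorithm uses. It does: the membership property $AB\in\mathbf{A}\mathbf{B}$ holds at each binary product, so it survives any parenthesization. Everything else is bookkeeping.
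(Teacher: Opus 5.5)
Your argument is correct, but it takes a different route from the paper. The paper's proof combines Proposition~\ref{prop: jac interval extension} (agreement on degenerate boxes) with Proposition~\ref{prop: jac inclusion isotonic} and then applies Theorem~\ref{thm: fund thm of IA}.

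As you noticed, that route formally requires $\Sigma,\Sigma'$ to be interval \emph{extensions} of $\sigma,\sigma'$, which the corollary does not assume. If $\Sigma'(\{t\})$ is non-degenerate for some $t$ (as happens with outward-rounded enclosures), then $\mathrm{Jac}_{\Phi,\ell,\Sigma,\Sigma'}(\{u\})$ need not equal $\{\nabla\Phi^{(\ell)}(x^{(\ell)}(u))\}$. The fundamental theorem then does not literally apply.

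Your downward induction replaces the singleton property by pointwise membership propagated through recursion~\eqref{eq: jacobian recursion}. In effect, you prove the fundamental theorem inline for this particular composition. It therefore works under exactly the stated hypotheses. The paper's version is shorter and more modular, but as written it is only valid under the stronger extension assumption. Your remarks on isotonicity (only isotonicity of $\Sigma,\Sigma'$ is used) and on the parenthesization of the triple product are both right.

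One point of consistency: you cite Corollary~\ref{cor: fval int encl} for $z^{(\ell+1)}(u)\in Z^{(\ell+1)}$, but the paper proves that corollary by the same extension-based argument, so it carries the same mismatch. Its statement is nevertheless true for enclosures by the analogous forward induction. Indeed, $u\in K$ gives $z^{(1)}(u)\in W^{(0)}K+b^{(0)}$, and $z^{(k)}_i(u)\in Z^{(k)}_i$ gives $\sigma(z^{(k)}_i(u))\in\sigma(Z^{(k)}_i)\subset\Sigma(Z^{(k)}_i)$. So your citation is sound, but it is worth stating this one line explicitly.
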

	\begin{proof}
		Since by Proposition~\ref{prop: jac interval extension} and Proposition~\ref{prop: jac inclusion isotonic} the function $\mathrm{Jac}_{\Phi,\ell, \Sigma, \Sigma'}$ is an inclusion isotonic interval extension of $\nabla\Phi^{(\ell)}(x^{(\ell)}(.))$. The Fundamental Theorem of Interval Analysis (see \ref{thm: fund thm of IA}) yields that $\mathrm{Jac}_{\Phi,\ell, \Sigma, \Sigma'}$ is an interval enclosure of $\nabla\Phi^{(\ell)}(x^{(\ell)}(.))$, finishing the proof.
	\end{proof}

    \begin{remark}\label{rem: local lipschitz from jac}
		Algorithm~\ref{alg: nn jac bounds} also yields computable certified local Lipschitz constants for $\Phi$ on any box $K \in \mathbb{IR}^{d_0}$. By the classical result that the Lipschitz constant of a differentiable map on a convex set is bounded by the supremum of the operator norm of its Jacobian, and since $\mathrm{Jac}_{\Phi,0,\Sigma,\Sigma'}(K)$ is an interval enclosure of $\nabla\Phi$ on $K$ by Corollary~\ref{cor: jac int encl} together with Definition~\ref{def: interval matrix norm}, the quantity
		$$L_K := \|\mathrm{Jac}_{\Phi,0,\Sigma,\Sigma'}(K)\|$$
		satisfies $\sup_{x \in K}\|\nabla\Phi(x)\|_{\infty,\infty} \leq L_K$.
		Hence $L_K$ is a certified local Lipschitz constant of $\Phi$ on $K$.
		This makes the Hölder refinement strategy $\mathrm{H\ddot{o}lder}_\rho$ (Algorithm~\ref{alg: hölder strategy}) practically implementable for $L^{p}$ norm computation: one sets $\gamma = 1$ and uses $L_K$ as the local Lipschitz constant $C$ for each box, replacing the global---and potentially very conservative---estimate required by Proposition~\ref{prop: fval hölder}.
	\end{remark}
    
	We now want to show that this interval enclosure is Hölder continuous.
	
	\begin{proposition}\label{prop: jac hölder}
		Let $\Omega \in \mathbb{IR}^{d_0}$, $\Phi : \mathbb{R}^{d_0} \to \mathbb{R}^{d_{L+1}}$ be a neural network with differentiable activation function $\sigma : \mathbb{R} \to \mathbb{R}$ and Hölder continuous interval enclosures $\Sigma,\Sigma':\mathbb{IR}\to \mathbb{IR}$ of $\sigma,\sigma'$, respectively. Then for $\ell=0,\dotsc,L$ the function $\mathrm{Jac}_{\Phi,\ell, \Sigma, \Sigma'}$ is Hölder continuous on $\Omega$.
	\end{proposition}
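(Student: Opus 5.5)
We argue by backward induction on $\ell$, from $\ell = L$ down to $\ell = 0$, mirroring the recursion in Algorithm~\ref{alg: nn jac bounds}. Write $J^{(\ell)}(K) = \mathrm{Jac}_{\Phi,\ell,\Sigma,\Sigma'}(K)$ for $K \in \mathbb{IR}^{d_0}$ with $K\subset\Omega$. The induction hypothesis at level $\ell+1$ has two parts: (i) $w(J^{(\ell+1)}(K)) \leq C_{\ell+1} w(K)^{\gamma_{\ell+1}}$, and (ii) $\|J^{(\ell+1)}(K)\| \leq M_{\ell+1}$ for a constant $M_{\ell+1}$ independent of $K\subset\Omega$.

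The base case $\ell=L$ is immediate: $J^{(L)}(K)=W^{(L)}$ is a real matrix, so it has width zero and norm $\|W^{(L)}\|$. Width zero trivially satisfies any Hölder bound (any $C>0$, $\gamma=1$ works).

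For the induction step we use the identity
$$J^{(\ell)}(K) = J^{(\ell+1)}(K)\,D(K)\,W^{(\ell)}, \qquad D(K)=\operatorname{diag}\big(\Sigma'(\mathrm{Fval}_{\Phi,\ell+1,\Sigma}(K))\big).$$
The width of a product with the real matrix $W^{(\ell)}$ is handled by \eqref{eq: matrix width inequality} with $w(W^{(\ell)})=0$, giving
$$w(J^{(\ell)}(K)) \leq \|W^{(\ell)}\|\,w\big(J^{(\ell+1)}(K)D(K)\big).$$
(Strictly, \eqref{eq: matrix width inequality} is stated with the real factor on the left; for a right factor we either use the transposed version or redo the entrywise estimate. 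The resulting constant is $\|W^{(\ell)}\|$ or a column-sum analogue, and in finite dimensions all such constants are equivalent, so this only changes $C$.) Applying \eqref{eq: matrix width inequality} again gives
$$w(J^{(\ell+1)}D) \leq \|J^{(\ell+1)}(K)\|\,w(D(K)) + \|D(K)\|\,w(J^{(\ell+1)}(K)).$$

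The terms are controlled as follows.
\begin{itemize}
\item $w(D(K)) = \max_i w(\Sigma'(Z_i)) \leq C_{\Sigma'} w(Z)^{\gamma_{\Sigma'}}$ with $Z=\mathrm{Fval}_{\Phi,\ell+1,\Sigma}(K)$. Proposition~\ref{prop: fval hölder} gives $w(Z)\leq C_F w(K)^{\gamma_F}$, so $w(D(K)) \lesssim w(K)^{\gamma_{\Sigma'}\gamma_F}$.
\item $\|D(K)\| \leq \|D(\Omega)\|$, using inclusion isotonicity of $\Sigma'\circ\mathrm{Fval}$ (Proposition~\ref{prop: fval inclusion isotonic}) and the fact that $\mathrm{mag}$ is monotone under inclusion. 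This is a finite constant.
\item $\|J^{(\ell+1)}(K)\| \leq \|J^{(\ell+1)}(\Omega)\| =: M_{\ell+1}$, by Proposition~\ref{prop: jac inclusion isotonic}.
\end{itemize}
Combining these gives a sum of terms $c\,w(K)^{\gamma_a} + c'\,w(K)^{\gamma_b}$. Exactly as in the proof of Proposition~\ref{prop: hölder lebesgue sobolev interval enclosure}, we factor out the minimum exponent and use $w(K)\leq w(\Omega)$ to obtain a single Hölder bound with exponent $\gamma_\ell=\min(\gamma_{\ell+1},\gamma_{\Sigma'}\gamma_F)\in(0,1]$. The norm bound (ii) at level $\ell$ follows from \eqref{eq: matrix magnitude inequality}: $\|J^{(\ell)}(K)\|\leq M_{\ell+1}\|D(\Omega)\|\|W^{(\ell)}\|$.

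The main obstacle is step (ii), the uniform boundedness of the magnitudes, because \eqref{eq: matrix width inequality} needs finite $\|\cdot\|$ factors. This is resolved by inclusion isotonicity reducing everything to the single box $\Omega$. A second subtlety is that in the base case the width is zero, so the "Hölder constant" must be allowed to be any positive number; we simply take any $C>0$.
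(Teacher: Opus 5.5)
Your proof is correct and follows the paper's argument. It uses backward induction from $\ell=L$, applies the width inequality \eqref{eq: matrix width inequality} to $J^{(\ell)}(K)=J^{(\ell+1)}(K)\operatorname{diag}(\Sigma'(\mathrm{Fval}_{\Phi,\ell+1,\Sigma}(K)))W^{(\ell)}$, gets Hölder continuity of $\Sigma'\circ\mathrm{Fval}_{\Phi,\ell+1,\Sigma}$ from Proposition~\ref{prop: fval hölder}, and uses inclusion isotonicity to bound every magnitude factor by its value on $\Omega$. The paper brackets the product as $J^{(\ell+1)}(DW^{(\ell)})$ rather than $(J^{(\ell+1)}D)W^{(\ell)}$, but because $D$ is diagonal both give the same interval matrix. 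Your caveat about the right factor is justified: with the row-sum norm, \eqref{eq: matrix width inequality} actually needs a column-sum constant for the right-hand factor, a point the paper's proof glosses over when it uses $\|\Delta^{(\ell+1)}(K)\|$. As you note, this only changes the constant.
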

	\begin{proof}
		Let $K\in \mathbb{IR}^{d_0}, \, K \subset \Omega$. We have $\mathrm{Jac}_{\Phi,L, \Sigma, \Sigma'}(K)=W^{(L)}$ and thus
		$$ w(\mathrm{Jac}_{\Phi,L, \Sigma, \Sigma'}(K)) = 0, $$
		which is in particular Hölder continuous for every exponent in $(0,1]$ and constant zero.
		Let $\ell \in \{0, \dotsc,L-1\}$ be arbitrary but fixed and assume
		\begin{equation}\label{eq: jac hölder induction hypothesis}
			w(\mathrm{Jac}_{\Phi,\ell+1, \Sigma, \Sigma'}(K)) \leq C_{\ell+1} w(K)^{\gamma_{\ell+1}}
		\end{equation}
		with $C_{\ell +1} >0$ and $\gamma_{\ell +1}\in (0,1]$.
		By Equation~\eqref{eq: algo jac recursion} we have
		\begin{align*}
			w(\mathrm{Jac}_{\Phi,\ell, \Sigma, \Sigma'}(K)) &=
			w\left( \mathrm{Jac}_{\Phi,{\ell}+1, \Sigma, \Sigma'}({K})  \operatorname{diag}(\Sigma'(\mathrm{Fval}_{\Phi , {\ell}+1, \Sigma}({K}))) W^{(\ell)}\right)
		\end{align*}
		and applying Equation~\eqref{eq: matrix width inequality} yields
		\begin{align*}
			w\left( \mathrm{Jac}_{\Phi,{\ell}+1, \Sigma, \Sigma'}({K})  \operatorname{diag}(\Sigma'(\mathrm{Fval}_{\Phi , {\ell}+1, \Sigma}({K}))) W^{(\ell)}\right) 
			&\leq ||\mathrm{Jac}_{\Phi,{\ell}+1, \Sigma, \Sigma'}({K})||w(\Delta^{(\ell+1)}(K)) \\ 
			&\quad + ||\Delta^{(\ell+1)}(K)|| w(\mathrm{Jac}_{\Phi,{\ell}+1, \Sigma, \Sigma'}({K})),
		\end{align*}
		where we define $\Delta^{(\ell+1)}(K) = \operatorname{diag}(\Sigma'(\mathrm{Fval}_{\Phi , {\ell}+1, \Sigma}({K}))) W^{(\ell)}$ for brevity.
		We have
		$$w(\Delta^{(\ell+1)}(K))  \leq ||\operatorname{diag}(\Sigma'(\mathrm{Fval}_{\Phi , {\ell}+1, \Sigma}({K})))|| w(W^{(\ell)}) + ||W^{(\ell)}||w(\operatorname{diag}(\Sigma'(\mathrm{Fval}_{\Phi , {\ell}+1, \Sigma}({K})))) 
		$$
		by Equation~\eqref{eq: matrix width inequality}. Considering $w(W^{(\ell)})=0$ and $w(\operatorname{diag}(\Sigma'(\mathrm{Fval}_{\Phi , {\ell}+1, \Sigma}({K})))) =w(\Sigma'(\mathrm{Fval}_{\Phi , {\ell}+1, \Sigma}({K})))$ we obtain
		\begin{align*}
			&||\operatorname{diag}(\Sigma'(\mathrm{Fval}_{\Phi , {\ell}+1, \Sigma}({K}))|| w(W^{(\ell)}) + ||W^{(\ell)}||w(\operatorname{diag}(\Sigma'(\mathrm{Fval}_{\Phi , {\ell}+1, \Sigma}({K})))) \\
			= &||W^{(\ell)}||w(\Sigma'(\mathrm{Fval}_{\Phi , {\ell}+1, \Sigma}({K}))).
		\end{align*}
		By assumption $\Sigma'$ is Hölder continuous and we showed in Proposition~\ref{prop: fval hölder} that $\mathrm{Fval}_{\Phi , {\ell}+1, \Sigma}({K})$ is Hölder continuous. Thus there exist $\tilde{C}_{\ell +1} > 0$ and $\tilde{\gamma}_{(\ell+1)} \in (0,1]$ independent of $K$ such that
		$$ ||W^{(\ell)}||w(\Sigma'(\mathrm{Fval}_{\Phi , {\ell}+1, \Sigma}({K}))) \leq \tilde{C}_{\ell +1}w(K)^{\tilde{\gamma}_{(\ell+1)}}. $$
		It remains to bound the matrix norms of $||\mathrm{Jac}_{\Phi,{\ell}+1, \Sigma, \Sigma'}({K})||$ and $||\Delta^{(\ell+1)}(K)||$. Since both $\mathrm{Jac}_{\Phi,{\ell}+1, \Sigma, \Sigma'}$ and $\Delta^{(\ell+1)}(K)$ are inclusion isotonic, we have
		$$||\mathrm{Jac}_{\Phi,{\ell}+1, \Sigma, \Sigma'}({K})|| \leq ||\mathrm{Jac}_{\Phi,{\ell}+1, \Sigma, \Sigma'}({\Omega})|| \quad \text{and} \quad ||\Delta^{(\ell+1)}(K)|| \leq ||\Delta^{(\ell+1)}(\Omega)||.$$
		Note, that the inclusion isotonicity of $\Delta^{(\ell+1)}$ follows from the inclusion isotonicity of $\mathrm{Fval}_{\Phi, \ell+1, \Sigma}$.
	\end{proof}

	We can now apply Theorem~\ref{thm: main conv} to compute the $W^{1,p}$ norm of neural networks for $1\leq p < \infty$.
	
	\begin{corollary}\label{corr:mainW1p}
		Let $1<p\leq \infty$, $\Phi : \mathbb{R}^{d_0} \to \mathbb{R}^{d_{L+1}}$ be a neural network with differentiable activation function $\sigma : \mathbb{R} \to \mathbb{R}$ and Hölder continuous interval enclosures $\Sigma,\Sigma':\mathbb{IR}\to \mathbb{IR}$ of $\sigma,\sigma'$, respectively.
		For $i=1, \dotsc, d_{L+1}$ and $\alpha \in\mathbb{N}^{d_0}_0$ set
		$$ F_{\Phi,\alpha,i}(K) = \mathrm{Fval}_{\Phi, L+1, \Sigma}(K)_i$$
		for $\alpha = 0$.
		For $|\alpha|=1$ with non-zero index $j$ set
		$$ F_{\Phi,\alpha,i}(K) = \mathrm{Jac}_{\Phi,0, \Sigma, \Sigma'}(K)_{ij}.$$
		Then $f_{\Phi,1,p}$ is continuous and $F_{\Phi,1,p}$ is a Hölder continuous interval enclosure of $f_{\Phi,1,p}$. Thus, $P_{1,p}=(f_{\Phi, 1,p}, F_{\Phi, 1,p}, \Omega)$ and $A_{\theta,\rho}$ are certifiable and convergent. In particular, we have
		$$ |\, ||\Phi||_{W^{1,p}(\Omega; \mathbb{R}^{d_{L+1}})} - \mathrm{Q}_n^{1/p}| \leq \eta_n^{1/p} $$
		and $\eta_n \leq \eta_0 q^n$ where $(\mathrm{Q}_n, \eta_n) = \mathrm{AdaQuad}_n(P_{1,p}, A_{\theta, \rho})$ for $n\in \mathbb{N}$ and $q=1-\theta(1-\rho)$.
	\end{corollary}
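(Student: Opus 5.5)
The argument mirrors the proof of Corollary~\ref{corr:LpNormsconvergenceOfQuadratureRule}, with the gradient entries added to the integrand. (The range $1<p\leq\infty$ in the statement is presumably a typo for $1\leq p<\infty$, since $f_{\Phi,1,p}$ and $\mathrm{AdaQuad}$ are only defined for finite $p$; we work with $1\leq p<\infty$.)

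\textbf{Step 1: continuity and the Sobolev setting.} Since $\sigma$ is differentiable and $\Sigma'$ is an interval enclosure of $\sigma'$, the first partial derivatives of $\Phi$ are given by the chain rule formula \eqref{eq: jacobian recursion}. We will assume $\sigma'$ is continuous, which is the tacit assumption needed to make $\Phi\in C^1$. Then each $D^\alpha\Phi_i$ with $|\alpha|\leq 1$ is continuous. Consequently $f_{\Phi,1,p}$ is continuous, being a finite sum of $|\cdot|^p$ applied to continuous functions. Moreover $\Phi\in W^{1,p}(\Omega;\mathbb{R}^{d_{L+1}})$ on the compact box $\Omega$, and weak derivatives coincide with classical ones.

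\textbf{Step 2: the enclosures.} For $\alpha=0$ we use Corollary~\ref{cor: fval int encl} with $\ell=L+1$, together with Proposition~\ref{prop: fval hölder}. This gives Hölder continuous interval enclosures $F_{\Phi,0,i}$ of $\Phi_i$.

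For $|\alpha|=1$ with nonzero index $j$, we take $\ell=0$ in Corollary~\ref{cor: jac int encl}. Since $x^{(0)}(K)=K$ and $\Phi^{(0)}=\Phi$, the interval matrix $\mathrm{Jac}_{\Phi,0,\Sigma,\Sigma'}(K)$ encloses $\nabla\Phi(K)$. Taking entries preserves enclosure and inclusion isotonicity, and $w(A_{ij})\leq w(A)$. Hence each entry $(i,j)$ is an interval enclosure of $\partial_j\Phi_i$, and by Proposition~\ref{prop: jac hölder} it is Hölder continuous.

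Feeding these into Proposition~\ref{prop: lebesgue sobolev interval enclosure} and Proposition~\ref{prop: hölder lebesgue sobolev interval enclosure} shows that $F_{\Phi,1,p}$ is a Hölder continuous interval enclosure of $f_{\Phi,1,p}$, with exponent equal to the minimum of the individual exponents.

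\textbf{Step 3: conclusion.} Theorem~\ref{thm: main conv} now yields that $P_{1,p}$ and $A_{\theta,\rho}$ are certifiable and converging. Proposition~\ref{prop: certificate} gives $|\mathrm{R}_n|\leq\eta_n$, and iterating Proposition~\ref{prop: convergence} gives $\eta_n\leq q^n\eta_0$.

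By Proposition~\ref{prop: lebesgue sobolev integrand}, $\int_\Omega f_{\Phi,1,p}=\|\Phi\|^p_{W^{1,p}}$. We then use the inequality $|a^{1/p}-b^{1/p}|\leq|a-b|^{1/p}$ for $a,b\geq0$ and $p\geq1$. Here $\mathrm{Q}_n\geq0$ holds because positive weights are applied to a nonnegative integrand. This gives the stated bound $\bigl|\|\Phi\|_{W^{1,p}}-\mathrm{Q}_n^{1/p}\bigr|\leq\eta_n^{1/p}$.

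\textbf{Main obstacle.} The delicate point is Proposition~\ref{prop: jac hölder}: its proof ends before the final induction bound is explicitly assembled. We would complete it as follows. Write
\[
w(J^{(\ell)})\leq \|J^{(\ell+1)}(\Omega)\|\,\tilde C_{\ell+1}w(K)^{\tilde\gamma_{\ell+1}}+\|\Delta^{(\ell+1)}(\Omega)\|\,C_{\ell+1}w(K)^{\gamma_{\ell+1}}.
\]
Then take $\gamma_\ell=\min(\tilde\gamma_{\ell+1},\gamma_{\ell+1})$. Using $w(K)\leq w(\Omega)$, absorb the excess powers into the constant, exactly as in the proof of Proposition~\ref{prop:DiffOpIntervalEnclosure}. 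A minor further care point is that all constants, including the $L_p$ from the power map, must be taken uniformly over the fixed domain $\Omega$.
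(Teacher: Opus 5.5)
Your proposal is correct and follows the same route as the paper. The paper's proof simply cites Corollary~\ref{cor: jac int encl}, Proposition~\ref{prop: jac hölder} and Theorem~\ref{thm: main conv}, with the $p$-th root step carried over from Corollary~\ref{corr:LpNormsconvergenceOfQuadratureRule}; your reading of the range as $1\leq p<\infty$ and your completion of the induction in Proposition~\ref{prop: jac hölder} are what is intended. One small simplification: you need not assume $\sigma'$ continuous, because the Hölder continuity of the enclosure $\Sigma'$ already gives $|\sigma'(s)-\sigma'(t)|\leq w(\Sigma'([s,t]))\leq C_{\Sigma'}|t-s|^{\gamma}$ for $s\leq t$, so $\Phi\in C^1$ follows from the stated hypotheses.
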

	\begin{proof}
		Corollary~\ref{cor: jac int encl} and Proposition~\ref{prop: jac hölder} yield that $F_{\Phi, 1,p}$ with $F_{\Phi,\alpha,i}$ is a Hölder continuous interval enclosure of $f_{\Phi,1,p}$. By Theorem~\ref{thm: main conv} the pair $P_{1,p}$ and $A_{\theta,\rho}$ are certifiable and converging. This finishes the proof.
	\end{proof}

	\subsection{Neural network Hessian bounds}\label{subsec:hessianBounds}
	
	We aim to construct an interval enclosure for the Hessian $\nabla^2\Phi (K)$ with $ K \in \mathbb{IR}^{d_{0}}$ for a neural network $\Phi : \mathbb{R}^{d_0} \to \mathbb{R}^{d_{L+1}}$.
	The tail neural network $\Phi^{(0)}$ satisfies $ \Phi^{(0)} =\Phi $, which implies
	\begin{equation}\label{eq: full tail hessian}
		\nabla^2\Phi^{(0)}(x^{(0)}) = \nabla^2 \Phi (x) 
	\end{equation}
	for $x^{(0)}=x\in \mathbb{R}^{d_0}$.
	Recall, for $\ell=0, \dotsc, L-1$ we have the recursion
	$$
	\Phi^{(\ell)} = \Phi^{(\ell+1)} \circ \sigma \circ T_{\ell}.
	$$
	For the $i$-th component of the neural network this implies
	\begin{align*}
		\nabla^2  \Phi_i^{(\ell)}(u) =& \,W^{(\ell)T} D^{(\ell+1)}(T_\ell(u)) \nabla^2\Phi^{(\ell+1)}_i( \sigma( T_{\ell}(u)))D^{(\ell+1)}(T_\ell(u))W^{(\ell)} \\
		&+W^{(\ell)T} \operatorname{diag}\left(\sigma'' (T_\ell(u)) \odot \nabla \Phi^{(\ell+1)}_i(\sigma (T_\ell(u)))\right) W^{(\ell)}
	\end{align*}
	for $u \in \mathbb{R}^{d_\ell}$ by the chain rule. In particular, for $u \in \mathbb{R}^{d_0}$ and $i=1, \dotsc, d_{L+1}$ we have
	\begin{equation}\label{eq: shortest tail hess}
		\nabla^2 \Phi^{(L)}_i(x^{(L)}(u)) = 0 \in \mathbb{R}^{d_{L}\times d_{L}}
	\end{equation}
	and
	\begin{equation}\label{eq: hessian recursion}
		\begin{aligned}
			\nabla^2  \Phi_i^{(\ell)}(x^{(\ell)}(u))
			&= W^{(\ell)T} D^{(\ell+1)}(z^{(\ell+1)}(u)) \nabla^2\Phi^{(\ell+1)}_i( x^{(\ell+1)}(u))
			D^{(\ell+1)}(z^{(\ell+1)}(u))W^{(\ell)} \\
			&\quad + W^{(\ell)T} \operatorname{diag}\!\left(\sigma'' (z^{(\ell+1)}(u)) \odot \nabla \Phi^{(\ell+1)}_i(x^{(\ell+1)}(u))\right) W^{(\ell)}.
		\end{aligned}
	\end{equation}
	for $\ell=0, \dotsc, L-1$ with $\odot$ denoting {entrywise} multiplication. Starting with Equation~\eqref{eq: shortest tail hess} and repeatedly applying Equation~\eqref{eq: hessian recursion} allows us to compute $\nabla^2 \Phi(u)$ by Equation~\eqref{eq: full tail hessian}. To bound the Hessian $\nabla^2 \Phi_i(K)$ for $K \in \mathbb{IR}^{d_0}$ we replace the pre-activations $z^{(\ell)}(K)$ by $\mathrm{Fval}_{\Phi, \ell, \Sigma}(K)$, tail Jacobians $\nabla \Phi^{(\ell+1)}_i(x^{(\ell+1)}(K))$ by $\mathrm{Jac}_{\Phi, \ell +1, \Sigma, \Sigma'}(K)$ and carry out the matrix multiplications via interval arithmetic (see Proposition~\ref{prop: endpoint formulas}).

	\vspace{1em}
	\begin{algorithm}[H]\label{alg: nn hess bounds}
		\LinesNotNumbered
		\DontPrintSemicolon
		\caption{$\mathrm{Hess}_{\Phi,i,\ell, \Sigma, \Sigma', \Sigma''}(K)$}
		\Input{
			Interval vector $K\in \mathbb{IR}^{d_0}$; neural network $\Phi : \mathbb{R}^{d_0} \to \mathbb{R}^{d_{L+1}}$ of depth $L$ with activation function $\sigma : \mathbb{R} \to \mathbb{R}$; output component $i \in \{1, \dotsc, d_{L+1}\}$; layer $\ell \in\{ 0, \dotsc, L\}$; interval enclosures $\Sigma,\Sigma',\Sigma'' : \mathbb{IR} \to \mathbb{IR}$ of $\sigma$, $\sigma'$, $\sigma''$, respectively.
		}
		\Output{
			Interval matrix $H^{(\ell)}\in \mathbb{IR}^{d_{\ell} \times d_{\ell}}$ containing the Hessians for $i$-th output component of the $\ell$-th tail neural network with inputs from $x^{(\ell)}(K)$.
		}
		\vspace{1em}
		$H^{(L)} = 0 \in \mathbb{R}^{d_L \times d_L}$\;
		\For{$\ k = L-1, \dotsc, \ell$}{
			$Z^{(k+1)} = \mathrm{Fval}_{\Phi, k+1,\Sigma}(K)$\;
			$J^{(k+1)}_i = \mathrm{Jac}_{\Phi, k+1, \Sigma, \Sigma'}(K)_{i:}$ (extract $i$-th row)\;
			$H^{(k)} = W^{(k)T}  \operatorname{diag}(\Sigma'(Z^{(k+1)})) H^{(k+1)} \operatorname{diag}(\Sigma'(Z^{(k+1)})) ~W^{(k)} + W^{(k)T}~\operatorname{diag}\left( \Sigma''(Z^{(k+1)}) \odot J^{(k+1)}_i\right)  W^{(k)}$\;
		}
		\Return $H^{(\ell)}$\;
	\end{algorithm}
	\vspace{1em}
	
	We now want to show that for a neural network $\Phi : \mathbb{R}^{d_0} \to \mathbb{R}^{d_{L+1}}$ the function $\mathrm{Hess}_{\Phi,i,\ell, \Sigma, \Sigma', \Sigma''}$ is an interval extension of the Hessian of the $i$-th component of the $\ell$-th tail neural network for inputs from $x^{(\ell)}(K)$ and $K \in \mathbb{IR}^{d_0}$, $i=1, \dotsc, d_{L+1}$, $\ell = 0, \dotsc, L$.

	\begin{proposition}\label{prop: hess interval extension}
		Let $\Phi : \mathbb{R}^{d_0} \to \mathbb{R}^{d_{L+1}}$ be a neural network with twice differentiable activation function $\sigma : \mathbb{R} \to \mathbb{R}$ and interval extensions $\Sigma,\Sigma',\Sigma'':\mathbb{IR}\to \mathbb{IR}$ of $\sigma,\sigma',\sigma''$, respectively. Then for $i=1, \dotsc, d_{L+1}$, $\ell=0,\dotsc, L$ we have that $\mathrm{Hess}_{\Phi,i,\ell, \Sigma, \Sigma', \Sigma''}$ is an interval extension of the Hessian of the $i$-th component of the $\ell$-th tail neural network evaluated at $x^{(\ell)}$, i.e.
		$$ \nabla^2 \Phi_i^{(\ell)}(x^{(\ell)}(\{u\})) = \mathrm{Hess}_{\Phi,i,\ell, \Sigma, \Sigma', \Sigma''}(\{u\}) $$
		for all $u\in \mathbb{R}^{d_0}$.
	\end{proposition}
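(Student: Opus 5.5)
We argue by backward induction on $\ell$, in the same way as Proposition~\ref{prop: jac interval extension}. Fix $u\in\mathbb{R}^{d_0}$ and $i\in\{1,\dotsc,d_{L+1}\}$, and put $K=\{u\}$ as input to Algorithm~\ref{alg: nn hess bounds}. The claim is that each intermediate interval matrix $H^{(k)}$ computed by the algorithm is the degenerate interval matrix equal to $\nabla^2\Phi_i^{(k)}(x^{(k)}(u))$, for $k=L,L-1,\dotsc,\ell$. Returning $H^{(\ell)}$ then gives the statement.

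\emph{Base case $k=L$.} The algorithm sets $H^{(L)}=0$. By Equation~\eqref{eq: shortest tail hess}, $\nabla^2\Phi_i^{(L)}(x^{(L)}(u))=0$, since $\Phi^{(L)}=T_L$ is affine. So the claim holds at $k=L$.

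\emph{Inductive step.} Assume $H^{(k+1)}$ is the singleton $\{\nabla^2\Phi_i^{(k+1)}(x^{(k+1)}(u))\}$. We identify every interval quantity entering the update for $H^{(k)}$.
\begin{itemize}
\item By Proposition~\ref{prop: fval interval extension}, $Z^{(k+1)}=\mathrm{Fval}_{\Phi,k+1,\Sigma}(\{u\})=\{z^{(k+1)}(u)\}$.
\item Since $\Sigma'$ and $\Sigma''$ are interval extensions, $\operatorname{diag}(\Sigma'(Z^{(k+1)}))$ is the singleton $\{D^{(k+1)}(z^{(k+1)}(u))\}$, and $\Sigma''(Z^{(k+1)})$ is the singleton $\{\sigma''(z^{(k+1)}(u))\}$ (entrywise).
\item By Proposition~\ref{prop: jac interval extension}, $\mathrm{Jac}_{\Phi,k+1,\Sigma,\Sigma'}(\{u\})=\{\nabla\Phi^{(k+1)}(x^{(k+1)}(u))\}$. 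Its $i$-th row is $J^{(k+1)}_i=\{\nabla\Phi_i^{(k+1)}(x^{(k+1)}(u))\}$.
\end{itemize}
On degenerate intervals, interval addition, multiplication and the entrywise product $\odot$ reduce to the real operations by Proposition~\ref{prop: endpoint formulas}. The same holds for interval matrix products by Remark~\ref{rem:IntervalLinAlg}. Hence the interval update for $H^{(k)}$ evaluates to the singleton given by the right-hand side of Equation~\eqref{eq: hessian recursion}, which equals $\nabla^2\Phi_i^{(k)}(x^{(k)}(u))$. This completes the induction.

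The only non-bookkeeping point is Equation~\eqref{eq: hessian recursion} itself. The paper derived it from the chain rule for $\Phi^{(k)}=\Phi^{(k+1)}\circ\sigma\circ T_k$, using that $\sigma$ is twice differentiable. It also uses that the tail networks are twice differentiable, which follows inductively because compositions of twice differentiable maps are twice differentiable. This step is standard, so I expect no real obstacle.

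One point does need care: the Jacobian row must be indexed consistently with $\mathrm{Jac}_{\Phi,k+1}$. The $\odot$ in Equation~\eqref{eq: hessian recursion} pairs $\sigma''(z^{(k+1)}_j)$ with $\partial_j\Phi_i^{(k+1)}$, where $j$ runs over the $d_{k+1}$ coordinates. This matches the algorithm's row extraction $J^{(k+1)}_{i}$, which has length $d_{k+1}$.
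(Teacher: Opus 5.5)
Your proposal is correct and takes the same approach as the paper: both compute $\nabla^2\Phi_i^{(\ell)}(x^{(\ell)}(u))$ from Equation~\eqref{eq: shortest tail hess} and the recursion~\eqref{eq: hessian recursion}, using that $\Sigma'$, $\Sigma''$, $\mathrm{Fval}_{\Phi,k,\Sigma}$ and $\mathrm{Jac}_{\Phi,k,\Sigma,\Sigma'}$ are interval extensions. Your explicit backward induction, and your observation that interval operations on degenerate intervals reduce to real arithmetic, spell out what the paper's one-line proof leaves implicit.
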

	
	\begin{proof}
		Let $u \in \mathbb{R}^{d_0}$. The result follows directly from the fact that for $i=1, \dotsc, d_{L+1}$, $\ell=0, \dotsc, L$ we can compute $\nabla^2 \Phi_i^{(\ell)}(x^{(\ell)}(u))$ via Equation \eqref{eq: shortest tail hess} and recursion \eqref{eq: hessian recursion}, since $\Sigma'$, $\Sigma''$, $\mathrm{Fval}_{\Phi,k,\Sigma}$ and $\mathrm{Jac}_{\Phi,k}$ are interval extensions for $k=L-1, \dotsc, \ell$.
	\end{proof}

	Next, we show that this interval extension is inclusion isotonic.
	
	\begin{proposition}\label{prop: hess inclusion isotonic}
		Let $\Phi : \mathbb{R}^{d_0} \to \mathbb{R}^{d_{L+1}}$ be a neural network with twice differentiable activation function $\sigma : \mathbb{R} \to \mathbb{R}$ and inclusion isotonic interval extensions $\Sigma,\Sigma',\Sigma'':\mathbb{IR}\to \mathbb{IR}$ of $\sigma,\sigma', \sigma''$, respectively. Then for $i=1, \dotsc, d_{L+1}$, $\ell=0,\dotsc, L$ we have that $\mathrm{Hess}_{\Phi,i,\ell, \Sigma, \Sigma', \Sigma''}$ is inclusion isotonic, i.e.
		\begin{equation}
			\mathrm{Hess}_{\Phi,i,\ell, \Sigma, \Sigma', \Sigma''}(\tilde{K}) \subset \mathrm{Hess}_{\Phi,i,\ell, \Sigma, \Sigma', \Sigma''}(K) \quad \text{if}\quad \tilde{K} \subset K
		\end{equation}
		for $\tilde{K},K \in \mathbb{IR}^{d_0}$.
	\end{proposition}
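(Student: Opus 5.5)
The plan is a backward induction on $\ell$ from $L$ down to $0$, the same scheme used in Proposition~\ref{prop: jac inclusion isotonic}. Fix $i$ and boxes $\tilde{K}\subset K$ in $\mathbb{IR}^{d_0}$. For the base case $\ell=L$, Algorithm~\ref{alg: nn hess bounds} returns the zero matrix regardless of the input box. Hence $\mathrm{Hess}_{\Phi,i,L,\Sigma,\Sigma',\Sigma''}(\tilde{K})=\mathrm{Hess}_{\Phi,i,L,\Sigma,\Sigma',\Sigma''}(K)=0$, and the inclusion holds trivially.

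For the inductive step, assume the inclusion holds for $\ell+1$, where $\ell\in\{0,\dots,L-1\}$. By construction we have the recursion
$$\mathrm{Hess}_{\Phi,i,\ell,\Sigma,\Sigma',\Sigma''}(\hat K)= W^{(\ell)T}\Lambda(\hat K)\,\mathrm{Hess}_{\Phi,i,\ell+1,\Sigma,\Sigma',\Sigma''}(\hat K)\,\Lambda(\hat K)W^{(\ell)} + W^{(\ell)T}\operatorname{diag}\big(\Sigma''(Z(\hat K))\odot J_i(\hat K)\big)W^{(\ell)},$$
with the abbreviations $Z(\hat K)=\mathrm{Fval}_{\Phi,\ell+1,\Sigma}(\hat K)$, $\Lambda(\hat K)=\operatorname{diag}(\Sigma'(Z(\hat K)))$, and $J_i(\hat K)$ the $i$-th row of $\mathrm{Jac}_{\Phi,\ell+1,\Sigma,\Sigma'}(\hat K)$. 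This is the analogue of Equation~\eqref{eq: algo jac recursion}.

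Each ingredient is inclusion isotonic in $\hat K$:
\begin{itemize}
\item $Z$ is inclusion isotonic by Proposition~\ref{prop: fval inclusion isotonic}.
\item $\Sigma'(Z)$ and $\Sigma''(Z)$ are inclusion isotonic as compositions of inclusion isotonic maps. Therefore $\Lambda$ is too, since the diagonal entries are these intervals and the off-diagonal entries are the constant $0$.
\item $J_i$ is inclusion isotonic by Proposition~\ref{prop: jac inclusion isotonic}, since extracting a row preserves entrywise inclusion.
\item $\mathrm{Hess}_{\Phi,i,\ell+1,\Sigma,\Sigma',\Sigma''}$ is inclusion isotonic by the induction hypothesis.
\end{itemize}

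The key auxiliary fact is that the interval operations used are inclusion monotone in each argument. These operations are addition, multiplication (and hence the entrywise product $\odot$), and interval matrix products as in Remark~\ref{rem:IntervalLinAlg}. This follows from Definition~\ref{def:IntArit}: if $X'\subset X$ and $Y'\subset Y$, then $\{x\circ y: x\in X',\,y\in Y'\}\subset\{x\circ y: x\in X,\,y\in Y\}$. Each entry of a matrix product is a finite sum of such products, so matrix products, sums, and multiplication by the real matrices $W^{(\ell)}$, $W^{(\ell)T}$ (degenerate intervals) all preserve inclusion entrywise.

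One subtlety is that the matrix product is not associative in interval arithmetic. The argument must therefore fix the evaluation order used by the algorithm, for example left to right, and apply monotonicity to each successive binary product. Monotonicity holds for any fixed order, so this causes no difficulty. Substituting the smaller operands obtained from $\tilde K$ into both summands yields $\mathrm{Hess}_{\Phi,i,\ell,\Sigma,\Sigma',\Sigma''}(\tilde K)\subset\mathrm{Hess}_{\Phi,i,\ell,\Sigma,\Sigma',\Sigma''}(K)$, which completes the induction.

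I expect no real obstacle here. The only point requiring care is stating the monotonicity of interval matrix products with a fixed parenthesization, including the case where the same interval matrix $\Lambda$ appears twice. That repetition is harmless because both occurrences shrink simultaneously.
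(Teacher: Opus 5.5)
Your proposal is correct and follows the paper's own argument. The paper also uses backward induction on $\ell$, with the trivial base case $\mathrm{Hess}_{\Phi,i,L,\Sigma,\Sigma',\Sigma''}\equiv 0$, and an inductive step that expands the algorithm's recursion and invokes inclusion isotonicity of $\Sigma'$, $\Sigma''$, $\mathrm{Fval}_{\Phi,\ell+1,\Sigma}$, $\mathrm{Jac}_{\Phi,\ell+1,\Sigma,\Sigma'}$ and the induction hypothesis. Where the paper just says ``proceed analogously'', you make explicit the inclusion monotonicity of interval sums, products and matrix products under a fixed evaluation order.
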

	\begin{proof}
		Let $\tilde{K},K \in \mathbb{IR}^{d_0}$ satisfy $\tilde{K} \subset K$ and $i \in \{1, \dotsc, d_{L+1}\}$. We have
		$$ \mathrm{Hess}_{\Phi,i,L, \Sigma, \Sigma', \Sigma''}(\tilde{K}) = \mathrm{Hess}_{\Phi,i,L, \Sigma, \Sigma', \Sigma''}({K}) = 0. $$
		Now assume
		\begin{equation}\label{eq: hess induction hypothesis}
			\mathrm{Hess}_{\Phi, i,\ell+1, \Sigma, \Sigma', \Sigma''}(\tilde{K}) \subset \mathrm{Hess}_{\Phi, i,\ell+1, \Sigma, \Sigma', \Sigma''}(K)
		\end{equation}
		for an arbitrary but fixed $\ell \in \{0, \dotsc, L-1\}$. Note that by definition we have
		\begin{equation}\label{eq: algo hess recursion}
			\begin{aligned}
				\mathrm{Hess}_{\Phi, i,\hat{\ell}, \Sigma, \Sigma', \Sigma''}(\hat{K})
				&= W^{(\hat{\ell})T} \operatorname{diag}\left( \Sigma'(\mathrm{Fval}_{\Phi, \hat{\ell}+1}(\hat{K}))\right)  \mathrm{Hess}_{\Phi, i,\hat{\ell}+1, \Sigma, \Sigma', \Sigma''}(\hat{K}) \\
				&\quad  \operatorname{diag}\left( \Sigma'(\mathrm{Fval}_{\Phi, \hat{\ell}+1}(\hat{K}))\right) W^{(\hat{\ell})}\\
				&\quad + W^{(\hat{\ell})T}  \operatorname{diag}\left(\Sigma'' (\mathrm{Fval}_{\Phi, \hat{\ell}+1}(\hat{K})) \odot \mathrm{Jac}_{\Phi, \hat{\ell}+1, \Sigma, \Sigma'}(\hat{K})\right)  W^{(\hat{\ell})},
			\end{aligned}
		\end{equation}
		for $\hat{K}\in \mathbb{IR}^{d_0}$ and $\hat{\ell} = 0, \dotsc, L-1$. We proceed analogously to the proof of Proposition~\ref{prop: jac inclusion isotonic} and apply the inclusion isotonicity of $\Sigma'$, $\Sigma''$, $\mathrm{Fval}_{\Phi, \ell+1, \Sigma}$ and $\mathrm{Jac}_{\Phi,i, \ell+1, \Sigma, \Sigma'}$. This yields
		$$  \mathrm{Hess}_{\Phi, i,\ell, \Sigma, \Sigma', \Sigma''}(\tilde{K}) \subset \mathrm{Hess}_{\Phi, i,\ell, \Sigma, \Sigma', \Sigma''}(K) $$
		and finishes the proof.
	\end{proof}
	
	From the Fundamental Theorem of Interval Analysis it follows that for $i=1, \dotsc, d_{L+1}$, $\ell=0,\dotsc,L$ the function $\mathrm{Hess}_{\Phi,i,\ell, \Sigma, \Sigma', \Sigma''}$ is an interval enclosure of $\nabla^2\Phi_i^{(\ell)}(x^{(\ell)}(K))$ for neural network $\Phi$ and $K \in \mathbb{IR}^d$.

	\begin{corollary}\label{cor: hess int encl}
		Let $\Phi : \mathbb{R}^{d_0} \to \mathbb{R}^{d_{L+1}}$ be a neural network with twice differentiable activation function $\sigma : \mathbb{R} \to \mathbb{R}$ and interval enclosures $\Sigma,\Sigma',\Sigma'':\mathbb{IR}\to \mathbb{IR}$ of $\sigma,\sigma',\sigma''$, respectively. Then for $i=1, \dotsc, d_{L+1}$, $\ell=0,\dotsc,L$ the function $\mathrm{Hess}_{\Phi,i,\ell, \Sigma, \Sigma',\Sigma''}(K)$ is an interval enclosure of $\nabla^2\Phi_i^{(\ell)}(x^{(\ell)}(K))$ for $K\in \mathbb{IR}^{d_0}$.
	\end{corollary}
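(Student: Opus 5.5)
The plan is to mirror the proofs of Corollary~\ref{cor: fval int encl} and Corollary~\ref{cor: jac int encl}. By Theorem~\ref{thm: fund thm of IA}, it suffices to show that $\mathrm{Hess}_{\Phi,i,\ell,\Sigma,\Sigma',\Sigma''}$ is an inclusion isotonic interval extension of $u\mapsto\nabla^2\Phi_i^{(\ell)}(x^{(\ell)}(u))$. Propositions~\ref{prop: hess interval extension} and~\ref{prop: hess inclusion isotonic} supply these two properties. Their hypotheses require $\Sigma,\Sigma',\Sigma''$ to be inclusion isotonic interval extensions, whereas the corollary only assumes interval enclosures. Bridging this gap is the one point needing care.

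\textbf{First step: compatibility of hypotheses.} Definition~\ref{def: int enclosure} gives inclusion isotonicity for free, but not agreement on singletons. For a singleton $\{t\}$, however, an enclosure satisfies $\{\sigma^{(k)}(t)\}\subset\Sigma^{(k)}(\{t\})$, and that is all that is used downstream. So I will not rely on Proposition~\ref{prop: hess interval extension} verbatim. Instead, I will prove the enclosure property directly by backward induction on $\ell$, from $L$ down to $0$.

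\textbf{Base case.} For $\ell=L$, the algorithm returns $H^{(L)}=0$, and $\nabla^2\Phi_i^{(L)}\equiv 0$ by \eqref{eq: shortest tail hess}. The claim is therefore trivial.

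\textbf{Inductive step.} Fix $u\in K$ and assume $\nabla^2\Phi_i^{(\ell+1)}(x^{(\ell+1)}(u))\in\mathrm{Hess}_{\Phi,i,\ell+1,\Sigma,\Sigma',\Sigma''}(K)$. The containments needed are the following.
\begin{itemize}
\item By Corollary~\ref{cor: fval int encl}, $z^{(\ell+1)}(u)\in\mathrm{Fval}_{\Phi,\ell+1,\Sigma}(K)$.
\item Since $\Sigma'$ and $\Sigma''$ are enclosures, $\sigma'(z^{(\ell+1)}(u))\in\Sigma'(\mathrm{Fval}_{\Phi,\ell+1,\Sigma}(K))$ componentwise, and likewise for $\sigma''$ and $\Sigma''$.
\item By Corollary~\ref{cor: jac int encl}, $\nabla\Phi_i^{(\ell+1)}(x^{(\ell+1)}(u))$ lies in the $i$-th row of $\mathrm{Jac}_{\Phi,\ell+1,\Sigma,\Sigma'}(K)$.
\end{itemize}
The real recursion \eqref{eq: hessian recursion} is built from exactly these quantities using only sums, products, entrywise products, and multiplication by the real matrices $W^{(\ell)}$ and $W^{(\ell)T}$. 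Interval arithmetic (Definition~\ref{def:IntArit}, Remark~\ref{rem:IntervalLinAlg}) has the property that $a\in A$ and $b\in B$ imply $a\circ b\in A\circ B$, and this extends entrywise to matrix products. Substituting the real quantities into the interval formula \eqref{eq: algo hess recursion} therefore gives $\nabla^2\Phi_i^{(\ell)}(x^{(\ell)}(u))\in\mathrm{Hess}_{\Phi,i,\ell,\Sigma,\Sigma',\Sigma''}(K)$. Since $u\in K$ was arbitrary, the whole range is enclosed.

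\textbf{Inclusion isotonicity.} This is exactly Proposition~\ref{prop: hess inclusion isotonic}, whose proof uses only the inclusion isotonicity of $\Sigma',\Sigma''$, of $\mathrm{Fval}$ (Proposition~\ref{prop: fval inclusion isotonic}), and of $\mathrm{Jac}$ (Proposition~\ref{prop: jac inclusion isotonic}). All of these hold because enclosures are isotonic by definition.

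\textbf{Main obstacle.} The delicate point is that the interval recursion reuses the same interval $\Sigma'(Z^{(k+1)})$ twice in the first term. Interval evaluation treats the two occurrences as independent, which only enlarges the result, so containment is preserved. Making this explicit is the one subtle point of the argument.
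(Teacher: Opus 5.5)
Your argument is correct, but it takes a somewhat different route from the paper's. The paper proves the corollary in one line: Proposition~\ref{prop: hess interval extension} (interval extension) and Proposition~\ref{prop: hess inclusion isotonic} (inclusion isotonicity) are fed into Theorem~\ref{thm: fund thm of IA}. You keep the isotonicity half, but you replace ``extension plus Fundamental Theorem'' with a direct backward induction. That induction shows $\nabla^2\Phi_i^{(\ell)}(x^{(\ell)}(u))\in\mathrm{Hess}_{\Phi,i,\ell,\Sigma,\Sigma',\Sigma''}(K)$ for each $u\in K$. It uses only two facts: every real operand lies in its interval, and interval operations contain the corresponding real results.

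The paper's route is shorter and modular. Yours has the advantage of matching the stated hypotheses. Proposition~\ref{prop: hess interval extension} requires $\Sigma,\Sigma',\Sigma''$ to agree with $\sigma,\sigma',\sigma''$ on singletons. A mere enclosure, for example an outward-rounded one, need not do this, so the paper tacitly strengthens the assumptions.

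It is worth adding that the two results you invoke, Corollaries~\ref{cor: fval int encl} and~\ref{cor: jac int encl}, are proved in the paper by the same extension-based argument. They therefore carry the same hypothesis mismatch in their proofs, even though their statements assume only enclosures. The identical pointwise-containment induction over the layers repairs them, and it is simpler there. Saying so would make your argument fully self-consistent.

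Finally, the reuse of $\Sigma'(Z^{(k+1)})$ is not really a delicate point. Since you argue pointwise for fixed $u$, the same real value $\sigma'(z^{(k+1)}(u))$ lies in both occurrences. The dependency effect, like the non-associativity of interval matrix products, therefore affects only sharpness, never containment.
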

	\begin{proof}
		Since by Proposition~\ref{prop: hess interval extension} and Proposition~\ref{prop: hess inclusion isotonic} the function $\mathrm{Hess}_{\Phi,i,\ell, \Sigma, \Sigma',\Sigma''}$ is an inclusion isotonic interval extension of $\nabla^2\Phi_i^{(\ell)}(x^{(\ell)}(.))$. The Fundamental Theorem of Interval Analysis (Theorem \ref{thm: fund thm of IA}) yields that $\mathrm{Hess}_{\Phi,i,\ell, \Sigma, \Sigma', \Sigma''}$ is an interval enclosure of $\nabla^2\Phi_i^{(\ell)}(x^{(\ell)}(.))$, finishing the proof.
	\end{proof}
	
	We now want to show that this interval enclosure is Hölder continuous.
	
	\begin{proposition}\label{prop: hess hölder}
		Let $\Omega \in \mathbb{IR}^{d_0}$, $\Phi : \mathbb{R}^{d_0} \to \mathbb{R}^{d_{L+1}}$ be a neural network with twice differentiable activation function $\sigma : \mathbb{R} \to \mathbb{R}$ and Hölder continuous interval enclosures $\Sigma,\Sigma',\Sigma'':\mathbb{IR}\to \mathbb{IR}$ of $\sigma,\sigma',\sigma''$, respectively. Then for $i=1, \dotsc, d_{L+1}$, $\ell=0,\dotsc,L$ the function $\mathrm{Hess}_{\Phi,i,\ell, \Sigma, \Sigma', \Sigma''}$ is Hölder continuous on $\Omega$.
	\end{proposition}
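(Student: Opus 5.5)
We argue by backward induction on $\ell$, mirroring the proof of Proposition~\ref{prop: jac hölder}. Fix $i$ and $K\in\mathbb{IR}^{d_0}$ with $K\subset\Omega$. For $\ell=L$ we have $\mathrm{Hess}_{\Phi,i,L,\Sigma,\Sigma',\Sigma''}(K)=0$. Its width is zero, so it is Hölder continuous with any exponent. For the inductive step we assume
$$w(\mathrm{Hess}_{\Phi,i,\ell+1,\Sigma,\Sigma',\Sigma''}(K))\le C_{\ell+1}w(K)^{\gamma_{\ell+1}}.$$
We then use the recursion \eqref{eq: algo hess recursion}, which writes $\mathrm{Hess}_{\Phi,i,\ell,\Sigma,\Sigma',\Sigma''}(K)$ as a sum of two interval matrix products. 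By \eqref{eq: width subadditivity} it suffices to bound the width of each summand separately.

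For the first summand we set $D(K)=\operatorname{diag}(\Sigma'(\mathrm{Fval}_{\Phi,\ell+1,\Sigma}(K)))$ and $H(K)=\mathrm{Hess}_{\Phi,i,\ell+1,\Sigma,\Sigma',\Sigma''}(K)$. The summand is then $W^{(\ell)T}D(K)H(K)D(K)W^{(\ell)}$. Applying \eqref{eq: matrix width inequality} repeatedly, and using that $W^{(\ell)}$ has zero width, the width is at most
$$\|W^{(\ell)T}\|\,\|W^{(\ell)}\|\bigl(2\|D(K)\|\,\|H(K)\|\,w(D(K))+\|D(K)\|^2 w(H(K))\bigr),$$
where \eqref{eq: matrix magnitude inequality} is used to bound norms of products. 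By inclusion isotonicity of $\mathrm{Fval}$, $\Sigma'$ and $\mathrm{Hess}$ (Propositions~\ref{prop: fval inclusion isotonic} and~\ref{prop: hess inclusion isotonic}) we get $\|D(K)\|\le\|D(\Omega)\|$ and $\|H(K)\|\le\|H(\Omega)\|$. We have $w(D(K))=w(\Sigma'(\mathrm{Fval}_{\Phi,\ell+1,\Sigma}(K)))$. Composing the Hölder bound of Proposition~\ref{prop: fval hölder} with that of $\Sigma'$ gives $w(D(K))\le C_{\Sigma'}(C w(K)^{\gamma})^{\gamma_{\Sigma'}}$. The term $w(H(K))$ is controlled by the induction hypothesis.

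For the second summand, $W^{(\ell)T}\operatorname{diag}(\Sigma''(Z^{(\ell+1)})\odot J_i^{(\ell+1)})W^{(\ell)}$, the middle diagonal has entries that are interval products of two scalars. Their widths are bounded exactly as in the product step of the proof of Proposition~\ref{prop:DiffOpIntervalEnclosure}:
$$w(AB)\le \mathrm{mag}(A)w(B)+\mathrm{mag}(B)w(A).$$
The magnitudes are bounded by their values on $\Omega$ via inclusion isotonicity. The width of $\Sigma''(\mathrm{Fval}_{\Phi,\ell+1,\Sigma}(K))$ is Hölder in $w(K)$ by composing the Hölder bounds of $\Sigma''$ and $\mathrm{Fval}$. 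The width of the Jacobian row $J_i^{(\ell+1)}$ is Hölder in $w(K)$ by Proposition~\ref{prop: jac hölder}.

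Collecting the terms gives a finite sum $\sum_j c_j w(K)^{\beta_j}$ with all $\beta_j\in(0,1]$. Let $\beta=\min_j\beta_j$. As in Proposition~\ref{prop: hölder lebesgue sobolev interval enclosure}, factor out $w(K)^\beta$ and bound $w(K)^{\beta_j-\beta}\le w(\Omega)^{\beta_j-\beta}$. This yields constants $C_\ell>0$ and $\gamma_\ell\in(0,1]$ independent of $K$, which closes the induction.

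The main obstacle is the bookkeeping in the first summand, a triple product of interval matrices. We must make sure every norm factor is bounded uniformly in $K$, by its value on $\Omega$ via inclusion isotonicity. Since interval matrix multiplication is not associative, we also have to fix the evaluation order and apply \eqref{eq: matrix width inequality} to each binary product in that order. Any fixed order works, because each application only needs isotonic norm bounds and Hölder width bounds of the factors.
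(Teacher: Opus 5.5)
Your proposal is correct and follows the paper's proof essentially step for step. You use backward induction from $\mathrm{Hess}_{\Phi,i,L,\Sigma,\Sigma',\Sigma''}=0$ and split the recursion \eqref{eq: algo hess recursion} into its two summands. The first summand is bounded by repeated use of \eqref{eq: matrix width inequality} and \eqref{eq: matrix magnitude inequality}, with norm factors controlled on $\Omega$ by inclusion isotonicity. The second uses the scalar product width bound together with the Hölder continuity of $\Sigma''\circ\mathrm{Fval}_{\Phi,\ell+1,\Sigma}$ and of the Jacobian row, and exponents are then merged via $w(K)\le w(\Omega)$.

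In places you are more careful than the paper. You write $\|W^{(\ell)T}\|\,\|W^{(\ell)}\|$ instead of $\|W^{(\ell)}\|^2$ and fix an evaluation order for the non-associative interval products. You also correctly name $\Sigma''(\mathrm{Fval})$ and $\mathrm{Jac}$ where the paper's final step has a typo. The route, however, is the same.
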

	
	\begin{proof}
		We proceed analogously to the proof of Proposition~\ref{prop: jac hölder}. Let $K\in \mathbb{IR}^{d_0}, \, K \subset \Omega$, and $i \in \{1, \dotsc, d_{L+1}\}$. We have $\mathrm{Hess}_{\Phi,L, \Sigma, \Sigma', \Sigma''}(K)=0 \in \mathbb{R}^{d_L \times d_L}$, which is in particular Hölder continuous for every exponent in $(0,1]$ and constant zero. Let $\ell \in \{0, \dotsc,L-1\}$ be arbitrary but fixed and assume
		\begin{equation}\label{eq: hess hölder induction hypothesis}
			w(\mathrm{Hess}_{\Phi,i,\ell+1, \Sigma, \Sigma', \Sigma''}(K)) \leq C_{\ell+1} w(K)^{\gamma_{\ell+1}}.
		\end{equation}
		Considering Remark~\ref{rem: width linearity} and Equation~\eqref{eq: algo hess recursion}, we obtain
		\begin{equation*}
			\begin{aligned}
				w(\mathrm{Hess}_{\Phi, i,{\ell}, \Sigma, \Sigma', \Sigma''}({K}))
				&\leq w\bigg(W^{({\ell})T} \operatorname{diag}\left( \Sigma'(\mathrm{Fval}_{\Phi, {\ell}+1}({K}))\right)  \mathrm{Hess}_{\Phi, i,{\ell}+1, \Sigma, \Sigma', \Sigma''}({K}) \\
				&\quad  \operatorname{diag}\left( \Sigma'(\mathrm{Fval}_{\Phi, {\ell}+1}({K}))\right) W^{({\ell})}\bigg)\\
				&\quad + w\bigg(W^{({\ell})T} \operatorname{diag}\left(\Sigma'' (\mathrm{Fval}_{\Phi, {\ell}+1}({K})) \odot \mathrm{Jac}_{\Phi, {\ell}+1, \Sigma, \Sigma'}({K})\right)  W^{({\ell})}\bigg),
			\end{aligned}
		\end{equation*}
		Since the width of point matrices is zero, repeatedly applying Equation~\eqref{eq: matrix width inequality} and \eqref{eq: matrix magnitude inequality} yields
		\begin{equation*}
			\begin{aligned}
				& w\bigg(W^{({\ell})T} \operatorname{diag}\left( \Sigma'(\mathrm{Fval}_{\Phi, {\ell}+1}({K}))\right)  \mathrm{Hess}_{\Phi, i,{\ell}+1, \Sigma, \Sigma', \Sigma''}({K}) 
				\operatorname{diag}\left( \Sigma'(\mathrm{Fval}_{\Phi, {\ell}+1}({K}))\right) W^{({\ell})}\bigg) \\
				&\leq ||W^{({\ell})}||^2 ||\Sigma'(\mathrm{Fval}_{\Phi, {\ell}+1}({K}))||^2 w(\mathrm{Hess}_{\Phi, i,{\ell}+1, \Sigma, \Sigma', \Sigma''}({K})) \\
				&+ 2 ||W^{({\ell})}||^2 ||\Sigma'(\mathrm{Fval}_{\Phi, {\ell}+1}({K}))|| \,||\mathrm{Hess}_{\Phi, i,{\ell}+1, \Sigma, \Sigma', \Sigma''}({K}) ||\, w(\Sigma' (\mathrm{Fval}_{\Phi, {\ell}+1}({K}))).
			\end{aligned}
		\end{equation*}
		By Proposition~\ref{prop: fval hölder} and by Equation~\eqref{eq: hess induction hypothesis}, we have that $\Sigma'(\mathrm{Fval}_{\Phi,\ell+1})$ and $\mathrm{Hess}_{\Phi, i,{\ell}+1, \Sigma, \Sigma', \Sigma''}$ are Hölder continuous. By inclusion isotonicity of $\Sigma'(\mathrm{Fval}_{\Phi, \ell+1, \Sigma})$ and $\mathrm{Hess}_{\Phi, i,{\ell}+1, \Sigma, \Sigma', \Sigma''}$, we have that
		$$ ||W^{({\ell})}||^2 ||\Sigma'(\mathrm{Fval}_{\Phi, {\ell}+1}({K}))||^2  \leq ||W^{({\ell})}||^2 ||\Sigma'(\mathrm{Fval}_{\Phi, {\ell}+1}({\Omega}))||^2$$
		and 
		\begin{equation*}
			\begin{aligned}
				&2 ||W^{({\ell})}||^2 ||\Sigma'(\mathrm{Fval}_{\Phi, {\ell}+1}({K}))|| \,||\mathrm{Hess}_{\Phi, i,{\ell}+1, \Sigma, \Sigma', \Sigma''}({K}) || \\
				&\leq 2 ||W^{({\ell})}||^2 ||\Sigma'(\mathrm{Fval}_{\Phi, {\ell}+1}({\Omega}))|| \,||\mathrm{Hess}_{\Phi, i,{\ell}+1, \Sigma, \Sigma', \Sigma''}({\Omega}) ||,
			\end{aligned}
		\end{equation*}
		bounding the coefficients. Proceeding analogously for the second summand yields
		\begin{equation*}
			\begin{aligned}
				& w\bigg(W^{({\ell})T} \operatorname{diag}\left(\Sigma'' (\mathrm{Fval}_{\Phi, {\ell}+1}({K})) \odot \mathrm{Jac}_{\Phi, {\ell}+1, \Sigma, \Sigma'}({K})\right)  W^{({\ell})}\bigg) \\
				&\leq ||W^{(\ell)}||^2 w\left(\Sigma'' (\mathrm{Fval}_{\Phi, {\ell}+1}({K})) \odot \mathrm{Jac}_{\Phi, {\ell}+1, \Sigma, \Sigma'}({K})\right).
			\end{aligned}
		\end{equation*}
		Moreover, by Equation~\eqref{eq: matrix width inequality} we have
		\begin{equation*}
			\begin{aligned}
				&w\left(\Sigma'' (\mathrm{Fval}_{\Phi, {\ell}+1}({K})) \odot \mathrm{Jac}_{\Phi, {\ell}+1, \Sigma, \Sigma'}({K})\right) \\
				&\leq || \Sigma'' (\mathrm{Fval}_{\Phi, {\ell}+1}({K}))||\, w(\mathrm{Jac}_{\Phi, {\ell}+1, \Sigma, \Sigma'}({K})) \\
				& +|| \mathrm{Jac}_{\Phi, {\ell}+1, \Sigma, \Sigma'}({K})||\, w(\Sigma'' (\mathrm{Fval}_{\Phi, {\ell}+1}({K}))).
			\end{aligned}
		\end{equation*}
		Again, by inclusion isotonicity we can bound the coefficients from above independently of $K$:
		$$ || \Sigma'' (\mathrm{Fval}_{\Phi, {\ell}+1}({K}))|| \leq || \Sigma'' (\mathrm{Fval}_{\Phi, {\ell}+1}({\Omega}))|| $$
		and 
		$$ || \mathrm{Jac}_{\Phi, {\ell}+1, \Sigma, \Sigma'}({K})|| \leq || \mathrm{Jac}_{\Phi, {\ell}+1, \Sigma, \Sigma'}({\Omega})||.$$
		By Propositions~\ref{prop: fval hölder} and \ref{prop: jac hölder}, we have that $\Sigma'(\mathrm{Fval}_{\Phi,\ell+1})$ and $\mathrm{Hess}_{\Phi, i,{\ell}+1, \Sigma, \Sigma', \Sigma''}$ are Hölder continuous. Considering that sums of Hölder continuous functions on a compact domain are again Hölder continuous finishes the proof.    
	\end{proof}

	We can now apply Theorem~\ref{thm: main conv} to compute the $W^{2,p}$ norm of neural networks for $1\leq p < \infty$.
	
	\begin{corollary}\label{corr:mainW2p}
		Let $1\leq p < \infty$, $\Phi : \mathbb{R}^{d_0} \to \mathbb{R}^{d_{L+1}}$ be a neural network with twice differentiable activation function $\sigma : \mathbb{R} \to \mathbb{R}$ and Hölder continuous interval enclosures $\Sigma,\Sigma',\Sigma'':\mathbb{IR}\to \mathbb{IR}$ of $\sigma,\sigma',\sigma''$, respectively.
		For $i=1, \dotsc, d_{L+1}$ and $\alpha \in\mathbb{N}^{d_0}_0$ set
		$$ F_{\Phi,\alpha,i}(K) = \mathrm{Fval}_{\Phi, L+1, \Sigma}(K)_i$$
		for $\alpha = 0$.
		For $|\alpha|=1$ with non-zero index $j$ set
		$$ F_{\Phi,\alpha,i}(K) = \mathrm{Jac}_{\Phi,0, \Sigma, \Sigma'}(K)_{ij}.$$
		For $|\alpha|=2$ with non-zero indices $j,k$
		set
		$$ F_{\Phi,\alpha,i}(K) = \mathrm{Hess}_{\Phi,i,0, \Sigma, \Sigma',\Sigma''}(K)_{jk}.$$
		Then $f_{\Phi,2,p}$ is continuous and $F_{\Phi,2,p}$ is a Hölder continuous interval enclosure of $f_{\Phi,2,p}$. Thus, $P_{2,p}=(f_{\Phi, 2,p}, F_{\Phi, 2,p}, \Omega)$ and $A_{\theta, \rho}$ are certifiable and convergent. In particular, we have
		$$ |\, ||\Phi||_{W^{2,p}(\Omega; \mathbb{R}^{d_{L+1}})} - \mathrm{Q}_n^{1/p}| \leq \eta_n^{1/p} $$
		and $\eta_n \leq \eta_0 q^n$ where $(\mathrm{Q}_n, \eta_n) = \mathrm{AdaQuad}_n(P_{2,p}, A_{\theta, \rho})$ for $n\in \mathbb{N}$ and $q=1-\theta(1-\rho)$.
	\end{corollary}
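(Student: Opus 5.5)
The proof will follow the pattern of Corollaries~\ref{corr:LpNormsconvergenceOfQuadratureRule} and~\ref{corr:mainW1p}: establish continuity of the integrand, show that the interval functions are Hölder continuous enclosures, invoke Theorem~\ref{thm: main conv}, and convert the integral bound to a norm bound.

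\emph{Continuity and enclosures.} Since $\sigma$ is twice differentiable and $\Phi$ is a composition of affine maps and $\sigma$, every $D^\alpha\Phi_i$ with $|\alpha|\le 2$ exists classically. To conclude that $f_{\Phi,2,p}$ is continuous, I will read the hypothesis that $\Sigma''$ is a Hölder continuous enclosure of $\sigma''$ as forcing $\sigma''$ itself to be continuous. Indeed, on a degenerate box $[x,y]$ the enclosure $\Sigma''$ contains $\sigma''(x)$ and $\sigma''(y)$, so $|\sigma''(x)-\sigma''(y)|\le w(\Sigma''([x,y]))\le C|x-y|^\gamma$. Hence $\Phi\in C^2$, and $f_{\Phi,2,p}$ is a finite sum of continuous functions $|D^\alpha\Phi_i|^p$.

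Next I check that each $F_{\Phi,\alpha,i}$ encloses $D^\alpha\Phi_i$.
\begin{itemize}
\item For $\alpha=0$ this is Corollary~\ref{cor: fval int encl} with $\ell=L+1$.
\item For $|\alpha|=1$ this is Corollary~\ref{cor: jac int encl} with $\ell=0$, using $x^{(0)}(K)=K$ and $\Phi^{(0)}=\Phi$.
\item For $|\alpha|=2$ this is Corollary~\ref{cor: hess int encl} with $\ell=0$. Entries of an interval matrix enclosure enclose the corresponding real entries.
\end{itemize}
Two details need care. A multi-index with $\alpha_j=2$ corresponds to the diagonal entry $(j,j)$. Mixed multi-indices $\alpha=e_j+e_k$ with $j\neq k$ are each counted once in $\sum_{|\alpha|\le2}$, so I choose the entry $(j,k)$ with $j<k$. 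By symmetry of the Hessian, this is consistent.

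\emph{Hölder continuity.} Proposition~\ref{prop: fval hölder}, Proposition~\ref{prop: jac hölder}, and Proposition~\ref{prop: hess hölder} give Hölder continuity on $\Omega$ of $\mathrm{Fval}_{\Phi,L+1,\Sigma}$, $\mathrm{Jac}_{\Phi,0,\Sigma,\Sigma'}$, and $\mathrm{Hess}_{\Phi,i,0,\Sigma,\Sigma',\Sigma''}$. Since the width of an interval matrix is the maximum entrywise width, each entry inherits the same constant and exponent. Proposition~\ref{prop: lebesgue sobolev interval enclosure} and Proposition~\ref{prop: hölder lebesgue sobolev interval enclosure} then show that $F_{\Phi,2,p}$ is a Hölder continuous interval enclosure of $f_{\Phi,2,p}$, with exponent equal to the minimum of the component exponents.

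\emph{Certification, convergence, and the norm bound.} Theorem~\ref{thm: main conv} yields that $P_{2,p}$ and $A_{\theta,\rho}$ are certifiable and converging. Proposition~\ref{prop: certificate} gives $|\mathrm{R}_n|\le\eta_n$, and iterating $\eta_{n+1}\le q\eta_n$ from Proposition~\ref{prop: convergence} gives $\eta_n\le\eta_0q^n$. Proposition~\ref{prop: lebesgue sobolev integrand} identifies $\int_\Omega f_{\Phi,2,p}=\|\Phi\|_{W^{2,p}}^p$. Because $t\mapsto t^{1/p}$ satisfies $|a^{1/p}-b^{1/p}|\le|a-b|^{1/p}$ for $a,b\ge0$, I obtain $\bigl|\,\|\Phi\|_{W^{2,p}}-\mathrm{Q}_n^{1/p}\bigr|\le\eta_n^{1/p}$. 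Here $\mathrm{Q}_n\ge0$ holds because the quadrature weights are positive and the integrand is nonnegative.

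The main obstacle is not analytic, since everything reduces to earlier results. It is bookkeeping: matching multi-indices to Hessian entries, and justifying that $\Phi\in W^{2,p}(\Omega)$ with classical derivatives equal to weak ones. The latter follows from $\Phi\in C^2$ on the compact box $\Omega$.
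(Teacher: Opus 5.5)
Your proposal is correct and takes essentially the same route as the paper. You obtain the enclosure from Corollary~\ref{cor: hess int encl} and its $\mathrm{Fval}$/$\mathrm{Jac}$ counterparts, get Hölder continuity of the entries from the corresponding Hölder propositions combined with Proposition~\ref{prop: lebesgue sobolev interval enclosure}, apply Theorem~\ref{thm: main conv}, and finish with the $t\mapsto t^{1/p}$ step as in Corollary~\ref{corr:LpNormsconvergenceOfQuadratureRule}. Your additional bookkeeping fills in details the paper's two-line proof leaves implicit: continuity of $f_{\Phi,2,p}$ via continuity of $\sigma''$, the matching of multi-indices to Hessian entries, $\mathrm{Q}_n\ge 0$, and $\eta_n\le\eta_0q^n$.
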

	\begin{proof}
		Corollary~\ref{cor: hess int encl} and Proposition~\ref{prop: hess hölder} yield that $F_{\Phi,2,p}$ with $F_{\Phi,\alpha,i}$ is a Hölder continuous interval enclosure of $f_{\Phi,2,p}$. By Theorem~\ref{thm: main conv} the pair $P_{2,p}$ and $A_{\theta,\rho}$ are certifiable and converging. This finishes the proof.
	\end{proof}

	\subsection{Neural network residual bounds}
	
	Here we show how to combine the previously established neural network bounding techniques to obtain certified convergence of $\mathrm{AdaQuad}$ to the integral of the residual of a differential equation. Recall the boundary-value problem \eqref{eq:BVP} with differential operator in nondivergence form given by
	\begin{equation*}
		\mathcal{D}\Phi(x) = - \sum_{i,j=1}^d a_{ij}(x) \Phi_{x_i x_j}(x)  + \sum_{i=1}^d b_i(x) \Phi_{x_i}(x) + c(x)\Phi(x),
	\end{equation*}
	for given coefficient functions $a_{ij}, b_i,c$ for $i,j=1, \dotsc, d$ and trial function $\Phi$ as well as right-hand side $g$. We will show that for the residual $r_{\Phi,p}=|\mathcal{D}\Phi - g|^p$ and its enclosure $R_{\Phi, p}$ (see Proposition~\ref{prop: hölder energy}) together with the algorithm instance $A_{\theta,\rho}$ is certifiable and converging.

	\begin{corollary}\label{corr:mainEnergy}
		Let $1 < p < \infty$, $d\in \mathbb{N}$, $\Omega \in \mathbb{IR}^d$, $\mathcal{D}$ given by \eqref{eq:DiffOperator}, $d_0=d$, $d_{L+1}=1$ and $\Phi : \mathbb{R}^{d_0} \to \mathbb{R}^{d_{L+1}}$ be a neural network with twice differentiable activation function $\sigma : \mathbb{R} \to \mathbb{R}$ and Hölder continuous interval enclosures $\Sigma,\Sigma',\Sigma'':\mathbb{IR}\to \mathbb{IR}$ of $\sigma,\sigma',\sigma''$, respectively.
		Assume for $i,j=1,\dotsc,d_0$ interval enclosures $A_{ij},B_i,C,G: \mathbb{IR}^d \to \mathbb{IR}$ of $a_{ij},b_i,c,g$, respectively, which are Hölder continuous on $\Omega$. Moreover, let $R_{\Phi,p}$ be defined by replacing in 
		$r_{\Phi,p}$ the functions $a_{ij},b_i,c,g$ with $A_{ij}, B_i,C,G$ for $i,j=1,\dotsc,d$, and the function value, first- and second-order partial derivatives of the neural network by the Hölder continuous interval 
		enclosures $\mathrm{Fval}_{\Phi, L+1, \Sigma}$, $\mathrm{Jac}_{\Phi,0,\Sigma, \Sigma'}$, 
		$\mathrm{Hess}_{\Phi,1,0,\Sigma',\Sigma''}$, 
		respectively. Then $r_{\Phi,p}$ is continuous and $R_{\Phi,p}$ is a Hölder continuous interval enclosure of $r_{\Phi,p}$. 
		Thus, $P_{\mathcal{D},p}=(r_{\Phi,p}, R_{\Phi,p}, \Omega)$ and {$A_{\theta, \rho}$} are certifiable and convergent. 
		In particular, we have
		$$ |\, ||r_{\Phi,p}||_{L^{p}(\Omega; \mathbb{R})} - \mathrm{Q}_n^{1/p}| \leq \eta_n^{1/p} $$
		and $\eta_n\leq \eta_0 q^n$ where $(\mathrm{Q}_n, \eta_n) = \mathrm{AdaQuad}_n(P_{\mathcal{D},p}, A_{\theta, \rho})$ for $n\in \mathbb{N}$ and $q = 1 - \theta(1-\rho)$.
	\end{corollary}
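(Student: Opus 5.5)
The proof follows the pattern of Corollaries~\ref{corr:mainW1p} and~\ref{corr:mainW2p}: we assemble Hölder continuous interval enclosures of the ingredients of $\mathcal{D}\Phi$, invoke Proposition~\ref{prop: hölder energy}, and then apply Theorem~\ref{thm: main conv}. Since $d_{L+1}=1$, there is a single output component, so $\mathrm{Hess}_{\Phi,1,0,\Sigma,\Sigma',\Sigma''}$ covers all second derivatives.

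\textbf{Step 1: the enclosure properties of $R_{\Phi,p}$.} We set $F_{\Phi,0,0}=\mathrm{Fval}_{\Phi,L+1,\Sigma}$, $F_{\Phi,i,0}=\mathrm{Jac}_{\Phi,0,\Sigma,\Sigma'}(\cdot)_{1i}$ and $F_{\Phi,i,j}=\mathrm{Hess}_{\Phi,1,0,\Sigma,\Sigma',\Sigma''}(\cdot)_{ij}$.
\begin{itemize}
\item Corollaries~\ref{cor: fval int encl}, \ref{cor: jac int encl} and~\ref{cor: hess int encl} show that these are interval enclosures of $\Phi$, $\Phi_{x_i}$ and $\Phi_{x_ix_j}$. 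The entries of an interval matrix enclosure are enclosures of the entries.
\item Propositions~\ref{prop: fval hölder}, \ref{prop: jac hölder} and~\ref{prop: hess hölder} show that they are Hölder continuous on $\Omega$. Here the width of an entry is bounded by the width of the matrix.
\end{itemize}
Together with the assumed Hölder continuous enclosures $A_{ij},B_i,C,G$, these are exactly the hypotheses of Proposition~\ref{prop: hölder energy}. That proposition, relying on Proposition~\ref{prop:DiffOpIntervalEnclosure} for the sums and products, gives that $R_{\Phi,p}$ is a Hölder continuous interval enclosure of $r_{\Phi,p}$.

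\textbf{Step 2: continuity of $r_{\Phi,p}$.} Since $\sigma\in C^2$, the network satisfies $\Phi\in C^2$. Therefore $\Phi$, $\Phi_{x_i}$ and $\Phi_{x_ix_j}$ are continuous, since they are compositions of affine maps with $\sigma,\sigma',\sigma''$. For continuity of the coefficients $a_{ij},b_i,c,g$, we extract it from their Hölder continuous enclosures. The point is that, for $x,y\in\Omega$, the box hull $K$ of $\{x,y\}$ contains both points, so
$$|a(x)-a(y)|\le w(A(K))\le C\,w(K)^{\gamma}=C\|x-y\|_\infty^{\gamma}.$$
The same argument applies to each of $b_i$, $c$ and $g$. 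Hence $\mathcal{D}\Phi-g$ is continuous on $\Omega$, and so is $r_{\Phi,p}=|\mathcal{D}\Phi-g|^p$.

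\textbf{Step 3: certifiability, convergence and the final estimate.} By Theorem~\ref{thm: main conv}, $P_{\mathcal{D},p}$ and $A_{\theta,\rho}$ are certifiable and converging. Proposition~\ref{prop: certificate} then gives $|\mathrm{R}_n|\le\eta_n$. Proposition~\ref{prop: convergence} gives $\eta_{n+1}\le q\eta_n$, hence $\eta_n\le\eta_0 q^n$ by induction. For the norm estimate we argue as in Corollary~\ref{corr:LpNormsconvergenceOfQuadratureRule}: the map $t\mapsto t^{1/p}$ is $1/p$-Hölder with constant $1$ on $[0,\infty)$, and $\mathrm{Q}_n\ge 0$ because the quadrature has positive weights and a nonnegative integrand. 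This yields
$$\Big|\big(\textstyle\int_\Omega r_{\Phi,p}\big)^{1/p}-\mathrm{Q}_n^{1/p}\Big|\le|\mathrm{R}_n|^{1/p}\le\eta_n^{1/p}.$$
By Proposition~\ref{prop: energy integrand}, the left-hand quantity $\big(\int_\Omega r_{\Phi,p}\big)^{1/p}$ is $\|\mathcal{D}\Phi-g\|_{L^p}$. This is the quantity the statement calls $\|r_{\Phi,p}\|$, so the statement's notation should be read in this sense.

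\textbf{Main obstacle.} The delicate step is Step 1: making sure the entrywise matrix enclosures legitimately feed into Proposition~\ref{prop: hölder energy}. This requires inclusion isotonicity of $\mathrm{Jac}$ and $\mathrm{Hess}$, which is needed for the magnitude bounds over $\Omega$, and a uniform Hölder exponent, for which we take the minimum of the finitely many exponents involved.
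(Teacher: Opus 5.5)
Your proposal is correct and follows the paper's route. You use Proposition~\ref{prop: hölder energy} for the Hölder continuous enclosure, then continuity of $r_{\Phi,p}$, then Theorem~\ref{thm: main conv}; your Step~3 supplies the norm estimate as in Corollary~\ref{corr:LpNormsconvergenceOfQuadratureRule}, and you rightly read $\|r_{\Phi,p}\|_{L^p}$ as $\|\mathcal{D}\Phi-g\|_{L^p}$.

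The only difference is in Step~2. The paper gets continuity directly from $R_{\Phi,p}$ being a Hölder continuous enclosure of $r_{\Phi,p}$, i.e.\ your box-hull argument applied to $r_{\Phi,p}$ itself: $|r_{\Phi,p}(x)-r_{\Phi,p}(y)|\le w(R_{\Phi,p}(K))\le C\|x-y\|_\infty^{\gamma}$. That is shorter and avoids your unjustified claim $\sigma\in C^2$, since only twice differentiability is assumed (although $C^2$ does follow from the Hölder continuity of $\Sigma''$ by the same trick).
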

	\begin{proof}
		By construction, $R_{\Phi,p}$ satisfies Proposition~\ref{prop: hölder energy} and is thus a Hölder continuous interval enclosure of $r_{\Phi,p}$. It is easy to see that therefore, $r_{\Phi,p}$ is Hölder continuous as well and, in particular, continuous. 
		By Theorem~\ref{thm: main conv} the pair $P_{\mathcal{D},p}$ and $A_{\theta,\rho}$ are certifiable and converging. This finishes the proof.
	\end{proof}

	\subsection{Numerical Experiments}
	\label{sec:numerical_experiments}
	
	This section presents a series of numerical experiments to validate the theoretical performance of the adaptive refinement and bounding algorithms. Our code is available under \url{https://github.com/RubenDeAngelo/Function-Space-Bounds-for-NN }.
	The primary quantity of interest is the gap between our upper and lower bounds.
	Let $\overline{B}, \underline{B} \in \mathbb{R}$ denote local upper and lower bounds computed by Algorithm~\ref{alg: adaptive integration} on a partition $\mathcal{P}$ of a domain $\Omega \in \mathbb{IR}^d$.
	For the norms considered here---$L^p$-, Sobolev-, and energy-norms---we evaluate bounds for the corresponding $p$-th powers.
	We define the global error $G \in \mathbb{R}$ by
	\begin{align*}
		G = \sum_{K \in \mathcal{P}} \big(\overline{B}(K) - \underline{B}(K)\big).
	\end{align*}
	The global error directly quantifies the error of our algorithm. 
	
	First, we provide a one-dimensional analysis of Sobolev (Corollary~\ref{corr:mainW1p}, \ref{corr:mainW2p}) and $L^p$ (Corollary~\ref{corr:LpNormsconvergenceOfQuadratureRule}) norm bounds in Sections~\ref{sec: Sobolev bounds for untrained and trained neural networks (1d)} and~\ref{sec: $L^p$ bounds for untrained and trained networks (1d)}.
	We analyze the convergence of the global error for both untrained and trained networks in deep and wide architectures for increasingly refined partitions.
	Next, in Section~\ref{sec: Sobolev and $L^p$ bounds for trained networks (2d)}, we study convergence of the global error and the local error for deep and wide architectures fitted to the two-dimensional smooth disk function~\eqref{eq: smooth disk}.
	Finally, in Section~\ref{sec:Bounding_Interior_Residuals_for_Elliptic_PINN}, we examine convergence of the global error and the local error for the energy norm in an elliptic PINN setting.
	
	For the $L^p$ norm bounds, we use the Hölder refinement strategy (Algorithm~\ref{alg: hölder strategy}). For Sobolev and energy-norm bounds, we use Dörfler marking (Algorithm~\ref{alg: dörfler strategy}) with uniform refinement, where each box is split along coordinate axes into $2^d$ subboxes (see \ref{rem:HalfRefinement}). Although this refinement strategy does not provide the controlled error reduction described in Equation~\eqref{eq: local error reduction}, it avoids the need to compute the Hölder constant and exponent. In the Lipschitz-continuous case, available estimates of the Lipschitz constant for neural networks are often overly conservative, which can lead to refinements with an excessive number of subdivisions.

	\subsubsection{Sobolev bounds for untrained and trained neural networks (1d)}
	\label{sec: Sobolev bounds for untrained and trained neural networks (1d)}
	To understand how well our bounds perform in generic settings, we study their behavior for many random initializations of two fixed architectures.
	
	Figures~\ref{fig: deep random network sobolev bound} and~\ref{fig: wide random network sobolev bound} show the mean normalized global errors for the Sobolev norm for untrained (random) tanh networks in deep and wide configurations computed over 100 randomly initialized neural networks. Here, we normalize by a high-precision estimate of the Sobolev norms computed with the Monte-Carlo method using 50000 samples.
	We study two architectures: a deep neural network with three hidden layers of 32 neurons, and a wide neural network with one hidden layer of 200 neurons. 
	Each run uses a unique reproducible seed and PyTorch's default Kaiming-uniform initialization.
	Since random initialization typically produces comparatively small curvature, the observed errors decrease regularly under refinement, and the separation between $W^{1,2}$ and $W^{2,2}$ is less pronounced. 
	Deeper architectures induce more curvature, which leads to larger and more clearly separated mean global errors for $W^{1,p}$ and $W^{2,p}$ than in the wide architecture.

	\begin{figure}[htb]
		\centering
		\begin{subfigure}[t]{0.48\linewidth}\vspace{0pt}
			\centering
			\includegraphics[width=\linewidth]{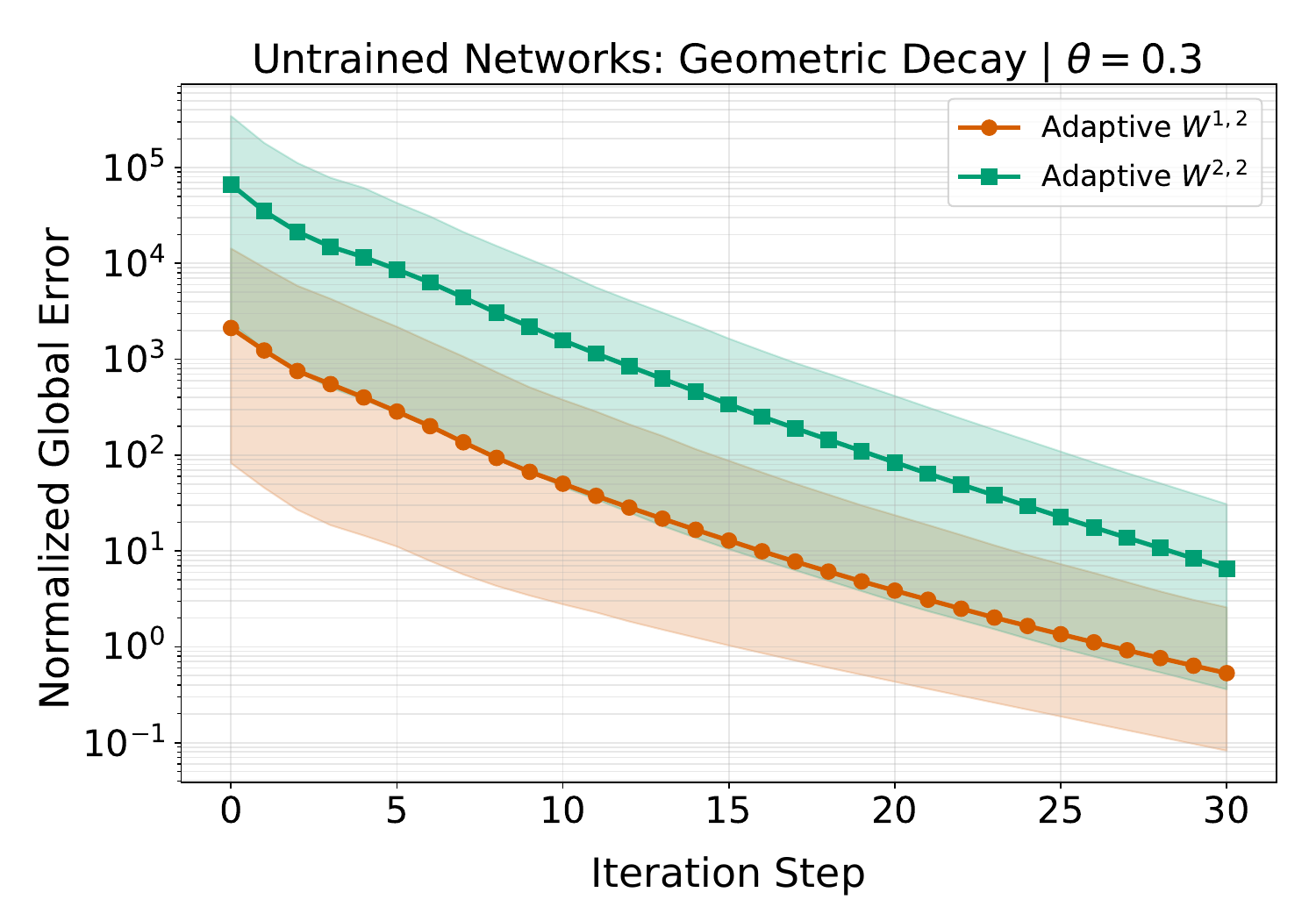}
			\caption{Deep architecture.}
			\label{fig: deep random network sobolev bound}
		\end{subfigure}
		\hfill
		\begin{subfigure}[t]{0.48\linewidth}\vspace{0pt}
			\centering
			\includegraphics[width=\linewidth]{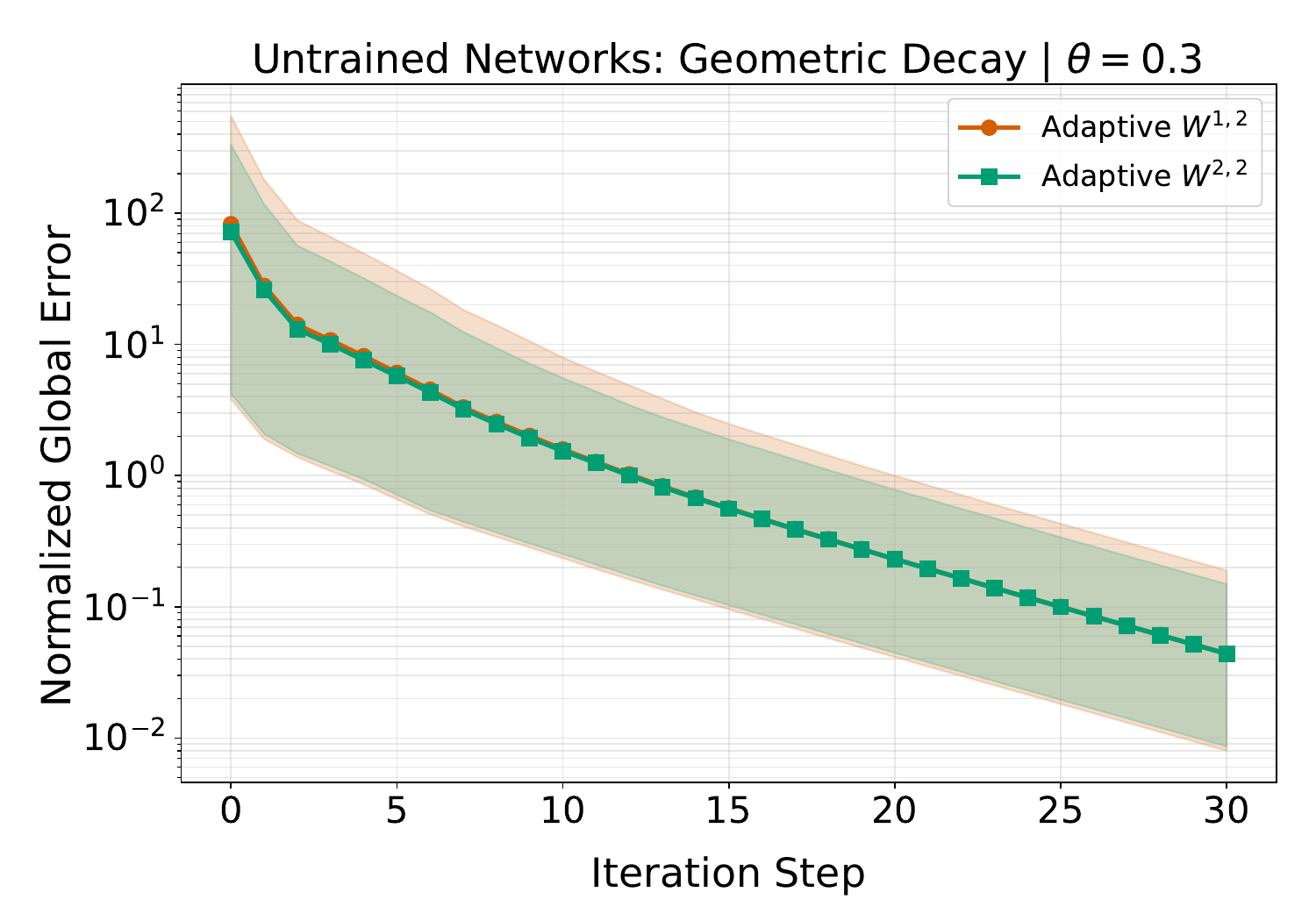}
			\caption{Wide architecture.}
			\label{fig: wide random network sobolev bound}
		\end{subfigure}
		\caption{Mean and $95\%$ confidence interval of the normalized global error for Sobolev norms for 100 untrained tanh networks, refined adaptively, comparing a deep (three hidden layers with 32 neurons) and a wide (one hidden layer with 200 neurons) architecture. }
		\label{fig: random network sobolev bound}
	\end{figure}
	
	\begin{figure}[htb]
		\centering
		\begin{subfigure}[t]{0.48\linewidth}\vspace{0pt}
			\centering
			\includegraphics[width=\linewidth]{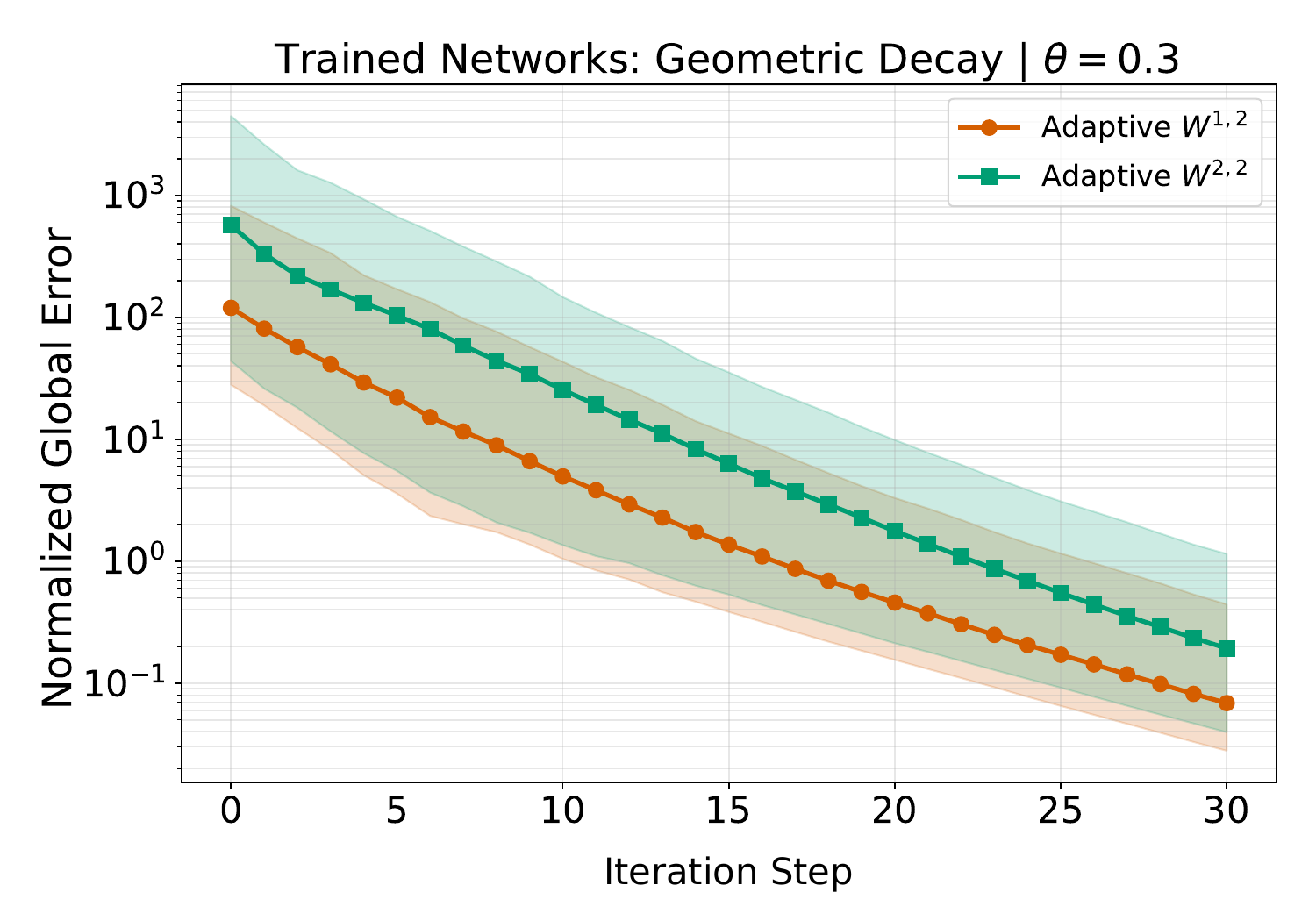}
			\caption{Deep architecture.}
			\label{fig: deep trained network sobolev bound}
		\end{subfigure}
		\hfill
		\begin{subfigure}[t]{0.48\linewidth}\vspace{0pt}
			\centering
			\includegraphics[width=\linewidth]{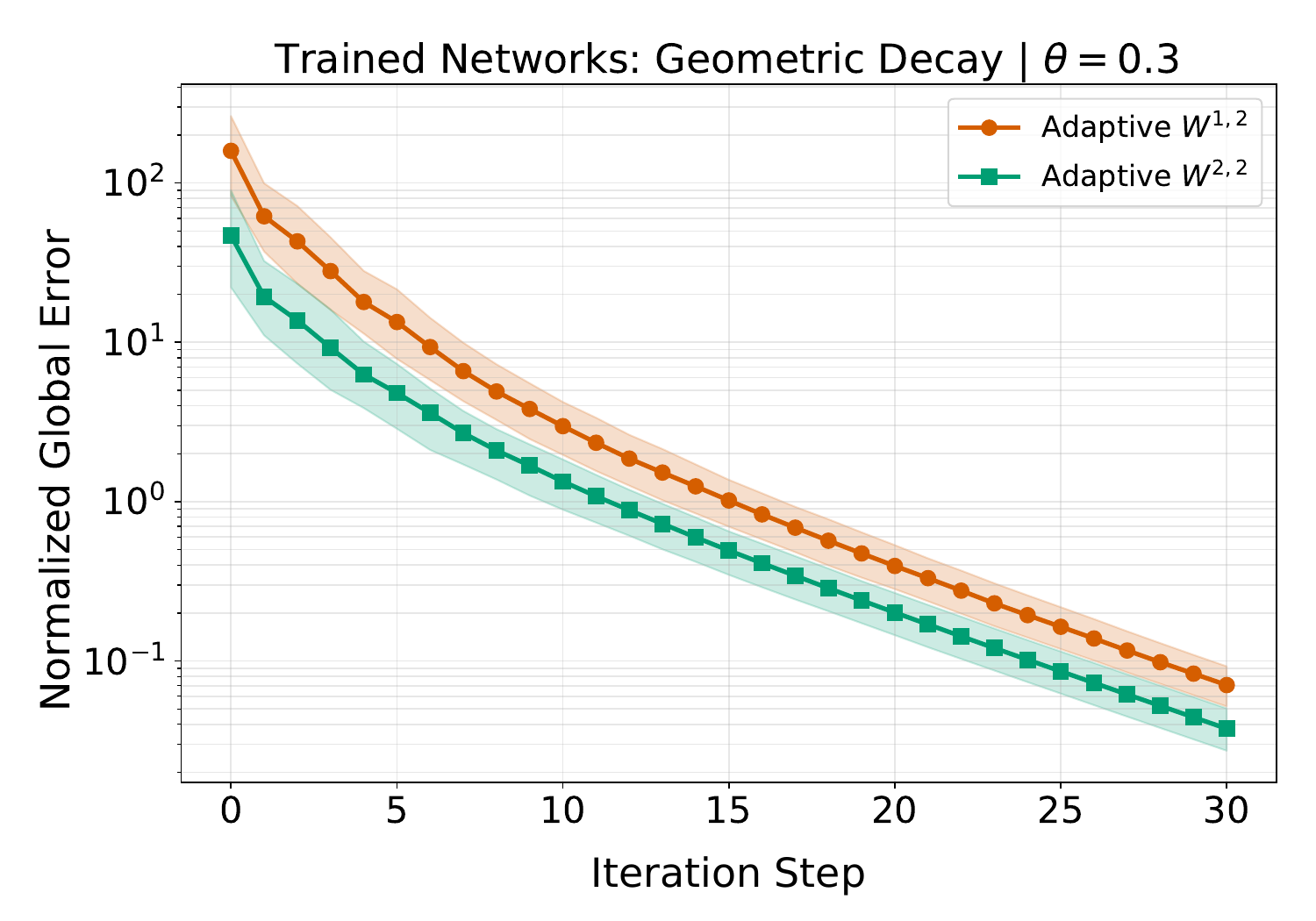}
			\caption{Wide architecture.}
			\label{fig: wide trained network sobolev bound}
		\end{subfigure}
		\caption{Mean and $95\%$ confidence interval of the normalized global error for Sobolev norms of 100 tanh networks, refined with Dörfler marking, comparing a deep (three hidden layers with 32 neurons) and a wide (one hidden layer with 200 neurons) architecture. The neural networks are trained to fit the Gaussian peak~\eqref{eq:gaussian_peak} function.}
		\label{fig: gaussian trained vs untrained}
	\end{figure}
	
	We then train deep and wide architectures using the same initializations as in Figure~\ref{fig: random network sobolev bound}.
	The neural networks are trained to fit the one-dimensional Gaussian-peak function
	\begin{align}
		\label{eq:gaussian_peak}
		f(x) = \exp\left(-\frac{(x - x_0)^2}{\sigma^2}\right),
	\end{align}
	for $x\in[0, 2\pi]$ and $x_0=\pi$. 
	Each network is trained with Adam using learning rate $10^{-3}$ for 2000 epochs. 
	Figures~\ref{fig: deep trained network sobolev bound} and~\ref{fig: wide trained network sobolev bound} show the resulting global errors for Sobolev norms in the Gaussian-peak experiment.
	The error values are again normalized by a Sobolev norm estimated via Monte Carlo with 50000 samples. 
	The normalized mean global error over 100 initializations decays geometrically for both architectures, as predicted by Proposition~\ref{prop: convergence}. 
	The magnitude of the normalized global error is initially higher by a factor of 10 for the deep architecture. Initially, the mean error drops steeply and becomes asymptotically geometric, since the Dörfler marking targets boxes with a high error around the Gaussian peak, reducing the global error significantly. 
	Once the adaptive refinement sufficiently resolves localized features, the spatial variance of the error decreases. This results in a more uniform distribution across the domain, at which point the global error reduction stabilizes at the predicted geometric rate. 
	
	\subsubsection{$L^p$ bounds for untrained and trained networks (1d)}
	\label{sec: $L^p$ bounds for untrained and trained networks (1d)}
	To complement the previous experiment for Sobolev norms, we conduct an analogous 100-run 
	statistical analysis for the $L^p$ norm bounds. 
	We maintain the same deep 
	and wide architectural dimensions (three hidden layers of 32 neurons, and one 
	hidden layer of 200 neurons, respectively), but employ the ReLU activation function to enable the $L^p$ norm bounding optimizations with Hölder refinement (Algorithm~\ref{alg: hölder strategy}). 
	We again randomly initialize 100 ReLU neural networks as before and fit them to the 1d Gaussian peak problem~\eqref{eq:gaussian_peak}, utilizing the Adam 
	optimizer over 2000 epochs without weight decay.
	Figure~\ref{fig: lp statistical 
		waterfall combined} displays the normalized mean global error and the 95\% confidence intervals of the 
	geometric decay for both the untrained and trained errors. 
	The normalization is carried out as in the previous experiments. 
	As observed in the untrained scenario (Figure~\ref{fig: lp untrained waterfall}), both architectures exhibit a steady reduction in the normalized global error, with the deep network initially displaying a higher gap. 
	Conversely, the trained networks (Figure~\ref{fig: lp trained waterfall}) demonstrate a highly consistent and stable geometric decay. 
	The deep architecture initially presents a higher normalized global error with a slightly wider deviation. 
	The difference between the deep and wide architecture is significantly reduced compared to the Sobolev experiment in Figure~\ref{fig: gaussian trained vs untrained}.

	\begin{figure}[htb]
		\centering
		\begin{subfigure}[t]{0.48\linewidth}\vspace{0pt}
			\centering
			\includegraphics[width=\linewidth]{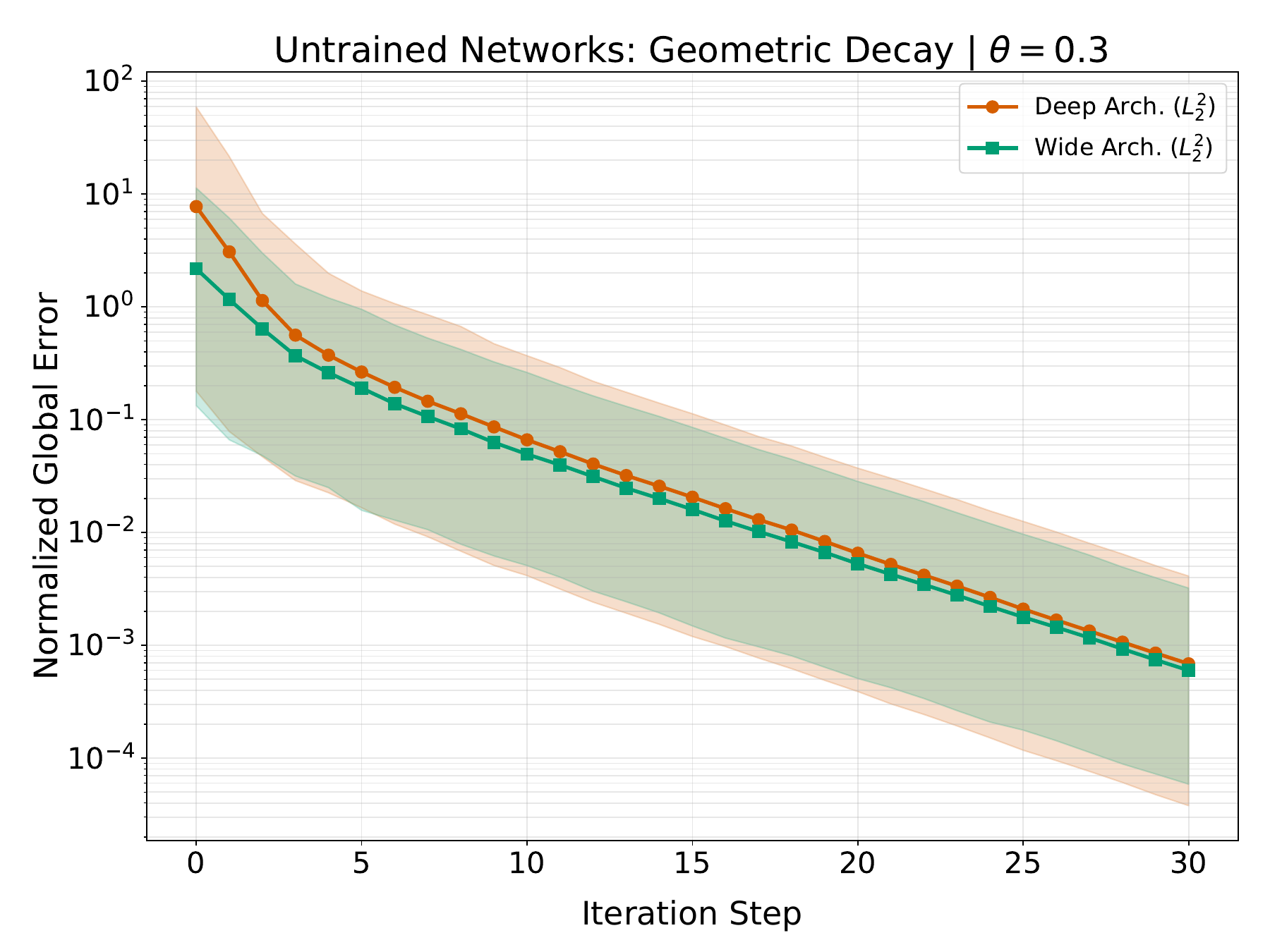}
			\caption{Untrained ReLU networks.}
			\label{fig: lp untrained waterfall}
		\end{subfigure}
		\hfill
		\begin{subfigure}[t]{0.48\linewidth}\vspace{0pt}
			\centering
			\includegraphics[width=\linewidth]{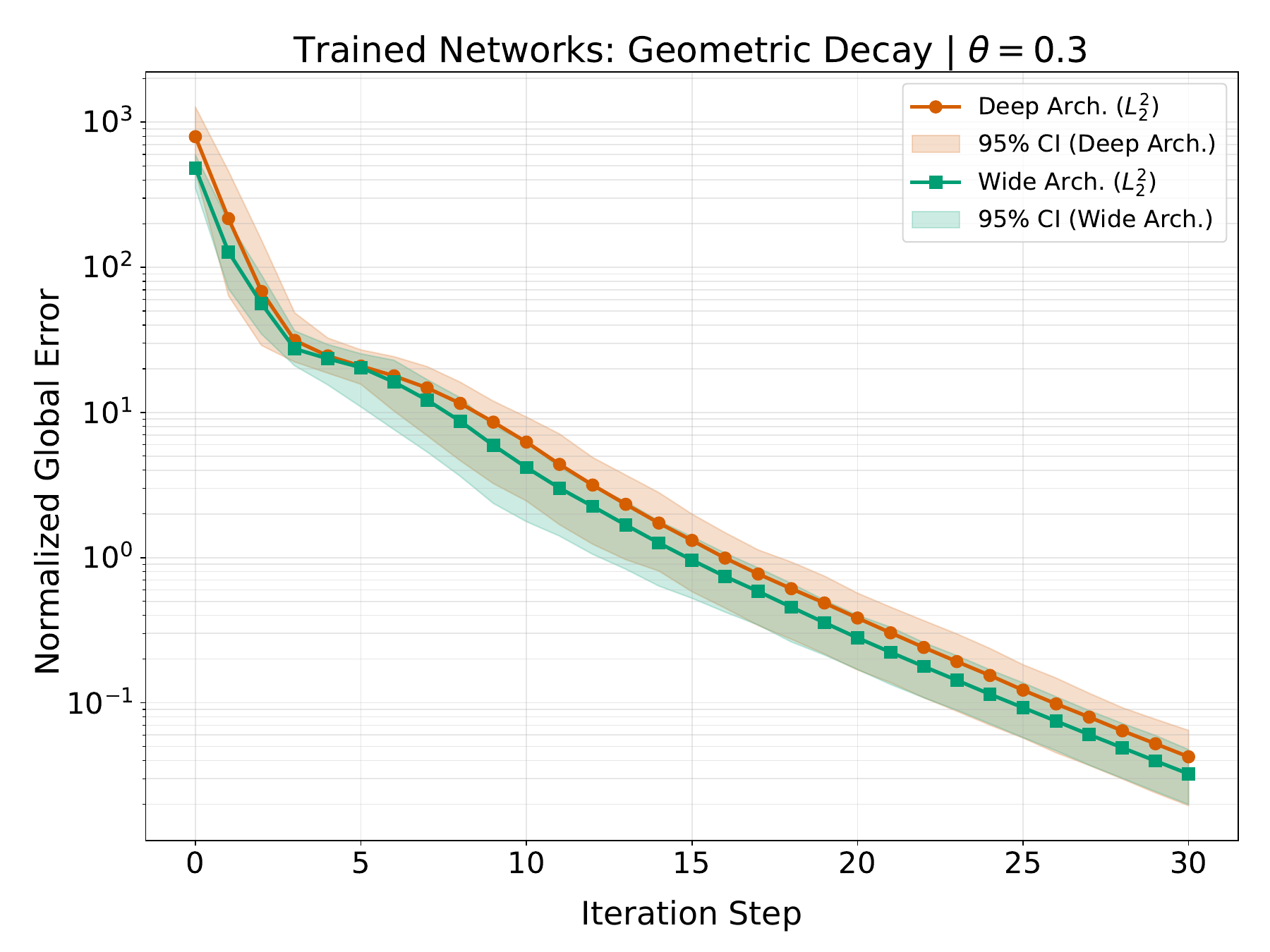}
			\caption{Trained ReLU networks.}
			\label{fig: lp trained waterfall}
		\end{subfigure}
		\caption{Mean and $95\%$ confidence interval of the normalized global error for the $L^p$ norm for 100 ReLU networks, comparing a deep and wide architecture. 
			The trained neural networks approximate the one-dimensional Gaussian-peak function~\eqref{eq:gaussian_peak}.}
		\label{fig: lp statistical waterfall combined}
	\end{figure}

	\subsubsection{Sobolev and $L^p$ bounds for trained networks (2d)}
	\label{sec: Sobolev and $L^p$ bounds for trained networks (2d)}
	The one-dimensional experiment indicates that architectural differences are most visible when the target has localized high curvature. 
	The smooth disk function, defined below, exhibits a localized curvature, making it an ideal fit for testing the adaptive integration of Algorithm~\ref{alg: adaptive integration} in two dimensions.
	The smooth disk function is defined as
	\begin{align}
		\label{eq: smooth disk}
		f(\mathbf{x}) = \begin{cases} 
			1 & \text{if } r \le 1, \\
			0 & \text{if } r \ge 1 + \epsilon, \\
			\frac{h(1+\epsilon - r)}{h(1+\epsilon - r) + h(r - 1)} & \text{otherwise},
		\end{cases}
	\end{align}
	where $r = \|\mathbf{x}\|_2$, $\epsilon = 0.5$ and $h(t) = e^{-1/t}$ for $t>0$ and zero otherwise.
	We train two architectures with Adam: a deep network with three hidden layers of width 32 and a wide network with one hidden layer of width 200.
	For $L^p$-bound evaluation, both architectures use ReLU and are trained for 2000 epochs without weight decay.
	For the Sobolev bounds, the neural networks were trained for 10000 epochs using the tanh activation function, applying a weight decay of $10^{-3}$ exclusively to the deep architecture. 
	Regularization by weight decay was used to avoid spiky approximations by the deep neural network, which would result in a very high Sobolev norm.  
	
	To bridge our earlier theory to these practical architectures, we first evaluate the normalized global error and the local error for the $L^2$ quantity on this domain.
	Figure~\ref{fig:2d_coin_lp_experiment} presents the adaptive refinement results for both the deep and wide architectures using $\theta=0.5$. 
	The convergence plots show that the normalized global error decays at the predicted geometric rate for both models. 
	The corresponding spatial heatmaps confirm that the marking strategy isolates and heavily refines transition regions with high local errors without wasting budget on the flat interior or exterior. 
	Notably, the deep neural network exhibits a sharper localization of the error compared to the wide neural network.
	\begin{figure}[htbp]
		\centering
		
		\begin{subfigure}[t]{0.48\linewidth}
			\centering
			\includegraphics[width=\linewidth]{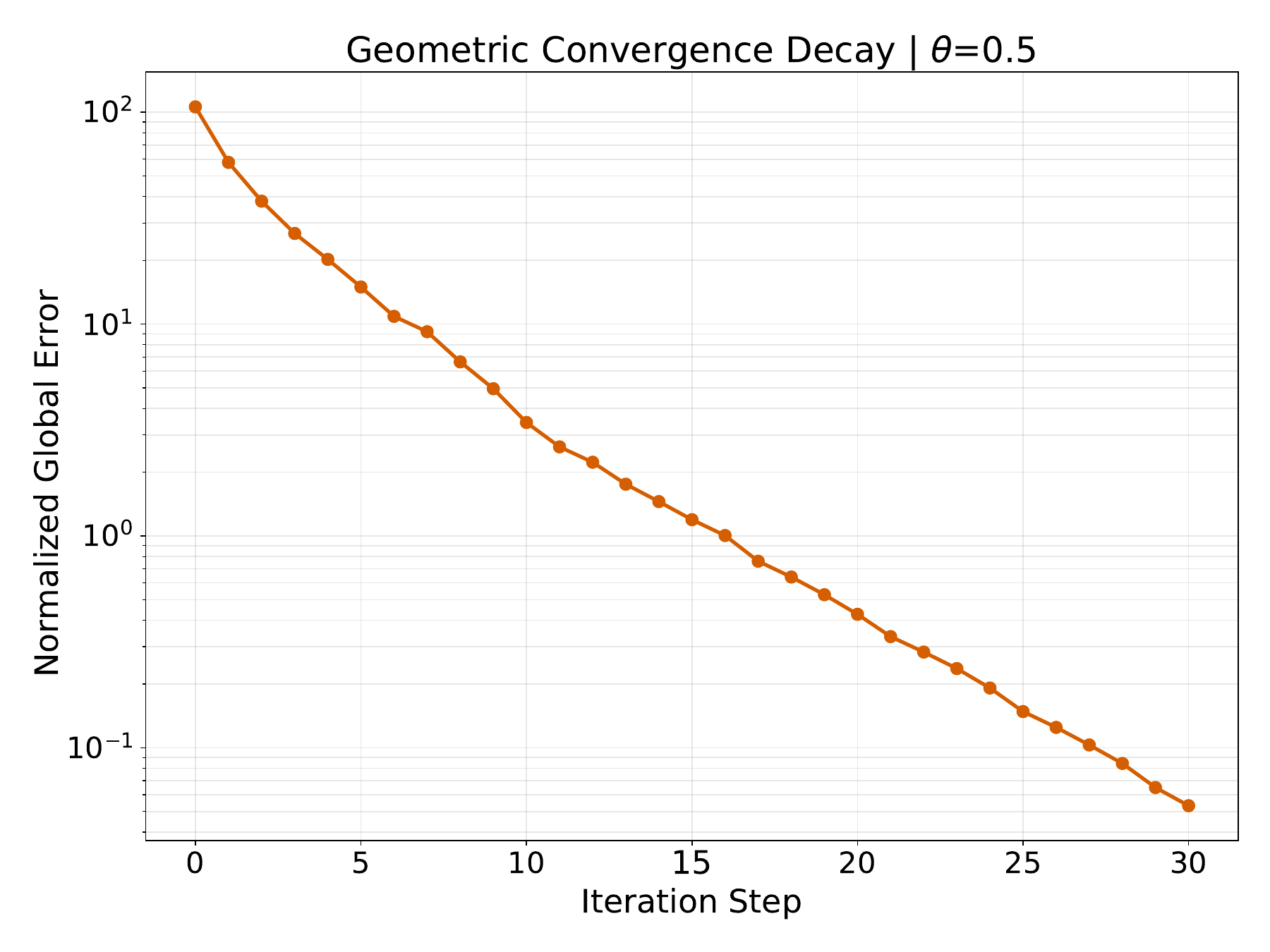}
			\caption{\small Geometric convergence ($L^2$ norm) for the deep architecture.}
			\label{fig:lp_rate_convergence_deep}
		\end{subfigure}
		\hfill
		\begin{subfigure}[t]{0.48\linewidth}
			\centering
			\includegraphics[width=\linewidth]{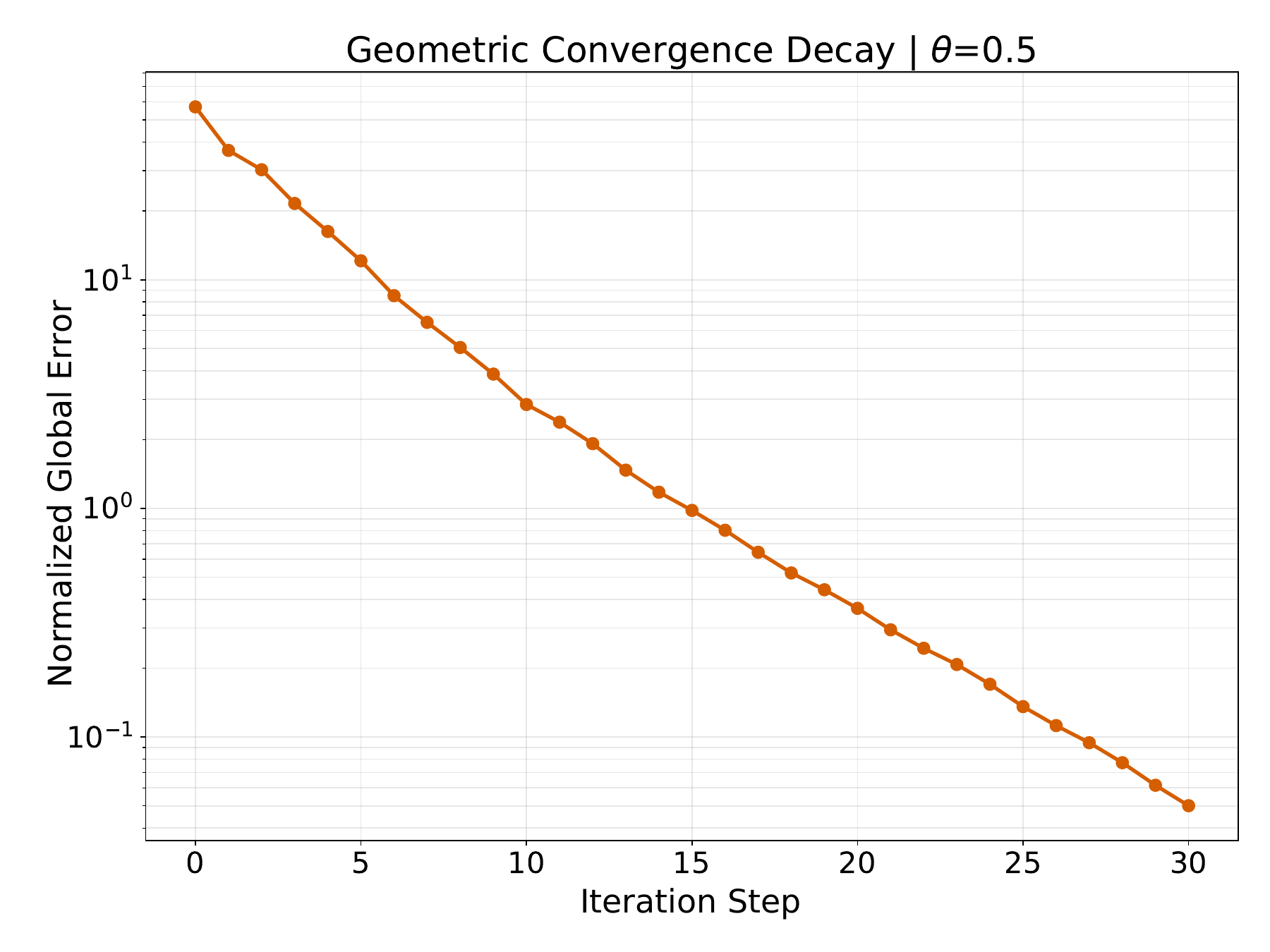}
			\caption{\small Geometric convergence ($L^2$ norm) for the wide architecture.}
			\label{fig:lp_rate_convergence_wide}
		\end{subfigure}
		
		\vspace{1em}
		
		\begin{subfigure}[t]{0.48\linewidth}
			\centering
			\includegraphics[width=\linewidth]{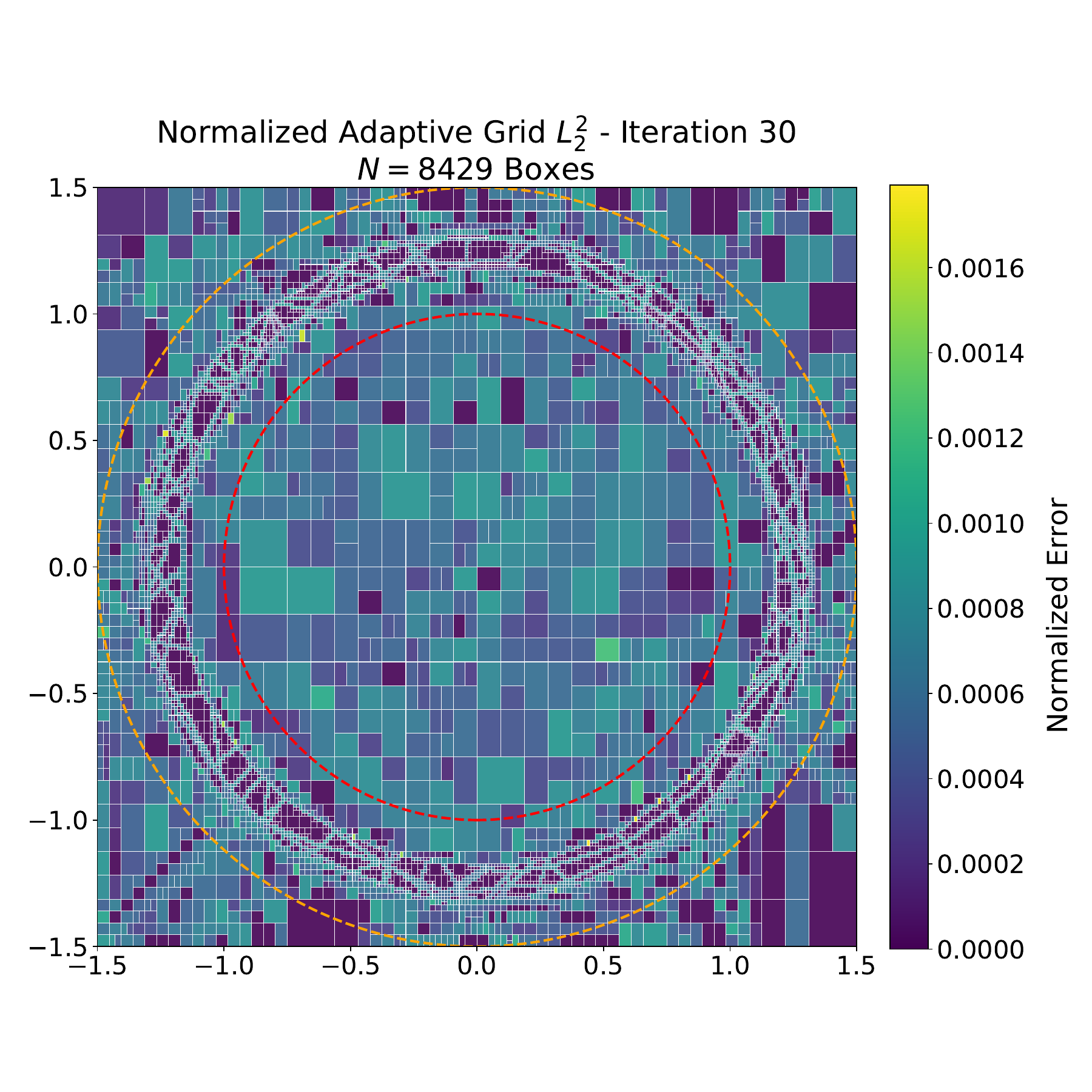}
			\caption{\small Heatmap of the local error for the $L^2$ norm (deep architecture).}
			\label{fig:lp_heatmap_deep}
		\end{subfigure}
		\hfill
		\begin{subfigure}[t]{0.48\linewidth}
			\centering
			\includegraphics[width=\linewidth]{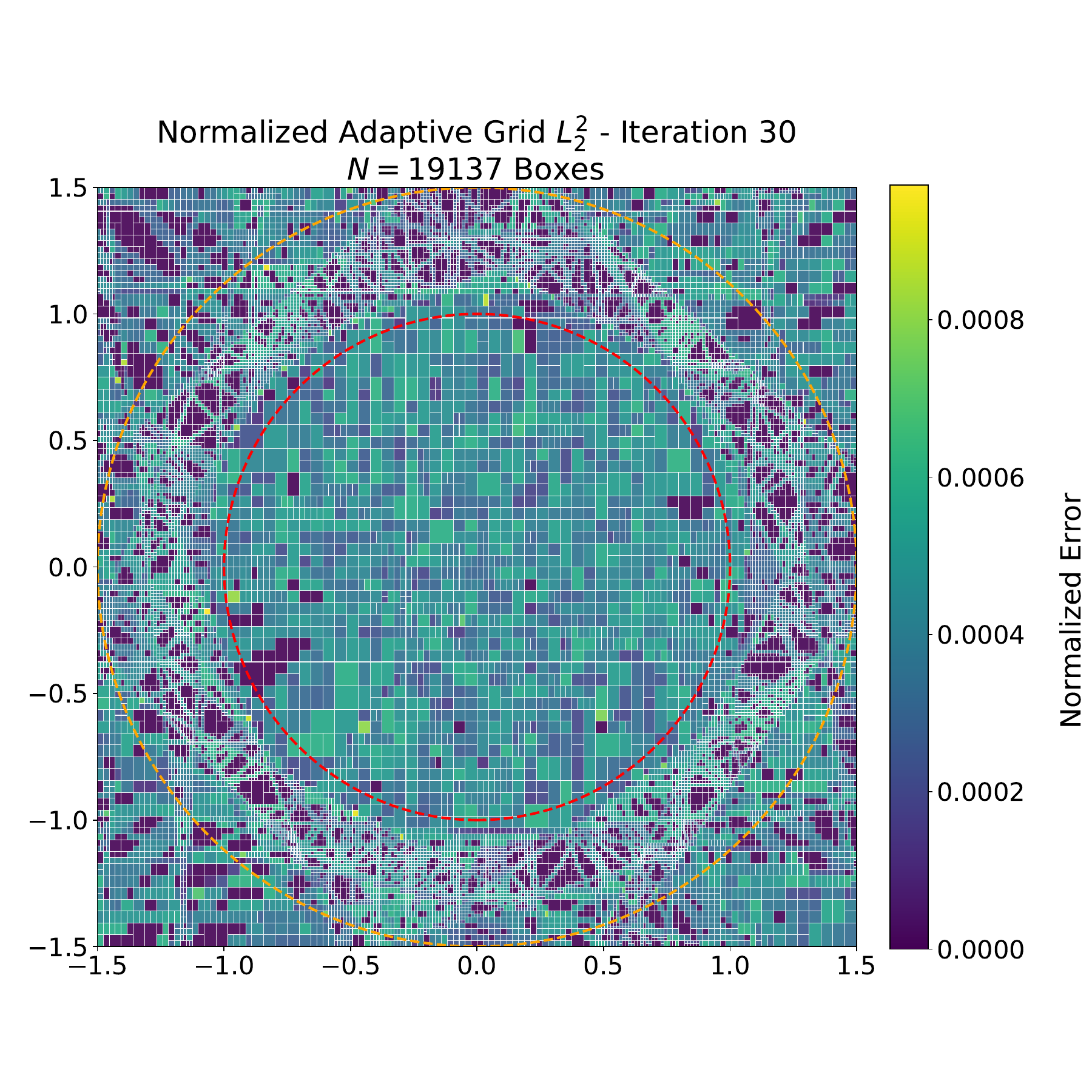}
			\caption{\small Heatmap of the local error for the $L^2$ norm (wide architecture).}
			\label{fig:lp_heatmap_wide}
		\end{subfigure}
		
		\caption{Two-dimensional disk experiment: adaptive evaluation of the normalized global error and the local errors for the $L^2$ quantity in deep and wide ReLU architectures ($\theta=0.5$).}
		\label{fig:2d_coin_lp_experiment}
	\end{figure}
	
	Building on this, Figure~\ref{fig:2d_coin_experiment_analysis_rates} presents convergence and refinement results for the normalized global error and the local error for Sobolev norms.
	The convergence closely follows the geometric convergence of Proposition~\ref{prop: convergence} shown in Figures~\ref{fig: deep rate convergence} and~\ref{fig: wide rate convergence}. 
	Figures~\ref{fig:w1p_deep_heatmap}, \ref{fig:w1p_wide_heatmap}, \ref{fig:w2p_deep_heatmap}, and \ref{fig:w2p_wide_heatmap} highlight how the adaptive framework targets localized large errors. 
	It is visible how the adaptive framework targets the transition areas with rapidly changing derivative (and therefore high bounds), which is very prominent in the heatmaps for the deep architecture. 
	The wide architecture cannot produce the same level of complexity, as indicated by the quasi-equidistant grid of the heatmap. 
	Figures~\ref{fig: deep local reduction rate convergence w1p}, \ref{fig: deep local reduction rate convergence w2p}, \ref{fig: wide local reduction rate convergence w1p}, and \ref{fig: wide local reduction rate convergence w2p} show that uniform refinement always results in a reduction of the local errors for all architectures. 
	This is essential for a provable convergence of the Dörfler marking and is discussed in Remark~\ref{rem:HalfRefinement}. Interval extensions reduce their error in $O(\text{diam}(\Omega))$, therefore one would expect a reduction of $1/4$ for the uniform refinement rule and the bounded norm to the second power.
	
	\begin{figure}[htbp]
		\centering
		
		\begin{subfigure}[t]{0.48\linewidth}
			\centering
			\includegraphics[width=\linewidth]{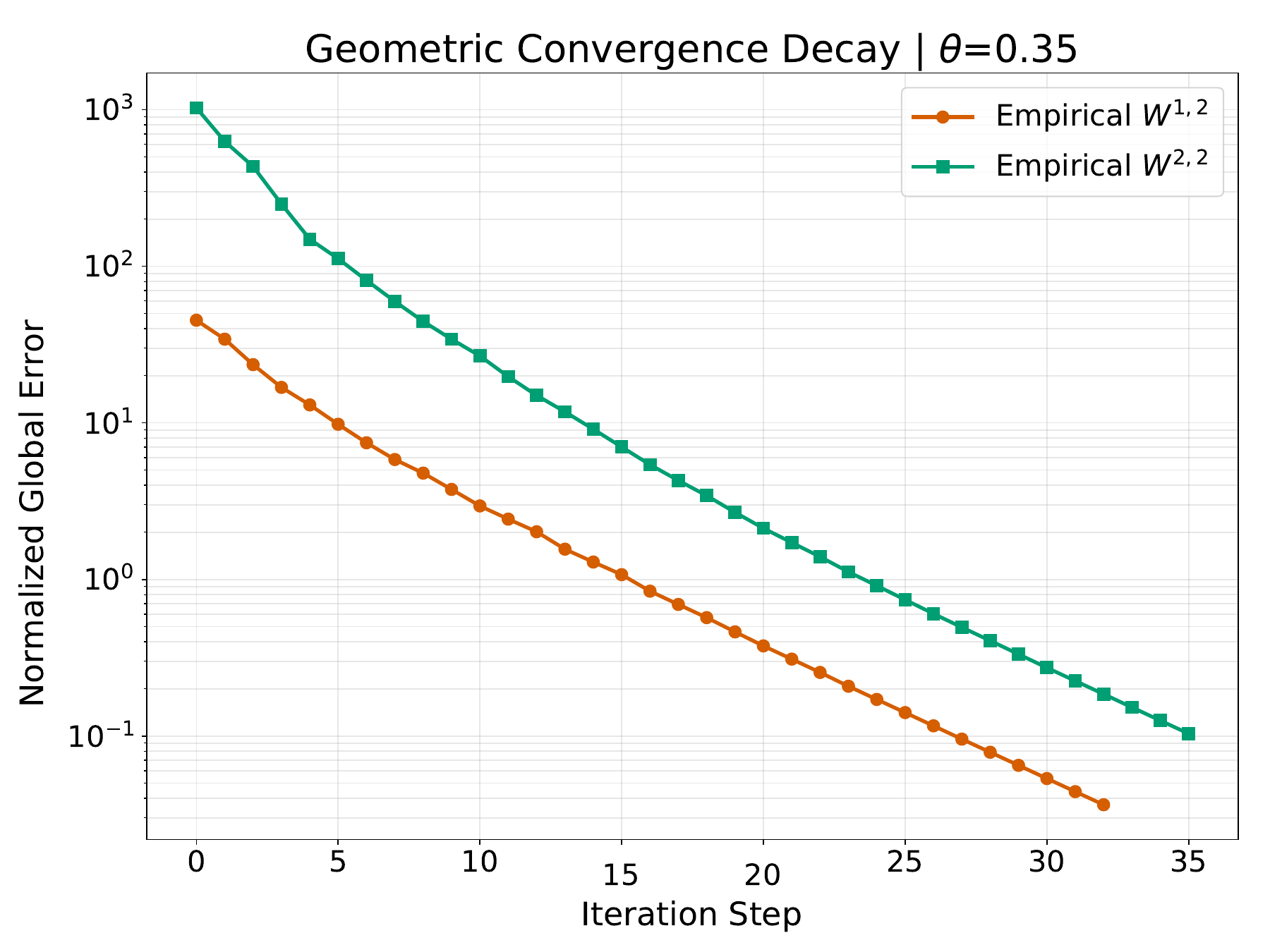}
			\caption{\small Convergence of the normalized global error for a deep architecture.}
			\label{fig: deep rate convergence}
		\end{subfigure}
		\hfill
		\begin{subfigure}[t]{0.48\linewidth}
			\centering
			\includegraphics[width=\linewidth]{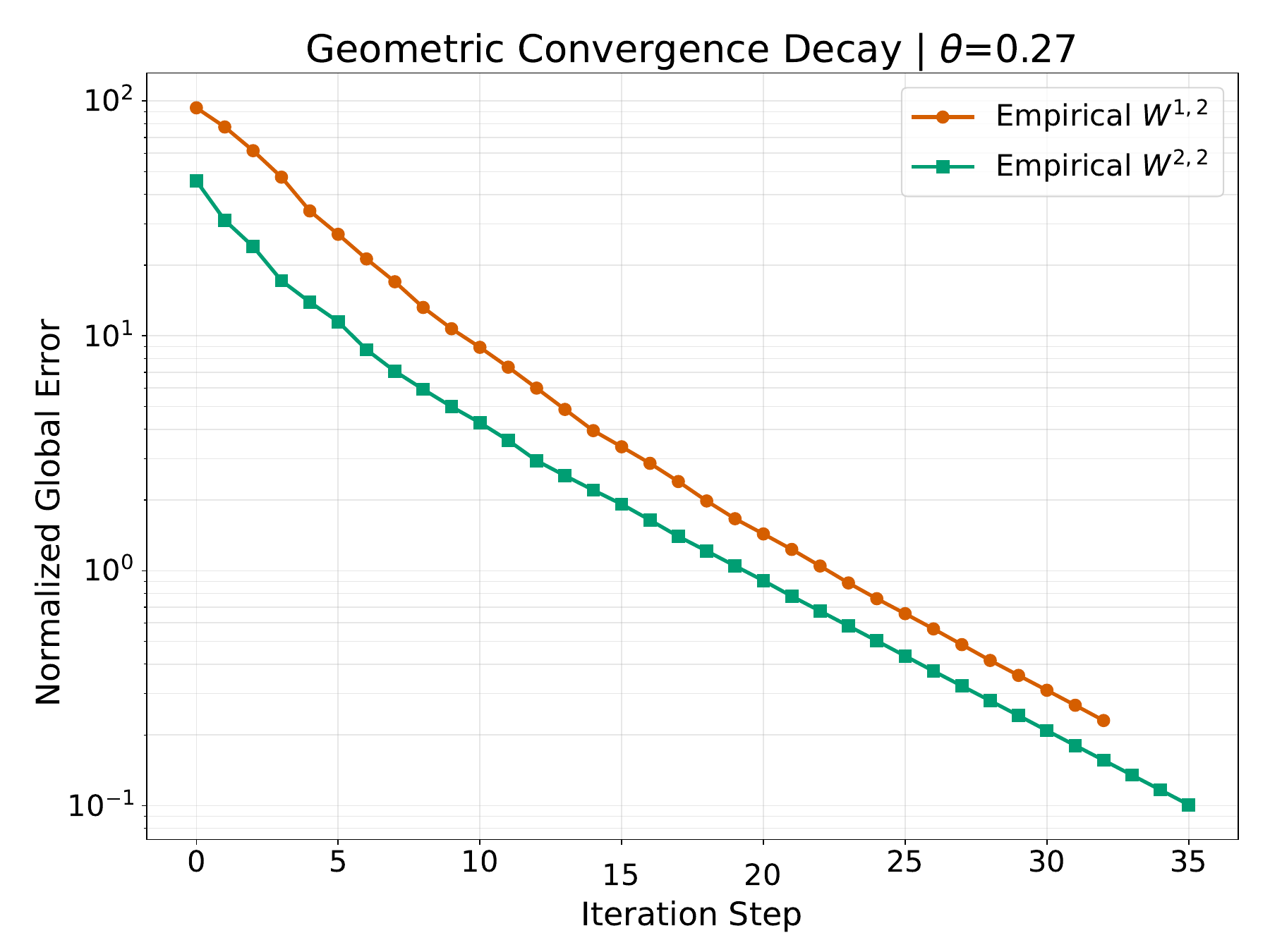}
			\caption{\small Convergence of the normalized global error for a wide architecture.}
			\label{fig: wide rate convergence}
		\end{subfigure}
		
		\vspace{1em}
		
		\begin{subfigure}[t]{0.48\linewidth}
			\centering
			\includegraphics[width=\linewidth]{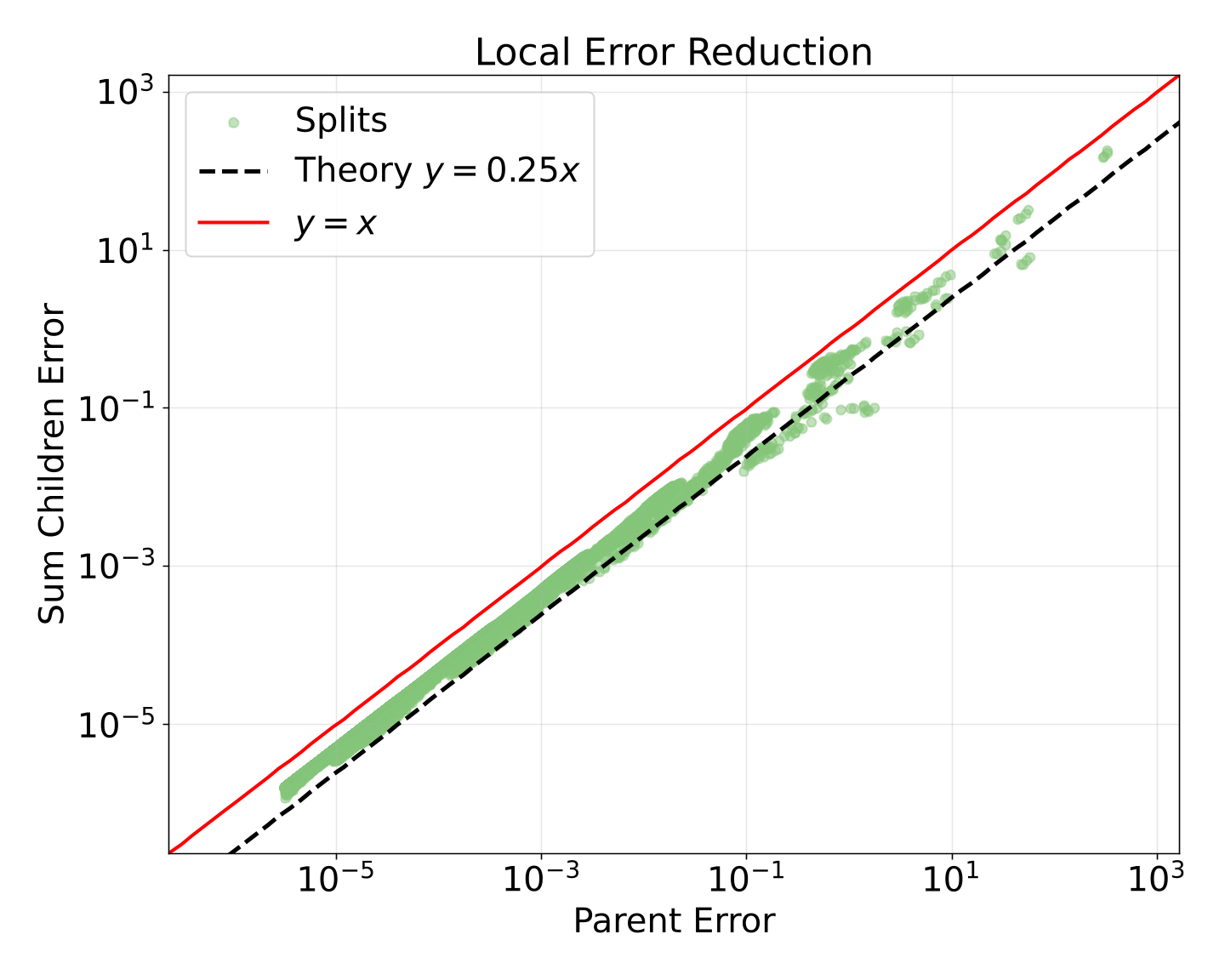}
			\caption{\small Local reduction of the $W^{1,p}$ norm bound for a deep architecture.}
			\label{fig: deep local reduction rate convergence w1p}
		\end{subfigure}
		\hfill
		\begin{subfigure}[t]{0.48\linewidth}
			\centering
			\includegraphics[width=\linewidth]{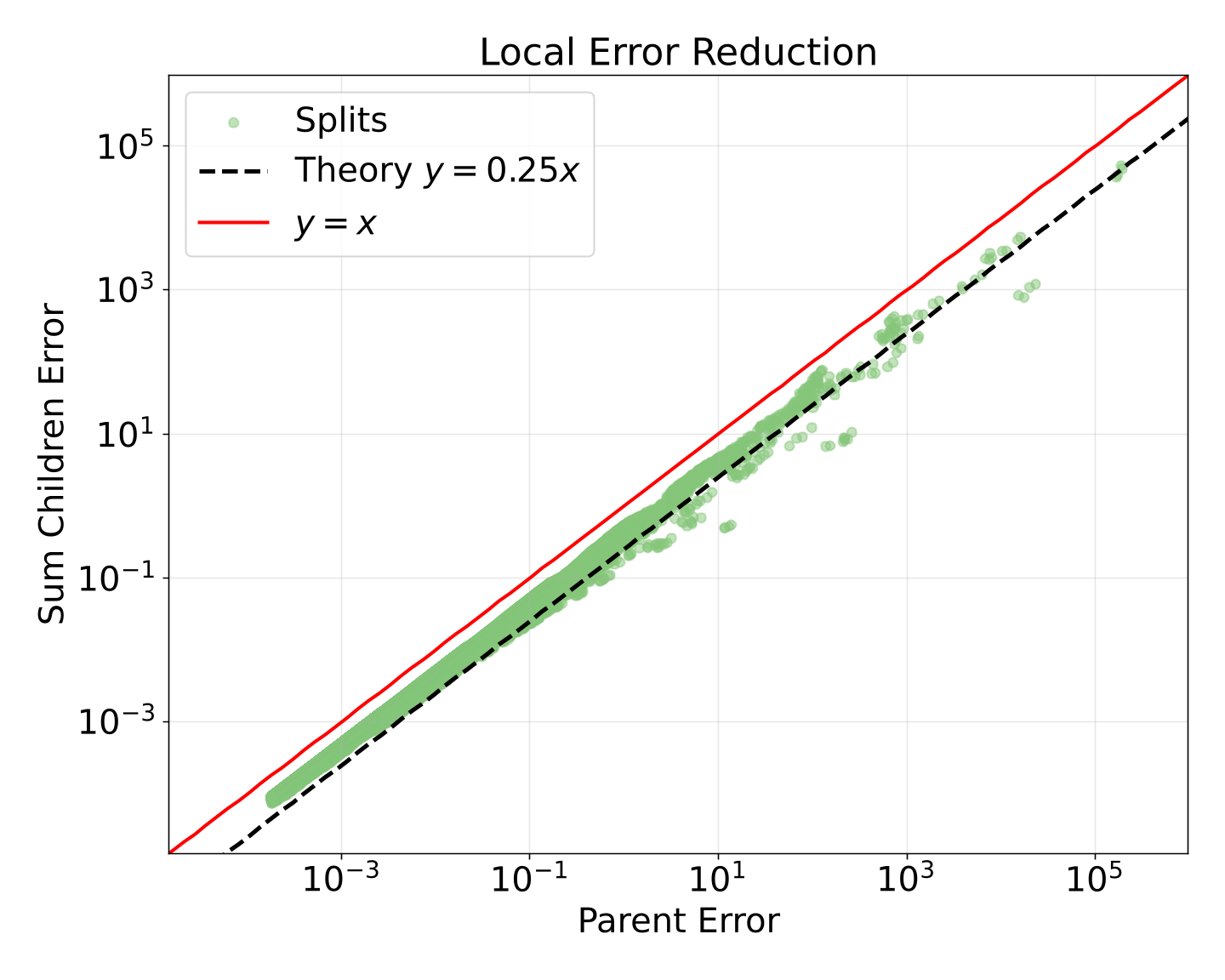}
			\caption{\small Local reduction of the $W^{2,p}$ norm bound for a deep architecture.}
			\label{fig: deep local reduction rate convergence w2p}
		\end{subfigure}
		
		\vspace{1em}
		
		\begin{subfigure}[t]{0.48\linewidth}
			\centering
			\includegraphics[width=\linewidth]{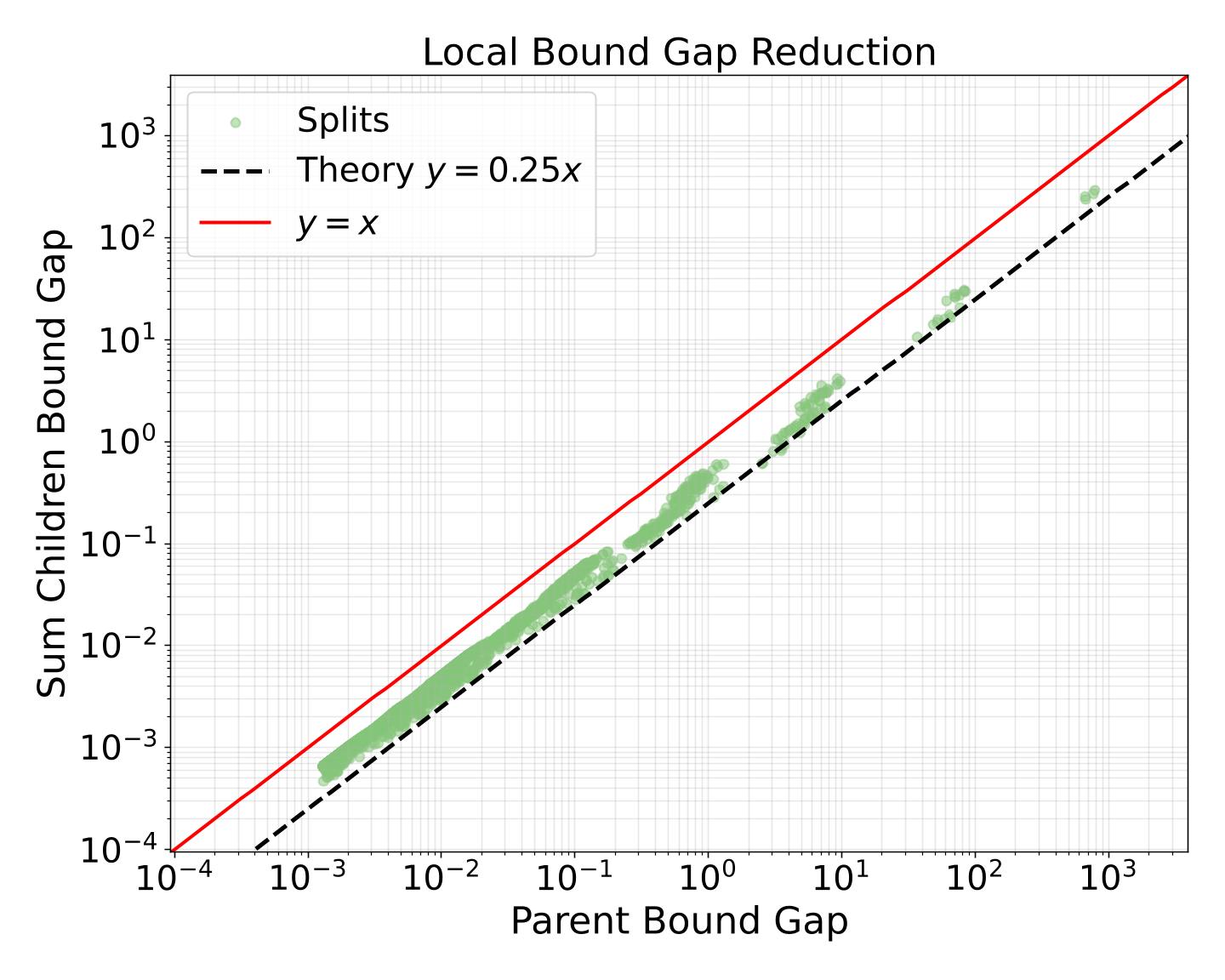}
			\caption{\small Local reduction of the $W^{1,p}$ norm bound for a wide architecture.}
			\label{fig: wide local reduction rate convergence w1p}
		\end{subfigure}
		\hfill
		\begin{subfigure}[t]{0.48\linewidth}
			\centering
			\includegraphics[width=\linewidth]{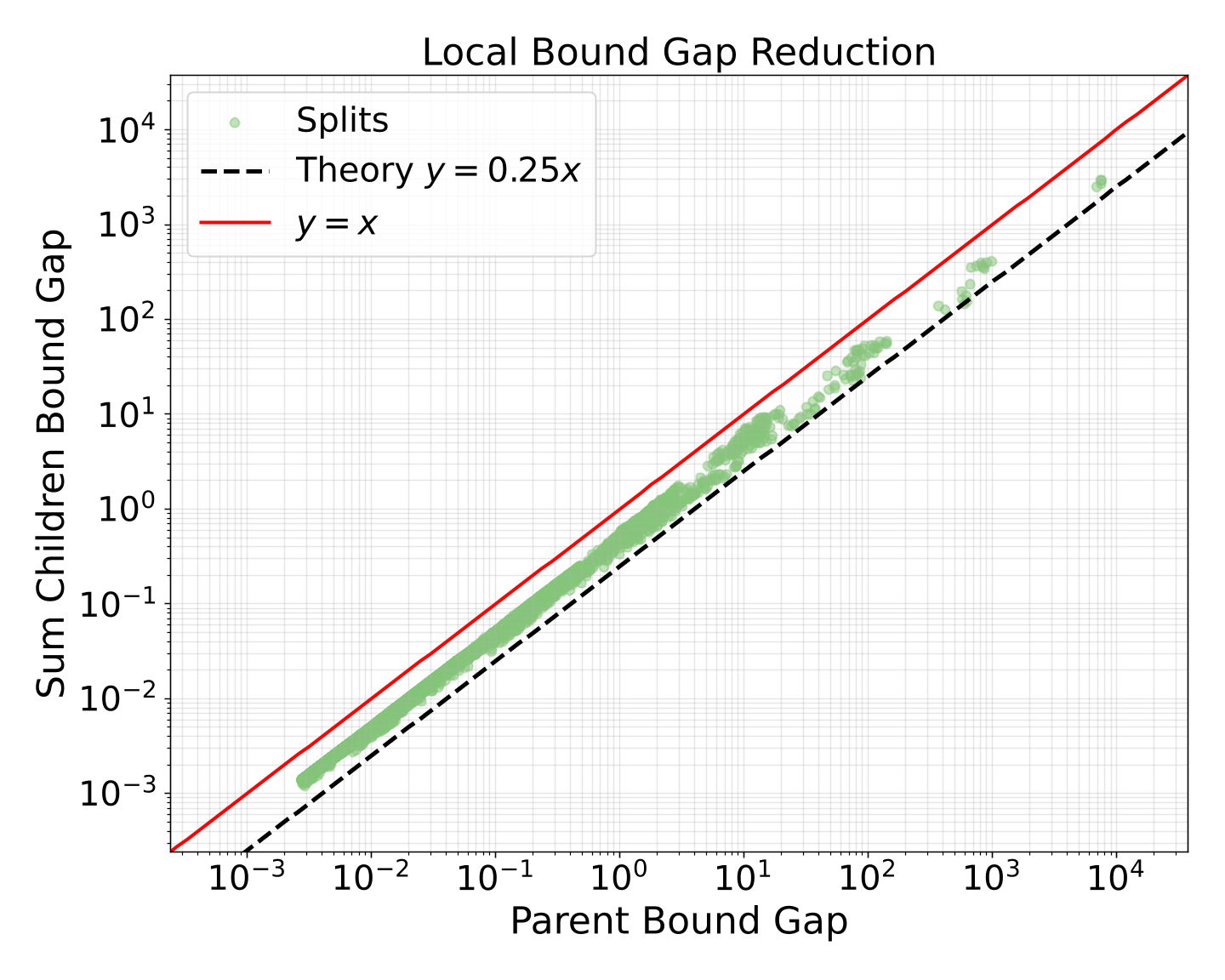}
			\caption{\small Local reduction of the $W^{2,p}$ norm bound for a wide architecture.}
			\label{fig: wide local reduction rate convergence w2p}
		\end{subfigure}
		
		\caption{Two-dimensional disk experiment: Convergence rate of the normalized error for the Sobolev norm and test of the local error reduction for the Dörfler marking (see Remark~\ref{rem:HalfRefinement}).}
		\label{fig:2d_coin_experiment_analysis_rates}
	\end{figure}
	
	\begin{figure}[htbp]
		\centering
		
		\begin{subfigure}[t]{0.48\linewidth}
			\centering
			\includegraphics[width=\linewidth]{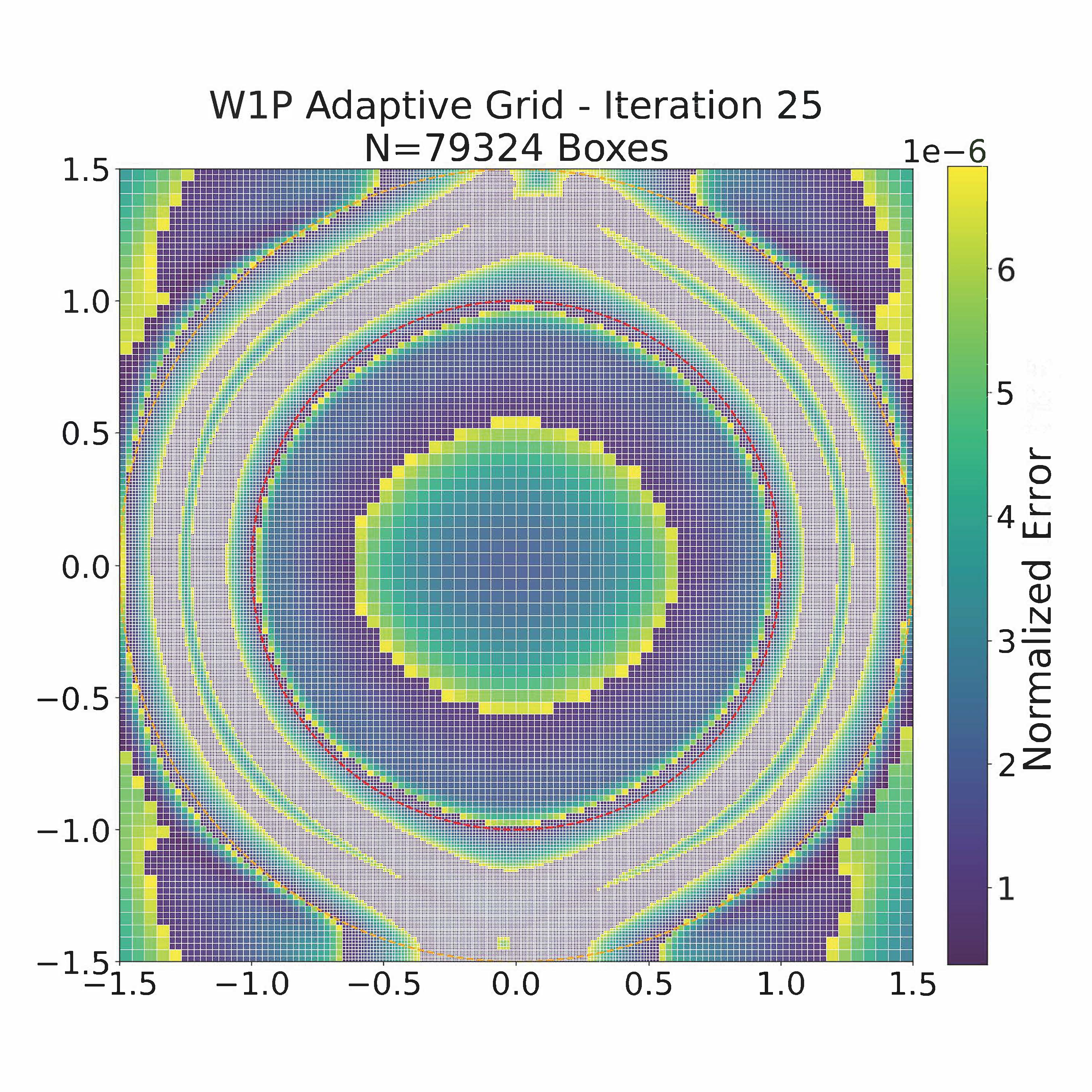}
			\caption{\small Heatmap for the deep architecture (W1P).} 
			\label{fig:w1p_deep_heatmap}
		\end{subfigure}
		\hfill
		\begin{subfigure}[t]{0.48\linewidth}
			\centering
			\includegraphics[width=\linewidth]{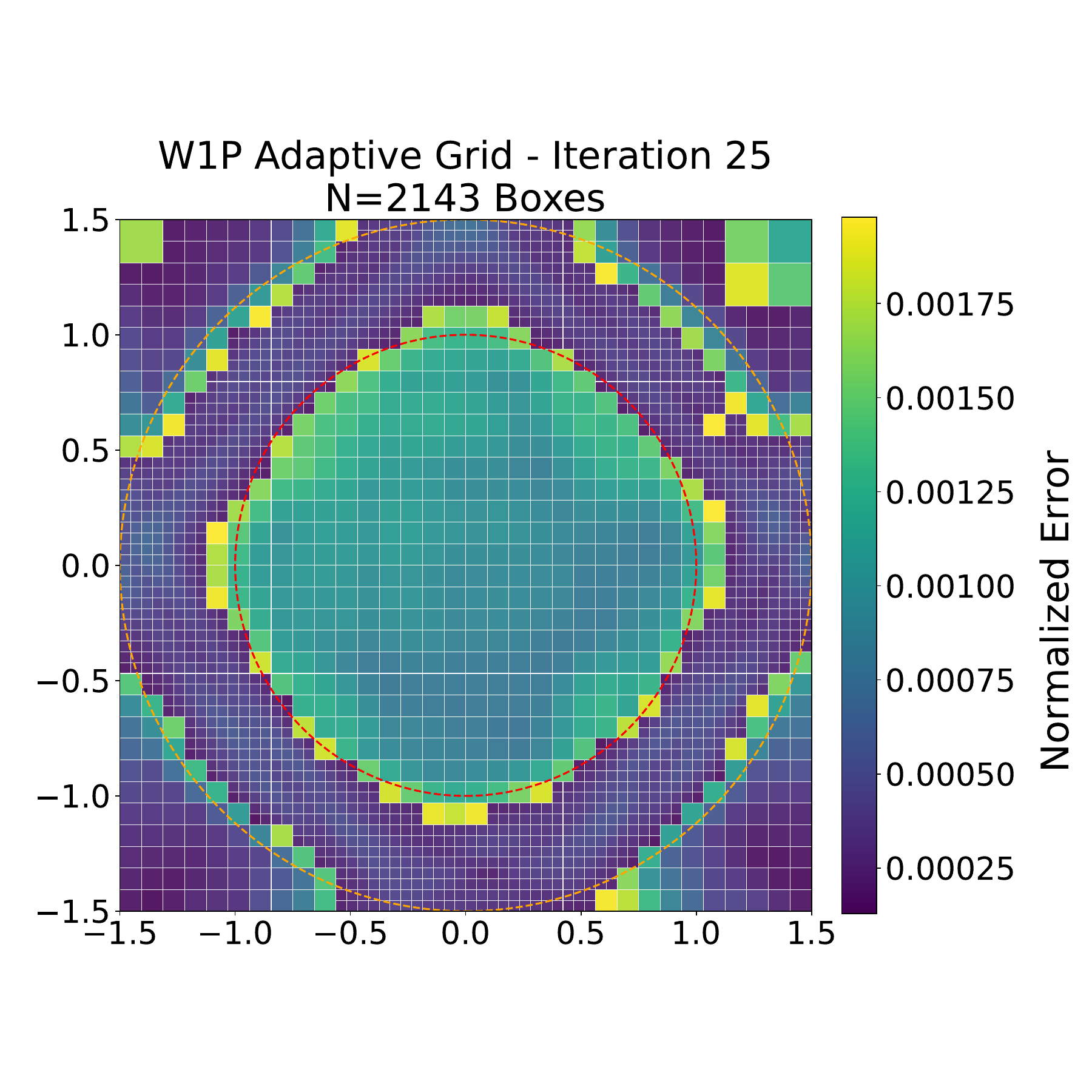}
			\caption{\small Heatmap for the wide architecture (W1P).} 
			\label{fig:w1p_wide_heatmap}
		\end{subfigure}
		
		\vspace{1em} 
		
		\begin{subfigure}[t]{0.48\linewidth}
			\centering
			\includegraphics[width=\linewidth]{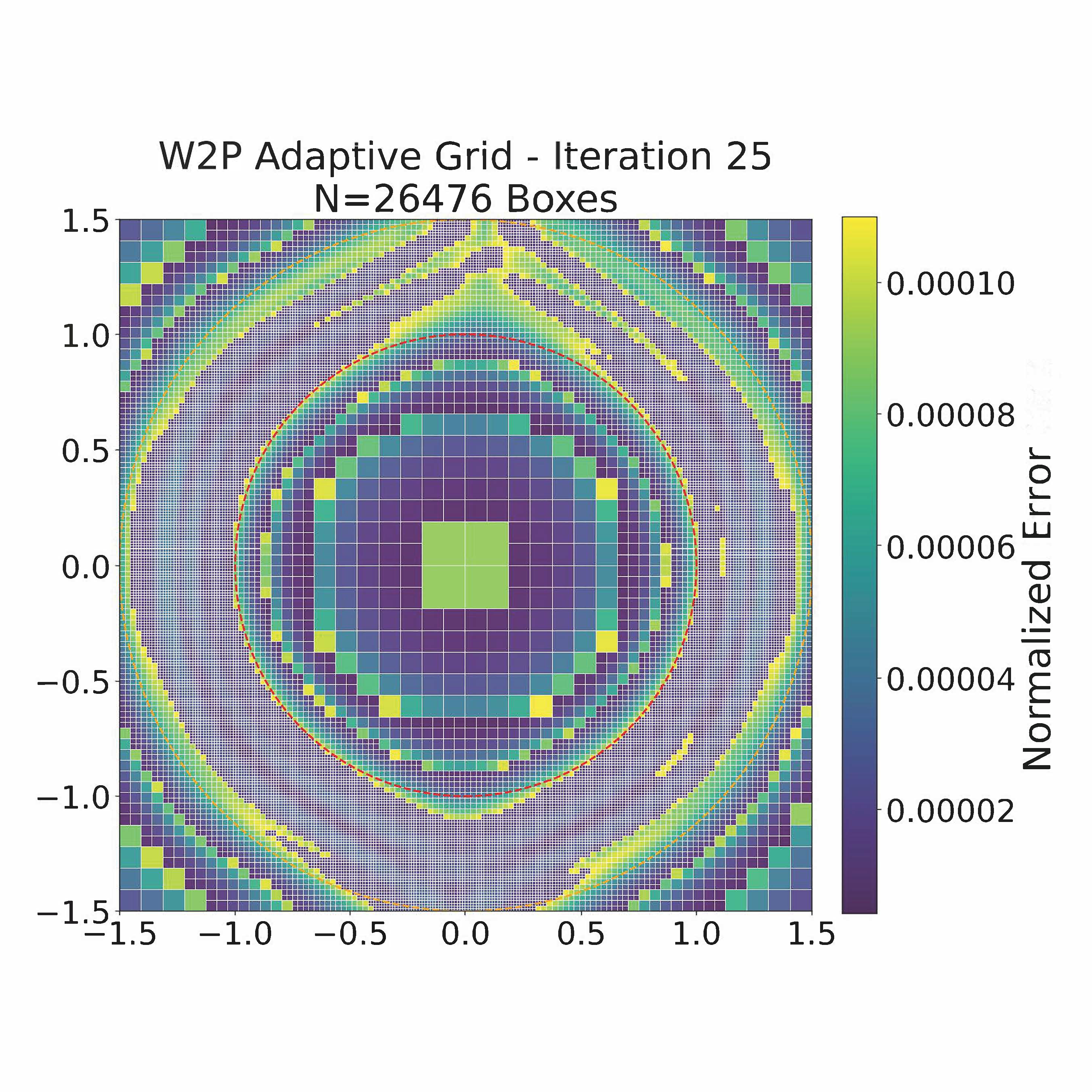}
			\caption{\small Heatmap for the deep architecture (W2P).} 
			\label{fig:w2p_deep_heatmap}
		\end{subfigure}
		\hfill
		\begin{subfigure}[t]{0.48\linewidth}
			\centering
			\includegraphics[width=\linewidth]{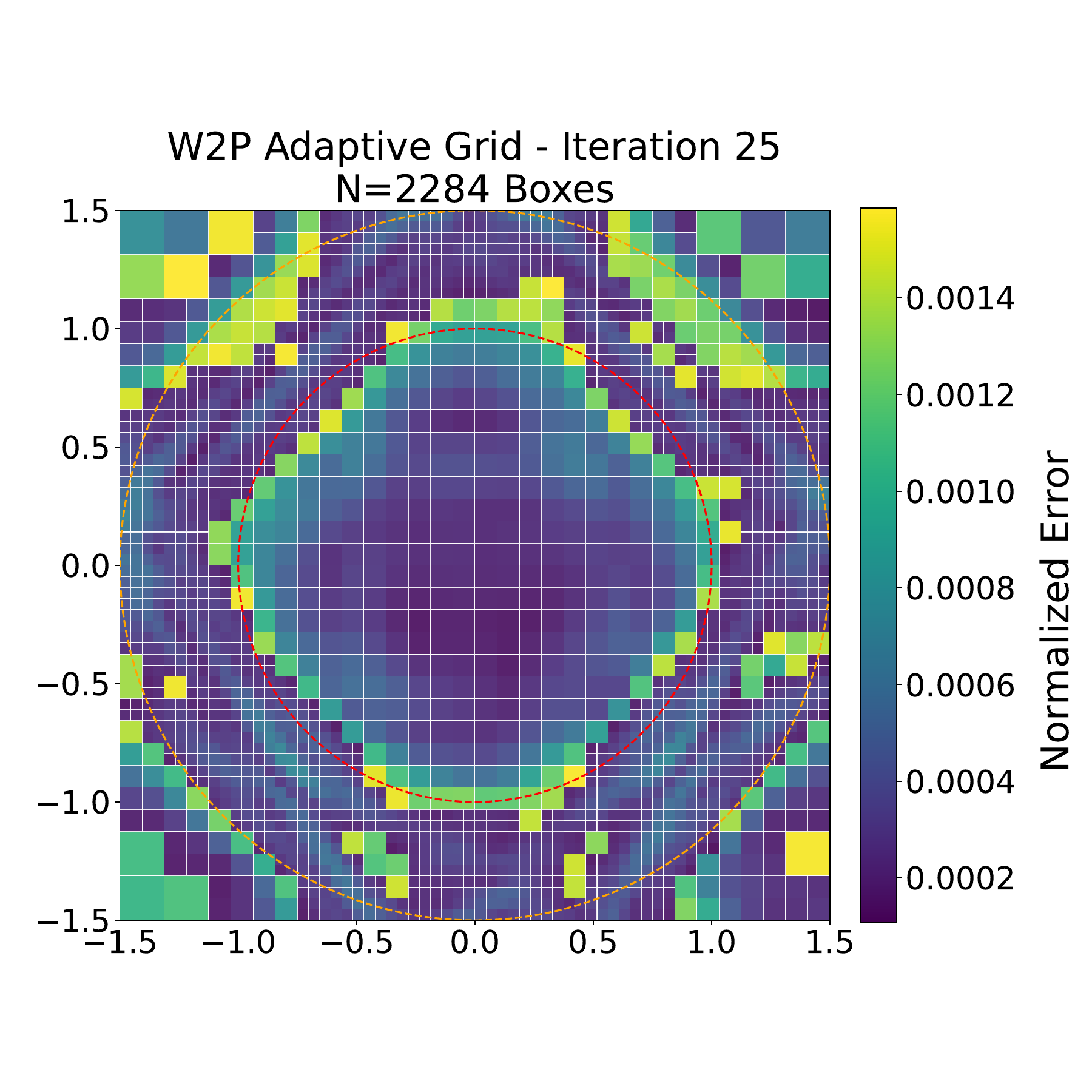}
			\caption{\small Heatmap for the wide architecture (W2P).} 
			\label{fig:w2p_wide_heatmap}
		\end{subfigure}
		
		\caption{Heatmaps of local errors for the Sobolev norm for the deep architecture (left) and wide architecture (right) from the 2d disk experiment.}
		\label{fig:2d_coin_heatmaps_combined}
	\end{figure}
	
	\subsubsection{Bounding the energy norm for elliptic PINNs}
	\label{sec:Bounding_Interior_Residuals_for_Elliptic_PINN}
	We analyze the interior residual bound calculated by bounding the energy norm as in Proposition~\ref{prop: energy interval enclosure} and its theoretical convergence from Proposition~\ref{prop: convergence}. 
	To approximate a weak solution of a PDE, we trained a neural network with 3 hidden layers of 32 neurons, the tanh activation function, and the Adam optimizer, calculating the loss as $|\Delta\Phi-f|$ via the Monte Carlo method as in \cite{Error_estimates_de2024error}.
	
	The residual is evaluated on 1000 sample points in the interior and on 800 sample points on the boundary. Let $\Omega = [0,1]^2$, $z = (x + y - 1)$, and the PDE of the form
	\begin{align}
		\label{eq: pde tanh}
		\Delta u &= -4 \left(1 - \tanh^2(z)\right) \tanh(z) & \text{in } \Omega, \\
		u &= \tanh(z) & \text{on } \partial\Omega.
	\end{align}
	The interior residual for the PINN approximation $u_\theta$ is defined as $r = \Delta u_\theta - f$.
	We chose the PDE~\eqref{eq: pde tanh} since fitting a PINN to this tanh-wavefront results in a highly localized error of the PINN, as shown in Figure~\ref{fig: energy norm error heatmap}. In Figure~\ref{fig: energy norm reduction rate}, we observe a reduction rate that matches the theoretical geometric error reduction rate of Proposition~\ref{prop: convergence}. The global error is not normalized in this plot, since we want to show the convergence towards zero, which is expected for a PINN fitting the Laplace operator. Proposition~\ref{prop: convergence} relies on the local reduction condition of the Dörfler marking, and we show by Figure~\ref{fig: deep local reduction energy error} that for the uniform refinement, this condition is satisfied for the interior residual bound.

	\begin{figure}[htb]
		\begin{subfigure}[t]{0.48\linewidth}
			\includegraphics[width=\linewidth]{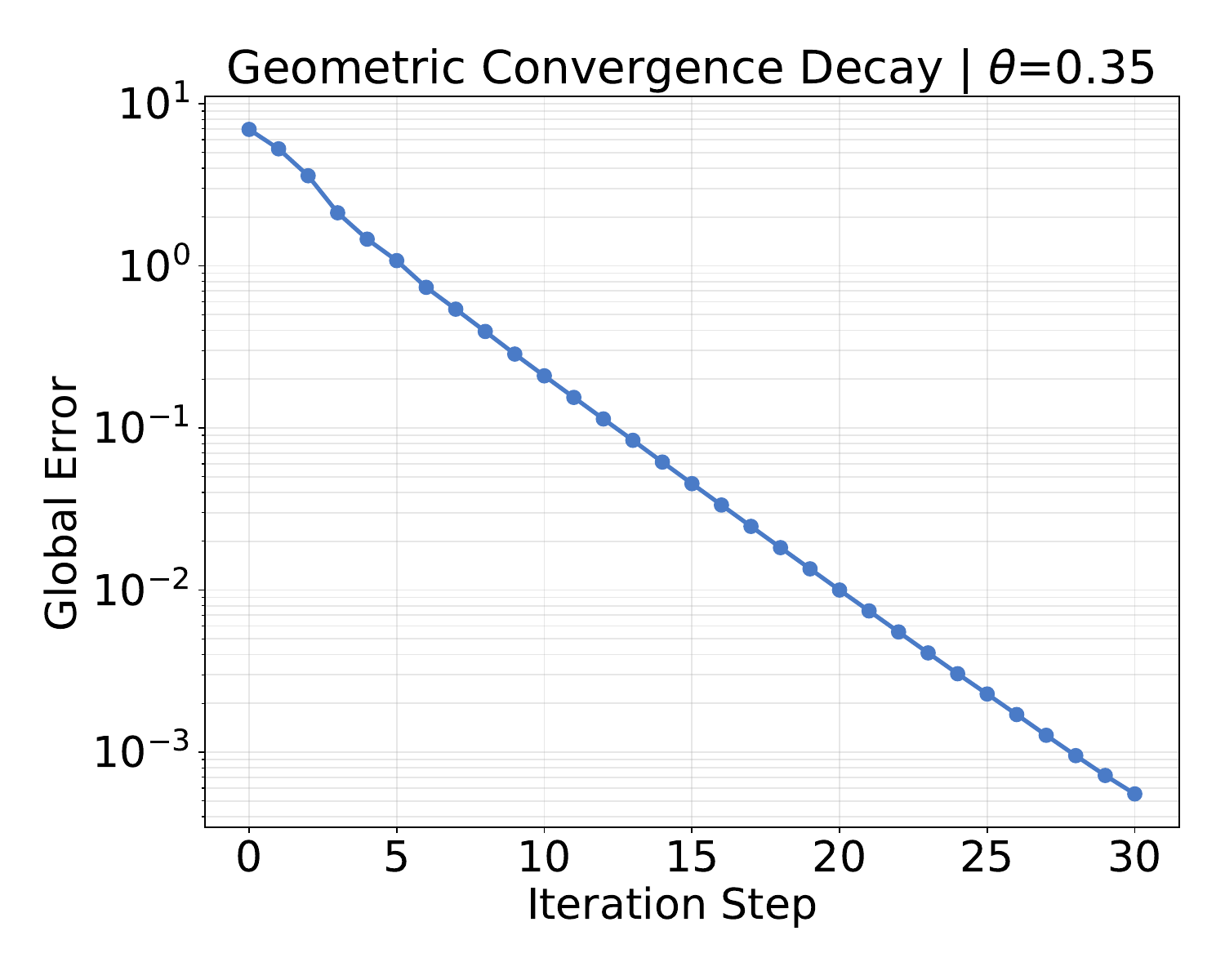}
			\caption{\small Convergence of the global error for the energy norm with $\theta=0.32$}
			\label{fig: energy norm reduction rate}
		\end{subfigure}
		\hfill
		\begin{subfigure}[t]{0.48\linewidth}
			\centering
			\includegraphics[width=\linewidth]{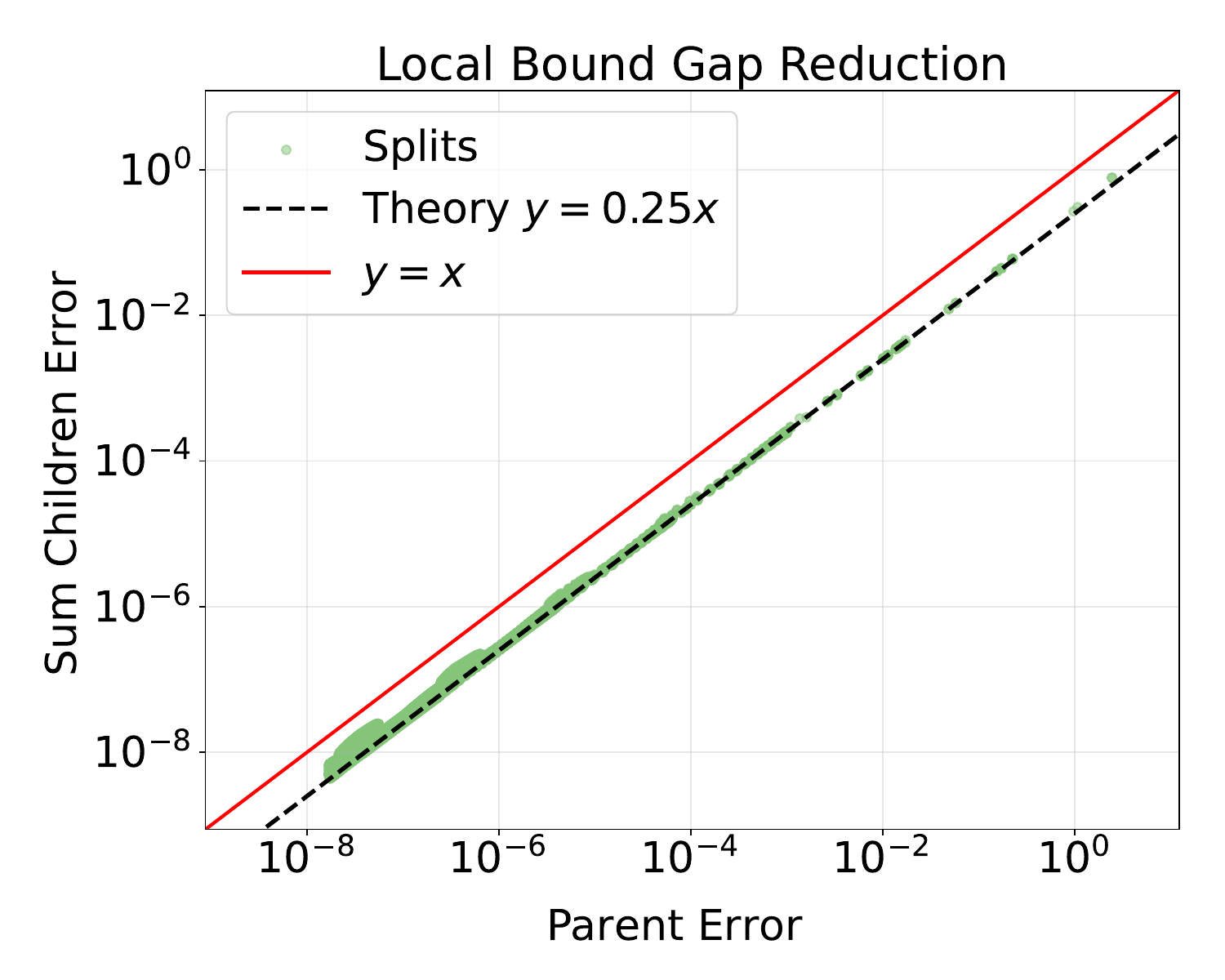}
			\caption{\small Local reduction}
			\label{fig: deep local reduction energy error}
		\end{subfigure}
		\hfill
		\begin{subfigure}[t]{0.48\linewidth}
			\centering
			\includegraphics[width=\linewidth]{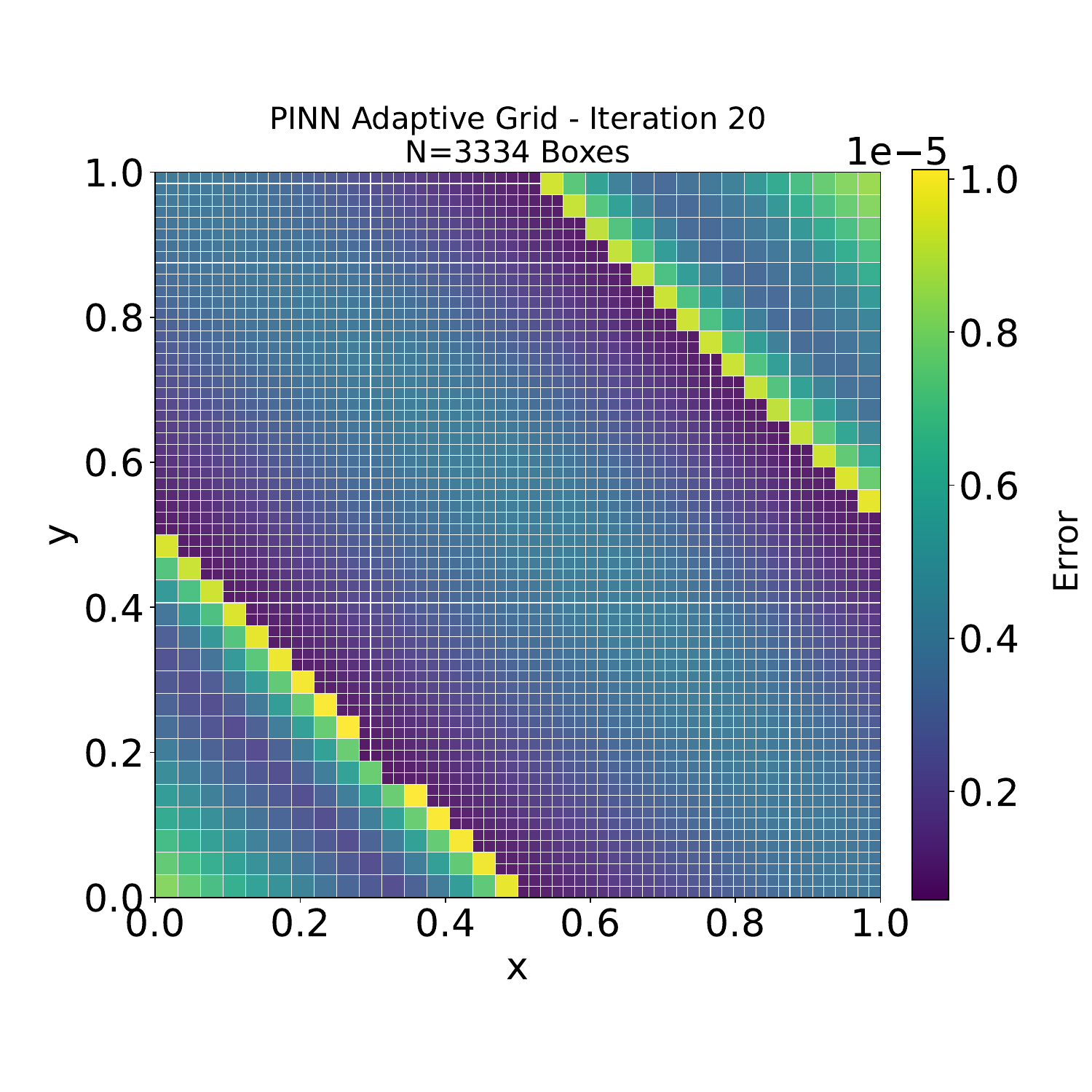}
			\caption{\small Heatmap of the local errors for the energy norm.}
			\label{fig: energy norm error heatmap}
		\end{subfigure}
		\hfill    
		\caption{PINN~\eqref{eq: pde tanh} experiment: convergence of the global error for the energy norm and local error reduction (Remark~\ref{rem:HalfRefinement}).}
		\label{fig: tanh PINN analysis}
	\end{figure}
	
	\section*{Acknowledgements}
	P.C.P. was supported by the Austrian Science Fund (FWF) Project P-37010.
	
	\bibliographystyle{abbrv}
	\bibliography{common/bib.bib}

@article{petersenzech,
  title        = {Mathematical Theory of Deep Learning},
  subtitle     = {Monograph},
  date         = {2025-09-03},
  journaltitle = {Springer Nature},
  author       = {Petersen, Philipp and Zech, Jakob},
  note         = {Monograph; bibliographic details incomplete}
}

@article{hanin2019deep,
  title={Deep relu networks have surprisingly few activation patterns},
  author={Hanin, Boris and Rolnick, David},
  journal={Advances in neural information processing systems},
  volume={32},
  year={2019}
}

@book{verfurth2013posteriori,
  title={A posteriori error estimation techniques for finite element methods},
  author={Verf{\"u}rth, R{\"u}diger},
  year={2013},
  publisher={OUP Oxford}
}

@article{Error_estimates_de2024error,
  title={Error estimates for physics-informed neural networks approximating the Navier--Stokes equations},
  author={De Ryck, Tim and Jagtap, Ameya D and Mishra, Siddhartha},
  journal={IMA Journal of Numerical Analysis},
  volume={44},
  number={1},
  pages={83--119},
  year={2024},
  publisher={Oxford University Press}
}

@article{dorfler1996convergent,
  title={A convergent adaptive algorithm for Poisson’s equation},
  author={D{\"o}rfler, Willy},
  journal={SIAM Journal on Numerical Analysis},
  volume={33},
  number={3},
  pages={1106--1124},
  year={1996},
  publisher={SIAM}
}

@article{babuska1978posteriori,
  title={A-posteriori error estimates for the finite element method},
  author={Babu{\v{s}}ka, Ivo and Rheinboldt, Werner C},
  journal={International journal for numerical methods in engineering},
  volume={12},
  number={10},
  pages={1597--1615},
  year={1978},
  publisher={Wiley Online Library}
}

@article{ainsworth1997posteriori,
  title={A posteriori error estimation in finite element analysis},
  author={Ainsworth, Mark and Oden, J Tinsley},
  journal={Computer methods in applied mechanics and engineering},
  volume={142},
  number={1-2},
  pages={1--88},
  year={1997},
  publisher={Elsevier}
}

@article{raissi2019pinns,
  title={Physics-informed neural networks: A deep learning framework for solving forward and inverse problems involving nonlinear partial differential equations},
  author={Raissi, Maziar and Perdikaris, Paris and Karniadakis, George E},
  journal={Journal of Computational physics},
  volume={378},
  pages={686--707},
  year={2019},
  publisher={Elsevier}
}

@article{sirignano2018dgm,
  title={DGM: A deep learning algorithm for solving partial differential equations},
  author={Sirignano, Justin and Spiliopoulos, Konstantinos},
  journal={Journal of computational physics},
  volume={375},
  pages={1339--1364},
  year={2018},
  publisher={Elsevier}
}

@article{lu2021deeponet,
  title={Deeponet: Learning nonlinear operators for identifying differential equations based on the universal approximation theorem of operators},
  author={Lu, Lu and Jin, Pengzhan and Karniadakis, George Em},
  journal={arXiv preprint arXiv:1910.03193},
  year={2019}
}

@article{kovachki2021neuraloperator,
  title={Neural operator: Learning maps between function spaces with applications to pdes},
  author={Kovachki, Nikola and Li, Zongyi and Liu, Burigede and Azizzadenesheli, Kamyar and Bhattacharya, Kaushik and Stuart, Andrew and Anandkumar, Anima},
  journal={Journal of Machine Learning Research},
  volume={24},
  number={89},
  pages={1--97},
  year={2023}
}

@book{evans2022partial,
  title={Partial differential equations},
  author={Evans, Lawrence C},
  volume={19},
  year={2022},
  publisher={American mathematical society}
}

@book{shalev2014understanding,
  title={Understanding machine learning: From theory to algorithms},
  author={Shalev-Shwartz, Shai and Ben-David, Shai},
  year={2014},
  publisher={Cambridge university press}
}

@article{neyshabur2018bounds,
  title={A {PAC}-bayesian approach to spectrally-normalized margin bounds for neural networks},
  author={Neyshabur, Behnam and Bhojanapalli, Srinadh and Srebro, Nathan},
  journal={arXiv preprint arXiv:1707.09564},
  year={2017}
}

@article{grohsvoigtlaendergap,
  title={Proof of the theory-to-practice gap in deep learning via sampling complexity bounds for neural network approximation spaces},
  author={Grohs, Philipp and Voigtlaender, Felix},
  journal={Found Comput Math},
  year={2023},
  doi={10.1007/s10208-023-09616-8}
}

@inproceedings{zhang2019recurjac,
  title={Recurjac: An efficient recursive algorithm for bounding jacobian matrix of neural networks and its applications},
  author={Zhang, Huan and Zhang, Pengchuan and Hsieh, Cho-Jui},
  booktitle={Proceedings of the AAAI Conference on Artificial Intelligence},
  volume={33},
  number={01},
  pages={5757--5764},
  year={2019}
}

@book{moore2009introduction,
  title={Introduction to interval analysis},
  author={Moore, Ramon E and Kearfott, R Baker and Cloud, Michael J},
  year={2009},
  publisher={SIAM}
}

@book{moore1966interval,
  title={Interval analysis},
  author={Moore, Ramon E},
  year={1966},
  publisher={Prentice-Hall}
}

@inproceedings{rump2010verification,
  title={Verification methods: Rigorous results using floating-point arithmetic},
  author={Rump, Siegfried M},
  booktitle={Proceedings of the 2010 International Symposium on Symbolic and Algebraic Computation},
  pages={3--4},
  year={2010}
}

@misc{Gowal2018,
  author        = {Gowal, Sven and Dvijotham, Krishnamurthy and Stanforth, Robert and
                   Bunel, Rudy and Qin, Chongli and Uesato, Jonathan and
                   Arandjelovic, Relja and Mann, Timothy and Kohli, Pushmeet},
  title         = {On the Effectiveness of Interval Bound Propagation for Training
                   Verifiably Robust Models},
  year          = {2018},
  eprint        = {1810.12715},
  archiveprefix = {arXiv},
  primaryclass  = {cs.LG},
  note          = {Preprint; published as ``Scalable Verified Training for Provably
                   Robust Image Classification'' at ICCV 2019}
}

@inproceedings{Mirman2018,
  author        = {Mirman, Matthew and Gehr, Timon and Vechev, Martin},
  title         = {Differentiable Abstract Interpretation for Provably Robust Neural
                   Networks},
  booktitle     = {Proceedings of the 35th International Conference on Machine Learning
                   (ICML)},
  pages         = {3575--3583},
  year          = {2018},
  volume        = {80},
  series        = {Proceedings of Machine Learning Research},
  publisher     = {PMLR},
  url           = {http://proceedings.mlr.press/v80/mirman18b.html}
}

@inproceedings{Zhang2018,
  author        = {Zhang, Huan and Weng, Tsui-Wei and Chen, Pin-Yu and
                   Hsieh, Cho-Jui and Daniel, Luca},
  title         = {Efficient Neural Network Robustness Certification with General
                   Activation Functions},
  booktitle     = {Advances in Neural Information Processing Systems 31 (NeurIPS)},
  pages         = {4939--4948},
  year          = {2018},
  url           = {https://proceedings.neurips.cc/paper/2018/hash/d04863f100d59b3eb688a11f95b0ae60-Abstract.html}
}

@inproceedings{Xu2020,
  author        = {Xu, Kaidi and Shi, Zhouxing and Zhang, Huan and Wang, Yihan and
                   Chang, Kai-Wei and Huang, Minlie and Kailkhura, Bhavya and
                   Lin, Xue and Hsieh, Cho-Jui},
  title         = {Automatic Perturbation Analysis for Scalable Certified Robustness
                   and Beyond},
  booktitle     = {Advances in Neural Information Processing Systems 33 (NeurIPS)},
  pages         = {1129--1141},
  year          = {2020},
  url           = {https://proceedings.neurips.cc/paper/2020/hash/0cbc5671ae26f67871cb914d81ef8fc1-Abstract.html}
}

@article{Singh2019,
  author        = {Singh, Gagandeep and Gehr, Timon and P{\"u}schel, Markus and
                   Vechev, Martin},
  title         = {An Abstract Domain for Certifying Neural Networks},
  journal       = {Proceedings of the {ACM} on Programming Languages},
  volume        = {3},
  number        = {{POPL}},
  pages         = {41:1--41:30},
  year          = {2019},
  doi           = {10.1145/3290354}
}

@inproceedings{Wang2021,
  author        = {Wang, Shiqi and Zhang, Huan and Xu, Kaidi and Lin, Xue and
                   Jana, Suman and Hsieh, Cho-Jui and Kolter, J. Zico},
  title         = {{$\beta$-CROWN}: Efficient Bound Propagation with Per-neuron Split
                   Constraints for Complete and Incomplete Neural Network Robustness
                   Verification},
  booktitle     = {Advances in Neural Information Processing Systems 34 (NeurIPS)},
  pages         = {29909--29921},
  year          = {2021},
  url           = {https://proceedings.neurips.cc/paper/2021/hash/fac7fead96dafceaf80c1daffeae82a4-Abstract.html}
}

@misc{Kapela2024,
  author        = {Kapela, Tomasz and Mrozek, Marian and Wilczak, Daniel and
                   Zgliczy{\'n}ski, Piotr and Spurek, Przemys{\l}aw},
  title         = {Make Interval Bound Propagation Great Again},
  year          = {2024},
  eprint        = {2410.03373},
  archiveprefix = {arXiv},
  primaryclass  = {cs.LG}
}

@inproceedings{Shi2022,
  author        = {Shi, Zhouxing and Wang, Yihan and Zhang, Huan and
                   Yi, Jinfeng and Hsieh, Cho-Jui},
  title         = {Efficiently Computing Local Lipschitz Constants of Neural Networks
                   via Bound Propagation},
  booktitle     = {Advances in Neural Information Processing Systems 35 (NeurIPS)},
  year          = {2022},
  url           = {https://proceedings.neurips.cc/paper_files/paper/2022/hash/7b58f55fa6f2d2ea77290d028ecc0e29-Abstract-Conference.html}
}

@inproceedings{Fazlyab2019,
  author        = {Fazlyab, Mahyar and Robey, Alexander and Hassani, Hamed and
                   Morari, Manfred and Pappas, George J.},
  title         = {Efficient and Accurate Estimation of {Lipschitz} Constants for Deep
                   Neural Networks},
  booktitle     = {Advances in Neural Information Processing Systems 32 (NeurIPS)},
  pages         = {11423--11434},
  year          = {2019},
  url           = {https://proceedings.neurips.cc/paper/2019/hash/95e1533eb1b20a97777749fb94fdb944-Abstract.html}
}

@inproceedings{Entesari2024,
  author        = {Entesari, Taha and Sharifi, Reza and Fazlyab, Mahyar},
  title         = {Certified Invertibility in Neural Networks via Mixed-Integer
                   Programming},
  booktitle     = {Proceedings of the 41st International Conference on Machine Learning
                   (ICML)},
  year          = {2024},
  note          = {Compositional curvature bounds paper; see also arXiv:2407.11498}
}

@misc{Sharifi2024,
  author        = {Sharifi, Reza and Fazlyab, Mahyar},
  title         = {Derivative-Preserving Reachability Analysis of Neural Networks},
  year          = {2024},
  eprint        = {2406.04476},
  archiveprefix = {arXiv},
  primaryclass  = {cs.LG}
}

@inproceedings{Laurel2022,
  author        = {Laurel, Jacob and Pailoor, Shankara and Sidrane, Claire and
                   Ugare, Shubham and Madhyastha, Sanjit},
  title         = {Toward Better Gradient Descent via Unified Abstract Interpretation
                   of Automatic Differentiation},
  booktitle     = {Proceedings of the ACM on Programming Languages ({OOPSLA})},
  volume        = {6},
  year          = {2022},
  doi           = {10.1145/3563325},
  note          = {Abstract interpretation of higher-order automatic differentiation
                   with certified derivative bounds via dual-number abstractions}
}

@article{Bunel2020,
  author        = {Bunel, Rudy and Lu, Jingyue and Turkaslan, Ilker and
                   Torr, Philip H. S. and Kohli, Pushmeet and Kumar, M. Pawan},
  title         = {Branch and Bound for Piecewise Linear Neural Network Verification},
  journal       = {Journal of Machine Learning Research},
  volume        = {21},
  number        = {42},
  pages         = {1--39},
  year          = {2020},
  url           = {https://jmlr.org/papers/v21/19-468.html}
}

@inproceedings{Wang2018ReluVal,
  author        = {Wang, Shiqi and Pei, Kexin and Whitehouse, Justin and
                   Yang, Junfeng and Jana, Suman},
  title         = {Formal Security Analysis of Neural Networks Using Symbolic
                   Intervals},
  booktitle     = {27th {USENIX} Security Symposium ({USENIX} Security 18)},
  pages         = {1599--1614},
  year          = {2018},
  address       = {Baltimore, MD},
  publisher     = {{USENIX} Association},
  isbn          = {978-1-939133-04-5},
  url           = {https://www.usenix.org/conference/usenixsecurity18/presentation/wang-shiqi}
}

@inproceedings{Ferrari2022,
  author        = {Ferrari, Claudio and M{\"u}ller, Mark Niklas and Jovanovi{\'c},
                   Nikola and Vechev, Martin},
  title         = {Complete Verification via Multi-Neuron Relaxation Guided
                   Branch-and-Bound},
  booktitle     = {Proceedings of the 10th International Conference on Learning
                   Representations (ICLR)},
  year          = {2022},
  url           = {https://openreview.net/forum?id=l_amHf1oaK}
}

@inproceedings{Zhang2022GCP,
  author        = {Zhang, Huan and Wang, Shiqi and Xu, Kaidi and Li, Linyi and
                   Li, Bo and Jana, Suman and Hsieh, Cho-Jui and Kolter, J. Zico},
  title         = {General Cutting Planes for Bound-Propagation-Based Neural Network
                   Verification},
  booktitle     = {Advances in Neural Information Processing Systems 35 (NeurIPS)},
  year          = {2022},
  url           = {https://proceedings.neurips.cc/paper_files/paper/2022/hash/20914db5742a5aa1b60e7c9d1d88e796-Abstract-Conference.html}
}

@inproceedings{Shi2024GenBaB,
  author        = {Shi, Zhouxing and Jin, Qirui and Kolter, J. Zico and Jana, Suman and
                   Hsieh, Cho-Jui and Zhang, Huan},
  title         = {Neural Network Verification with Branch-and-Bound for General
                   Nonlinearities},
  booktitle     = {Proceedings of the 31st International Conference on Tools and
                   Algorithms for the Construction and Analysis of Systems (TACAS)},
  year          = {2025},
  eprint        = {2405.21063},
  archiveprefix = {arXiv},
  primaryclass  = {cs.LG}
}

@inproceedings{Boetius2025,
  author        = {Boetius, Alexander and Leue, Stefan and Sutter, Tobias},
  title         = {Probabilistic Verification of Neural Networks against Group
                   Fairness},
  booktitle     = {Proceedings of the 42nd International Conference on Machine Learning
                   (ICML)},
  year          = {2025},
  eprint        = {2405.17556},
  archiveprefix = {arXiv},
  primaryclass  = {cs.LG},
  note          = {Computes guaranteed bounds on probabilities by aggregating interval
                   bounds over domain partitions}
}

@article{CorlissRall1987,
  author        = {Corliss, George F. and Rall, L. B.},
  title         = {Adaptive, Self-Validating Numerical Quadrature},
  journal       = {{SIAM} Journal on Scientific and Statistical Computing},
  volume        = {8},
  number        = {5},
  pages         = {831--847},
  year          = {1987},
  doi           = {10.1137/0908069}
}

@article{Petras2002,
  author        = {Petras, Knut},
  title         = {Autonomous and Reliable Numerical Integration of Functions with
                   Irregular Behavior},
  journal       = {Journal of Computational and Applied Mathematics},
  volume        = {145},
  number        = {2},
  pages         = {345--359},
  year          = {2002},
  doi           = {10.1016/S0377-0427(01)00588-8}
}

@inproceedings{Johansson2018,
  author        = {Johansson, Fredrik},
  title         = {Numerical Integration in Arbitrary-Precision Ball Arithmetic},
  booktitle     = {Mathematical Software -- {ICMS} 2018},
  series        = {Lecture Notes in Computer Science},
  volume        = {10931},
  pages         = {255--263},
  year          = {2018},
  publisher     = {Springer},
  doi           = {10.1007/978-3-319-96418-8_30},
  eprint        = {1802.07942},
  archiveprefix = {arXiv}
}

@article{BerzMakino1998,
  author        = {Berz, Martin and Makino, Kyoko},
  title         = {Verified Integration of {ODEs} and Flows Using Differential
                   Algebraic Methods on High-Order {Taylor} Models},
  journal       = {Reliable Computing},
  volume        = {4},
  number        = {4},
  pages         = {361--369},
  year          = {1998},
  doi           = {10.1023/A:1024467732637}
}

@article{BerzMakino2003,
  author        = {Berz, Martin and Makino, Kyoko},
  title         = {Performance of {Taylor} Model Methods for Validated Integration of
                   {ODEs}},
  journal       = {International Journal of Pure and Applied Mathematics},
  volume        = {6},
  number        = {3},
  pages         = {239--316},
  year          = {2003}
}

@book{Moore2009,
  author        = {Moore, Ramon E. and Kearfott, R. Baker and Cloud, Michael J.},
  title         = {Introduction to Interval Analysis},
  publisher     = {Society for Industrial and Applied Mathematics (SIAM)},
  address       = {Philadelphia},
  year          = {2009},
  doi           = {10.1137/1.9780898717716}
}

@book{Tucker2011,
  author        = {Tucker, Warwick},
  title         = {Validated Numerics: A Short Introduction to Rigorous Computations},
  publisher     = {Princeton University Press},
  address       = {Princeton, NJ},
  year          = {2011}
}

@article{Rump2010,
  author        = {Rump, Siegfried M.},
  title         = {Verification Methods: Rigorous Results Using Floating-Point
                   Arithmetic},
  journal       = {Acta Numerica},
  volume        = {19},
  pages         = {287--449},
  year          = {2010},
  doi           = {10.1017/S096249291000005X}
}

@book{Nakao2019,
  author        = {Nakao, Mitsuhiro T. and Plum, Michael and Watanabe, Yoshitaka},
  title         = {Numerical Verification Methods and Computer-Assisted Proofs for
                   Partial Differential Equations},
  series        = {Springer Series in Computational Mathematics},
  volume        = {53},
  publisher     = {Springer},
  address       = {Singapore},
  year          = {2019},
  doi           = {10.1007/978-981-13-7669-6}
}

@article{Dorfler1996,
  author        = {D{\"o}rfler, Willy},
  title         = {A Convergent Adaptive Algorithm for {Poisson's} Equation},
  journal       = {{SIAM} Journal on Numerical Analysis},
  volume        = {33},
  number        = {3},
  pages         = {1106--1124},
  year          = {1996},
  doi           = {10.1137/0733054}
}

@article{DeRyck2024,
  author        = {De Ryck, Tim and Mishra, Siddhartha},
  title         = {Generic Bounds on the Approximation Error for Physics-Informed
                   (and Other) Neural Networks},
  journal       = {Acta Numerica},
  volume        = {33},
  pages         = {633--713},
  year          = {2024},
  doi           = {10.1017/S0962492922000150}
}

@misc{Lorenz2024,
  author        = {Lorenz, Davide Antonio and Bacho, Rami and Kutyniok, Gitta},
  title         = {Certified Neural Network Approximations for Semilinear Wave
                   Equations},
  year          = {2024},
  eprint        = {2402.07153},
  archiveprefix = {arXiv},
  primaryclass  = {math.NA}
}

@misc{Doumeche2025,
  author        = {Doum{\`e}che, Nathan and Biau, G{\'e}rard and Boyer, Claire and
                   Moulines, {\'E}ric},
  title         = {Physics-Informed Machine Learning as a Kernel Method},
  year          = {2025},
  eprint        = {2402.11953},
  archiveprefix = {arXiv},
  primaryclass  = {stat.ML},
  note          = {A priori PINN error analysis paper cited as Doum\`eche et al. (2025)
                   in the reviewed manuscript}
}

@article{Hillebrecht2022,
  author        = {Hillebrecht, Birgit and Unger, Benjamin},
  title         = {Rigorous Numerical Analysis of Neural Networks},
  journal       = {{IEEE} Transactions on Neural Networks and Learning Systems},
  year          = {2022},
  note          = {A posteriori PINN error bounds via semigroup theory;
                   see also the 2025 extended version}
}

@article{Hillebrecht2025,
  author        = {Hillebrecht, Birgit and Unger, Benjamin},
  title         = {Certified Machine Learning: {A} Posteriori Error Estimation for
                   Physics-Informed Neural Networks},
  journal       = {{IEEE} Transactions on Neural Networks and Learning Systems},
  volume        = {36},
  number        = {1},
  pages         = {1583--1593},
  year          = {2025},
  doi           = {10.1109/TNNLS.2024.3349450}
}

@misc{Ernst2025,
  author        = {Ernst, Oliver and Rekatsinas, Athanasios and Urban, Karsten},
  title         = {Certified Error Bounds for Physics-Informed Neural Networks via
                   {Riesz} Representations},
  year          = {2025},
  eprint        = {2502.20336},
  archiveprefix = {arXiv},
  primaryclass  = {math.NA}
}

@article{Berrone2022,
  author        = {Berrone, Stefano and Canuto, Claudio and Pintore, Moreno},
  title         = {Variational Physics Informed Neural Networks: The Role of
                   Quadratures and Test Functions},
  journal       = {Journal of Scientific Computing},
  volume        = {92},
  pages         = {100},
  year          = {2022},
  doi           = {10.1007/s10915-022-01950-4},
  note          = {Ann. Univ. Ferrara 68:575--595 in an earlier version; cf.\
                   the published journal version}
}

@inproceedings{Eiras2024,
  author        = {Eiras, Francisco and Petrov, Aleksandar and Raina, Shubham and
                   Bhatt, Aditi and Sherborne, Tom and Cebere, Bogdan and
                   Bhatt, Usman and Torr, Philip H. S. and Kumar, M. Pawan and
                   Bunel, Rudy},
  title         = {{$\partial$-CROWN}: Certified Bound Propagation for Differentiable
                   Programs},
  booktitle     = {Proceedings of the 41st International Conference on Machine Learning
                   (ICML)},
  year          = {2024},
  eprint        = {2305.10157},
  archiveprefix = {arXiv},
  primaryclass  = {cs.LG}
}

@misc{Mukherjee2026,
  author        = {Mukherjee, Arnab and others},
  title         = {Generalization Bounds for Physics-Informed Neural Networks via
                   Residual Control},
  year          = {2026},
  eprint        = {2603.19165},
  archiveprefix = {arXiv},
  primaryclass  = {math.NA}
}

@misc{Tanaka2026,
  author        = {Tanaka, Ryo and Yatabe, Kohei},
  title         = {Learn and Verify: Interval-Arithmetic-Based Verification of
                   Physics-Informed Neural Networks for {ODEs}},
  year          = {2026},
  eprint        = {2601.19818},
  archiveprefix = {arXiv},
  primaryclass  = {cs.LG}
}

@article{E2022,
  author        = {E, Weinan and Ma, Chao and Wu, Lei},
  title         = {Barron Spaces and the Compositional Function Spaces for Neural
                   Network Models},
  journal       = {Constructive Approximation},
  volume        = {55},
  pages         = {369--406},
  year          = {2022},
  doi           = {10.1007/s00365-021-09549-y}
}

@misc{LiaoMing2023,
  author        = {Liao, Zhiqiang and Ming, Pingbing},
  title         = {Spectral {Barron} Spaces and Neural Network Approximation in the
                   High-Frequency Regime},
  year          = {2023},
  note          = {Cited as Liao and Ming (2023) in the reviewed manuscript;
                   see also related work on spectral Barron spaces in JMLR}
}

@article{Guhring2020,
  author        = {G{\"u}hring, Ingo and Kutyniok, Gitta and Petersen, Philipp},
  title         = {Error Bounds for Approximations with Deep {ReLU} Neural Networks in
                   {$W^{s,p}$} Norms},
  journal       = {Analysis and Applications},
  volume        = {18},
  number        = {5},
  pages         = {803--859},
  year          = {2020},
  doi           = {10.1142/S0219530519410021}
}

@article{SiegelXu2023,
  author        = {Siegel, Jonathan W. and Xu, Jinchao},
  title         = {Characterization of the Variation Spaces Corresponding to Shallow
                   Neural Networks},
  journal       = {Constructive Approximation},
  volume        = {57},
  pages         = {1017--1078},
  year          = {2023},
  doi           = {10.1007/s00365-023-09626-4}
}

@article{DeVore2021,
  author        = {DeVore, Ronald and Hanin, Boris and Petrova, Guergana},
  title         = {Neural Network Approximation},
  journal       = {Acta Numerica},
  volume        = {30},
  pages         = {327--444},
  year          = {2021},
  doi           = {10.1017/S0962492921000052}
}

@inproceedings{Bartlett2017,
  author        = {Bartlett, Peter L. and Foster, Dylan J. and Telgarsky, Matus},
  title         = {Spectrally-Normalized Margin Bounds for Neural Networks},
  booktitle     = {Advances in Neural Information Processing Systems 30 (NIPS)},
  pages         = {6240--6249},
  year          = {2017},
  url           = {https://proceedings.neurips.cc/paper/2017/hash/b22b257ad0519d4500539da3c8895f41-Abstract.html}
}

@inproceedings{Neyshabur2015,
  author        = {Neyshabur, Behnam and Tomioka, Ryota and Srebro, Nathan},
  title         = {Norm-Based Capacity Control in Neural Networks},
  booktitle     = {Proceedings of the 28th Annual Conference on Learning Theory (COLT)},
  series        = {Proceedings of Machine Learning Research},
  volume        = {40},
  pages         = {1376--1401},
  year          = {2015},
  publisher     = {PMLR},
  url           = {http://proceedings.mlr.press/v40/Neyshabur15.html}
}

@inproceedings{Gonon2024,
  author        = {Gonon, Lukas and Grigoryeva, Lyudmila and Ortega, Juan-Pablo and
                   Ratsaby, Joel},
  title         = {Path-Norm Optimization for Residual Networks},
  booktitle     = {Proceedings of the 12th International Conference on Learning
                   Representations (ICLR)},
  year          = {2024},
  eprint        = {2310.01225},
  archiveprefix = {arXiv},
  primaryclass  = {cs.LG},
  note          = {Extends path norms to general DAG ReLU networks with biases,
                   skip connections, and pooling}
}
\end{document}